\Crefname{paragraph}{Section}{Sections}
\newcommand{\ensemblenombre}[1]{\mathbb{#1}}
\newcommand{\N}{\ensemblenombre{N}}
\newcommand{\R}{} % Probleme LateXML
\renewcommand{\R}{\ensemblenombre{R}}
\newcommand{\C}{\ensemblenombre{C}}
\newcommand{\norme}[1]{\left\lVert#1\right\rVert}
\theoremstyle{plain} 
\newtheorem{prop}{Proposition}[section] 
\newtheorem{theo}[prop]{Theorem}
\newtheorem{lem}[prop]{Lemma}
\newtheorem{cor}[prop]{Corollary}
\theoremstyle{definition}
\newtheorem{defi}[prop]{Definition}
\newtheorem{rmk}[prop]{Remark}
\journal{Journal of Differential Equations}
\begin{document}

\title{Controllability of a $4\times4$ quadratic reaction-diffusion system}
\author{Kévin Le Balc'h\footnote{IRMAR, Ecole Normale Supérieure de Rennes, Campus de Ker Lann, 35170 Bruz, France, kevin.lebalch@ens-rennes.fr}}
%\maketitle
\begin{frontmatter}
\begin{abstract} 
We consider a $4\times4$ nonlinear reaction-diffusion system posed on a smooth domain $\Omega$ of $\R^N$ ($N \geq 1$) with controls localized in some arbitrary nonempty open subset $\omega$ of the domain $\Omega$. This system is a model for the evolution of concentrations in reversible chemical reactions. We prove the local exact controllability to stationary constant solutions of the underlying reaction-diffusion system for every $N \geq 1$ in any time $T >0$. A specificity of this control system is the existence of some invariant quantities in the nonlinear dynamics. The proof is based on a linearization which uses return method and an adequate change of variables that creates cross diffusion which will be used as coupling terms of second order. The controllability properties of the linearized system are deduced from Carleman estimates. A Kakutani's fixed-point argument enables to go back to the nonlinear parabolic system. Then, we prove a global controllability result in large time for $1 \leq N \leq 2$ thanks to our local controllabillity result together with a known theorem on the asymptotics of the free nonlinear reaction-diffusion system.
\end{abstract}

\begin{keyword}
Controllability to stationary states, parabolic system, nonlinear coupling, Carleman estimate, return method
\end{keyword}
\end{frontmatter}

\renewcommand{\abstractname}{Acknowledgements}

\setcounter{tocdepth}{3} 
\small{\tableofcontents}
\normalsize
\section{Introduction}

\indent Let $T > 0$, $N \in \N^*$, $\Omega$ be a bounded, connected, open subset of $\R^N$ of class $C^2$ and let $\omega$ be a nonempty open subset of $\Omega$. The notation $Q :=(0,T)\times\Omega$ will be used throughout the paper.

\subsection{Presentation of the nonlinear reaction-diffusion system}

Let $(d_1,d_2,d_3,d_4) \in (0,+\infty)^4$. We are interested in the following reaction-diffusion system
\begin{equation}
\forall 1 \leq i \leq 4,\ \left\{
\begin{array}{l l}
\partial_t u_i - d_i \Delta u_{i} = (-1)^{i}(u_1 u_3 - u_2 u_4) &\mathrm{in}\ (0,T)\times\Omega,\\
\frac{\partial u_i}{\partial n} = 0 &\mathrm{on}\ (0,T)\times\partial\Omega,\\
u_i(0,.)=u_{i,0} &\mathrm{in}\  \Omega,
\end{array}
\right.
\label{systsc}
\end{equation}
where $n$ is the outer unit normal vector to $\partial\Omega$. This system is a model for the evolution of the concentration $u_i(.,.)$ in the reversible chemical reaction
\begin{equation}U_1+U_3\rightleftharpoons U_2+U_4,\label{ChemicalReaction}\end{equation}
by using the law of mass action, Fick's law and the fact that no substance crosses the boundary (Neumann conditions).
For this quadratic system, global existence of weak solutions holds in any dimension.
\begin{prop}\cite[Proposition 5.12]{P}\\
Let $u_0 \in L^2(\Omega)^4$, $u_0 \geq 0$. Then, there exists a global weak solution (in the sense of the definition \cite[Section 5, (5.12)]{P}) to \eqref{systsc}.
\end{prop}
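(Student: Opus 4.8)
This statement is quoted verbatim from \cite{P}, so there is nothing new to do; I merely sketch the standard construction behind \cite[Section~5, (5.12)]{P}. The plan is to build the weak solution by a regularization/compactness scheme. First I would truncate the quadratic reaction term, replacing $r(u) := u_1u_3 - u_2u_4$ by the bounded globally Lipschitz map $r_n(u) := r(u)/(1+ n^{-1}|r(u)|)$, and smooth the initial data so that $u_0^n \in L^\infty(\Omega)^4$, $u_0^n \geq 0$, $u_0^n \to u_0$ in $L^2(\Omega)^4$. For the truncated system classical parabolic theory yields a unique global bounded solution $u^n = (u_1^n,\dots,u_4^n)$, and nonnegativity $u^n \geq 0$ follows from the quasi-positivity of the reaction: on $\{u_i = 0\}$ with the other components $\geq 0$ one has $(-1)^i r_n(u) \geq 0$ for each $i$ (namely $\geq n^{-1}$-truncation of $u_2u_4$ for $i$ odd and of $u_1u_3$ for $i$ even), so the componentwise maximum principle applies.

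Second, I would derive the a priori estimates uniform in $n$. Since $\sum_{i=1}^4 (-1)^i = 0$, summing the four equations makes the reaction terms cancel: $\partial_t\bigl(\sum_i u_i^n\bigr) - \Delta\bigl(\sum_i d_i u_i^n\bigr) = 0$, and integrating over $\Omega$ with the Neumann conditions gives conservation of total mass, hence $\sup_{t\in(0,T)}\|u^n(t)\|_{L^1(\Omega)} \leq \|u_0\|_{L^1(\Omega)}$. (The same computation shows that the spatial integrals of $u_1+u_2$, $u_3+u_4$ and $u_1-u_3$ are conserved: these are the invariant quantities mentioned in the introduction.) The crucial upgrade is the parabolic duality estimate of Pierre--Schmitt: writing $\Sigma^n := \sum_i u_i^n$, one has $\partial_t \Sigma^n - \Delta(a^n \Sigma^n) = 0$ with $a^n := (\sum_i d_i u_i^n)/\Sigma^n$ bounded between $\min_i d_i$ and $\max_i d_i$; testing against the solution of a backward heat equation with arbitrary $L^2(Q)$ source and using $u_0^n \in L^2$ gives $\|u^n\|_{L^2(Q)} \leq C$ uniformly in $n$. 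Consequently $\|r_n(u^n)\|_{L^1(Q)} \leq \|u_1^n u_3^n\|_{L^1(Q)} + \|u_2^n u_4^n\|_{L^1(Q)} \leq C$.

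Third, the compactness step: since $\partial_t u_i^n - d_i\Delta u_i^n$ is bounded in $L^1(Q)$ and $u_{i,0}^n$ in $L^2(\Omega)$, the smoothing effect of the heat semigroup bounds $u_i^n$ in $L^q(0,T;W^{1,q}(\Omega))$ for every $q < (N+2)/(N+1)$ and $\partial_t u_i^n$ in the space of measures; an Aubin--Lions--Simon argument then yields, along a subsequence, $u^n \to u$ strongly in $L^1(Q)$ and a.e.\ in $Q$, with $u \in L^2(Q)^4$ by the previous bound. One then passes to the limit in the weak formulation, using the $L^2(Q)$ bound and a.e.\ convergence to identify the limit of the (sub)quadratic terms against the admissible test functions of the notion \cite[Section~5, (5.12)]{P}, and obtains a global weak solution with $u \geq 0$.

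The main obstacle is exactly this last passage to the limit in the quadratic terms: an $L^2(Q)$ bound on $u^n$ only gives an $L^1(Q)$ — not equi-integrable — bound on $u_1^n u_3^n$ and $u_2^n u_4^n$, so strong $L^1$ convergence of the nonlinearities is not automatic and possible concentration of mass must be handled. This is precisely why the conclusion is stated with the weak notion of solution of \cite{P}, whose test-function class and distributional formulation are tailored to nonlinearities living a priori only in $L^1$; the algebraic cancellations carried by the invariant combinations are what makes the duality estimate, and hence the whole scheme, go through. One additional lever available here is the entropy structure: the functional $\sum_i\int_\Omega (u_i\ln u_i - u_i)$ dissipates, since the reaction contributes $(u_1u_3-u_2u_4)\ln\bigl((u_2u_4)/(u_1u_3)\bigr) \leq 0$, which yields an $L\log L$-type control and $L^2(0,T;H^1)$ bounds on $\sqrt{u_i^n}$ that can be used to refine the compactness argument.
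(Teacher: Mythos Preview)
The paper does not give its own proof of this proposition: it is simply quoted from \cite[Proposition~5.12]{P} and used as background, with no argument supplied. You correctly recognize this at the outset, and your sketch of the truncation/duality/compactness scheme is a faithful outline of the construction in \cite{P}; there is nothing to compare against in the present paper.
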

For dimensions $N=1,\ 2$, it was proved that the solutions are bounded and therefore classical for bounded initial data (see \cite{DF}, \cite{GV} and \cite{HM}). It was not known until recently whether they were bounded in higher dimension (see \cite[Section 7, Problem 3]{P} and references therein for more details). But, two very recent preprints: \cite{CGV} and \cite{SoP} prove that these solutions are smooth.

\subsection{The question}\label{LAquestion}
Let $(u_1^*,u_2^*,u_3^*,u_4^*) \in [0,+\infty)^4$ satisfying
\begin{equation}
u_1^* u_3^* = u_2^* u_4^*.
\label{stat}
\end{equation}
We will say that $(u_i^*)_{1\leq i \leq 4}$ is a stationary constant solution of \eqref{systsc}.
\begin{rmk}
The nonnegative stationary solutions of \eqref{systsc} are constant (see \Cref{solutionspositivesconstantes} in \Cref{appendix}). Thus, it is not restrictive to assume that $(u_1^*,u_2^*,u_3^*,u_4^*) \in [0,+\infty)^4$.
\end{rmk}
\indent The \textbf{question} we ask is the following: \textbf{Could one reach stationary constant solutions of \eqref{systsc} with localized controls in finite time?} From a chemical viewpoint, we wonder whether one can act on the free reaction \eqref{ChemicalReaction} by a localized external force to reach in finite time $T$ a particular steady state $(u_i^*)_{1\leq i \leq 4}$. For instance, this force can be the addition or the removal of a chemical species in a specific location of the domain $\Omega$.  \\

We introduce the notations:
\[j \in \{1,2,3\}\ \text{denotes\ the\ number\ of\ internal controls that we allow in the equations of \eqref{systsc}},\]
\[1_{i \leq j} := 1\ \text{if}\ 1 \leq i \leq j\ \text{and}\ 0\ \text{if}\ i > j.\] By symmetry of the system, we reduce our study to the case of controls entering in the first equations. Thus, we consider the following controlled system
\begin{equation}
\forall 1 \leq i \leq 4,\ 
\left\{
\begin{array}{l l}
\partial_t u_i - d_i \Delta u_{i} = (-1)^{i}(u_1 u_3 - u_2 u_4) + h_{i} 1_{\omega}1_{i \leq j}&\mathrm{in}\ (0,T)\times\Omega,\\
\frac{\partial u_i}{\partial n} = 0 &\mathrm{on}\ (0,T)\times\partial\Omega,\\
u_i(0,.)=u_{i,0} &\mathrm{in}\  \Omega.
\end{array}
\right.
\label{syst}
\end{equation}
Here, $(u_i)_{1 \leq i \leq 4}(t,.) : \Omega \rightarrow \R^4$ is the \textit{state} to be controlled and $(h_i)_{1 \leq i \leq j} (t,.) : \Omega \rightarrow \R^j$ is the \textit{control input} supported in $\omega$. We are interested in the $L^{\infty}$-controllability properties of \eqref{syst}: For every $u_0 \in L^{\infty}(\Omega)^4$, does there exist $(h_i)_{1 \leq i \leq j} \in L^{\infty}(Q)^j$ such that the solution $u$ of \eqref{syst} satisfies 
\begin{equation} 
\forall i \in \{1,2,3,4\},\ u_i(T,.) = u_i^*?
\label{conditionfinale}
\end{equation}

\subsection{Two partial answers}\label{LESrep}

Our first main outcome is a \textbf{local controllability result in $L^{\infty}(\Omega)$ with controls in $L^{\infty}(Q)$ for \eqref{syst}}, i.e. we will show that for every $1 \leq j \leq 3$, there exists $\delta > 0 $ such that for every $u_0 \in X_{j,(d_i),(u_i^*)}$ (a " natural " subspace of $L^{\infty}(\Omega)^4$, see \Cref{sectionunderconstraints}), with $\norme{u_0-u^*}_{L^{\infty}(\Omega)^4} \leq \delta$, there exists $(h_i)_{1 \leq i \leq j} \in L^{\infty}(Q)^j$ such that the solution $u$ of \eqref{syst} satisfies \eqref{conditionfinale}.\\

Our second main result is a \textbf{global controllability result in $L^{\infty}(\Omega)$ with controls in $L^{\infty}(Q)$ for \eqref{syst} in large time and in small dimension}, i.e., we will prove that for every $1 \leq N \leq 2$, $1 \leq j \leq 3$, $u_0 \in X_{j,(d_i),(u_i^*)}$ which verifies a positivity condition (see \eqref{hypglobalcontr}), there exist $T^*$ sufficiently large and $(h_i)_{1 \leq i \leq j} \in L^{\infty}((0,T^*)\times\Omega)^j$ such that the solution $u$ of \eqref{syst} (replace $T$ with $T^*$) satisfies \eqref{conditionfinale} (replace $T$ with $T^*$).\\

The precise results are stated in \Cref{sectionmainre} (see \Cref{mainresult} and \Cref{mainresult2}).

\subsection{Bibliographical comments for the null-controllability of parabolic systems with localized controls}

Now, we discuss the null-controllability of parabolic coupled parabolic systems. The following results will be useful for having a proof strategy of our two main results.

\begin{rmk}
We choose to present parabolic systems with Dirichlet conditions because these results are more easy to find in the literature. However, all the following results can be adapted to the Neumann conditions.
\end{rmk}

\subsubsection{Linear parabolic systems}

The problem of null-controllability of the heat equation was solved independently by Gilles Lebeau, Luc Robbiano in $1995$ (see \cite{LR} or the survey \cite{LLR}) and Andrei Fursikov, Oleg Imanuvilov in $1996$ (see \cite{FI}) with Carleman estimates.
\begin{theo}\cite[Corollary 2]{AKBGBT}\\
For every $u_0 \in L^2(\Omega)$, there exists $h \in L^2(Q)$ such that the solution $u$ of 
\[
\left\{
\begin{array}{l l}
\partial_t u-  \Delta u =  h 1_{\omega} &\mathrm{in}\ (0,T)\times\Omega,\\
 u = 0 &\mathrm{on}\ (0,T)\times\partial\Omega,\\
 u(0,.)=u_0  &\mathrm{in}\ \Omega,
\end{array}
\right.
\]
satisfies $u(T,.)=0$.
\end{theo}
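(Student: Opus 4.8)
The plan is to use the Hilbert Uniqueness Method: by duality, null-controllability of the heat equation is equivalent to an \emph{observability inequality} for the adjoint (backward) heat equation, and that inequality is obtained from a global Carleman estimate of Fursikov--Imanuvilov type.

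First I would set up the duality. For $\varphi_T\in L^2(\Omega)$, let $\varphi$ solve $-\partial_t\varphi-\Delta\varphi=0$ in $Q$, $\varphi=0$ on $(0,T)\times\partial\Omega$, $\varphi(T,\cdot)=\varphi_T$. The existence, for every $u_0$, of a control $h\in L^2(Q)$ steering $u_0$ to $0$ at time $T$ is equivalent to the existence of $C>0$ with
\[
\norme{\varphi(0,\cdot)}_{L^2(\Omega)}^2 \leq C\int_0^T\!\!\int_\omega \abs{\varphi(t,x)}^2\,dx\,dt \qquad\text{for every } \varphi_T\in L^2(\Omega).
\]
Indeed, given this inequality, the functional $J(\varphi_T)=\tfrac12\int_0^T\!\int_\omega\abs{\varphi}^2+\int_\Omega u_0\,\varphi(0,\cdot)$ is strictly convex, continuous and coercive on $L^2(\Omega)$, and its unique minimizer $\widehat\varphi_T$ furnishes the control $h=\widehat\varphi\,1_\omega$, as one checks from the Euler--Lagrange equation and an integration by parts against the state equation.

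The core is the observability inequality, which I would derive from a Carleman estimate. Fix a nonempty open $\omega_0$ with $\overline{\omega_0}\subset\omega$; by a classical construction there is $\eta\in C^2(\overline\Omega)$ with $\eta>0$ in $\Omega$, $\eta=0$ on $\partial\Omega$, $\abs{\nabla\eta}>0$ on $\overline{\Omega\setminus\omega_0}$. For $s,\lambda>0$ set
\[
\alpha(t,x)=\frac{e^{2\lambda\norme{\eta}_\infty}-e^{\lambda\eta(x)}}{t(T-t)},\qquad \xi(t,x)=\frac{e^{\lambda\eta(x)}}{t(T-t)},
\]
and establish: there exist $\lambda_0,s_0,C>0$ such that for all $\lambda\geq\lambda_0$, $s\geq s_0$ and all $\varphi$ as above,
\[
s^3\lambda^4\!\int_Q e^{-2s\alpha}\xi^3\abs{\varphi}^2 + s\lambda^2\!\int_Q e^{-2s\alpha}\xi\abs{\nabla\varphi}^2 \;\leq\; C\,s^3\lambda^4\!\int_0^T\!\!\int_{\omega_0} e^{-2s\alpha}\xi^3\abs{\varphi}^2.
\]
I would prove this by conjugation: with $\psi=e^{-s\alpha}\varphi$, expand $e^{-s\alpha}(\partial_t+\Delta)(e^{s\alpha}\psi)$ into self-adjoint and skew-adjoint parts, take $L^2(Q)$-norms, and carry out a long sequence of integrations by parts; the positivity of $\abs{\nabla\eta}$ off $\omega_0$ and the blow-up of the weights at $t\in\{0,T\}$ render the principal terms positive with the stated powers of $s,\lambda$, while the lower-order and boundary contributions get absorbed once $s,\lambda$ are large. (Alternatively one can follow the Lebeau--Robbiano route: a spectral inequality for finite sums of Dirichlet eigenfunctions combined with the dissipation of the semigroup, iterated over dyadic frequency scales.)

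To pass from the Carleman estimate to observability, I would use that $e^{-2s\alpha}\xi^3$ is bounded below by a positive constant on $(T/4,3T/4)\times\Omega$ and bounded above on $(0,T)\times\omega_0$, so the left-hand side controls $\int_{T/4}^{3T/4}\!\int_\Omega\abs{\varphi}^2$ and the right-hand side is $\leq C\int_0^T\!\int_\omega\abs{\varphi}^2$; and that $t\mapsto\norme{\varphi(t,\cdot)}_{L^2(\Omega)}$ is nondecreasing (multiply the equation by $\varphi$ and integrate over $\Omega$), whence $\norme{\varphi(0,\cdot)}_{L^2(\Omega)}^2\leq\frac{2}{T}\int_{T/4}^{3T/4}\norme{\varphi(t,\cdot)}_{L^2(\Omega)}^2\,dt$. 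Combining these gives the observability inequality, hence the theorem. The main obstacle is the Carleman estimate — arranging the correct hierarchy of powers of $s$ and $\lambda$ so that every lower-order and boundary term can be absorbed, together with the construction of the weight $\eta$ with non-vanishing gradient outside the observation region; the duality argument, the dissipation estimate, and the enlargement from $\omega_0$ to $\omega$ are comparatively soft.
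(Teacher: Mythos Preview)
The paper does not actually prove this theorem: it is stated as a known, cited result in the bibliographical comments, with the remark that it was established independently by Lebeau--Robbiano and by Fursikov--Imanuvilov via Carleman estimates. Your proposal follows the Fursikov--Imanuvilov route (Carleman estimate $\Rightarrow$ observability $\Rightarrow$ HUM), which is exactly one of the two proofs the paper credits, and is moreover the same machinery the paper itself deploys later (see the Carleman inequality in \Cref{lemcarl1} and the HUM argument in \Cref{HUM}) for its own systems; so your approach is correct and aligned with the paper's framework, even though there is no ``paper's own proof'' of this particular statement to compare against.
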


Then, null-controllability of linear parabolic systems was studied. A typical example is
\begin{equation}
\left\{
\begin{array}{l l}
\partial_t u - D \Delta u = Au + B h 1_{\omega} &\mathrm{in}\ (0,T)\times\Omega,\\
u = 0&\mathrm{on}\ (0,T)\times\partial\Omega,\\
u(0,.)=u_0& \mathrm{in}\ \Omega,
\end{array}
\right.
\label{systemekalman}
\end{equation}
where $u \in C([0,T];L^2(\Omega)^k)$ is the state, $h \in L^2(Q)^l$, $1 \leq l \leq k$, is the control, $D:=diag(d_1,\dots,d_k)$ with $d_i \in (0,+\infty)$ is the \textit{diffusion matrix}, $A \in \mathcal{M}_k(\R)$ (matrix with $k$ lines and $k$ columns with entries in $\R$) is the \textit{coupling matrix} and $B \in \mathcal{M}_{k,l}(\R)$ (matrix with $k$ lines and $l$ columns with entries in $\R$) represents the \textit{distribution of controls}.
\begin{defi}
System \eqref{systemekalman} is said to be null-controllable if for every $u_0 \in L^2(\Omega)^k$, there exists $h \in L^2(Q)^l$ such that the solution $u$ of \eqref{systemekalman} satisfies $u(T,.) = 0$.
\end{defi}  
The triplet $(D,A,B)$ plays an important role for null-controllabillity of \eqref{systemekalman} as the following theorem, proved by Farid Ammar-Khodja, Assia Benabdallah, Cédric Dupaix and Manuel Gonzalez-Burgos (which is a generalization of the well-known Kalman condition in finite dimension, see \cite[Theorem 1.16]{C}), shows us. 
\begin{theo}\label{kalman}\cite[Theorem 5.6]{AKBGBT}\\
Let us denote by $(\lambda_m)_{m \geq 1}$ the sequence of positive eigenvalues of the unbounded operator \\$(-\Delta,H^2(\Omega) \cap H_0^{1}(\Omega))$ on $L^2(\Omega)$. Then, the following conditions are equivalent.
\begin{enumerate}[nosep]
\item System \eqref{systemekalman} is null-controllable.
\item For every $m \geq 1$, $rank((-\lambda_m D + A)|B)  = k$, where 
\[ ((-\lambda_m D + A)|B) : =\big(B, (-\lambda_m D + A)B, (-\lambda_m D + A)^2 B, \dots, (-\lambda_m D + A)^{k-1} B\big).\]
\end{enumerate}
\end{theo}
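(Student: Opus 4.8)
I would prove the equivalence as two implications, after a reduction to an observability inequality.

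\emph{Reduction.} By duality (the Hilbert Uniqueness Method), \eqref{systemekalman} is null-controllable in time $T$ if and only if there is $C>0$ such that every solution $\varphi$ of the adjoint system $-\partial_t\varphi - D\Delta\varphi = A^{*}\varphi$, with the same boundary conditions and terminal datum $\varphi(T,\cdot)=\varphi_T\in L^2(\Omega)^k$, obeys $\norme{\varphi(0,\cdot)}_{L^2(\Omega)^k}^2 \leq C\int_0^T\int_\omega \abs{B^{*}\varphi}^2$. Since $D$ is symmetric and $-\Delta$ is self-adjoint under the prescribed boundary conditions, writing $\varphi(t,x)=\sum_{m\geq 1}\Phi_m(t)\phi_m(x)$ on the orthonormal eigenbasis $(\phi_m)$ of $-\Delta$ decouples the adjoint PDE into the linear ODEs $-\Phi_m' = (-\lambda_m D + A)^{*}\Phi_m$, so $\Phi_m(t)=e^{(T-t)M_m^{*}}\Phi_m(T)$ with $M_m:=-\lambda_m D + A$. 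The essential gain is that the localization $1_\omega$ now appears only in the observation, so one has to prove $\sum_{m\geq 1}\abs{e^{TM_m^{*}}\Phi_m(T)}^2 \leq C\int_0^T\int_\omega\big|\sum_{m\geq 1}\big(B^{*}e^{(T-t)M_m^{*}}\Phi_m(T)\big)\phi_m(x)\big|^2\,dx\,dt$.

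\emph{Necessity $(1 \Rightarrow 2)$.} I would argue by contraposition: if $\mathrm{rank}(M_{m_0}|B)<k$ for some $m_0$, the orthogonal complement of the reachable subspace of $(M_{m_0},B)$ is a nonzero $M_{m_0}^{*}$-invariant subspace contained in $\ker B^{*}$ (finite-dimensional Kalman theorem, \cite[Theorem 1.16]{C}); picking $\xi\neq 0$ in it one gets $B^{*}e^{sM_{m_0}^{*}}\xi=\sum_{p\geq 0}\frac{s^p}{p!}B^{*}(M_{m_0}^{*})^p\xi=0$ for every $s$. The adjoint solution $\varphi(t,x):=\big(e^{(T-t)M_{m_0}^{*}}\xi\big)\phi_{m_0}(x)$ then has $B^{*}\varphi\equiv 0$ on $(0,T)\times\Omega$ while $\varphi(0,\cdot)\neq 0$, so the observability inequality cannot hold and \eqref{systemekalman} is not null-controllable.

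\emph{Sufficiency $(2 \Rightarrow 1)$.} This is the substantial direction; I would establish the observability inequality by the Lebeau--Robbiano strategy (\cite{LR}). For $\Lambda>0$ let $\pi_\Lambda$ be the spatial projection onto the modes with $\lambda_m\leq\Lambda$; it commutes with the dynamics, since $D\Delta$ and the constant matrices $A,B$ all commute with it. On $\mathrm{Range}(\pi_\Lambda)$ the adjoint dynamics is the finite-dimensional block-diagonal matrix $\bigoplus_{\lambda_m\leq\Lambda}M_m^{*}$; using the rank hypothesis for each $M_m$ together with the linear independence on $\omega$ of finitely many eigenfunctions $\phi_m$ (a consequence of unique continuation for $-\Delta$, needed to separate modes inside the observation integral), and inserting the spectral inequality $\norme{\sum_{\lambda_m\leq\Lambda}a_m\phi_m}_{L^2(\Omega)}\leq Ce^{C\sqrt\Lambda}\norme{\sum_{\lambda_m\leq\Lambda}a_m\phi_m}_{L^2(\omega)}$, one obtains, on any time interval of length $\tau$, an observability estimate for $\pi_\Lambda\varphi$ with constant bounded by $\exp\big(C(\sqrt\Lambda+1/\tau)\big)$. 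Then I would split $(0,T)$ into subintervals of geometrically decaying lengths $\tau_n$ with cutoffs $\Lambda_n\to\infty$ and alternate, at step $n$, the control of the frequencies $\leq\Lambda_n$ at cost $\exp(C(\sqrt{\Lambda_n}+1/\tau_n))$ with the free parabolic decay $\exp(-c\Lambda_n\tau_{n+1})$ of the remaining frequencies; a suitable choice makes the telescoping series converge and yields the full observability inequality, hence null-controllability.

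\emph{Main obstacle.} The crux is the quantitative, $\Lambda$-uniform form of the estimate for $\pi_\Lambda\varphi$: one must control the observability constant of the truncated system so that, for fixed $\tau$, it does not grow faster than $e^{C\sqrt\Lambda}$. This requires a uniform-in-$m$ observability estimate for the finite-dimensional pairs $(M_m,B)$ — roughly, a lower bound on the Gramian $\int_0^\tau e^{sM_m^{*}}BB^{*}e^{sM_m}\,ds$ relative to $e^{\tau M_m}e^{\tau M_m^{*}}$, uniform over all modes — which the bare rank condition does not provide directly, together with the absorption of possible (near-)coincidences of eigenvalues of distinct $M_m,M_{m'}$, again via the independence of the $\phi_m$ on $\omega$. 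Since the excerpt only cites this as a known result (\cite[Theorem 5.6]{AKBGBT}), in the paper one simply invokes it; the above is how one would prove it from scratch.
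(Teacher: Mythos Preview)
The paper does not prove this theorem: it is quoted verbatim from \cite[Theorem 5.6]{AKBGBT} in the bibliographical-comments subsection, with no argument given. You already note this in your final paragraph, so there is nothing in the paper to compare your sketch against.

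As a standalone sketch, your necessity direction $(1\Rightarrow 2)$ is correct and is the standard argument. For sufficiency, the Lebeau--Robbiano route you outline is plausible, and you identify the real difficulty honestly: the rank condition at each $m$ gives mode-by-mode observability of $(M_m,B)$, but not a priori with a constant growing at most like $e^{C\sqrt{\lambda_m}}$, which is what the iteration needs. In the original works of Ammar-Khodja, Benabdallah, Dupaix and Gonz\'alez-Burgos this uniformity is obtained differently (roughly, by exploiting that $\det\big((-\lambda D+A)\,|\,B\big)$-type quantities are polynomial in $\lambda$, so the rank condition for all $\lambda_m$ forces quantitative nondegeneracy for large $m$, after which one combines a Carleman/moment estimate for the scalar heat equation with algebraic manipulations rather than a pure frequency-truncation scheme). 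Your sketch would become a proof once you supply that uniform lower bound on the Gramians; as written it is an outline with the hard step correctly flagged but not resolved.
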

For example, let us consider the $2 \times 2$ toy-system
\begin{equation}
\left\{
\begin{array}{l l}
\partial_t u_1 -  d_1 \Delta u_1 = a_{11} u_1 + a_{12} u_2 +  h_1 1_{\omega} &\mathrm{in}\ (0,T)\times\Omega,\\
\partial_t u_2 -  d_2 \Delta u_2 = a_{21} u_1 + a_{22} u_2  &\mathrm{in}\ (0,T)\times\Omega,\\
u= 0&\mathrm{on}\ (0,T)\times\partial\Omega,\\
u(0,.)=u_0& \mathrm{in}\ \Omega,
\end{array}
\right.
\label{systemekalman2}
\end{equation}
where $a_{i,j} \in L^{\infty}(Q)$ for every $1 \leq i,j \leq 2$.
We easily deduce from \Cref{kalman} the following proposition.
\begin{prop}\label{couplageconstant}
We assume $a_{ij} \in \R$ for every $1\leq i,j \leq 2$. The following conditions are equivalent.
\begin{enumerate}[nosep]
\item System \eqref{systemekalman2} is null-controllable.
\item $a_{21} \neq 0$.
\end{enumerate}
\end{prop}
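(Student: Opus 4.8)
The plan is to deduce \Cref{couplageconstant} directly from \Cref{kalman}, which applies here because $2\times 2$ system \eqref{systemekalman2} is exactly of the form \eqref{systemekalman} with $k=2$, $l=1$, diffusion matrix $D=\mathrm{diag}(d_1,d_2)$, coupling matrix
\[ A = \begin{pmatrix} a_{11} & a_{12} \\ a_{21} & a_{22} \end{pmatrix}, \qquad \text{and control distribution} \qquad B = \begin{pmatrix} 1 \\ 0 \end{pmatrix}, \]
all with constant entries, so that \Cref{kalman} is indeed usable.

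The core of the argument is a one-line rank computation, carried out uniformly in $m$. Fix $m\geq 1$ and set $M_m := -\lambda_m D + A$. Since $k=2$, the Kalman matrix $(M_m\mid B)$ is the $2\times 2$ matrix $(B,\ M_m B)$. A direct computation gives $M_m B = \transpose{(a_{11}-\lambda_m d_1,\ a_{21})}$, hence
\[ (B,\ M_m B) = \begin{pmatrix} 1 & a_{11}-\lambda_m d_1 \\ 0 & a_{21} \end{pmatrix}, \qquad \det\big(B,\ M_m B\big) = a_{21}. \]
Therefore $\mathrm{rank}(M_m\mid B) = 2$ if and only if $a_{21}\neq 0$, and crucially this does not depend on $m$. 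Plugging this into the equivalence of \Cref{kalman}: if $a_{21}\neq 0$ then condition (2) holds for every $m\geq1$ and \eqref{systemekalman2} is null-controllable, while if $a_{21}=0$ the rank condition fails for every $m$ and the system is not null-controllable. This establishes the claimed equivalence.

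I would also add a short remark making the converse implication transparent without invoking \Cref{kalman}: when $a_{21}=0$, the second component satisfies the uncontrolled equation $\partial_t u_2 - d_2\Delta u_2 = a_{22}u_2$ with Dirichlet conditions, so $u_2(T,\cdot)$ depends only on $u_{2,0}$ and cannot be steered to $0$ for a generic initial datum; this is the natural obstruction the rank condition detects. I do not expect any real obstacle here — the only points requiring care are matching the sign conventions in the Kalman matrix $-\lambda_m D + A$ and observing that the determinant obtained is independent of the eigenvalue $\lambda_m$, which is what makes a single scalar condition ($a_{21}\neq 0$) equivalent to null-controllability.
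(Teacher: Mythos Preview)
Your proposal is correct and follows exactly the approach the paper intends: the paper states that \Cref{couplageconstant} is ``easily deduced from \Cref{kalman}'' without writing out the computation, and your rank calculation $(B,\ M_m B)$ with $\det(B,\ M_m B)=a_{21}$ is precisely that deduction. The additional remark on the $a_{21}=0$ case is a nice sanity check but not needed for the proof.
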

Roughly speaking, $u_1$ can be driven to $0$ thanks to the control $h_1$ and $u_2$ can be driven to $0$ thanks to the \textit{coupling term} $a_{21}u_1$. We have the following diagram
\[ h_1 \overset{controls} \rightsquigarrow u_1 \overset{controls} \rightsquigarrow u_2.\]
\indent We also have a more general result for the toy-model \eqref{systemekalman2}.
\begin{prop}\cite[Theorem 7.1]{AKBGBT}\label{couplzonecontrole}\\
We assume that for every $1 \leq i,j \leq 2$, $a_{ij} \in L^{\infty}(Q)$ and there exist $t_1 < t_2 \in (0,T)$, a nonempty open subset $\omega_0 \subset \omega$ and $\varepsilon > 0$ such that for almost every $ (t,x) \in (t_1,t_2)\times \omega_0,\ |a_{21}(t,x)| \geq \varepsilon$. Then, system \eqref{systemekalman2} is null-controllable.
\end{prop}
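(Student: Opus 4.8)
The plan is to reduce the problem, by duality, to an observability inequality for the adjoint system and then to prove that inequality by means of global Carleman estimates. First I would reduce to the case $t_1 = 0$, $t_2 = T$: in general, let \eqref{systemekalman2} evolve with $h = 0$ on $(0,t_1)$, apply the result on $(t_1,t_2)$ (with the time origin shifted to $t_1$) to drive the state at time $t_1$ to $0$ at time $t_2$, and observe that the state then stays at $0$ on $(t_2,T)$ since $0$ solves the homogeneous linear system $\partial_t u - D\Delta u = Au$. So assume $\abs{a_{21}(t,x)} \geq \varepsilon$ for a.e. $(t,x) \in (0,T)\times\omega_0$. By the Hilbert Uniqueness Method, null-controllability of \eqref{systemekalman2} is equivalent to the observability inequality
\[
\norme{\varphi_1(0,.)}_{L^2(\Omega)}^2 + \norme{\varphi_2(0,.)}_{L^2(\Omega)}^2 \leq C\int_0^T\!\!\int_\omega \abs{\varphi_1}^2
\]
for every solution $\varphi = (\varphi_1,\varphi_2)$ of the adjoint system
\[
\left\{
\begin{array}{l l}
-\partial_t\varphi_1 - d_1\Delta\varphi_1 = a_{11}\varphi_1 + a_{21}\varphi_2 & \mathrm{in}\ (0,T)\times\Omega,\\
-\partial_t\varphi_2 - d_2\Delta\varphi_2 = a_{12}\varphi_1 + a_{22}\varphi_2 & \mathrm{in}\ (0,T)\times\Omega,\\
\varphi = 0 & \mathrm{on}\ (0,T)\times\partial\Omega,\\
\varphi(T,.) = \varphi_T & \mathrm{in}\ \Omega,
\end{array}
\right.
\]
the key feature being that only $\varphi_1$ is observed, since the control acts on the first equation only.

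Next I would fix a nonempty open set $\omega_1$ with $\overline{\omega_1} \subset \omega_0$ and a Fursikov--Imanuvilov weight pair $(\alpha,\xi)$ attached to $\omega_1$, and apply the standard global parabolic Carleman estimate to each of the two adjoint equations, treating the coupling terms as source terms. Summing the two estimates and using $a_{ij} \in L^\infty(Q)$ to absorb the lower-order contributions into the left-hand side for $s,\lambda$ large yields $I(\varphi_1) + I(\varphi_2) \leq C\int_0^T\int_{\omega_1} s^3\lambda^4\xi^3 e^{-2s\alpha}(\abs{\varphi_1}^2 + \abs{\varphi_2}^2)$, where $I(\varphi)$ denotes the usual Carleman functional, which also contains the terms $(s\xi)^{-1}e^{-2s\alpha}(\abs{\partial_t\varphi}^2 + \abs{\Delta\varphi}^2)$ and $s\lambda^2\xi e^{-2s\alpha}\abs{\nabla\varphi}^2$.

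The crux is to remove the local term in $\varphi_2$. Since $\abs{a_{21}} \geq \varepsilon$ a.e. on $(0,T)\times\omega_0$, the first adjoint equation gives there $\abs{\varphi_2}^2 \leq C\varepsilon^{-2}(\abs{\partial_t\varphi_1}^2 + \abs{\Delta\varphi_1}^2 + \abs{\varphi_1}^2)$ a.e. Introducing a cut-off $\zeta \in C^\infty_c(\omega_0)$ with $\zeta \equiv 1$ on $\omega_1$ together with suitable powers of the Carleman weight, one bounds $\int_0^T\int_{\omega_1} s^3\lambda^4\xi^3 e^{-2s\alpha}\abs{\varphi_2}^2$ by a weighted integral of $\abs{\partial_t\varphi_1}^2 + \abs{\Delta\varphi_1}^2 + \abs{\varphi_1}^2$ over $(0,T)\times\omega_0$, and then integrates by parts to transfer the time and second-order spatial derivatives of $\varphi_1$ onto $\zeta$, onto the weight, and onto $\varphi_2$ (rewriting $\partial_t\varphi_2$ from the second adjoint equation and retaining $\nabla\varphi_2$, which is controlled by $I(\varphi_2)$), also using the $(s\xi)^{-1}e^{-2s\alpha}(\abs{\partial_t\varphi_1}^2+\abs{\Delta\varphi_1}^2)$ part of $I(\varphi_1)$. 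This should produce an inequality of the form $\int_0^T\int_{\omega_1} s^3\lambda^4\xi^3 e^{-2s\alpha}\abs{\varphi_2}^2 \leq \tfrac{1}{2}(I(\varphi_1) + I(\varphi_2)) + C\int_0^T\int_{\omega_0} s^K\lambda^K\xi^K e^{-2s\alpha}\abs{\varphi_1}^2$ for some fixed $K$; plugging it back and taking $s,\lambda$ large absorbs every $\varphi_2$-term and leaves a Carleman estimate whose right-hand side involves $\varphi_1$ alone. Finally, on $[T/4,3T/4]$ the weights are bounded above and below, so this estimate bounds $\int_{T/4}^{3T/4}\int_\Omega (\abs{\varphi_1}^2 + \abs{\varphi_2}^2)$ by $C\int_0^T\int_\omega \abs{\varphi_1}^2$; combining this with the backward dissipation estimate $\norme{\varphi(0,.)}_{L^2(\Omega)^2}^2 \leq e^{CT}\norme{\varphi(t,.)}_{L^2(\Omega)^2}^2$ (valid for every $t \in [0,T]$) and integrating in $t$ over $[T/4,3T/4]$ yields the observability inequality, hence the null-controllability.

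The hard part is the step that removes the local $\varphi_2$-term: since only $\varphi_1$ is observed, $\varphi_2$ has to be recovered from the coupling, but the coupling produces $\varphi_2$ only through derivatives of $\varphi_1$ of higher order than those that appear with a strong weight in the basic Carleman functional, and because $a_{21}$ is merely bounded one cannot differentiate it in the integrations by parts. Arranging the cut-offs and the powers of $s$ and $\lambda$ so that these higher-order terms in $\varphi_1$ are genuinely absorbable — possibly by invoking a sharper Carleman estimate that also controls $\Delta\varphi_1$ — is where the real work lies.
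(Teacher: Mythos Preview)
Your overall plan is correct and is exactly the route the paper follows for the analogous statement it does prove in detail (the observability inequality \eqref{inobs3c} in \Cref{inobs3comp}); note that \Cref{couplzonecontrole} itself is only quoted from \cite{AKBGBT} and not reproved in this paper. The reduction to $(t_1,t_2)=(0,T)$, the HUM duality, the application of the Carleman estimate to each adjoint equation, the absorption of the zero-order coupling terms, and the final passage via energy dissipation are all as in the paper.

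Where your description goes wrong is precisely at the step you flag as hard. You write that one first uses the pointwise bound $|\varphi_2|^2\le C\varepsilon^{-2}(|\partial_t\varphi_1|^2+|\Delta\varphi_1|^2+|\varphi_1|^2)$ and obtains a weighted integral of these squared quantities, and then ``integrates by parts to transfer the derivatives of $\varphi_1$ onto $\zeta$, onto the weight, and onto $\varphi_2$''. These two operations are incompatible: once you have squared and eliminated $\varphi_2$, there is no $\varphi_2$ left to integrate by parts against, and from $\int \zeta\,s^3\xi^3 e^{-2s\alpha}|\partial_t\varphi_1|^2$ (or the $\Delta\varphi_1$ analogue) no integration by parts will lower the weight below $s^3\xi^3$, so absorption by the $(s\xi)^{-1}$-term of $I(\varphi_1)$ is impossible.

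What the paper actually does (see \eqref{3cin2}--\eqref{3cin5}) is to skip the pointwise bound entirely and multiply the first adjoint equation by $\chi(x)\,e^{2s\alpha}(s\phi)^3\varphi_2$, obtaining directly
\[
\int_0^T\!\!\int_{\omega_0}\chi\,e^{2s\alpha}(s\phi)^3 a_{21}|\varphi_2|^2
=\int_0^T\!\!\int_{\omega_0}\chi\,e^{2s\alpha}(s\phi)^3\varphi_2\bigl(-\partial_t\varphi_1-d_1\Delta\varphi_1-a_{11}\varphi_1\bigr).
\]
Now the right-hand side consists of \emph{bilinear} terms $\varphi_2\,\partial_t\varphi_1$, $\varphi_2\,\Delta\varphi_1$, $\varphi_2\,\varphi_1$, and one integrates by parts in $t$ or $x$ to move the derivatives onto $\chi\,e^{2s\alpha}(s\phi)^3\varphi_2$. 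The resulting $\partial_t\varphi_2$, $\Delta\varphi_2$, $\nabla\varphi_2$ are all present in $I(\varphi_2)$ with weight $(s\phi)^{-1}$ or $(s\phi)$, so after Young's inequality they are absorbed by $\varepsilon\,I(\varphi_2)$; the $\varphi_1$-factors that remain pick up higher powers of $(s\phi)$ (in the paper's computation, $(s\phi)^7$) but stay local on $\omega_0\subset\omega$, which is exactly what one wants. This is precisely \Cref{lemteccarl} in the paper. No ``sharper Carleman estimate controlling $\Delta\varphi_1$'' is needed; the standard one already contains the $(s\phi)^{-1}(|\partial_t\varphi|^2+|\Delta\varphi|^2)$ term, and that suffices once the argument is set up bilinearly rather than via the squared pointwise bound.

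One further remark: with only $|a_{21}|\ge\varepsilon$ (no sign), the multiplier trick above needs $a_{21}$ to keep a fixed sign on $(t_1,t_2)\times\omega_0$; this is how the paper states its own working hypotheses (see \eqref{3csign}, \eqref{2c2sign}, \eqref{1c2sign}). If one really wants to allow sign changes of an $L^\infty$ coefficient, an additional argument is required; but for the purposes of this paper the sign hypothesis is always available after the return-method reduction.
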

Roughly speaking, if the coupling term $a_{21}$ lives somewhere in the control zone, then $(u_1,u_2)$ can be driven to $(0,0)$. The case where $supp(a_{21})\cap \omega = \emptyset$ is more difficult even if $a_{21}$ depends only on the spatial variable: a minimal time of control can appear (see \cite{AKBGBT2} and \cite{AKBGBT3}).\\

\indent In order to reduce the number of controls entering in the equations of a linear parabolic system, a good strategy is to transform the system into a \textit{cascade system}. This type of system has been studied by Manuel Gonzalez-Burgos and Luz de Teresa (see \cite{GBT}). For example, let us consider the $3\times 3$ toy system
\begin{equation}
\left\{
\begin{array}{l l}
\partial_t u_1 -  d_1 \Delta u_1 = a_{11} u_1 + a_{12} u_2 + a_{13} u_3 +  h_1 1_{\omega} &\mathrm{in}\ (0,T)\times\Omega,\\
\partial_t u_2 -  d_2 \Delta u_2 = a_{21} u_1 + a_{22} u_2 + a_{23} u_3  &\mathrm{in}\ (0,T)\times\Omega,\\
\partial_t u_3 -  d_3 \Delta u_3 = \qquad\quad\ \ a_{32} u_2 + a_{33} u_3  &\mathrm{in}\ (0,T)\times\Omega,\\
u= 0&\mathrm{on}\ (0,T)\times\partial\Omega,\\
u(0,.)=u_0& \mathrm{in}\ \Omega.
\end{array}
\right.
\label{systemekalmancasc}
\end{equation}
where for every $1 \leq i,j \leq 3$, $a_{ij} \in L^{\infty}(Q)$.
\begin{prop}
If there exist $t_1 < t_2 \in (0,T)$, a nonempty open subset $\omega_0 \subset \omega$ and $\varepsilon > 0$ such that for almost every $ (t,x) \in (t_1,t_2)\times \omega_0$, $|a_{21}(t,x)| \geq \varepsilon$ and $|a_{32}(t,x)| \geq \varepsilon$, then system \eqref{systemekalmancasc} is null-controllable.
\label{propcascade}
\end{prop}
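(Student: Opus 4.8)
\emph{Plan.} By the Hilbert Uniqueness Method, null-controllability of \eqref{systemekalmancasc} is equivalent to an observability inequality for the adjoint system. Writing that system, the cascade structure is reversed: the adjoint state $\varphi=(\varphi_1,\varphi_2,\varphi_3)$ solves a backward system in which $\varphi_3$ enters the equation of $\varphi_2$ with coefficient $a_{32}$, $\varphi_2$ enters the equation of $\varphi_1$ with coefficient $a_{21}$, and the only observation is $\varphi_1$ on $(0,T)\times\omega$. The goal is therefore to prove
\[ \norme{\varphi(0)}_{L^2(\Omega)^3}^2 \leq C \iint_{(0,T)\times\omega} |\varphi_1|^2 . \]
I would deduce this from a global Carleman estimate, iterating the argument behind \Cref{couplzonecontrole} twice.

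First I would fix nested open sets $\overline{\omega_0}\subset\omega_1$, $\overline{\omega_1}\subset\omega$ and the usual Fursikov--Imanuvilov weights $\alpha(t,x),\xi(t,x)$ adapted to $\omega_1$, and write the scalar global Carleman estimate for each heat equation $-\partial_t\varphi_i-d_i\Delta\varphi_i=F_i$, where $F_i$ gathers the coupling terms ($F_1=a_{11}\varphi_1+a_{21}\varphi_2$, and so on). Summing the three estimates, with the parameter $s$ chosen large enough (depending on the $d_i$ and on $\norme{a_{ij}}_{L^\infty}$) to absorb the zeroth-order couplings appearing on the right-hand sides into the left-hand Carleman energies $I(\varphi_i)$, one obtains, for $s,\lambda$ large,
\[ \sum_{i=1}^3 I(\varphi_i) \leq C \sum_{i=1}^3 s^{3}\lambda^{4} \iint_{(0,T)\times\omega_1} e^{-2s\alpha}\xi^{3}\,|\varphi_i|^2 . \]
It then remains to erase the local terms in $\varphi_3$ and in $\varphi_2$.

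The core of the proof is this elimination. On $(t_1,t_2)\times\omega_0$ one has $|a_{32}|\geq\varepsilon$, so the equation of $\varphi_2$ gives $\varphi_3=\tfrac{1}{a_{32}}\bigl(-\partial_t\varphi_2-d_2\Delta\varphi_2-a_{12}\varphi_1-a_{22}\varphi_2\bigr)$ there. Inserting this into the weighted local integral of $\varphi_3$, after multiplying by a cutoff $\theta(t)\zeta(x)$ supported in $(t_1,t_2)\times\omega_1$ and equal to $1$ on a smaller set containing $(\,\cdot\,)\times\omega_0$, and integrating by parts to move $\partial_t$ and $\Delta$ off $\varphi_2$ (using the equation of $\varphi_3$ again when a $\partial_t\varphi_3$ is produced), one bounds the local term of $\varphi_3$ by local integrals of $\varphi_2$ and $\varphi_1$ against a higher power of $s\xi$; that power is absorbed by $I(\varphi_2)+I(\varphi_1)$ for $s$ large. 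Repeating the manoeuvre with the equation of $\varphi_1$ and $|a_{21}|\geq\varepsilon$ removes the local term of $\varphi_2$ in favour of a local integral of $\varphi_1$. One is left with $\sum_i I(\varphi_i)\leq C s^{K}\lambda^{K}\iint_{(0,T)\times\omega}e^{-2s\alpha}\xi^{K}|\varphi_1|^2$ for some $K$, and the classical passage from the Carleman inequality to the observability inequality (energy estimate on $[T/4,3T/4]$, boundedness of the weights there) concludes.

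The main obstacle is the bookkeeping in this double elimination: the integrations by parts must be localized in $t$ to $(t_1,t_2)$ as well as in $x$ to $\omega_0\Subset\omega_1$, so the time cutoff $\theta$ interacts with the $\partial_t$ that falls on $\varphi_2$ (and, one step deeper, on $\varphi_3$), and one must check that every term generated carries powers of $s$ and $\lambda$ strictly below those of the corresponding Carleman term so that it can be absorbed — exactly as in the two-equation case of \Cref{couplzonecontrole}, but now cascaded twice and with three distinct diffusion coefficients to track.
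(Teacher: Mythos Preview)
Your proposal is correct and matches the approach the paper itself uses when proving its analogous observability inequalities (see \Cref{inobs3comp} and \Cref{preuve2èmeobs}; the paper does not give a separate proof of \Cref{propcascade}, treating it as known). One simplification the paper makes that dissolves the ``main obstacle'' you flag: rather than carrying a time cutoff $\theta(t)$ through the Carleman bookkeeping, it first lets the system evolve freely on $(0,t_1)$ and on $(t_2,T)$ and reduces to the case $(t_1,t_2)=(0,T)$, so that the sign condition holds on the whole time interval and only a spatial cutoff is needed; the elimination step is then exactly as you describe (multiply the equation of $\varphi_2$ by $\chi e^{2s\alpha}(s\phi)^r\varphi_3$, integrate by parts to put $\partial_t$ and $\Delta$ on $\varphi_3$, and absorb $\partial_t\varphi_3$, $\Delta\varphi_3$ directly into $I(\varphi_3)$ rather than reusing the equation of $\varphi_3$).
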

Roughly speaking, $u_1$ can be driven to $0$ thanks to the control $h_1$, $u_2$ can be driven to $0$ thanks to the coupling term $a_{21}u_1$ (which lives somewhere in the control zone) and $u_3$ can be driven to $0$ thanks to the coupling term $a_{32}u_2$ (which lives somewhere in the control zone). Heuristically, we have the following diagram
\[ h_1 \overset{controls} \rightsquigarrow u_1 \overset{controls} \rightsquigarrow u_2 \overset{controls} \rightsquigarrow u_3.\]
For more general results, see \cite{AKBDGB}, \cite{AKBDGB3}, \cite{AKBDGB2} and the survey \cite[Sections 4, 5, 7]{AKBGBT}.\\

We can also replace the coupling matrix $A$ in the system \eqref{systemekalman} by a \textit{differential operator of first order or second order}. In this case, there exist some similar results (see \cite{Ga}, \cite{BCGT} with a technical assumption on $\omega$, \cite{D}, \cite{DL1}, \cite{DL2}). For example, let us consider the particular case of the $2 \times 2$ system  
\begin{equation}
\left\{
\begin{array}{l l}
\partial_t u_1 -  d_1 \Delta u_1 = g_{11} . \nabla u_1 + g_{12} . \nabla u_2 +  a_{11} u_1 + a_{12} u_2 +  h_1 1_{\omega} &\mathrm{in}\ (0,T)\times\Omega,\\
\partial_t u_2 -  d_2 \Delta u_2 = g_{21} . \nabla u_1 + g_{22} . \nabla u_2 + a_{21} u_1 + a_{22} u_2  &\mathrm{in}\ (0,T)\times\Omega,\\
u = 0&\mathrm{on}\ (0,T)\times\partial\Omega,\\
u(0,.)=u_0& \mathrm{in}\ \Omega,
\end{array}
\right.
\label{systemekalman3}
\end{equation}
where $a_{ij} \in \R$, $g_{ij} \in \R$ for every $1 \leq i,j \leq 2$.
Then, system \eqref{systemekalman3} is null-controllable if and only if $g_{21} \neq 0 $ or $a_{21} \neq 0$. This result is due to Michel Duprez and Pierre Lissy (see \cite[Theorem 1]{DL1} and \cite[Theorem 3.4]{StGhMa} for a similar result). It is proved by a \textit{fictitious control method} and \textit{algebraic solvability}, introduced for the first time by Jean-Michel Coron in the context of stabilization of ordinary differential equations (see \cite{C2}). This type of method has also been used for Navier-Stokes equations by Jean-Michel Coron and Pierre Lissy in \cite{CL}. However, the situation is much more complicated and is not well-understood in the case where $a_{ij}$, $g_{ij}$ ($1 \leq i,j \leq 2$) depend on the spatial variable. One can see the surprising negative result of null-controllability: \cite[Theorem 2]{DL2}. When the matrix $A$ in \eqref{systemekalman} is a differential operator of second order (take $A = \widetilde{A} \Delta + C(t,x)$ with $(\widetilde{A}, C) \in \mathcal{M}_k(\R) \times L^{\infty}(Q;\mathcal{M}_{k}(\R))$ to simplify), the coupling matrix $A$ disturbs the diagonal diffusion matrix $D$ and creates a new “cross”  diffusion matrix: $\widetilde{D}=D-\widetilde{A}$. When $\widetilde{D}$ is not diagonalizable, there are few results (see \cite{FCGBT} with a technical assumption on the dimension of the Jordan Blocks of $\widetilde{D}$ and the recent preprint \cite[Section 3]{LiZu} when $C$ does not depend on time and space).\\

Let us also keep in mind the following result which help to understand our analysis.

\begin{prop}\label{crosseddifussion}\cite[Theorem 3]{Gu}, \cite[Theorem 1.5]{FCGBT}\\
Let $a_{11}$, $a_{12}$, $d$ $\in \R$. Let us consider the $2 \times 2$ toy system
\begin{equation}
\left\{
\begin{array}{l l}
\partial_t u_1 -  d_1 \Delta u_1 = a_{11} u_1 + a_{12} u_2 +  h_1 1_{\omega} &\mathrm{in}\ (0,T)\times\Omega,\\
\partial_t u_2 -  d_2 \Delta u_2 = d \Delta u_1  &\mathrm{in}\ (0,T)\times\Omega,\\
u= 0&\mathrm{on}\ (0,T)\times\partial\Omega,\\
u(0,.)=u_0& \mathrm{in}\ \Omega.
\end{array}
\right.
\label{systemecrosseddiffusion}
\end{equation}
Then, the following conditions are equivalent.
\begin{enumerate}[nosep]
\item System \eqref{systemecrosseddiffusion} is null-controllable.
\item $d \neq 0$.
\end{enumerate}
\end{prop}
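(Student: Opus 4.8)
The plan is to prove the two implications separately.

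\emph{Necessity.} I would argue by contraposition: if $d=0$, the second equation of \eqref{systemecrosseddiffusion} decouples and reduces to the uncontrolled heat equation $\partial_t u_2-d_2\Delta u_2=0$, $u_2(0,\cdot)=u_{2,0}$, $u_2=0$ on $(0,T)\times\partial\Omega$, so that $u_2(T,\cdot)=e^{d_2T\Delta}u_{2,0}$, which is nonzero as soon as $u_{2,0}\neq0$ (by backward uniqueness for the heat equation, $e^{d_2T\Delta}$ is injective). Since $h_1$ has no influence on $u_2$, system \eqref{systemecrosseddiffusion} is not null-controllable.

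\emph{Sufficiency.} Assume $d\neq0$. By the Hilbert uniqueness method, null-controllability of \eqref{systemecrosseddiffusion} is equivalent to the observability inequality
\[
\norme{\varphi_1(0,\cdot)}_{L^2(\Omega)}^2+\norme{\varphi_2(0,\cdot)}_{L^2(\Omega)}^2\leq C\int_0^T\int_\omega\abs{\varphi_1}^2
\]
for all solutions of the backward adjoint system
\[
-\partial_t\varphi_1-d_1\Delta\varphi_1=a_{11}\varphi_1+d\,\Delta\varphi_2,\qquad -\partial_t\varphi_2-d_2\Delta\varphi_2=a_{12}\varphi_1,
\]
with $\varphi=0$ on $(0,T)\times\partial\Omega$ and $\varphi(T,\cdot)=\varphi_T$. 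The key structural feature is that $\varphi_2$ enters the first equation only through the second-order term $d\,\Delta\varphi_2$, with $d\neq0$. (When $d_1\neq d_2$ one may alternatively diagonalize the cross-diffusion matrix — lower triangular with diagonal $(d_1,d_2)$ and off-diagonal entry $d$ — turning \eqref{systemecrosseddiffusion} into a system of two heat equations coupled by a zeroth-order term, to which \Cref{kalman} applies directly; the rank condition there holds at every eigenvalue $\lambda_m$ precisely when $d\neq0$. I will nevertheless describe the Carleman approach, which is uniform in $d_1,d_2>0$.)

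For the observability inequality I would fix a nonempty open set $\omega_0$ with $\overline{\omega_0}\subset\omega$ and the usual Fursikov--Imanuvilov \cite{FI} weights $(\alpha,\xi)$ attached to $\omega_0$, and apply the classical global parabolic Carleman estimate — in the strengthened form that also controls the weighted $L^2$-norms of $\partial_t\varphi_i$ and $\Delta\varphi_i$ — to each equation, treating $a_{11}\varphi_1+d\,\Delta\varphi_2$ and $a_{12}\varphi_1$ as source terms, and then add the two estimates. The zeroth-order terms $a_{ij}\varphi_1$ are absorbed into the dominant term $s^3\lambda^4\int_0^T\int_\Omega e^{-2s\alpha}\xi^3\abs{\varphi_1}^2$ of the functional of $\varphi_1$ by taking $s,\lambda$ large. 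The coupling term $d\,\Delta\varphi_2$ is the delicate point: the Carleman functional of $\varphi_2$ controls $s^{-1}\int_0^T\int_\Omega e^{-2s\alpha}\xi^{-1}\abs{\Delta\varphi_2}^2$, whereas the source contributes $\int_0^T\int_\Omega e^{-2s\alpha}\abs{d\,\Delta\varphi_2}^2$, so one is short by a power of $s\xi$; this is bridged by running the estimates with slightly shifted weights (or by first multiplying the $\varphi_2$-equation by a suitable power of $s\xi$), and this is where $d\neq0$ is used in an essential way. One is then left with the localized terms $s^3\lambda^4\int_0^T\int_{\omega_0}e^{-2s\alpha}\xi^3(\abs{\varphi_1}^2+\abs{\varphi_2}^2)$. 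The localized $\varphi_2$-term is eliminated by using the first equation on $\omega$, where $d\,\Delta\varphi_2=-\partial_t\varphi_1-d_1\Delta\varphi_1-a_{11}\varphi_1$: inserting a cut-off function supported in $\omega$ and equal to $1$ on $\omega_0$ and integrating by parts twice, that term is bounded by localized integrals of $\abs{\varphi_1}^2$ and $\abs{\nabla\varphi_1}^2$ (absorbed for $s$ large) plus a localized integral of $\abs{\varphi_1}^2$ against a bounded power of the weight, which is kept on the right-hand side. This yields a Carleman estimate with $\varphi_1$ observed on $\omega$ only; combining it with standard dissipation estimates for $(\varphi_1,\varphi_2)$ then gives the observability inequality above.

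The main obstacle is exactly the bookkeeping just described: the coupling $d\,\Delta\varphi_2$ sits at the highest order the Carleman functional can absorb, and symmetrically recovering the localized mass of $\varphi_2$ from $\varphi_1$ costs two derivatives, so both steps go through only because the coupling is of second order with $d\neq0$ — had it been a zeroth-order term one would instead have needed it to be visible inside the observed equation. A detailed implementation is given in \cite{Gu} and \cite{FCGBT}.
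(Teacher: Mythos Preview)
Your necessity argument is fine, and your parenthetical remark about diagonalizing the cross-diffusion matrix when $d_1\neq d_2$ and then applying \Cref{kalman} is exactly the route the paper indicates in the remark following \Cref{crosseddifussion} (the paper does not give a self-contained proof of this proposition, citing \cite{Gu} and \cite{FCGBT} instead).

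Your Carleman sketch, however, has a gap in the local elimination step. After summing the two Carleman estimates (with shifted powers of $s\phi$, which is indeed the right way to absorb the source $d\,\Delta\varphi_2$), the localized term you need to remove is $\int_{\omega_0}e^{2s\alpha}(s\phi)^m|\varphi_2|^2$. But the first adjoint equation only gives you $\Delta\varphi_2$ in terms of $\varphi_1$: multiplying that equation by $\chi e^{2s\alpha}(s\phi)^m\varphi_2$ and integrating by parts yields a local $|\nabla\varphi_2|^2$ term (with a sign depending on $d$), not the $|\varphi_2|^2$ term you need. There is no local way to recover $\varphi_2$ from $\Delta\varphi_2$, so ``integrating by parts twice'' does not produce what you claim.

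The standard fix, which is precisely what the paper carries out for the $4\times4$ analogue in \Cref{preuve2èmeobs}, is to apply the Carleman estimate to $\Delta\varphi_2$ rather than to $\varphi_2$: differentiating the second adjoint equation gives $-\partial_t(\Delta\varphi_2)-d_2\Delta(\Delta\varphi_2)=a_{12}\Delta\varphi_1$, so the local term becomes $\int_{\omega_0}e^{2s\alpha}(s\phi)^m|\Delta\varphi_2|^2$, and \emph{this} is directly expressed through the first equation as $d\,\Delta\varphi_2=-\partial_t\varphi_1-d_1\Delta\varphi_1-a_{11}\varphi_1$. The elimination then proceeds exactly as in \Cref{lemteccarl}, and one recovers the $L^2$ norm of $\varphi_2$ at time $0$ from that of $\Delta\varphi_2$ via the Poincar\'e-type inequalities of \Cref{lempw} and \Cref{corpw} (Dirichlet version).
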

Roughly speaking, $u_1$ can be driven to $0$ thanks to the control $h_1$ and $u_2$ can be driven to $0$ thanks to the \textit{coupling term of second order} $d \Delta u_1$.

\begin{rmk}
When it is possible, one can diagonalize the matrix $\widetilde{D} = \begin{pmatrix} d_1 & 0\\
d& d_2\end{pmatrix}$. Then, by a linear transformation together with \Cref{kalman}, one can prove \Cref{crosseddifussion}. However, in this paper, we choose the opposite strategy. We transform \eqref{syst} into a system like \eqref{systemecrosseddiffusion} (with four equations). Indeed, such a system seems to be a \textit{cascade system} with coupling terms of second order.
\end{rmk}

\subsubsection{Nonlinear parabolic systems}\label{nonlinearsyst}

Then, another challenging issue is the study of the null-controllability properties of semilinear parabolic systems. The usual strategy consists in \textit{linearizing the system} around $0$ and to deduce local controllability properties of the nonlinear system by controllability properties of the linearized system and a fixed-point argument.\\
\indent For example, let us consider the $2 \times 2$ model system
\begin{equation}
\left\{
\begin{array}{l l}
\partial_t u_1 -  d_1 \Delta u_1 = f_1(u_1,u_2) +  h_1 1_{\omega} &\mathrm{in}\ (0,T)\times\Omega,\\
\partial_t u_2 -  d_2 \Delta u_2 = f_2(u_1,u_2)  &\mathrm{in}\ (0,T)\times\Omega,\\
u= 0&\mathrm{on}\ (0,T)\times\partial\Omega,\\
u(0,.)=u_0& \mathrm{in}\ \Omega,
\end{array}
\right.
\label{systemenonlintoy}
\end{equation}
where $f_1$ and $f_2$ belong to $C^{\infty}(\R^2;\R)$. Then, the following result is a consequence of \Cref{couplageconstant}.
\begin{prop}\label{nonlineartoy}
Let us suppose that $\frac{\partial f_2}{\partial u_1}(0,0) \neq 0$. Then, there exists $\delta > 0$ such that for every $u_0 \in L^{\infty}(\Omega)^2$ which satisfies $\norme{u_0}_{L^{\infty}(\Omega)^2} \leq \delta$, there exists $h_1 \in L^{\infty}(Q)$ such that the solution $u$ of \eqref{systemenonlintoy} verifies $u(T,.) = 0$.
\end{prop}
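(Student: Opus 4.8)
The plan is to \emph{linearize the system around the equilibrium $(0,0)$}, to feed the resulting linear system into \Cref{couplageconstant}, and then to return to the nonlinear system by a fixed-point scheme. Throughout one tacitly assumes $f_1(0,0) = f_2(0,0) = 0$, so that $(0,0)$ is a stationary solution of \eqref{systemenonlintoy} (otherwise reaching exactly $0$ at time $T$ is in general impossible — already for a single heat equation with a nonzero constant source). Set $a_{ij} := \frac{\partial f_i}{\partial u_j}(0,0)$ and write
\[ f_i(u_1,u_2) = a_{i1} u_1 + a_{i2} u_2 + g_i(u_1,u_2), \]
where $g_i \in C^{\infty}(\R^2;\R)$ vanishes to second order at the origin, so that on a bounded neighbourhood of $0$ one has $\abs{g_i(v)} \leq C\abs{v}^2$ and $\abs{g_i(v)-g_i(w)} \leq C(\abs{v}+\abs{w})\abs{v-w}$. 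Dropping the $g_i$'s gives exactly \eqref{systemekalman2} with constant coefficients and $a_{21} = \frac{\partial f_2}{\partial u_1}(0,0) \neq 0$, which is null-controllable by \Cref{couplageconstant}.

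Second, I would \emph{upgrade this into a null-controllability statement for the linearized system with an external source, together with quantitative weighted estimates}. The adjoint system is a $2\times2$ cascade whose second component is recovered from the first thanks to $a_{21}\neq 0$; a global Carleman estimate for it yields an observability inequality with Imanuvilov-type weights $\rho(t,x)$ blowing up as $t\to T$. By the Fursikov–Imanuvilov duality method (minimisation of a weighted quadratic functional) one then obtains: for every $u_0 \in L^2(\Omega)^2$ and every source $F=(F_1,F_2)$ in a weighted space $L^2(Q;\rho^2\,dt\,dx)^2$, there exist a control $h_1$ and a solution $u$ of the linear system with source $F$ with $u(T,\cdot)=0$ and
\[ \norme{\rho_0 u}_{L^2(Q)^2} + \norme{\rho_1 h_1}_{L^2(Q)} \leq C\bigl(\norme{u_0}_{L^2(\Omega)^2} + \norme{\rho F}_{L^2(Q)^2}\bigr) \]
for suitable weights $\rho_0,\rho_1$ also degenerating near $t=T$. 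A parabolic bootstrap using $L^p$ maximal regularity (with $p$ large, $N$ fixed) and the embedding $W^{2,1}_p(Q)\hookrightarrow L^{\infty}(Q)$ then promotes these weighted $L^2$ bounds into weighted $L^{\infty}$ bounds; this is where one passes from $u_0\in L^{\infty}(\Omega)^2$ to $h_1\in L^{\infty}(Q)$.

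Third, I would run a \emph{fixed-point argument}. Let $Y$ denote the weighted $L^{\infty}$-space for which the above estimates hold, and let $\Lambda:Y\to Y$ send $\bar u$ to the controlled solution $u$ produced in the previous step with source $F=(g_1(\bar u),g_2(\bar u))$. Since $g_i$ vanishes to second order at $0$, $\norme{\rho\,g_i(\bar u)}$ is controlled by $\norme{\bar u}_Y^2$, so $\Lambda$ maps a ball of small radius $r$ into itself, and the Lipschitz bound on $g_i$ makes it a contraction there (alternatively one invokes compactness and Schauder's/Kakutani's theorem, as the paper will do for the full system); the threshold $\delta$ on $\norme{u_0}_{L^{\infty}(\Omega)^2}$ is governed by $r$. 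A fixed point of $\Lambda$ is then a solution of \eqref{systemenonlintoy} with $u(T,\cdot)=0$ and $h_1\in L^{\infty}(Q)$.

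The main obstacle is not the linear controllability itself — \Cref{couplageconstant} supplies it — but making the functional-analytic framework precise enough to close the loop: the weights must be chosen so that the blow-up of $\rho$ near $t=T$ is exactly compensated by the decay of $u$ (hence of $g_i(u)$) forced by $u(T,\cdot)=0$, while retaining enough integrability/regularity to land the control in $L^{\infty}$. For this constant-coefficient $2\times2$ toy system this is classical (in the spirit of Fernández-Cara–Zuazua's treatment of semilinear parabolic equations), but it is precisely this mechanism that, in a more delicate form involving second-order couplings, will drive the proof of the paper's main results.
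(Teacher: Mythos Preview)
The paper does not actually prove \Cref{nonlineartoy}: it is stated as a bibliographical result, with the remark that it ``is well-known but it is difficult to find it in the literature'' and that one can adapt the arguments of \cite{CGR}. So there is no proof in the paper to compare against directly. Your outline is nonetheless correct and is precisely the standard route.

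It is worth pointing out one structural difference between your fixed-point scheme and the one the paper implements for its \emph{main} result (\Cref{mainresult}), which is the natural point of comparison. You write $f_i(u) = a_{i1}u_1 + a_{i2}u_2 + g_i(u)$ with $g_i$ vanishing to second order and treat $g_i(\bar u)$ as an external source in the fixed linearized system, closing by a contraction (or Schauder) in a weighted space. The paper instead writes the nonlinearity in the quasi-linear form $G(\zeta)\zeta$ and runs Kakutani's theorem on the multivalued map $z \mapsto \{$controlled solutions of $\partial_t\zeta - D\Delta\zeta = G(z)\zeta + Bh\}$; this requires null-controllability not just of the single linearized system but of a whole \emph{family} of linear systems with coupling matrices $A = G(z)$ ranging over a neighbourhood of $G(0)$ (the sets $\mathcal{E}_j$), with constants uniform in $A$. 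Your source-term formulation avoids that uniformity issue and is arguably cleaner for this toy case; the paper's formulation, on the other hand, sidesteps the need to track the precise weighted space in which the quadratic remainder lives, at the price of proving Carleman/observability estimates uniformly over $\mathcal{E}_j$. Both are standard and both work here.
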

\begin{rmk}
This result is well-known but it is difficult to find it in the literature (see \cite[Theorem 6]{AKBD} with a restriction on the dimension $1 \leq N < 6$ and other function spaces or one can adapt the arguments given in \cite{CGR} to get \Cref{nonlineartoy} for any $N\in \N^*$). For other results in this direction, see \cite{WZ}, \cite{LCMML}, \cite{GBPG} and \cite{CSG}.
\end{rmk}
When $f_2$ does not satisfy the hypothesis of \Cref{nonlineartoy}, another strategy consists in linearizing around a non trivial trajectory $(\overline{u_1},\overline{u_2},\overline{h_1})$ of the nonlinear system which goes from $0$ to $0$. This procedure is called the \textit{return method} and was introduced by Jean-Michel Coron in \cite{C2} (see \cite[Chapter 6]{C}). This method conjugated with \Cref{couplzonecontrole} gives the following result. 
\begin{prop}\label{nonlineartoy2}
We assume that there exist $t_1 < t_2 \in (0,T)$, a nonempty open subset $\omega_0 \subset \omega$ and $\varepsilon > 0$ such that $|\frac{\partial f_2}{\partial u_1}(\overline{u_1},\overline{u_2})| \geq \varepsilon$ on $(t_1,t_2) \times \omega_0$. Then, there exists $\delta > 0$ such that for every $u_0 \in L^{\infty}(\Omega)^2$ which satisfies $\norme{u_0}_{L^{\infty}(\Omega)^2} \leq \delta$, there exists $h_1 \in L^{\infty}(Q)$ such that the solution $u$ of \eqref{systemenonlintoy} verifies $u(T,.) = 0$.
\end{prop}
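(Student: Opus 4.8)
The plan is to apply the \emph{return method}: rather than linearizing \eqref{systemenonlintoy} around $0$, we linearize around the trajectory $(\overline{u_1},\overline{u_2},\overline{h_1})$ supplied by the hypothesis, which joins $0$ to $0$ and along which $|\frac{\partial f_2}{\partial u_1}(\overline{u_1},\overline{u_2})| \geq \varepsilon$ on $(t_1,t_2)\times\omega_0$. Setting $u = \overline{u} + v$ and $h_1 = \overline{h_1} + g_1$, the perturbation $v = (v_1,v_2)$ solves
\begin{equation*}
\left\{
\begin{array}{l l}
\partial_t v_1 - d_1 \Delta v_1 = a_{11} v_1 + a_{12} v_2 + g_1 1_{\omega} + R_1(v) &\mathrm{in}\ (0,T)\times\Omega,\\
\partial_t v_2 - d_2 \Delta v_2 = a_{21} v_1 + a_{22} v_2 + R_2(v) &\mathrm{in}\ (0,T)\times\Omega,\\
v = 0 &\mathrm{on}\ (0,T)\times\partial\Omega,\\
v(0,.)=u_0 &\mathrm{in}\ \Omega,
\end{array}
\right.
\end{equation*}
where $a_{ij} := \frac{\partial f_i}{\partial u_j}(\overline{u_1},\overline{u_2}) \in L^{\infty}(Q)$ and $R_i(v) := f_i(\overline{u}+v) - f_i(\overline{u}) - \nabla f_i(\overline{u})\cdot v$ is the second-order Taylor remainder, so that $|R_i(v)| \leq C|v|^2$ whenever $v$ stays bounded (using $f_i \in C^{\infty}$). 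Since $|a_{21}| \geq \varepsilon$ on $(t_1,t_2)\times\omega_0$, \Cref{couplzonecontrole} applies to the linear part; more precisely, the global Carleman estimate behind it produces, for every source term and every initial datum, a controlled trajectory obeying weighted energy estimates in which the state and the control are controlled by the source and by $u_0$, with Fursikov--Imanuvilov weights $\rho(t)$ blowing up as $t\to T^-$.

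To absorb the quadratic terms $R_i$, I would run a fixed-point scheme. Choose the weights so that there is a Banach space $\mathcal{E}$ of functions $v$ on $Q$ with $\rho v$ in a parabolic-regularity class continuously embedded in $L^{\infty}(Q)^2$; by an $L^p$-$L^\infty$ parabolic bootstrap (using $\overline{u}, u_0 \in L^{\infty}$ and $f_i \in C^{\infty}$), the controlled trajectory furnished above lies in $\mathcal{E}$ and depends continuously and compactly on the source term. Define $\Lambda : \mathcal{E} \to \mathcal{E}$ by $\Lambda(\widehat{v}) := v$, where $v$ is the controlled solution of the \emph{linear} system displayed above with source terms $R_i(\widehat{v})$ and initial datum $u_0$ (this uses \Cref{couplzonecontrole} for the linear part with a right-hand side). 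A fixed point $v = \Lambda(v)$ solves the nonlinear perturbed system and satisfies $v(T,\cdot)=0$, hence $u = \overline{u}+v$ satisfies $u(T,\cdot) = \overline{u}(T,\cdot) = 0$, with control $h_1 = \overline{h_1}+g_1 \in L^{\infty}(Q)$.

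The smallness is where $\delta$ enters. Restricting $\Lambda$ to the closed ball $B_r \subset \mathcal{E}$ with $r$ small enough that the members of $B_r$ lie in the region where the bound $|R_i(v)| \leq C|v|^2$ is valid, the quadratic remainder of an element of $B_r$ is a source bounded, crudely, by $C r$ times an $\mathcal{E}$-bounded term carrying one extra power of the weight to spare, so the controllability estimate gives $\norme{\Lambda(\widehat{v})}_{\mathcal{E}} \leq C_0\big(\norme{u_0}_{L^{\infty}(\Omega)^2} + r^2\big)$. Picking $r$ small and then $\delta$ small so that $C_0(\delta + r^2) \leq r$, the map $\Lambda$ sends $B_r$ into itself, and Kakutani's (or Schauder's) fixed-point theorem applies by the continuity and compactness of $\Lambda$ noted above. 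This yields the claim for every $u_0$ with $\norme{u_0}_{L^{\infty}(\Omega)^2} \leq \delta$.

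The main obstacle is reconciling the $L^{\infty}$ statement with the $L^2$-based Carleman machinery of \Cref{couplzonecontrole}: one must tune the weights so that simultaneously the weighted controllability estimate with source term holds, the quadratic remainder is absorbed with a net gain of one weight power, and a parabolic regularity argument upgrades the $L^2$-controlled trajectory to the $L^{\infty}$ bound that makes $\Lambda$ well defined on $\mathcal{E}$ and the estimate $|R_i(v)|\leq C|v|^2$ usable. Two points require no extra effort: $a_{21}$ is merely in $L^{\infty}(Q)$, with $|a_{21}|\geq\varepsilon$ only on the subcylinder $(t_1,t_2)\times\omega_0$ — exactly the setting \Cref{couplzonecontrole} is designed to cover — and the return trajectory $(\overline{u_1},\overline{u_2},\overline{h_1})$ joining $0$ to $0$ is part of the hypothesis, so its existence and regularity need not be established here.
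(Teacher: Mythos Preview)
Your proposal is correct in spirit and identifies the right ingredients: linearize around the return trajectory $(\overline{u},\overline{h_1})$, invoke \Cref{couplzonecontrole} for the resulting linear system with $a_{21}=\partial_{u_1}f_2(\overline{u})$, upgrade the $L^2$ Carleman machinery to $L^\infty$ via a parabolic bootstrap, and close by a fixed-point argument. The paper itself does not prove \Cref{nonlineartoy2}; it cites \cite{CGR} and then adapts the same scheme in the proof of its main result (\Cref{mainresult}).

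Comparing with the paper's own implementation (Sections~4.1--4.5), there is one genuine methodological difference worth noting. You treat the quadratic Taylor remainder $R_i(v)$ as a \emph{source term} in a fixed linear system and run the fixed point in a weighted space $\mathcal{E}$ carrying Fursikov--Imanuvilov weights that blow up at $t=T$; this is the ``source-term method'' of \cite{LTT}. The paper instead rewrites the nonlinearity as $G(\zeta)\zeta$ with a \emph{varying} coupling matrix $G$, proves uniform $L^\infty$ null-controllability for the whole family of linear systems $\partial_t\zeta-D\Delta\zeta=A\zeta+Bh$ with $A$ ranging over a set $\mathcal{E}_j$ (\Cref{contrlinlinfty}), and then applies Kakutani's theorem to the multivalued map $z\mapsto\{\text{controlled solutions with }A=G(z)\}$ on a plain $L^\infty$ ball. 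The paper's route sidesteps the delicate weight-tuning you flag as ``the main obstacle'' (no controllability-with-source estimate is needed), at the cost of proving controllability uniformly over a family of couplings and using a set-valued fixed point; your route is more direct once the weighted estimate is in hand and yields a single-valued map. Both are standard and both succeed here.
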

\Cref{nonlineartoy2} is proved in \cite{CGR} and used in \cite{CGR} with $f_2(u_1,u_2)= u _1^3 + R u_2$, where $R \in \R$, \cite{CGMR}, \cite{CG} and \cite{LB2}.\\

\indent Finally, Felipe Walison Chaves-Silva and Sergio Guerrero have studied the local controllability of the Keller-Segel system in which the nonlinearity involves derivative terms of order $2$ (see \cite{CSG}). Some ideas of \cite{CSG} are exploited in our proof.

\subsection{Proof strategy of the two main results}

Let us return to the main question discussed in this paper (see \Cref{LAquestion}) and the expected results as explained in \Cref{LESrep}.\\

The \textbf{local controllability result} is deduced from controllability properties of the linearized system around $(u_i^*)_{1 \leq i \leq 4}$ of \eqref{syst}. This strategy presents \textbf{two main difficulties}.\\
\indent For the case of $3$ controls (see \Cref{sytul3controls}), if $(u_1^*,u_3^*,u_4^*)\neq(0,0,0)$, the linearized system is controllable and consequently the nonlinear result comes from an adaptation of \Cref{nonlineartoy}. If $(u_1^*,u_3^*,u_4^*)= (0,0,0)$, \textbf{the linearized system is not controllable}. Then, we use the \textit{return method} to overcome this problem and the nonlinear result comes from an adaptation of \Cref{nonlineartoy2}.\\
\indent For the case of $2$ controls and $1$ control, \textbf{there exist some invariant quantities in the nonlinear system} and consequently in the linearized system, that prevent controllability from happening in the whole space $L^{\infty}(\Omega)^4$. Therefore, we restrict the initial data to a  “natural” subspace of $L^{\infty}(\Omega)^4$ (see \Cref{sectionunderconstraints}). A modified version (for Neumann conditions) of \Cref{kalman} cannot be applied to the linearized system of \eqref{syst} because the rank condition is never satisfied (due to the invariant quantities). An \textit{adequate change of variable} gets over this difficulty by creating \textit{cross-diffusion} and by using coupling matrices of second order (see \Cref{systul2controls} and \Cref{systul1control}). Then, we treat the controllability properties of the linearized system by adapting \Cref{propcascade} and \Cref{crosseddifussion}.\\
\indent To summarize, we must require necessary conditions on the initial data. Consequently the local controllability result depends on:
the coefficients $(d_i)_{1 \leq i \leq 4}$ (i.e. the diffusion matrix), the state $(u_i^*)_{1 \leq i \leq 4}$ (i.e. the coupling matrix of the linearized system of \eqref{syst}), $j$ (i.e. the number of controls that we put in the equations).\\

The \textbf{global controllability result} is a corollary of our local controllability result and a result by Laurent Desvillettes, Klemens Fellner and Michel Pierre, Takashi Suzuki, Yoshio Yamada, Rong Zou concerning the asymptotics of the trajectory of \eqref{systsc} for $1 \leq N \leq 2$. Indeed, this known result claims that the solution $u(T,.)$ of \eqref{syst} converges in $L^{\infty}(\Omega)^4$ to a particular positive stationary solution $z$ of \eqref{systsc} when $T \rightarrow + \infty$ (see \cite{DF} or \cite[Theorem 3]{PSZ} and \cite[Theorem 3]{PSY}). Then, the solution of \eqref{syst} can be exactly driven to $z$ by our first outcome. Finally, a connectedness-compactness argument enables to steer the solution of \eqref{syst} from $z$ to $(u_i^*)_{1 \leq i \leq 4}$.

\section{Properties of the nonlinear controlled system}

\subsection{Definitions and usual properties}

In this part, we introduce the concept of \textit{trajectory} of \eqref{syst}. This definition requires a well-posedness result (see \Cref{wpl2linfty}).\\

First, we introduce some usual notations.\\
\indent Let $k, l \in \N^*$, $\mathcal{A}$ an algebra. Then, $\mathcal{M}_k(\mathcal{A})$ (respectively $\mathcal{M}_{k,l}(\mathcal{A})$) denotes the algebra of matrices with $k$ lines and $k$ columns with entries in $\mathcal{A}$ (respectively the algebra of matrices with $k$ lines and $l$ columns with entries in $\mathcal{A}$).\\
\indent For $k \in \N^*$ and $A \in \mathcal{M}_{k}(\R)$, $Sp(M)$ denotes the set of complex eigenvalues of $M$,
\[ Sp(M) := \{\lambda \in \C\ ;\ \exists X \in \C^k\setminus\{0\},\ MX = \lambda X\}.\]
\indent For $(a,b,c,d)\in \R^4$, we introduce
\begin{equation}
\forall i \in \N^*,\ f_i(a,b,c,d) := (-1)^i(ac-bd),\qquad f(a,b,c,d)=(f_i(a,b,c,d))_{1 \leq i \leq 4}.
\end{equation}

\begin{defi}\label{defiYspaceL2}
We introduce the space $Y$ defined by
\begin{equation}
Y := L^2(0,T;H^1(\Omega)) \cap H^1(0,T;(H^1(\Omega))').
\end{equation}
\end{defi}

\begin{prop}\label{injclassique}
From an easy adaptation of the proof of \cite[Section 5.9.2, Theorem 3]{E}, we have
\begin{equation}
Y \hookrightarrow C([0,T];L^2(\Omega)).
\end{equation}
\end{prop}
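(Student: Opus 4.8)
The plan is to follow the classical argument for Gelfand (evolution) triples, adapting the proof for $L^2(0,T;H_0^1(\Omega)) \cap H^1(0,T;H^{-1}(\Omega))$ given in \cite[Section 5.9.2, Theorem 3]{E} to the present setting, where the role of $H_0^1(\Omega)$ is played by $H^1(\Omega)$. Write $V := H^1(\Omega)$, $H := L^2(\Omega)$, and identify $H$ with its own dual so that $V \hookrightarrow H \hookrightarrow V' = (H^1(\Omega))'$ with continuous and dense embeddings; the duality pairing $\langle \cdot, \cdot \rangle_{V',V}$ then extends the inner product of $H$. The only structural facts used below are these embeddings; in particular the boundary condition plays no role, which is why the adaptation is straightforward.

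First I would establish the desired estimate on a dense class of smooth functions. Let $u \in C^1([0,T];V)$. Then $t \mapsto \norme{u(t)}_H^2$ is $C^1$ with $\frac{d}{dt}\norme{u(t)}_H^2 = 2\langle u'(t), u(t) \rangle_{V',V}$, hence for all $s,t \in [0,T]$,
\[ \norme{u(t)}_H^2 = \norme{u(s)}_H^2 + \int_s^t 2\langle u'(\tau), u(\tau) \rangle_{V',V}\, d\tau. \]
Bounding $\abs{\langle u'(\tau), u(\tau) \rangle_{V',V}} \leq \norme{u'(\tau)}_{V'}\norme{u(\tau)}_V \leq \tfrac12\big(\norme{u'(\tau)}_{V'}^2 + \norme{u(\tau)}_V^2\big)$ and integrating the identity in $s$ over $(0,T)$ yields
\[ T\,\norme{u(t)}_H^2 \leq \norme{u}_{L^2(0,T;H)}^2 + T\big(\norme{u'}_{L^2(0,T;V')}^2 + \norme{u}_{L^2(0,T;V)}^2\big), \]
so that $\max_{[0,T]}\norme{u(t)}_H \leq C\norme{u}_Y$ with $C$ depending only on $T$.

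Then comes the density step, which is the main technical point (though a standard one). One shows that $C^1([0,T];V)$ is dense in $Y$ for its natural norm, by extending $u \in Y$ to a slightly larger time interval and regularizing by convolution in time, checking that $u \in L^2(0,T;V)$ with $u' \in L^2(0,T;V')$ extends and regularizes consistently. Granting this density, if $u \in Y$ and $(u_n) \subset C^1([0,T];V)$ converges to $u$ in $Y$, then by the estimate above $(u_n)$ is a Cauchy sequence in $C([0,T];H)$, hence converges uniformly to some $v \in C([0,T];H)$; since $u_n \to u$ in $L^2(0,T;H)$ as well, $v = u$ almost everywhere, so $u$ admits a representative in $C([0,T];H)$ and the estimate passes to the limit. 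This gives the continuous injection $Y \hookrightarrow C([0,T];L^2(\Omega))$.

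As a complement (not needed for the injection itself, but used implicitly in the well-posedness theory) one also records, by passing to the limit in the smooth identity, that for general $u \in Y$ the map $t \mapsto \norme{u(t)}_H^2$ is absolutely continuous with $\frac{d}{dt}\norme{u(t)}_H^2 = 2\langle u'(t), u(t) \rangle_{V',V}$ for almost every $t$. The main obstacle is exactly the time-regularization on a bounded interval in the density argument; the rest is a direct Cauchy--Schwarz computation.
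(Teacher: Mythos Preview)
Your proposal is correct and is precisely the ``easy adaptation'' the paper has in mind: the paper gives no proof of its own but simply refers to \cite[Section 5.9.2, Theorem 3]{E}, and your argument is the standard Gelfand-triple version of that proof with $H_0^1(\Omega)$ replaced by $H^1(\Omega)$. There is nothing to add.
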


\begin{prop}\label{wpl2linfty}
Let $k \in \N^*$, $D\in \mathcal{M}_{k}(\R)$ such that $D$ is diagonalizable and $ Sp(D) \subset (0,+\infty)$, $A\in \mathcal{M}_k(L^{\infty}(Q))$, $u_0 \in L^2(\Omega)^k$, $g \in L^{2}(Q)^k$. The following Cauchy problem admits a unique weak solution $u \in Y^k $
\[
\left\{
\begin{array}{l l}
\partial_t u - D \Delta u= A(t,x) u + g&\mathrm{in}\ (0,T)\times\Omega,\\
\frac{\partial u}{\partial n} = 0 &\mathrm{on}\ (0,T)\times\partial\Omega,\\
u(0,.)=u_0 &\mathrm{in}\  \Omega.
\end{array}
\right.
\]
This means that $u$ is the unique function in $Y^k$ that satisfies the variational fomulation
\begin{equation}
\forall w \in L^2(0,T;H^1(\Omega)^k),\ \int_0^T (\partial_t u ,w)_{(H^1(\Omega)^k)',H^1(\Omega)^k)} + \int_Q D \nabla u .  \nabla w = \int_Q (Au + g) . w,
\label{formvar}
\end{equation}
and
\begin{equation}
u(0,.) = u_0 \ \mathrm{in}\ L^2(\Omega)^k.
\end{equation}
Moreover, there exists $ C >0$ independent of $u_0$ and $g$ such that
\begin{equation}
 \norme{u}_{Y^k} \leq C \left(\norme{u_0}_{L^{2}(\Omega)^k}+\norme{g}_{L^{2}(Q)^k}\right).
 \label{estl2faible}
\end{equation}
Finally, if $u_0 \in L^{\infty}(\Omega)^k$ and $g \in L^{\infty}(Q)^k$, then $u \in L^{\infty}(Q)^k$ and there exists $ C >0$ independent of $u_0$ and $g$ such that
\begin{equation}
\norme{u}_{(Y\cap L^{\infty}(Q))^k} \leq C \left(\norme{u_0}_{L^{\infty}(\Omega)^k}+\norme{g}_{L^{\infty}(Q)^k}\right).
\label{estl2faiblelinfty}
\end{equation}
\end{prop}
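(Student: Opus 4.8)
The plan is to prove \Cref{wpl2linfty} in three stages: first construct a weak solution and establish uniqueness in $Y^k$ together with the energy estimate \eqref{estl2faible}, then upgrade to the $L^\infty$ bound \eqref{estl2faiblelinfty} under the stronger hypotheses on $u_0$ and $g$. Since $D$ is diagonalizable with positive spectrum, I would begin by writing $D = P \Delta_D P^{-1}$ with $\Delta_D = \mathrm{diag}(\mu_1,\dots,\mu_k)$, $\mu_i > 0$, and changing unknowns $v = P^{-1} u$; this reduces the coupled principal part $-D\Delta u$ to a decoupled, coercive one $-\Delta_D \Delta v$, while the zeroth-order coupling $A$ is conjugated into another matrix in $\mathcal{M}_k(L^\infty(Q))$. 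Thus it suffices to treat the case $D = \Delta_D$ diagonal, which is what makes the standard theory applicable componentwise for the leading term.

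For existence and uniqueness I would use the classical Galerkin/Lions approach (as in \cite[Section 7.1]{E}), or equivalently a fixed-point argument on the zeroth-order term. Concretely, freeze the right-hand side: given $\tilde u \in L^2(Q)^k$, solve the $k$ decoupled Neumann heat equations $\partial_t u_i - \mu_i \Delta u_i = (A\tilde u + g)_i$ with $u_i(0,\cdot) = (P^{-1}u_0)_i$; each has a unique solution in $Y$ by the scalar theory, with the estimate $\norme{u}_{Y^k} \le C(\norme{u_0}_{L^2} + \norme{A\tilde u + g}_{L^2(Q)})$. On a short time interval $[0,\tau]$ with $\tau$ depending only on $\norme{A}_{L^\infty}$ this map is a contraction on $C([0,\tau];L^2(\Omega))^k$ (using \Cref{injclassique}), giving a local solution; iterating in time yields a global one, and summing the resulting estimates over the finitely many subintervals (or, more cleanly, a direct Grönwall argument on $\frac{d}{dt}\norme{u(t)}_{L^2}^2$ exploiting coercivity of $\Delta_D$ and $\norme{Au}_{L^2}\le \norme{A}_{L^\infty}\norme{u}_{L^2}$) produces the global bound \eqref{estl2faible}. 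Uniqueness follows from the same Grönwall estimate applied to the difference of two solutions.

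For the $L^\infty$ bound I would again use the diagonalized form and treat each component: $u_i$ solves a scalar Neumann heat equation with source $F_i := (P^{-1}APv + P^{-1}g)_i \in L^2(Q)$, but since we now assume $u_0 \in L^\infty(\Omega)^k$ and $g \in L^\infty(Q)^k$, I would run a bootstrap. The key input is the $L^p$ maximal regularity / parabolic smoothing for the scalar Neumann heat semigroup: $u \in L^\infty(\Omega)$, $A u \in L^\infty$, so by Duhamel $u(t) = e^{t\mu_i\Delta}u_i(0) + \int_0^t e^{(t-s)\mu_i\Delta}F_i(s)\,ds$, and one controls $\norme{u_i(t)}_{L^\infty}$ by $\norme{u_i(0)}_{L^\infty} + \int_0^t \norme{F_i(s)}_{L^\infty}\,ds \le \norme{u_0}_{L^\infty} + C\int_0^t (\norme{u(s)}_{L^\infty} + \norme{g}_{L^\infty})\,ds$; Grönwall then closes the estimate \eqref{estl2faiblelinfty}. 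One must first justify that $u \in L^\infty(Q)$ at all — this can be done either by the De Giorgi–Nash–Moser scalar estimates applied componentwise (the coupling term $Au$ being merely an $L^\infty$-in-time, $L^2$-in-space source at the outset is already enough to get boundedness of the scalar heat equation when combined with $L^\infty$ data) or by the $L^p$–$L^q$ smoothing of the heat semigroup iterated finitely many times to reach $L^\infty$.

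The main obstacle is not conceptual but technical: because $A$ is only in $\mathcal{M}_k(L^\infty(Q))$ (time- and space-dependent), the equations do not decouple even after diagonalizing $D$, so one cannot simply cite a scalar result — every estimate must be set up as a Grönwall-type inequality on the full vector $u$, carefully tracking that the constant $C$ depends only on $T$, $\Omega$, $\mathrm{Sp}(D)$, the conditioning of $P$, and $\norme{A}_{L^\infty(Q)}$, but \emph{not} on $u_0$ or $g$. The $L^\infty$ step in particular requires care in choosing the right functional framework for the scalar heat equation with merely $L^2$ (in space) source plus $L^\infty$ data so that the bootstrap to $L^\infty(Q)$ actually terminates; once the $L^\infty(Q)$ membership is known, the quantitative bound \eqref{estl2faiblelinfty} is routine.
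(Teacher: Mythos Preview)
Your treatment of existence, uniqueness, and the $Y^k$ estimate \eqref{estl2faible} matches the paper's: reduce to diagonal $D$ via conjugation, then invoke Galerkin/energy methods as in \cite[Section 7.1]{E} adapted to Neumann boundary conditions, with a Grönwall argument handling the bounded zeroth-order coupling $A$.

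For the $L^\infty$ bound \eqref{estl2faiblelinfty}, however, you take a genuinely different route. The paper uses \emph{Stampacchia's truncation method}: it introduces the barrier $l(t)=l_0 e^{tM}(1,\dots,1)^T$ with $l_0=\max_i(\norme{u_{0,i}}_\infty+\norme{g_i}_\infty)$ and $M$ large depending only on $\norme{A}_{L^\infty}$, tests the variational formulation against $w=\mathrm{sign}(u)(|u|-l)^+$, and obtains a differential inequality for $\int_\Omega\sum_i((|u_i|-l_i)^+)^2$ that forces this quantity to vanish by Grönwall. This gives $|u_i(t,x)|\le l_0 e^{tM}$ in one stroke, without any a priori knowledge that $u\in L^\infty$, without semigroup estimates, and without $L^p$ maximal regularity. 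Your Duhamel + $L^\infty$-Grönwall argument is correct but, as you note, presupposes $u\in L^\infty(Q)$, which you then propose to obtain by a separate bootstrap (the $L^p$--$L^q$ smoothing option does work; the De Giorgi--Nash--Moser option is delicate since the coupling term $Au$ is only in $L^2(Q)$ at the outset, which is below the threshold $L^{(N+2)/2}$ needed for boundedness when $N\ge 3$). The paper's approach is thus more elementary and self-contained for this step, while yours imports heavier parabolic machinery; both yield the same dependence of $C$ on $T$, $\Omega$, $\mathrm{Sp}(D)$, $P$, and $\norme{A}_{L^\infty(Q)}$.
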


\begin{rmk}
This proposition is more or less classical, but we could not find it as such in the literature and we give its proof in the Appendix (see \Cref{preuveestlinfty-section}).
\end{rmk}

\begin{defi}\label{deftraj}
For $u_0 \in L^{\infty}(\Omega)^4$, $((u_i)_{1 \leq i \leq 4},(h_i)_{1 \leq i \leq j})$ is a trajectory of \eqref{syst} if 
\begin{enumerate}[nosep]
\item $((u_i)_{1 \leq i \leq 4},(h_i)_{1 \leq i \leq j}) \in (Y \cap L^{\infty}(Q))^4\times L^{\infty}(Q)^j $,
\item $(u_i)_{1 \leq i \leq 4}$ is the (unique) solution of \eqref{syst}.
\end{enumerate}
Moreover, $((u_i)_{1 \leq i \leq 4},(h_i)_{1 \leq i \leq j})$ is a trajectory of \eqref{syst} reaching $(u_i^*)_{1 \leq i \leq 4}$ (in time $T$) if
\[\forall i \in \{1,\dots,4\},\ u_i(T,.)=u_i^*.\]
\end{defi}

\begin{rmk}
The concept of solution of \eqref{syst} is the same as in \Cref{wpl2linfty} (take $D = diag(d_1,d_2,d_3,d_4)$, $A = 0$ and $g = (g_i(u))_{1 \leq i \leq 4}^T$ where $g_i(u) = f_i(u) + h_i 1_{i \leq j} 1_{\omega}$).
\end{rmk}

\begin{rmk}\label{uniqueness}
The uniqueness is a consequence of the following estimate. \\
\indent Let $D = diag(d_1,d_2,d_3,d_4)$, $(h_i)_{1 \leq i \leq j} \in L^{\infty}(Q)^j$, $u=(u_i)_{1 \leq i \leq 4} \in (Y\cap L^{\infty}(Q))^4$, $\widetilde{u}=(\widetilde{u_i})_{1 \leq i \leq 4} \in (Y\cap L^{\infty}(Q))^4$ be two solutions of \eqref{syst}, and $v=u - \widetilde{u}$. The function $v$ satisfies (in the weak sense)
\begin{equation}
\left\{
\begin{array}{ll}
\partial_t v - D \Delta v = f(u) - f(\widetilde{u}) &\mathrm{in}\ (0,T)\times\Omega,\\
\frac{\partial v}{\partial n} = 0 &\mathrm{on}\ (0,T)\times\partial\Omega,\\
v(0,.)=0 &\mathrm{in}\  \Omega.
\end{array}
\right.
\label{systcv}
\end{equation}
By taking $w:=v$ in the variational formulation of \eqref{systcv} (see also \eqref{formvar}) and by using the fact that the mapping $t \mapsto \norme{v(t)}_{L^2(\Omega)^4}^2$ is absolutely continuous with \\$\frac{d}{dt} \norme{v(t)}_{L^2(\Omega)^4}^2 = 2 (\partial_t v(t),v(t))_{(H^1(\Omega)^4)',H^1(\Omega)^4}$ for a.e. $0 \leq t \leq T$ (see \cite[Section 5.9.2, Theorem 3]{E}), we find that
\begin{equation}
\frac{1}{2}\frac{d}{dt} \left(\norme{v}_{L^2(\Omega)^4}^2\right) + \norme{D\nabla v}_{L^2(\Omega)^4}^2 = (f(u) - f(\widetilde{u}),v)_{L^2(\Omega)^4,L^2(\Omega)^4},\ \text{for a.e.}\ 0 \leq t \leq T.
\label{ddtv}
\end{equation}
By using the facts that $(u,\widetilde{u})\in L^{\infty}(Q)^4 \times L^{\infty}(Q)^4$, $f$ is locally Lipschitz continuous on $\R^4$, we find the differential inequality
\begin{equation}
\frac{d}{dt} \left(\norme{v}_{L^2(\Omega)^4}^2\right) \leq C \norme{v}_{L^2(\Omega)^4}^2,\ \text{for a.e.}\ 0 \leq t \leq T.
\label{ddtvin}
\end{equation}
Gronwall's lemma and the initial condition $v(0,.) = 0$ prove that $v = 0$ in $L^2(Q)^4$. Consequently, $u = \widetilde{u}$.
\end{rmk}

\subsection{Invariant quantities of the nonlinear dynamics}\label{invquant}

In this section, we show that in the system \eqref{syst}, some invariant quantities exist. They impose some restrictions on the initial condition for the controllability results.

\subsubsection{Variation of the mass}

\begin{prop}
Let $j\in \{1,2,3\}$, $u_0 \in L^{\infty}(\Omega)^4$, $((u_i)_{1 \leq i \leq 4},(h_i)_{1 \leq i \leq j})$ be a trajectory of \eqref{syst}. For every $1 \leq i \leq 4$, the mapping $t \mapsto \int_{\Omega} u_i(t,x) dx$ is absolutely continuous with for a.e. $0 \leq t \leq T$,
\begin{equation}
\frac{d}{dt} \int_{\Omega} u_i(t,x) dx =  \int_{\Omega}\Big\{f_i(u_1(t,x),u_2(t,x),u_3(t,x),u_4(t,x)) + h_{i}(t,x) 1_{\omega}(x)1_{i \leq j}\Big\}dx.
\label{ddt}
\end{equation}
\end{prop}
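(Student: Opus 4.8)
The plan is to establish \eqref{ddt} by testing the weak formulation of \eqref{syst} against the constant function $w = e_i$ (the $i$-th vector of the canonical basis of $\R^4$, which belongs to $L^2(0,T;H^1(\Omega)^4)$), and then localizing in time. Since $u = (u_i)_{1\leq i\leq 4} \in (Y\cap L^\infty(Q))^4$ is a trajectory of \eqref{syst}, \Cref{wpl2linfty} applies with $D = \mathrm{diag}(d_1,d_2,d_3,d_4)$, $A=0$, and $g = (f_i(u) + h_i 1_\omega 1_{i\leq j})_{1\leq i\leq 4}$, so $u$ satisfies the variational formulation \eqref{formvar}. The first step is to note that because $g \in L^\infty(Q)^4 \subset L^2(Q)^4$ and $u \in Y^4$, the map $t \mapsto \int_\Omega u_i(t,x)\,dx = (u_i(t,\cdot), 1)_{L^2(\Omega)}$ is well-defined for every $t$ by the continuous embedding $Y \hookrightarrow C([0,T];L^2(\Omega))$ of \Cref{injclassique}.

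Next I would recall the abstract regularity fact, already used in \Cref{uniqueness} and cited from \cite[Section 5.9.2, Theorem 3]{E}: for $u_i \in Y$ the scalar product with any fixed element $\varphi \in H^1(\Omega)$ gives an absolutely continuous map $t \mapsto (u_i(t),\varphi)_{L^2(\Omega)}$ with $\frac{d}{dt}(u_i(t),\varphi)_{L^2(\Omega)} = \langle \partial_t u_i(t), \varphi\rangle_{(H^1(\Omega))',H^1(\Omega)}$ for a.e. $t$. Applying this with $\varphi \equiv 1$ (which lies in $H^1(\Omega)$ since $\Omega$ is bounded) yields that $t \mapsto \int_\Omega u_i(t,x)\,dx$ is absolutely continuous with derivative $\langle \partial_t u_i(t), 1\rangle_{(H^1(\Omega))',H^1(\Omega)}$ for a.e. $t$. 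To identify this duality pairing, I would plug the test function $w$ with $i$-th component equal to $\mathbf{1}_{(t_1,t_2)}(t)\,\mathbf{1}$ (a valid element of $L^2(0,T;H^1(\Omega)^4)$) and all other components zero into \eqref{formvar}: the diffusion term $\int_Q D\nabla u\cdot\nabla w$ vanishes since $\nabla w = 0$, leaving $\int_{t_1}^{t_2} \langle \partial_t u_i(t),1\rangle\,dt = \int_{t_1}^{t_2}\!\!\int_\Omega \big(f_i(u(t,x)) + h_i(t,x)1_\omega(x)1_{i\leq j}\big)\,dx\,dt$. Since this holds for all $0 \leq t_1 < t_2 \leq T$, the Lebesgue differentiation theorem gives $\langle \partial_t u_i(t),1\rangle = \int_\Omega \big(f_i(u(t,x)) + h_i(t,x)1_\omega(x)1_{i\leq j}\big)\,dx$ for a.e. $t$, which combined with the previous step is exactly \eqref{ddt}.

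The only genuine point requiring care — and hence the main (modest) obstacle — is the justification that the Neumann boundary condition makes the diffusion contribution disappear at the level of the weak formulation. But this is automatic here: the weak formulation \eqref{formvar} already encodes the homogeneous Neumann condition, and with the constant test function the term is literally $\int_Q D\nabla u\cdot\nabla(\mathbf{1}) = 0$, so no boundary-trace argument is actually needed. The remaining verifications — that $\mathbf{1}_{(t_1,t_2)}\mathbf{1}$ is an admissible test function, that $f_i(u) \in L^2(Q)$ because $u \in L^\infty(Q)^4$ and $f_i$ is a quadratic polynomial, and that Fubini applies to rewrite the space-time integral — are routine. I would close by remarking that the same computation with Neumann data is why all four masses evolve only through the reaction term and the (localized) controls, which is precisely what feeds the invariant-quantity discussion of \Cref{invquant}.
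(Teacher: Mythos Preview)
Your proof is correct and follows essentially the same approach as the paper: both use the abstract fact from \cite[Section 5.9.2, Theorem 3]{E} to get absolute continuity of $t\mapsto\int_\Omega u_i(t,x)\,dx$ with derivative $\langle\partial_t u_i(t),1\rangle_{(H^1)',H^1}$, and then identify this pairing via the weak formulation \eqref{formvar} with the constant test function, for which the gradient term vanishes. Your time-localization step with $\mathbf{1}_{(t_1,t_2)}$ followed by Lebesgue differentiation is just an explicit way of passing from the time-integrated variational formulation to its a.e.-in-$t$ version, which the paper invokes directly.
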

\begin{proof}
We fix $1 \leq i \leq 4$. By using the fact that $u_i \in Y$ and from an easy adaptation of \cite[Section 5.9.2, Theorem 3, $(ii)$]{E}, we deduce that the mapping $t \mapsto \int_{\Omega} u_i(t,x) dx$ is absolutely continuous and for a.e. $0 \leq t \leq T$,
\[ 
\frac{d}{dt} \int_{\Omega} u_i(t,x) dx = \left(\partial_t u_i(t,.), 1\right)_{(H^1(\Omega))',H^1(\Omega)}.
\]
Then, by using that $((u_i)_{1 \leq i \leq 4},(h_i)_{1 \leq i \leq j})$ is the (unique) solution of \eqref{syst} and by taking $w=1$ in \eqref{formvar}, we find that for a.e. $0 \leq t \leq T$,
\begin{align*}
&\left(\partial_t u_i(t,.), 1\right)_{(H^1(\Omega))',H^1(\Omega)} \\&= d_i(\nabla u_i(t,.), \nabla 1)_{L^2(\Omega),L^2(\Omega)} + \int_{\Omega}\Big\{f_i(u_1(t,x),u_2(t,x),u_3(t,x),u_4(t,x)) + h_{i}(t,x) 1_{\omega}(x)1_{i \leq j}\Big\}dx\\
& = \int_{\Omega}\Big\{f_i(u_1(t,x),u_2(t,x),u_3(t,x),u_4(t,x)) + h_{i}(t,x) 1_{\omega}(x)1_{i \leq j}\Big\}dx. 
\end{align*}
\end{proof}
\subsubsection{Case of 2 controls}\label{invquant2}
\begin{prop}
Let $j=2$, $u_0 \in L^{\infty}(\Omega)^4$, $((u_i)_{1 \leq i \leq 4},(h_i)_{1 \leq i \leq 2})$ be a trajectory of \eqref{syst} reaching $(u_i^*)_{1 \leq i \leq 4}$ in time $T$. Then, we have 
\begin{equation}
\frac{1}{|\Omega|} \int_{\Omega} \Big(u_{3,0}(x) + u_{4,0}(x)\Big) dx=u_3^* + u_4^*,
\label{ci1}
\end{equation}
\begin{equation}
\Big(d_3 =d_4\Big) \Rightarrow \Big(u_{3,0} + u_{4,0} = u_3^* + u_4^*\Big).
\label{liencied}
\end{equation}
\end{prop}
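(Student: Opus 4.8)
The key observation is that when $j=2$ the equations for $u_3$ and $u_4$ in \eqref{syst} carry no control and satisfy $f_3(u)+f_4(u) = -(u_1u_3-u_2u_4)+(u_1u_3-u_2u_4)=0$, so that $u_3+u_4$ solves $\partial_t(u_3+u_4)-\Delta(d_3u_3+d_4u_4)=0$ with homogeneous Neumann condition — a genuine heat equation exactly when $d_3=d_4$.

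To prove \eqref{ci1} I would apply the mass-variation identity \eqref{ddt} with $i=3$ and with $i=4$ and add the two equalities. Since $f_3+f_4=0$ and $1_{i\leq j}=0$ for $i\in\{3,4\}$, this gives $\frac{d}{dt}\int_\Omega(u_3(t,x)+u_4(t,x))\,dx=0$ for a.e.\ $t\in(0,T)$; being absolutely continuous, $t\mapsto\int_\Omega(u_3+u_4)\,dx$ is therefore constant on $[0,T]$. Evaluating at $t=0$ and $t=T$ (using $u_i(T,\cdot)=u_i^*$, hence $\int_\Omega(u_3^*+u_4^*)=|\Omega|(u_3^*+u_4^*)$) and dividing by $|\Omega|$ yields \eqref{ci1}.

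For \eqref{liencied}, assume $d_3=d_4=:d$ and set $w:=u_3+u_4\in Y$. Adding the variational formulations \eqref{formvar} for $u_3$ and $u_4$ and using again $f_3+f_4=0$, one checks that $w$ is the unique weak solution of $\partial_t w-d\Delta w=0$ with homogeneous Neumann condition and $w(0,\cdot)=u_{3,0}+u_{4,0}$; by \Cref{injclassique}, $w\in C([0,T];L^2(\Omega))$ with $w(T,\cdot)=u_3^*+u_4^*$, a constant. It then remains to invoke backward uniqueness for the Neumann heat equation: putting $c:=u_3^*+u_4^*$, the function $v:=w-c$ solves the same heat equation (constants being stationary) with $v(T,\cdot)=0$; expanding $v$ on an orthonormal basis $(\varphi_k)$ of eigenfunctions of the Neumann Laplacian with eigenvalues $\lambda_k\geq0$, one gets $v(t,\cdot)=\sum_k a_k e^{-d\lambda_k t}\varphi_k$, and $v(T,\cdot)=0$ forces $a_k e^{-d\lambda_k T}=0$, hence $a_k=0$, for every $k$. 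Thus $v\equiv0$, so $w(0,\cdot)=c$, i.e.\ $u_{3,0}+u_{4,0}=u_3^*+u_4^*$ a.e., which is \eqref{liencied}.

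The manipulations with the variational formulations are routine; the one genuinely non-elementary ingredient is the backward-uniqueness step in \eqref{liencied}. The point to be careful about there is that the weak $Y$-solution furnished by \Cref{wpl2linfty} must be identified with the semigroup solution $e^{td\Delta_N}w(0,\cdot)$ (immediate from the uniqueness part of \Cref{wpl2linfty}) before the spectral representation is used; equivalently, one may simply note that $e^{Td\Delta_N}$ is injective on $L^2(\Omega)$ and fixes constants, so that $e^{Td\Delta_N}(w(0,\cdot)-c)=w(T,\cdot)-c=0$ forces $w(0,\cdot)=c$.
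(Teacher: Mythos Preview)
Your proof is correct and follows essentially the same route as the paper's: the mass identity \eqref{ddt} for $i=3,4$ gives \eqref{ci1}, and when $d_3=d_4$ the sum $u_3+u_4$ solves a homogeneous Neumann heat equation, so backward uniqueness forces the initial datum to equal the constant final value. The only cosmetic difference is that the paper invokes its general backward-uniqueness lemma (\Cref{unires}, proved via \cite{BT}) whereas you give the direct spectral/semigroup argument, which is perfectly adequate here since the equation has constant coefficients.
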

\begin{proof}
From \eqref{ddt}, we have
\[ \frac{d}{dt} \left(\int_{\Omega} (u_3(t,x) + u_4(t,x)) dx\right) = 0\ \text{for a.e.}\ 0 \leq t \leq T.\]
Then, from \Cref{deftraj}, \eqref{ci1} holds.\\
\indent Moreover, $u_3 + u_4$ satisfies
\begin{equation*}
\left\{
\begin{array}{ll}
\partial_t (u_3 + u_4) - d_4 \Delta (u_3 + u_4) = (d_3 -d_4) \Delta u_3&\ \mathrm{in}\ (0,T)\times\Omega,\\
\frac{\partial (u_3+u_4)}{\partial n} = 0&\ \mathrm{on}\ (0,T)\times\partial \Omega.
\label{equ3u4}
\end{array}
\right.
\end{equation*}
If \textbf{$d_3 = d_4$}, then the backward uniqueness for the heat equation (a corollary of \Cref{unires}) proves that
\begin{equation}
\forall t \in [0,T],\  (u_3 + u_4)(t,.) = (u_3+ u_4)(T,.)= u_3^* + u_4^*.
\label{lienfu3u4}
\end{equation}
This implies the necessary condition \eqref{liencied}, stronger than \eqref{ci1}, on the initial condition.
\end{proof}

\subsubsection{Case of 1 control}\label{invquant1}
\begin{prop}
Let $j=1$, $u_0 \in L^{\infty}(\Omega)^4$, $((u_i)_{1 \leq i \leq 4},(h_i)_{1 \leq i \leq 2})$ be a trajectory of \eqref{syst} reaching $(u_i^*)_{1 \leq i \leq 4}$ in time $T$. Then, we have 
\begin{equation}
\frac{1}{|\Omega|} \int_{\Omega} \Big(u_{2,0}(x) + u_{3,0}(x)\Big) dx=u_2^* + u_3^*,\ \frac{1}{|\Omega|} \int_{\Omega} \Big(u_{3,0}(x) + u_{4,0}(x)\Big) dx=u_3^* + u_4^*,
\label{ci2}
\end{equation}
\begin{equation}
\Big(k \neq l \in \{2,3,4\},\ d_k = d_l\Big) \Rightarrow \Big(u_{k,0} - (-1)^{k-l} u_{l,0} = u_k^* - (-1)^{k-l}  u_l^*\Big).
\label{liencidkdl}
\end{equation}
\end{prop}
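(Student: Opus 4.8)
The plan is to mirror the proof of the two-control case, exploiting the fact that with a single control placed in the first equation, the equations for $u_2,u_3,u_4$ are purely source-driven and the quadratic reaction term $r:=u_1u_3-u_2u_4$ enters them with the alternating signs $(-1)^i$.

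For the integral identities \eqref{ci2} I would apply \eqref{ddt} to the pairs $i\in\{2,3\}$ and $i\in\{3,4\}$. Since $f_2+f_3=r+(-r)=0$ and $f_3+f_4=(-r)+r=0$, and since no control enters the equations of index $2,3,4$ (because $j=1$), one gets $\frac{d}{dt}\int_\Omega(u_2+u_3)\,dx=0$ and $\frac{d}{dt}\int_\Omega(u_3+u_4)\,dx=0$ for a.e. $t\in(0,T)$. Integrating on $(0,T)$ and using that the trajectory reaches $(u_i^*)_{1\le i\le 4}$ (so that $\int_\Omega(u_2+u_3)(T,\cdot)=|\Omega|(u_2^*+u_3^*)$ and likewise for $u_3+u_4$) yields \eqref{ci2}. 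Note that any combination involving $u_1$ would be spoiled by the term $\int_\Omega h_1 1_\omega$, which is why only pairs in $\{2,3,4\}$ appear.

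For \eqref{liencidkdl}, observe first that the claimed identity is symmetric under exchanging $k$ and $l$ (multiply through by $-(-1)^{k-l}$ and use $(-1)^{k-l}=(-1)^{l-k}$), so it suffices to treat $(k,l)\in\{(2,3),(2,4),(3,4)\}$. Fix such a pair with $d_k=d_l=:d$ and set $v:=u_k-(-1)^{k-l}u_l$. The reaction terms cancel, since $f_k-(-1)^{k-l}f_l=\big[(-1)^k-(-1)^{k-l}(-1)^l\big]r=0$ (because $(-1)^{k-l}(-1)^l=(-1)^k$); no control is present; and because $d_k=d_l$, the function $v\in Y$ is a weak solution of the heat equation $\partial_t v-d\Delta v=0$ with homogeneous Neumann boundary conditions. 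Since the trajectory reaches $(u_i^*)$, $v(T,\cdot)=u_k^*-(-1)^{k-l}u_l^*$ is constant; backward uniqueness for the heat equation with Neumann conditions (the corollary of \Cref{unires} already invoked in the two-control case) forces $v(t,\cdot)\equiv u_k^*-(-1)^{k-l}u_l^*$ on $[0,T]$, and evaluating at $t=0$ gives \eqref{liencidkdl}.

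I do not expect a real obstacle here: the argument is bookkeeping of signs combined with two soft facts already at hand, namely testing the weak formulation against the constant function $1$ (using the Neumann condition to kill the diffusion contribution) and backward uniqueness for the scalar heat equation. The only points deserving a line of care are that $v$ inherits enough regularity to be a genuine weak solution of the heat equation to which backward uniqueness applies — which is immediate from $u_i\in Y\cap L^\infty(Q)$ — and that the sign identity $(-1)^k-(-1)^{k-l}(-1)^l=0$ be checked once and for all rather than case by case.
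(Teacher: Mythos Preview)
Your proposal is correct and follows essentially the same approach as the paper: use \eqref{ddt} to see that the masses of $u_2+u_3$ and $u_3+u_4$ are conserved (yielding \eqref{ci2}), and for \eqref{liencidkdl} observe that $u_k-(-1)^{k-l}u_l$ solves the free heat equation when $d_k=d_l$ and apply backward uniqueness. Your write-up is in fact more detailed than the paper's, which simply states the two derivative identities and invokes backward uniqueness without spelling out the sign cancellation.
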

\begin{proof}
From \eqref{ddt}, we have
\[ \frac{d}{dt} \left(\frac{1}{|\Omega|} \int_{\Omega} (u_2(t,x) + u_3(t,x)) dx\right)=0,\ \frac{d}{dt} \left(\frac{1}{|\Omega|} \int_{\Omega} (u_3(t,x) + u_4(t,x)) dx\right)=0\ \text{for a.e.}\ 0 \leq t \leq T.\]
Then, from \Cref{deftraj}, \eqref{ci2} holds.\\
\indent Moreover, if there exists $k \neq l \in \{2,3,4\}$ such that $d_k = d_l$, by using again the backward uniqueness for the heat equation, we get 
\begin{align}
&\Big(k \neq l \in \{2,3,4\},\ d_k = d_l\Big) \notag\\
&\Rightarrow \Big(\forall t \in [0,T],\  (u_k - (-1)^{k-l} u_l)(t,.) = (u_k- (-1)^{k-l} u_l)(T,.)= u_k^* - (-1)^{k-l} u_l^*\Big),
\label{lienfukul}
\end{align}
and in particular the necessary condition \eqref{liencidkdl}, stronger than \eqref{ci2}, on the initial condition.
\end{proof}

\subsection{More restrictive conditions on the initial condition when the target $(u_i^*)_{1 \leq i \leq 4}$ vanishes}\label{suvan}

In the previous section, we have seen that there are invariant quantities in the dynamics of \eqref{syst} which impose necessary conditions on the initial condition: \eqref{ci1}, \eqref{ci2}. Moreover, when some coefficients of diffusion $d_i$ are equal, we have more invariant quantities in \eqref{syst} which impose stronger necessary conditions on the initial condition: \eqref{liencied}, \eqref{liencidkdl}.

\subsubsection{The lemma of backward uniqueness}

\begin{lem}\label{unires}\textbf{Backward uniqueness}\\
Let $k \in \N^{*}$, $D = diag(d_1,\dots,d_k)$ where $d_i \in (0,+\infty)$, $C \in \mathcal{M}_k(L^{\infty}(Q))$, $\zeta_0 \in L^{\infty}(\Omega)^k$.
Let $\zeta \in Y^k$ be the solution of 
\[
\left\{
\begin{array}{l l}
\partial_t \zeta - D \Delta \zeta = C(t,x) \zeta&\mathrm{in}\ (0,T)\times\Omega,\\
\frac{\partial\zeta}{\partial n} = 0 &\mathrm{on}\ (0,T)\times\partial\Omega,\\
\zeta(0,.)=\zeta_0 &\mathrm{in}\  \Omega.
\end{array}
\right.
\]
If $\zeta(T,.) = 0$, then for every $t \in [0,T]$, $\zeta(t,.) = 0$.
\end{lem}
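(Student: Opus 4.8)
I would prove \Cref{unires} by the logarithmic convexity (Agmon--Nirenberg) method. Write $L_0:=-D\Delta$ with homogeneous Neumann boundary conditions, a self-adjoint nonnegative operator on $L^2(\Omega)^k$, so that $\zeta$ solves $\partial_t\zeta=-L_0\zeta+C(t,x)\zeta$ with $M:=\norme{C}_{L^\infty(Q)}<\infty$. Introduce the energy $H(t):=\norme{\zeta(t,\cdot)}_{L^2(\Omega)^k}^2$ (absolutely continuous on $[0,T]$, continuous up to the endpoints by \Cref{injclassique}) and, whenever $H(t)>0$, the \emph{frequency function}
\[
\Lambda(t):=\frac{(L_0\zeta(t),\zeta(t))_{L^2(\Omega)^k}}{H(t)}=\frac{\sum_{i=1}^k d_i\norme{\nabla\zeta_i(t)}_{L^2(\Omega)}^2}{H(t)}\ \geq\ 0 .
\]
The heart of the argument is that $\Lambda$ is nonincreasing up to an additive correction, which makes $\log H$ almost convex and hence incompatible with $H$ vanishing at a time $>0$ unless $H\equiv 0$.

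First I would pin down the vanishing set. Running the Gronwall estimate of \Cref{uniqueness} on the linear system satisfied by $\zeta$ gives forward uniqueness: if $H(\tau)=0$ for some $\tau$, then $H\equiv 0$ on $[\tau,T]$; since $H(T)=0$, the set $\{t:\ H(t)=0\}$ is a closed interval $[t^*,T]$, and it suffices to show $t^*=0$. Suppose $t^*>0$; then $H>0$ on $[0,t^*)$ and $H(t)\to0$ as $t\to(t^*)^-$. I would then prove the key inequality $\Lambda'(t)\leq M^2$ for a.e.\ $t\in(0,t^*)$. Differentiating and using $\partial_t\zeta=-L_0\zeta+C\zeta$, the Neumann condition and the self-adjointness of $L_0$, one finds (the ``bad'' part $\Lambda(\zeta,C\zeta)$ of the cross term cancelling once one writes $L_0\zeta=(L_0-\Lambda)\zeta+\Lambda\zeta$)
\[
\Lambda'(t)=-2\,\frac{\norme{(L_0-\Lambda(t))\zeta(t)}_{L^2}^2}{H(t)}+2\,\frac{\big((L_0-\Lambda(t))\zeta(t),\,C(t)\zeta(t)\big)_{L^2}}{H(t)},
\]
and Cauchy--Schwarz together with $\norme{C\zeta}_{L^2}\leq M\norme{\zeta}_{L^2}$ and Young's inequality absorb the second term into the first, leaving $\Lambda'\leq M^2$. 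Integrating from a fixed $s_0\in(0,t^*)$ with $\zeta(s_0,\cdot)\in H^1(\Omega)^k$ (valid for a.e.\ such $s_0$ since $\zeta\in L^2(0,T;H^1(\Omega))^k$) yields $\Lambda(t)\leq\Lambda(s_0)+M^2T=:\Lambda_{\max}$ on $[s_0,t^*)$. Then $\frac{H'(t)}{H(t)}=-2\Lambda(t)+2\frac{(C\zeta,\zeta)_{L^2}}{H(t)}\geq-2\Lambda_{\max}-2M$ a.e.\ on $[s_0,t^*)$, so $H(t)\geq H(s_0)e^{-2(\Lambda_{\max}+M)T}>0$ there, contradicting $H(t)\to0$. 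Hence $t^*=0$, i.e.\ $H\equiv0$ on $[0,T]$, so $\zeta(t,\cdot)=0$ for every $t\in[0,T]$.

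The step I expect to be delicate is the rigorous justification of the computation of $\Lambda'$, since $C$ is only bounded (with no time regularity) and $\zeta\in Y^k$ is not a priori regular. I would handle this as in the classical treatments: work on a compact subinterval of $(0,t^*)$ where interior parabolic regularity for the system with $L^\infty$ coefficients provides the extra smoothness ($\zeta\in L^2_{\mathrm{loc}}((0,T];H^2(\Omega))^k\cap H^1_{\mathrm{loc}}((0,T];L^2(\Omega))^k$) needed to differentiate $t\mapsto(L_0\zeta(t),\zeta(t))$ and to integrate by parts; crucially, $C$ enters only through $(C\zeta,\zeta)$ and the pointwise bound $|C\zeta|\leq M|\zeta|$, never through $\partial_tC$, so its lack of time regularity is harmless. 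Alternatively, the same conclusion can be reached without the frequency function by applying a global parabolic Carleman estimate to each scalar component $\zeta_i$ (legitimate because $D$ is diagonal, so $\partial_t\zeta_i-d_i\Delta\zeta_i=(C\zeta)_i$ with $|(C\zeta)_i|\leq M|\zeta|$) and absorbing the right-hand side thanks to the large Carleman parameter, which is again insensitive to the time regularity of $C$. The scalar backward uniqueness quoted after \Cref{unires} is the case $k=1$, $C=0$.
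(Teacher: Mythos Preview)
Your argument is correct and self-contained. The paper, however, takes a different and much shorter route: it performs the change of unknown $\widetilde{\zeta}=e^{-t}\zeta$ so as to replace $-D\Delta$ by the coercive operator $A=-D\Delta+I_k$ on $H^1(\Omega)^k$, checks that $A$ is bounded, symmetric and coercive from $H^1(\Omega)^k$ to its dual, packages the coupling as the bounded family $B(t)u=C(t,\cdot)u\in L^2(0,T;\mathcal{L}(H^1,L^2))$, and then simply invokes the abstract backward-uniqueness theorem of Bardos--Tartar \cite[Theorem~II.1]{BT}. Since the proof of that abstract theorem is precisely an Agmon--Nirenberg/log-convexity argument of the type you wrote out, the two approaches share the same mechanism; you have essentially reproduced, in this concrete setting, the content of the black box the paper cites. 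What the paper gains is brevity and the outsourcing of the regularity bookkeeping you flag as delicate (it is absorbed into the hypotheses of \cite{BT}); what your approach gains is self-containedness and an explicit demonstration that only the pointwise bound $\abs{C\zeta}\le M\abs{\zeta}$ matters, so no time regularity of $C$ is needed. Your alternative via scalar Carleman estimates is also valid and is indeed in the spirit of the Carleman machinery the paper develops later, though it is not the route taken for this lemma.
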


\begin{proof}
$\widetilde{\zeta}(t,x)=\exp(-t)\zeta(t,x) \in Y^k$ is the solution of the system
\[
\left\{
\begin{array}{l l}
\partial_t {\widetilde{\zeta}} - D \Delta \widetilde{\zeta} + I_k \widetilde{\zeta} = C(t,x) \widetilde{\zeta}&\mathrm{in}\ (0,T)\times\Omega,\\
\frac{\partial\widetilde{\zeta}}{\partial n} = 0 &\mathrm{on}\ (0,T)\times\partial\Omega,\\
\widetilde{\zeta}(0,.)=\widetilde{\zeta}_0 &\mathrm{in}\  \Omega,
\end{array}
\right.
\]
which verifies $\widetilde{\zeta}(T,.)=0$.\\
\indent Let us denote $A = -D \Delta + I_k$ which is a bounded linear operator from $H^1(\Omega)^k$ to $(H^1(\Omega)^k)'$. Indeed, 
\[ \forall (u,v) \in (H^1(\Omega)^k)^2, (Au)(v) = \sum\limits_{i=1}^k d_i (\nabla u_i, \nabla v_i)_{L^2(\Omega),L^2(\Omega)} + \sum\limits_{i=1}^k (u_i, v_i)_{L^2(\Omega),L^2(\Omega)},\]
\[ \norme{Au}_{(H^1(\Omega)^k)'} \leq \sqrt{1+\max(d_i)} \norme{u}_{H^1(\Omega)^k}.\]
Then, $A$ verifies the three hypotheses: $(i)$, $(ii)$ and $(iii)$ of \cite[Proposition II.1]{BT}.\\
\indent $(i)$ is satisfied because $A$ does not depend on $t$.\\
\indent $(ii)$ is a consequence of 
\[ \forall (u,v) \in (H^1(\Omega)^k)^2, (Au)(v)=(Av)(u). \]
\indent $(iii)$ is satisfied because 
\[ (Au,u) = \sum\limits_{i=1}^k d_i (\nabla u_i, \nabla u_i)_{L^2(\Omega),L^2(\Omega)} + \sum\limits_{i=1}^k (u_i, u_i)_{L^2(\Omega),L^2(\Omega)} \geq \min(\min_i(d_i),1) \norme{u}^2_{H^1(\Omega)^k}.\]
\indent Let $B(t)$ be the family of operators in $L^2(0,T;\mathcal{L}(H^1(\Omega)^k,L^2(\Omega)^k))$ defined by
\[\forall u \in H^1(\Omega)^k,\ B(t)u(.) = C(t,.)u(.).\]
We have
\[\norme{B}_{L^2(0,T;\mathcal{L}(H^1(\Omega)^k,L^2(\Omega)^k))}^2 \leq  \norme{C}_{L^{\infty}(Q)^{k^2}}^2.\]
\indent By applying \cite[Theorem II.1]{BT}, we get that for every $t \in [0,T]$, $\widetilde{\zeta}(t,.) = 0$. Then,
\[\forall t \in [0,T],\ \zeta(t,.) = 0.\]
\end{proof}

\subsubsection{Case of 2 controls}\label{suvan2}
\begin{prop}\label{prop2ctargetv}
Let $j=2$, $u_0 \in L^{\infty}(\Omega)^4$. If $((u_i)_{1 \leq i \leq 4},(h_i)_{1 \leq i \leq 2})$ is a trajectory of \eqref{syst} reaching $(u_i^*)_{1 \leq i \leq 4}$ in time $T$, then we have
\begin{equation}
\Big((u_3^*,u_4^*)=(0,0)\Big) \Rightarrow \Big((u_{3,0},u_{4,0}) = (0,0)\Big).
\label{condci0}
\end{equation}
Conversely, for every $u_0 \in L^{\infty}(\Omega)^4$ such that $(u_{3,0},u_{4,0}) = (0,0)$, we can find $(h_i)_{1 \leq i \leq 2} \in L^{\infty}(Q)^2$ such that the associated solution $(u_i)_{1 \leq i \leq 4} \in L^{\infty}(Q)^4$ of \eqref{syst} satisfies $$(u_1,u_2,u_3,u_4)(T,.) = (u_1^*,u_2^*,0,0).$$
\end{prop}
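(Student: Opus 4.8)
The statement has two halves of quite different flavour: the first is a backward–uniqueness argument, the second is a reduction of \eqref{syst} to two decoupled controlled heat equations obtained by forcing $u_3$ and $u_4$ to stay identically zero.

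\textbf{Necessity of \eqref{condci0}.} With $j=2$ no control enters the equations of $u_3,u_4$. Recalling $f_3=-(u_1u_3-u_2u_4)$ and $f_4=u_1u_3-u_2u_4$, the pair $w:=(u_3,u_4)$ is, by \Cref{deftraj} and the variational formulation of \eqref{syst} read with $u_1,u_2$ regarded as given coefficients, the $Y^2$-solution of the Cauchy problem
\[ \partial_t w - \mathrm{diag}(d_3,d_4)\,\Delta w = \begin{pmatrix} -u_1 & u_2 \\ u_1 & -u_2 \end{pmatrix} w \quad \text{in } Q, \]
with homogeneous Neumann boundary condition and $w(0,\cdot)=(u_{3,0},u_{4,0})$. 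The coupling matrix lies in $\mathcal{M}_2(L^{\infty}(Q))$ because $u_1,u_2\in L^{\infty}(Q)$. If $(u_3^*,u_4^*)=(0,0)$ then $w(T,\cdot)=0$, so \Cref{unires} applied with $k=2$, $D=\mathrm{diag}(d_3,d_4)$ and $\zeta_0=(u_{3,0},u_{4,0})\in L^{\infty}(\Omega)^2$ gives $w\equiv 0$ on $[0,T]$; in particular $(u_{3,0},u_{4,0})=(0,0)$.

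\textbf{Sufficiency.} Let now $u_0\in L^{\infty}(\Omega)^4$ with $(u_{3,0},u_{4,0})=(0,0)$. The key observation is that keeping $u_3$ and $u_4$ identically zero is consistent with \eqref{syst} and annihilates the quadratic coupling: if $\bar u_3=\bar u_4=0$ then $f_i(\bar u_1,\bar u_2,\bar u_3,\bar u_4)=(-1)^i(\bar u_1\cdot 0-\bar u_2\cdot 0)=0$ for every $i$, so \eqref{syst} splits into $\partial_t\bar u_i-d_i\Delta\bar u_i=h_i1_{\omega}$ ($i=1,2$) and $\partial_t\bar u_i-d_i\Delta\bar u_i=0$ ($i=3,4$) with zero initial data, all with Neumann conditions. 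For $i\in\{1,2\}$ I then choose, by null-controllability of the heat equation with Neumann boundary conditions and $L^{\infty}$ data producing a control and a state in $L^{\infty}$, a control $h_i\in L^{\infty}(Q)$ steering $v_i:=\bar u_i-u_i^*$ from $u_{i,0}-u_i^*$ to $0$; equivalently $\bar u_i:=v_i+u_i^*\in Y\cap L^{\infty}(Q)$ (by \Cref{wpl2linfty}) solves $\partial_t\bar u_i-d_i\Delta\bar u_i=h_i1_{\omega}$ with $\bar u_i(0,\cdot)=u_{i,0}$ and $\bar u_i(T,\cdot)=u_i^*$. Setting $\bar u_3:=\bar u_4:=0$, the quadruple $(\bar u_i)_{1\leq i\leq 4}$ together with $(h_1,h_2)$ satisfies all equations and initial conditions of \eqref{syst} and belongs to $(Y\cap L^{\infty}(Q))^4$, hence by the uniqueness recorded in \Cref{uniqueness} it is the solution of \eqref{syst} associated with $u_0$ and $(h_1,h_2)$; by construction $(\bar u_1,\bar u_2,\bar u_3,\bar u_4)(T,\cdot)=(u_1^*,u_2^*,0,0)$, which is \eqref{condci0}'s converse.

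\textbf{Main obstacle.} The first implication and the consistency/uniqueness bookkeeping of the second are routine once \Cref{unires}, \Cref{wpl2linfty} and \Cref{uniqueness} are invoked. The only genuinely analytic ingredient is the $L^{\infty}$ null-controllability of the heat equation with Neumann boundary conditions — i.e.\ producing $h_1,h_2$ in $L^{\infty}(Q)$, with the states also in $L^{\infty}(Q)$. I would get this by combining the classical $L^2$ null-controllability (adapted to Neumann conditions) with a short regularization time-interval on which the free evolution smooths the $L^{\infty}$ datum, followed by the $L^{\infty}$ parabolic regularity of \Cref{wpl2linfty}; this is the point that requires the most care.
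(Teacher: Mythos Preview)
Your proof is correct and follows essentially the same route as the paper: backward uniqueness via \Cref{unires} for the necessity, and reduction to two decoupled controlled heat equations for the converse. The only difference is cosmetic: where you flag the $L^{\infty}$ null-controllability of the Neumann heat equation as the ``main obstacle'' and sketch a regularisation argument, the paper simply cites \cite[Proposition 1]{FCGBGP} as a known result and moves on.
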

\begin{proof}
If $(u_3^*,u_4^*)=(0,0)$, it results from \eqref{syst} that
\begin{equation}
\left\{
\begin{array}{ll}
\partial_t u_3 - d_3\Delta u_3 = -u_1u_3 + u_2 u_4 &\mathrm{in}\ (0,T)\times\Omega,\\
\partial_t u_4 - d_4 \Delta u_4 = u_1 u_3 - u_2 u_4 &\mathrm{in}\ (0,T)\times\Omega,\\
\frac{\partial u_3}{\partial n} = \frac{\partial u_4}{\partial n} = 0 &\mathrm{on}\ (0,T)\times\partial\Omega.
\end{array}
\right.
\label{u3u4non0}
\end{equation}
By using the point 1 of \Cref{deftraj}, we have
\begin{equation}
(u_1,u_2) \in L^{\infty}(Q)^2.
\label{condlinfty2}
\end{equation}
Then, from \eqref{u3u4non0}, \eqref{condlinfty2}, \Cref{deftraj}: $(u_3,u_4)(T,.) = (0,0)$  and \Cref{unires} with $k=2$, $D=diag(d_3,d_4)$ and $C = \begin{pmatrix} -u_1 & u_2\\ u_1 & -u_2 \end{pmatrix}$, we deduce that
\[ \forall t \in [0,T],\ (u_3,u_4)(t,.)=(0,0),\]
and in particular \eqref{condci0}.\\
\indent Conversely, let $u_0 \in L^{\infty}(\Omega)^4$ be such that $(u_{3,0},u_{4,0}) = (0,0)$. Then, \eqref{syst} reduces to the following system
\begin{equation}
\left\{
\begin{array}{l l}
\partial_t u_1 - d_1\Delta u_1 = h_1 1_{\omega} &\mathrm{in}\ (0,T)\times\Omega,\\
\partial_t u_2 - d_2 \Delta u_2 = h_2 1_{\omega} &\mathrm{in}\ (0,T)\times\Omega,\\
\frac{\partial u_1}{\partial n} = \frac{\partial u_2}{\partial n} = 0 &\mathrm{on}\ (0,T)\times\partial\Omega,\\
(u_1,u_2)(0,.) = (u_{1,0},u_{2,0}) &\mathrm{in}\ \Omega.
\end{array}
\right.
\label{u1u2}
\end{equation}
The problem reduces to the null-controllability of two decoupled heat equations in $L^{\infty}(\Omega)$ with two localized control in $L^{\infty}(Q)$ which is a solved problem (see for example \cite[Proposition 1]{FCGBGP}). Therefore, we can find $(h_i)_{1 \leq i \leq 2} \in L^{\infty}(Q)^2$ such that the associated solution $(u_i)_{1 \leq i \leq 4} \in L^{\infty}(Q)^4$ of \eqref{syst} satisfies $(u_1,u_2,u_3,u_4)(T,.) = (u_1^*,u_2^*,0,0)$.
\end{proof}

\begin{rmk}
Thanks to \Cref{prop2ctargetv}, we avoid the easy case $(u_3^*,u_4^*)=(0,0)$ for $2$ controls in the sequel.
\end{rmk}

\subsubsection{Case of 1 control}\label{suvan1}
\begin{prop}\label{prop1ctargetv}
Let $j=1$, $u_0 \in L^{\infty}(\Omega)^4$. If $((u_i)_{1 \leq i \leq 4},h_1)$ is a trajectory of \eqref{syst} reaching $(u_i^*)_{1 \leq i \leq 4}$ in time $T$, then we have 
\begin{equation}
\Big((u_3^*,u_2^*)=(0,0)\Big) \Rightarrow \Big((u_{2,0},u_{3,0},u_{4,0}) = (0,0,u_4^*)\Big),
\label{condci02}
\end{equation}
\begin{equation}
\Big((u_3^*,u_4^*)=(0,0)\Big) \Rightarrow \Big((u_{2,0},u_{3,0},u_{4,0}) = (u_2^*,0,0)\Big).
\label{condci04}
\end{equation}
Conversely, for every $u_0 \in L^{\infty}(\Omega)^4$ such that $u_{3,0}=0$, we can find $h_1\in L^{\infty}(Q)$ such that the associated solution $(u_i)_{1 \leq i \leq 4} \in L^{\infty}(Q)^4$ of \eqref{syst} satisfies $(u_1,u_2,u_3,u_4)(T,.) = (u_1^*,u_2^*,0,u_4^*)$.
\end{prop}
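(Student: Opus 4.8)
The plan is to mirror the structure of the proof of \Cref{prop2ctargetv}, treating the necessary conditions and the sufficiency separately.

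For the necessary conditions \eqref{condci02} and \eqref{condci04}, the idea is to isolate, among the four equations of \eqref{syst} with $j=1$, the subsystem formed by those components whose equations carry no control, i.e. $u_2,u_3,u_4$, and to observe that when the target on certain of these components vanishes, the quadratic source terms become linear with $L^\infty$ coefficients, so backward uniqueness (\Cref{unires}) applies. Concretely, for \eqref{condci04}: if $(u_3^*,u_4^*)=(0,0)$, then since $u_1\in L^\infty(Q)$ by point 1 of \Cref{deftraj}, the pair $(u_3,u_4)$ solves a $2\times2$ parabolic system $\partial_t(u_3,u_4)^T - \mathrm{diag}(d_3,d_4)\Delta(u_3,u_4)^T = C(t,x)(u_3,u_4)^T$ with $C=\begin{pmatrix}-u_1 & u_2\\ u_1 & -u_2\end{pmatrix}\in \mathcal{M}_2(L^\infty(Q))$ (using $u_2\in L^\infty(Q)$ as well), and $(u_3,u_4)(T,.)=(0,0)$; hence \Cref{unires} gives $(u_3,u_4)(t,.)=(0,0)$ for all $t$, in particular $(u_{3,0},u_{4,0})=(0,0)$. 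Feeding $u_3\equiv 0$ back into the $u_2$-equation, $\partial_t u_2 - d_2\Delta u_2 = u_2 u_4 = 0$ is a free heat equation with $u_2(T,.)=u_2^*$, so backward uniqueness (the $k=1$ case) forces $u_{2,0}=u_2^*$, which is \eqref{condci04}. For \eqref{condci02}, if $(u_2^*,u_3^*)=(0,0)$, one first applies \Cref{unires} to the pair $(u_2,u_3)$ (whose equations, with $u_1,u_4\in L^\infty(Q)$, form a linear $2\times2$ system with $L^\infty$ coefficients and vanishing final data) to get $(u_{2,0},u_{3,0})=(0,0)$ and $u_2\equiv u_3\equiv 0$; then the $u_4$-equation collapses to the free heat equation $\partial_t u_4 - d_4\Delta u_4 = 0$ with $u_4(T,.)=u_4^*$, so $u_{4,0}=u_4^*$, yielding \eqref{condci02}.

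For the converse, let $u_0\in L^\infty(\Omega)^4$ with $u_{3,0}=0$. The plan is to choose $h_1$ so that $u_1$ becomes identically zero after a short time, which decouples the system. More precisely: solving the scalar null-controllability problem for $\partial_t u_1 - d_1\Delta u_1 = u_2u_4 - (u_1u_3) + h_1 1_\omega$ is not immediate because the source depends on the other unknowns, so instead I would argue directly: since $u_{3,0}=0$, as long as we have not yet acted we cannot a priori conclude $u_3\equiv 0$; the cleanest route is to first drive $u_1$ to $0$ keeping track of the coupled dynamics via a fixed-point/Carleman argument, but since \Cref{prop2ctargetv} handled the analogous $2$-control situation by reducing to decoupled heat equations, here I expect the intended argument to be: observe that if $u_{3,0}=0$ then choosing $h_1$ to null-control $u_1$ to zero on $[0,T/2]$ is the crux, after which $u_1\equiv 0$ on $[T/2,T]$ forces (from the $u_3$-equation $\partial_t u_3 - d_3\Delta u_3 = -u_1 u_3 + u_2 u_4$ together with $u_{3,0}=0$ and $u_1\equiv 0$ — wait, $u_2u_4$ need not vanish) — so the genuinely clean statement is that once $u_1\equiv 0$, the equation for $u_3$ reads $\partial_t u_3 - d_3\Delta u_3 = u_2 u_4$, and since the $u_1$-equation with $u_1\equiv 0$ imposes $0 = u_2 u_4 + h_1 1_\omega$ outside $\omega$ hence $u_2 u_4 = 0$ off $\omega$; this is getting delicate, so the honest plan is to invoke the forthcoming local controllability machinery of the paper. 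I would therefore write the converse as a consequence of the one-control local result applied around a suitable trajectory, or more simply: null-control $u_1$ to $0$ by $t=T/2$ (possible since the full system with $u_{3,0}=0$ restricted to reaching $u_1(T/2,.)=0$ is solvable by the linearization-plus-fixed-point scheme used later in the paper, the coupling $u_1u_3$ being quadratic and small), note that then $u_3$ solves $\partial_t u_3 - d_3\Delta u_3 = u_2u_4$ on $(T/2,T)$ with $u_3(T/2,.)=0$ — and here one uses that $h_1\equiv 0$ on $(T/2,T)$ combined with the $u_1$-equation gives $u_2u_4 \equiv 0$, hence $u_3\equiv 0$ on $(T/2,T)$, so $u_3(T,.)=0$; finally $u_2,u_4$ satisfy free heat equations on $(T/2,T)$ which, being at our disposal through the choice on $(0,T/2)$, can be steered to $u_2^*,u_4^*$; the remaining freedom on $u_1$ on $(T/2,T)$ lets us also reach $u_1^*$.

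The main obstacle I anticipate is precisely this converse direction: unlike the $2$-control case, where setting $(u_{3,0},u_{4,0})=(0,0)$ literally decouples \eqref{syst} into two free heat equations, here with one control the system does not fully decouple even when $u_{3,0}=0$, because the nonlinearity $u_1u_3-u_2u_4$ does not vanish. So the heart of the matter is to show that one can still choose $h_1$ making $u_3$ hit $0$ at time $T$ while simultaneously hitting the prescribed values for $u_1,u_2,u_4$; I expect this to rely on the full local (and, via the return method, not-necessarily-local) controllability analysis developed in the later sections of the paper rather than on an elementary decoupling, and I would phrase it as an application of those results together with the backward-uniqueness observation that $u_3\equiv 0$ is propagated once $u_1u_3 - u_2u_4$ is arranged to vanish.
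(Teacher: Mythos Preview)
Your treatment of the necessary conditions \eqref{condci02} and \eqref{condci04} is correct and matches the paper: apply \Cref{unires} first to the appropriate $2\times2$ uncontrolled subsystem with vanishing final data, then to the remaining scalar heat equation. (Minor slip: in your \eqref{condci04} argument the right-hand side of the $u_2$-equation is $u_1u_3-u_2u_4$, which vanishes because \emph{both} $u_3\equiv0$ and $u_4\equiv0$, not just the latter.)

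For the converse you are over-complicating matters, and the reason is that the hypothesis ``$u_{3,0}=0$'' as literally stated is too weak --- it is in fact incompatible with reaching the target, by the very necessary conditions you just proved, unless $u_{2,0}$ and $u_{4,0}$ are also already at their target values. The paper's proof implicitly works under the full necessary condition (e.g.\ $(u_{2,0},u_{3,0},u_{4,0})=(0,0,u_4^*)$ when $(u_2^*,u_3^*)=(0,0)$), and under that hypothesis the argument is a one-line reduction, not a fixed-point or return-method construction. The observation you are missing is this: once $(u_{2,0},u_{3,0},u_{4,0})=(0,0,u_4^*)$, the constant triple $(u_2,u_3,u_4)\equiv(0,0,u_4^*)$ solves the three uncontrolled equations of \eqref{syst} for \emph{any} $u_1\in L^\infty(Q)$, because the nonlinearity $u_1u_3-u_2u_4$ vanishes identically on this triple. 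By uniqueness (\Cref{uniqueness}) these three components are therefore frozen at their targets regardless of $h_1$, and the full system reduces to $\partial_t u_1 - d_1\Delta u_1 = h_1 1_\omega$ with Neumann boundary and $u_1(0,\cdot)=u_{1,0}$; controllability to $u_1^*$ is then just the scalar $L^\infty$ heat-equation result. Your proposed detours --- null-controlling $u_1$ on $[0,T/2]$, arranging $u_2u_4=0$ off $\omega$, or invoking the later local-controllability machinery --- are all unnecessary.
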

\begin{proof}
If $u_3^* = 0$, then from \eqref{stat}, $u_2^* = 0$ or $u_4^* = 0$. We assume that $(u_3^*,u_2^*)=(0,0)$ (the other case is similar). The backward uniqueness (i.e. \Cref{unires}) as in \Cref{suvan2} leads to
\[
\forall t \in [0,T],\ (u_3,u_2)(t,.)=(0,0).
\]
Then, we deduce that 
\begin{equation}
\left\{
\begin{array}{l l}
\partial_t u_4 - d_4 \Delta u_4 = 0&\mathrm{in}\ (0,T)\times\Omega,\\
\frac{\partial u_4}{\partial n} = 0 &\mathrm{on}\ (0,T)\times\partial\Omega.
\end{array}
\right.
\label{u4}
\end{equation}
The backward uniqueness for the heat equation applied to \eqref{u4} proves that
\[
\forall t \in [0,T],\ u_4(t,.)=u_4^*,
\]
and in particular \eqref{condci02} and \eqref{condci04}.\\
\indent Conversely, let $u_0 \in L^{\infty}(\Omega)^4$ such that $u_{3,0} = 0$. Then, \eqref{syst} reduces to the following system
\begin{equation}
\left\{
\begin{array}{l l}
\partial_t u_1 - d_1\Delta u_1 = h_1 1_{\omega} &\mathrm{in}\ (0,T)\times\Omega,\\
\frac{\partial u_1}{\partial n} = 0 &\mathrm{on}\ (0,T)\times\partial\Omega,\\
u_1(0,.) = u_{1,0} &\mathrm{in}\ \Omega.
\end{array}
\right.
\label{u1}
\end{equation}
The problem reduces to the null-controllability of the heat equation in $L^{\infty}(\Omega)$ with a localized control in $L^{\infty}(Q)$ which is a solved problem (see for example \cite[Proposition 1]{FCGBGP}). Therefore, we can find $h_1\in L^{\infty}(Q)$ such that the associated solution $(u_i)_{1 \leq i \leq 4} \in L^{\infty}(Q)^4$ of \eqref{syst} satisfies $(u_1,u_2,u_3,u_4)(T,.) = (u_1^*,u_2^*,0,u_4^*)$.
\end{proof}
\begin{rmk}
Thanks to \Cref{prop1ctargetv}, we avoid the easy case $u_3^*=0$ for $1$ control in the sequel.
\end{rmk}

\section{Main results}\label{sectionmainre}

In this part, we present our two main results: a local controllability result and a large-time global controllabillity result for \eqref{syst}.

\subsection{Local controllability under constraints}\label{sectionunderconstraints}

In \Cref{invquant} and \Cref{suvan}, we have highlighted necessary conditions on initial conditions when $((u_i)_{1 \leq i \leq 4},(h_i)_{1 \leq i \leq j})$ is a trajectory reaching $(u_i^*)_{1 \leq i \leq 4}$. They turn out to be sufficient for the existence of such trajectories at least for data close to $(u_i^*)_{1 \leq i \leq 4}$. The goal of this subsection is to define subspaces of $L^{\infty}(\Omega)^4$ which take care of these conditions.

\subsubsection{Case of 3 controls}

We introduce
\begin{equation}
X_{3,(d_i),(u_i^*)} = L^{\infty}(\Omega)^4.
\label{defstate3}
\end{equation}

\subsubsection{Case of 2 controls}

The results of \Cref{invquant2} and \Cref{suvan2} are summed up in the following array.
\begin{equation}
\begin{tabular}{|c|c|c|}
\hline
& $(u_3^*,u_4^*) \neq (0,0) $\\
\hline
$d_3 = d_4$ & $u_{3,0} + u_{4,0} = u_3^*+u_4^* $  \\
\hline
$d_3 \neq d_4$ &$ \frac{1}{|\Omega|} \int_{\Omega} (u_{3,0} + u_{4,0})=u_3^* + u_4^*$ \\
\hline
\end{tabular}
\label{tab1}
\end{equation}

Then, we introduce
\begin{equation}
X_{2,(d_i),(u_i^*)} := \{ u_0 \in L^{\infty}(\Omega)^4\ ;\ u_0\  \text{satisfies the associated condition of \eqref{tab1}}\}.
\label{defstat2}
\end{equation}
For example, $X_{2,(1,2,3,4),(1,1,1,1)} = \{u_0 \in L^{\infty}(\Omega)^4\ ; \frac{1}{|\Omega|}\int_{\Omega} \left(u_{3,0}+u_{4,0}\right)= 2\}$.

\subsubsection{Case of 1 control}

The results of \Cref{invquant1} and \Cref{suvan1} are summed up in the following array.
\begin{equation}
\begin{tabular}{|c|c|}
\hline
& $u_3^*\neq 0 $\\
\hline
$d_2=d_3 = d_4$ & $u_{2,0} + u_{3,0} = u_2^*+u_3^*,\ u_{3,0} + u_{4,0} = u_3^*+u_4^* $\\
\hline
$d_2 \neq d_3,\ d_3 = d_4 $ & $  \frac{1}{|\Omega|} \int_{\Omega}(u_{2,0} + u_{3,0})= u_2^*+u_3^*,\ u_{3,0} + u_{4,0} = u_3^*+u_4^* $\\
\hline
$d_2=d_3, d_3 \neq d_4$ & $u_{2,0} + u_{3,0} = u_2^*+u_3^*,\ \frac{1}{|\Omega|} \int_{\Omega}(u_{3,0} + u_{4,0}) = u_3^*+u_4^* $\\
\hline
$d_2=d_4,\ d_2 \neq d_3$ & $u_{2,0} - u_{4,0} = u_2^*-u_4^*,\ \frac{1}{|\Omega|} \int_{\Omega}(u_{3,0} + u_{4,0}) = u_3^*+u_4^* $\\
\hline
$d_2\neq d_3,\ d_3 \neq d_4,\ d_2 \neq d_4$ & $\frac{1}{|\Omega|} \int_{\Omega}(u_{2,0} + u_{3,0})= u_2^*+u_3^*,\ \frac{1}{|\Omega|} \int_{\Omega}(u_{3,0} + u_{4,0}) = u_3^*+u_4^* $\\
\hline
\end{tabular}
\label{tab2}
\end{equation}

Then, we introduce
\begin{equation}
X_{1,(d_i),(u_i^*)} := \{ u_0 \in L^{\infty}(\Omega)^4\ ;\ u_0\  \text{satisfies the associated condition of \eqref{tab2}}\}.
\label{defstat1}
\end{equation}

\subsubsection{Local controllability result}

\begin{defi}
Let $j \in \{1,2,3\}$, $(u_1^*,u_2^*,u_3^*,u_4^*) \in ({\R^{+}})^4$ be such that \eqref{stat} holds. The system \eqref{syst} is \textbf{locally controllable to the state $(u_i^*)_{1 \leq i \leq 4}$ in $L^{\infty}(\Omega)^4$ with controls in $L^{\infty}(Q)^j$} if there exists $\delta > 0$ such that for every $u_0 \in X_{j,(d_i),(u_i^*)}$ (see \eqref{defstate3}, \eqref{defstat2} and \eqref{defstat1}) verifying $\norme{u_0-(u_i^*)_{1 \leq i \leq 4}}_{L^{\infty}(\Omega)^4} \leq \delta$, there exists $(h_i)_{1\leq i \leq j} \in L^{\infty}(Q)^j$ such that the solution $(u_i)_{1 \leq i \leq 4} \in L^{\infty}(Q)^4$ to the Cauchy problem \eqref{syst} satisfies
\[ \forall i \in \{1,2,3,4\}, u_i(T,.)=u_i^*.\]
\end{defi}

\begin{theo}\label{mainresult}
For every $j \in \{1,2,3\}$, for every $(u_1^*,u_2^*,u_3^*,u_4^*) \in ({\R^{+}})^4$ which satisfies \eqref{stat}, the system \eqref{syst} is \textbf{locally controllable to the state $(u_i^*)_{1 \leq i \leq 4}$ in $L^{\infty}(\Omega)^4$ with controls in $L^{\infty}(Q)^j$}.
\end{theo}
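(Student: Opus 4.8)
The plan is to treat the three cases $j=3$, $j=2$, $j=1$ separately, each time reducing the nonlinear controllability problem to a controllability statement for a suitable linearization and concluding with a fixed-point argument. Setting $u_i = u_i^* + v_i$, the nonlinearity $f_i(u)$ splits as a linear part $L_i v := (-1)^i(u_3^* v_1 - u_4^* v_2 + u_1^* v_3 - u_2^* v_4)$ plus the quadratic remainder $(-1)^i(v_1 v_3 - v_2 v_4)$, the constant term vanishing thanks to \eqref{stat}. The reference linear system is $\partial_t v - D\Delta v = Lv + H 1_\omega$ with $D = \mathrm{diag}(d_1,\dots,d_4)$ and $H$ supported in its first $j$ components, and the first task is to prove that it can be steered from a small $v_0$ (in the subspace $X_{j,(d_i),(u_i^*)}$ translated by $-u^*$) to $0$ by an $L^\infty(Q)^j$ control, with an estimate linear in the size of $v_0$ and of the source, so that the quadratic term can be absorbed afterwards.

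For $j=3$, only the fourth equation is uncontrolled, and it sees the controlled block through the constant coefficient vector $(u_3^*,-u_4^*,u_1^*)$. If $(u_1^*,u_3^*,u_4^*)\neq(0,0,0)$, the linear system is null-controllable by the Neumann analogue of \Cref{couplageconstant}, and the nonlinear statement follows by reproducing the proof of \Cref{nonlineartoy}. If $(u_1^*,u_3^*,u_4^*)=(0,0,0)$, the fourth equation decouples and the linearization fails to be controllable; I would then invoke the return method, building a trajectory $(\overline{u_i},\overline{h_i})$ of \eqref{syst} issued from and returning to $u^*$ whose components $\overline{u_1}$ or $\overline{u_3}$ are bounded away from $0$ on some subcylinder $(t_1,t_2)\times\omega_0\subset Q$, so that the linearization along $(\overline{u_i})$ couples the controlled block to the fourth equation in the manner required by \Cref{couplzonecontrole}; the nonlinear conclusion then follows as in \Cref{nonlineartoy2}.

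For $j=2$ and $j=1$, the Kalman-type rank condition genuinely fails, because of the conserved quantities of \Cref{invquant2} and \Cref{invquant1}; the subspaces $X_{2,(d_i),(u_i^*)}$ and $X_{1,(d_i),(u_i^*)}$ are designed precisely around these obstructions, and by \Cref{prop2ctargetv} and \Cref{prop1ctargetv} one may assume respectively $(u_3^*,u_4^*)\neq(0,0)$ and $u_3^*\neq 0$. The crucial step is a triangular linear change of unknowns — typically replacing $(v_3,v_4)$ by $(v_3,v_3+v_4)$ when $j=2$, and an analogous substitution on $(v_2,v_3,v_4)$ when $j=1$ — after which the zeroth-order coupling that linked the uncontrolled equations to one another is replaced by the diffusion mismatch $(d_3-d_4)\Delta v_3$ and its analogues, that is, by second-order coupling terms of the type handled in \Cref{crosseddifussion}. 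One then reads the transformed system as a cascade $H\rightsquigarrow(\text{directly controlled components})\rightsquigarrow\cdots$ in which each newly reached component is coupled to the previous ones either through a zeroth-order coefficient equal to a nonvanishing constant (apply \Cref{propcascade}) or through a nonvanishing $\Delta$-coupling (apply \Cref{crosseddifussion}); when two of the $d_i$ coincide, the corresponding combination solves a pure heat equation, so by \Cref{unires} it is already pinned down by the constraint defining $X_{j,(d_i),(u_i^*)}$ and need not be controlled at all. In every sub-case this yields, through the associated Carleman estimate, a control in $L^\infty(Q)^j$ driving the linearized system to $0$ with norm bounded linearly by $\|v_0\|_{L^\infty(\Omega)^4}$ and the $L^\infty(Q)$ norm of the source.

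The last step, common to all cases, is a Kakutani fixed-point argument. On a small ball of $(Y\cap L^\infty(Q))^4$ one associates to $\tilde v$ the nonempty convex set of states $v$ produced by solving the linearized (or return-method-linearized) controlled problem with frozen quadratic source $(-1)^i(\tilde v_1\tilde v_3-\tilde v_2\tilde v_4)$ together with a control steering $v$ to $0$; using the $L^\infty$ parabolic estimates of \Cref{wpl2linfty} and parabolic compactness, this multivalued map sends a sufficiently small ball into itself with relatively compact, convex, closed values, so Kakutani's theorem provides a fixed point, which is exactly a trajectory of \eqref{syst} reaching $(u_i^*)$ in time $T$. I expect the heart of the difficulty to be the linear analysis for $j=1$ (and, to a lesser degree, $j=2$): designing the change of variables so that the resulting four-equation system is a genuine cascade with usable zeroth- or second-order couplings across every configuration of equal or distinct diffusion coefficients and vanishing or nonvanishing $u_i^*$, and establishing the corresponding Carleman estimate with second-order coupling terms while keeping the controls in $L^\infty$ — this is where the loss of derivatives induced by the cross-diffusion must be compensated by the cascade structure.
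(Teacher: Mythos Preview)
Your proposal is correct and follows essentially the same approach as the paper: linearization with return method for $j=3$, the triangular change of variables creating cross-diffusion couplings for $j=2,1$ (the paper uses exactly $(u_1,u_2,u_3,u_3+u_4)$ and $(u_1,u_2,u_2+u_3,\alpha u_2+\beta u_3+\gamma u_4)$), Carleman estimates to obtain observability for the resulting cascade systems, and a Kakutani fixed-point to close the nonlinear loop. The one substantive point you gloss over is how to upgrade the $L^2$ controls furnished by HUM to $L^\infty$ controls uniformly over the family of frozen coefficients; the paper handles this via a penalized HUM yielding controls in a weighted $L^2$ space tied to the Carleman weights, followed by a bootstrap on the adjoint state using maximal $L^p$ parabolic regularity and Sobolev embeddings---this is needed for the Kakutani argument to produce a compact convex invariant set in $L^\infty(Q)^4$.
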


\begin{rmk}
The uniqueness of the solution $(u_i)_{1 \leq i \leq 4} \in L^{\infty}(Q)^4$ is a consequence of \Cref{uniqueness}. The existence of the solution $(u_i)_{1 \leq i \leq 4} \in L^{\infty}(Q)^4$ is a consequence of a good choice of controls $(h_i)_{1\leq i \leq j} \in L^{\infty}(Q)^j$ and more precisely of a fixed-point argument (see \Cref{np}).
\end{rmk}

\begin{rmk}
As we have said in the introduction, it was not known if $L^{\infty}$ blow-up occurs or not in dimension $N > 2$ for the free system \eqref{systsc} until recently (see \cite{CGV}). Here, our strategy of control avoids blow-up and enables the solution to reach a stationary solution of \eqref{systsc}.
\end{rmk}

\begin{rmk}\label{easycases}
In some particular cases (easy cases), this local controllability result can be improved in a global controllability result (see the case $(u_3^*,u_4^*)=(0,0)$ for $2$ controls in \Cref{suvan2} and the case $u_3^* = 0$ for $1$ control in \Cref{suvan1}).
\end{rmk}

\subsection{Large-time global controllability result}

From \Cref{mainresult}, we establish a global controllability result in large time for $N=1,2$.
\begin{theo}\label{mainresult2}
We assume that $N=1$ or $2$. Let $j \in \{1,2,3\}$ and $(u_i^*)_{1 \leq i \leq 4} \in (\R^{+})^4$ be such that \eqref{stat} holds. Then, for every $u_0 \in X_{j,(d_i),(u_i^*)}$ satisfying
\small
\begin{align}
&\forall 1 \leq i \leq 4,\ u_{i0} \geq 0,\notag\\
&\frac{1}{|\Omega|}\int_{\Omega}(u_{1,0}+u_{2,0})> 0,\ \frac{1}{|\Omega|}\int_{\Omega}(u_{1,0}+u_{4,0})> 0,\ \frac{1}{|\Omega|}\int_{\Omega}(u_{2,0}+u_{3,0})> 0,\ \frac{1}{|\Omega|}\int_{\Omega}(u_{3,0}+u_{4,0})> 0,
\label{hypglobalcontr}
\end{align}
\normalsize
there exists $T^* > 0$ (sufficiently large) and $(h_i)_{1 \leq i \leq j} \in L^{\infty}((0,T^*)\times\Omega)^j$ such that the solution $u$ of 
\begin{equation}
\forall 1 \leq i \leq 4,\ 
\left\{
\begin{array}{l l}
\partial_t u_i - d_i \Delta u_{i} = (-1)^{i}(u_1 u_3 - u_2 u_4) + h_{i} 1_{\omega}1_{i \leq j}&\mathrm{in}\ (0,T^*)\times\Omega,\\
\frac{\partial u_i}{\partial n} = 0 &\mathrm{on}\ (0,T^*)\times\partial\Omega,\\
u_i(0,.)=u_{i,0} &\mathrm{in}\  \Omega,
\end{array}
\right.
\label{systglobalcontrol}
\end{equation}
satisfies
\begin{equation}
u(T^*,.) = u^*.
\end{equation}
\end{theo}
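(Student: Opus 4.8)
The plan is to combine the local controllability result (\Cref{mainresult}) with the known asymptotic behavior of the free nonlinear system \eqref{systsc} in dimension $N=1,2$, following a three-stage scheme: first let the uncontrolled dynamics flow, then apply local controllability near the attractor, then patch targets by a connectedness-compactness argument. First I would take $u_0 \in X_{j,(d_i),(u_i^*)}$ satisfying the positivity hypotheses \eqref{hypglobalcontr}, set all controls to zero on an interval $[0,T_1]$, and consider the solution of the free system \eqref{systsc}. Since $N \leq 2$, the solution is global, bounded and classical, and by the cited results (\cite{DF}, \cite[Theorem 3]{PSZ}, \cite[Theorem 3]{PSY}) it converges in $L^{\infty}(\Omega)^4$ as $t \to +\infty$ to the \emph{unique} positive constant stationary state $z = (z_1,z_2,z_3,z_4)$ compatible with the conserved masses of $u_0$; the strict positivity of $z$ is exactly what the four average inequalities in \eqref{hypglobalcontr} guarantee (they force all relevant pairwise mass sums to be positive, hence the limiting equilibrium has no vanishing component).

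Next I would check that $z$ is an admissible target in the sense of \Cref{mainresult}, i.e. that $z \in (\R^+)^4$ satisfies \eqref{stat} — which holds since $z$ is a stationary constant solution — and, crucially, that $u(T_1,\cdot) \in X_{j,(d_i),(z)}$ for $T_1$ large. This membership is automatic: the invariant quantities (the relevant mass averages, and the stronger pointwise combinations when diffusion coefficients coincide) are preserved by the free flow, and $z$ was chosen precisely as the constant vector sharing these invariants with $u_0$; so the linear constraints defining $X_{j,(d_i),(z)}$ in \eqref{tab1}, \eqref{tab2} are met by $u(T_1,\cdot)$ by construction. Then, choosing $T_1$ so large that $\|u(T_1,\cdot) - z\|_{L^\infty(\Omega)^4} \leq \delta_z$, where $\delta_z$ is the radius from \Cref{mainresult} applied with target $z$, I apply local controllability on a time window $[T_1, T_1 + \tau]$ to steer the state exactly to $z$ at time $T_1 + \tau$.

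The remaining issue is that we want to reach the \emph{prescribed} target $u^*$, not the attractor $z$. Here I would use a connectedness-compactness argument: since both $z$ and $u^*$ lie in $(\R^+)^4$ and satisfy \eqref{stat}, and since the set of constant stationary states reachable from one another by the controlled dynamics in arbitrary time is both open (by local controllability near each such state, via \Cref{mainresult}) and closed, one can chain finitely many local controllability steps along a path of stationary states joining $z$ to $u^*$ inside the relevant constraint manifold, adding up the (finite) control times. Concatenating, we obtain $T^* = T_1 + \tau + (\text{sum of the patching times})$ and controls $(h_i)_{1\le i\le j} \in L^\infty((0,T^*)\times\Omega)^j$ (zero on $[0,T_1]$, then the successive local controls) driving the solution of \eqref{systglobalcontrol} to $u(T^*,\cdot)=u^*$. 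I expect the main obstacle to be the bookkeeping of the invariant-quantity constraints: one must verify carefully that at each stage the current state lies in the correct subspace $X_{j,(d_i),(\cdot)}$ for the next target — in particular that the path of stationary states used in the patching step stays within the affine constraint set determined by the conserved quantities of $u_0$, and that the degenerate-diffusion cases ($d_3=d_4$, etc.) do not obstruct the connectedness argument.
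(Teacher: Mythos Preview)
Your three-stage plan---free flow to the attractor $z$, local controllability to hit $z$ exactly, then a finite chain of local steps from $z$ to $u^*$---is precisely the paper's argument. Two places where the paper is more concrete than your sketch deserve attention. First, your open-and-closed phrasing for the chaining step hides a uniformity issue: openness of the reachable set requires not just $\delta_w>0$ for each stationary $w$ (which is all \Cref{mainresult} states) but a lower bound on $\delta_{w'}$ for $w'$ near $w$; the paper sidesteps this by writing down an explicit continuous path $\gamma:[0,1]\to\{v_1v_3=v_2v_4\}$ from $z$ to $u^*$, tracking the dependence of the local radius $r_{\gamma(\theta)}$ on the target through the Carleman and fixed-point constants, and checking $\inf_{\theta}r_{\gamma(\theta)}>0$ directly on the compact image, after which uniform continuity of $\gamma$ gives the finite cover. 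Second, that infimum is positive only because the paper's path keeps $\gamma_3(\theta)=(1-\theta)z_3+\theta u_3^*$ bounded away from $0$, which fails when $u_3^*=0$; the paper therefore treats $u_3^*=0$ (hence $u_2^*=0$ or $u_4^*=0$) as a separate case, first steering to a nearby nondegenerate stationary state $\tilde u^*\in B(u^*,\delta_{u^*})$ with $\tilde u_3^*>0$ by one application of \Cref{mainresult}, and then running the path argument from $z$ to $\tilde u^*$. Your concern about invariant-quantity bookkeeping for $j<3$ is legitimate, but the paper only writes out $j=3$ (where $X_{3,(d_i),(\cdot)}=L^\infty(\Omega)^4$ and there is nothing to check) and declares the other cases ``similar''; the real obstacle you should have flagged is the uniformity of the local radius along the path, not the constraint membership.
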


\begin{rmk}
The restriction on the dimension $N \in \{1,2\}$ is a consequence of the following property: the solution of the free system \eqref{systsc} converges in \textbf{$L^{\infty}(\Omega)$} when $T \rightarrow + \infty$ to a particular stationary solution of \eqref{systsc} (see \cite{DF}). One can extend \Cref{mainresult2} to $N > 2$ if the convergence in $L^{\infty}(\Omega)$ (of the free system) holds. For $N > 2$, one only knows that a weak solution of the free system \eqref{systsc} converges in \textbf{$L^1(\Omega)$} when $T \rightarrow + \infty$ to a particular stationary solution of \eqref{systsc} (see \cite[Theorem 3]{PSZ}). But, for example, if we assume that \textit{the diffusion coefficients $d_i$ are close}, the weak solution of the free system \eqref{systsc} converges in \textbf{$L^{\infty}(\Omega)$} when $T \rightarrow + \infty$ to a particular stationary solution of \eqref{systsc} (see \cite[Proposition 1.3]{CDF}).
\end{rmk}

\begin{rmk}
The positivity assumption \eqref{hypglobalcontr} is not restrictive. One can extend the result to nonnegative initial condition $u_0 \in X_{j,(d_i),(u_i^*)}$ (see \cite[Section 5]{PSZ}).
\end{rmk}

\section{Proof of \Cref{mainresult}: the local controllability to constant stationary states}

The aim of this section is to prove \Cref{mainresult}. As usual, we study the properties of controllability of the \textbf{linearized system} around $(u_i^*)_{1 \leq i \leq 4}$ of \eqref{syst}. First, we transform the problem by studying the null-controllability of a family of linear control systems (see \Cref{linearization}). The \textbf{existence of controls in $L^2(Q)$} is a consequence of a duality method: the \textbf{Hilbert Uniqueness Method} introduced by Jacques-Louis Lions (see \Cref{HUM}). It links the existence of controls in $L^2(Q)$ with an \textbf{observability inequality} for solution of the adjoint system. This type of inequalities is proved by \textbf{Carleman estimates} (see \Cref{Carlemanest}). In order to get more regular controls (in $L^p(Q)$ sense, $p\geq 2$), we use a sophistication of Hilbert Uniqueness Method called the \textbf{penalized Hilbert Uniqueness Method} introduced by Viorel Barbu (see \Cref{pHUM}). Indeed, this enables to have controls a bit better than $L^2(Q)$. Then, a \textbf{bootstrap method} gives controls in $L^{\infty}(Q)$ (see \Cref{btm}). A \textbf{fixed-point argument} concludes the proof (see \Cref{np}).\\

Now, we develop a strategy in order to treat the cases of $1$, $2$ or $3$ controls in a unified way.\\
\indent We introduce the following notations
\begin{equation}
B_3 =\begin{pmatrix}1&0&0\\ 
0&1&0\\ 
0&0&1\\
0&0&0
\end{pmatrix},\   h^3 = \begin{pmatrix}h_1\\ 
h_2\\ 
h_3\\
0
\end{pmatrix},\ B_2 = \begin{pmatrix}1&0\\ 
0&1\\ 
0&0\\
0&0
\end{pmatrix},\  h^2 = \begin{pmatrix}h_1\\ 
h_2\\ 
0\\
0
\end{pmatrix},\ B_1 = \begin{pmatrix}1\\ 
0\\ 
0\\
0
\end{pmatrix},\   h^1 = \begin{pmatrix}h_1\\ 
0\\ 
0\\
0
\end{pmatrix}.
\label{notationbh}
\end{equation}
Let $j \in \{1,2,3\}$, $(u_1^*,u_2^*,u_3^*,u_4^*) \in ({\R^{+}})^4$ be such that \eqref{stat} holds and  $u_0 \in X_{j,(d_i),(u_i^*)}$ (see \eqref{defstate3}, \eqref{defstat2} and \eqref{defstat1}).

\subsection{Linearization}\label{linearization}

We adopt the approach presented in \Cref{nonlinearsyst}.

\subsubsection{3 controls, return method when $(u_1^*,u_3^*,u_4^*)= (0,0,0)$}\label{sytul3controls}

We linearize \eqref{syst} around $(u_i^*)_{1 \leq i \leq 4}$ and we get the system: for every $1 \leq i \leq 4$,
\begin{equation}
\left\{
\begin{array}{l l}
\partial_t u_i - d_i \Delta u_{i} = (-1)^{i}(u_3^* u_1 -u_4^* u_2 + u_1^* u_3 - u_2^* u_4) + h_{i} 1_{\omega}1_{i \leq 3} &\mathrm{in}\ (0,T)\times\Omega,\\
\frac{\partial u_i}{\partial n} = 0 &\mathrm{on}\ (0,T)\times\partial\Omega,\\
u_i(0,.)=u_{i,0} &\mathrm{in}\  \Omega.
\end{array}
\right.
\label{systul}
\end{equation}

Roughly speaking, it is easy to control $u_1$, $u_2$, $u_3$ thanks to $h_1$, $h_2$, $h_3$. The main difficulty is to control $u_4$. Now, we present the heuristic way of controlling $u_4$.
\paragraph{First case: $(u_1^*,u_3^*,u_4^*)\neq (0,0,0)$}\label{casfacile} There is a coupling term in the fourth equation of \eqref{systul} which enables to control $u_4$. For example, if $u_3^* \neq 0$, then $u_1$ controls $u_4$.
\begin{rmk}
In this case, the linearized system \eqref{systul} looks like the toy-model \eqref{systemekalman2} and its controllability properties come from \Cref{couplageconstant}. Consequently, the local controllability of \eqref{syst} can be proved as in \Cref{nonlineartoy} for system \eqref{systemenonlintoy}.
\end{rmk}
\paragraph{Second case: $(u_1^*,u_3^*,u_4^*)= (0,0,0)$, return method}\label{paragraphreturn} The fourth equation of \eqref{systul} is decoupled from the other equations. In particular, if $u_4(0,.) \neq 0$, then $u_4(T,.) \neq 0$. Consequently, system \eqref{systul} is not controllable. The idea is to linearize around a non trivial trajectory of \eqref{syst} which comes from $(0,u_2^*,0,0)$ and goes to $(0,u_2^*,0,0)$ and which forces the appearance of a coupling term after linearization. It is the \textbf{return method}. Here, we take
\[\big((0,u_2^*,\overline{u_3}^{\sharp},0),(0,0,\overline{h_3}^{\sharp})\big):=\big((0,u_2^*,g,0),(0,0,\partial_t g - d_3 \Delta g)\big),\]
where $g$ satisfies the following properties
\begin{equation}
g \in C^{\infty}(\overline{Q}),\ g\geq 0,\ g \neq 0,\ supp(g) \subset (0,T)\times \omega. 
\label{defg}
\end{equation}
Then, if we linearize the system \eqref{syst} around $\big((0,u_2^*,\overline{u_3}^{\sharp},0),(0,0,\overline{h_3}^{\sharp})\big)$, then the fourth equation becomes
\[ \partial_t u_4 - d_4 \Delta u_{4} = \overline{u_3}^{\sharp}(t,x) u_1  - u_2^* u_4\ \mathrm{in}\ (0,T)\times\Omega.\]
Roughly speaking, as $\overline{u_3}^{\sharp} \neq 0$ in the control zone, then $u_1$ controls $u_4$.
\begin{rmk}
Here, the linearized system around the non trivial trajectory looks like the toy-model \eqref{systemekalman2} and its controllability properties follow from \Cref{couplzonecontrole}. Consequently, the local controllability of \eqref{syst} can be proved as \Cref{nonlineartoy2} for \eqref{systemenonlintoy}.
\end{rmk}

\paragraph{Linearization in $L^{\infty}(Q)$ and null-controllability of a family of linear systems} We define
\begin{equation}
\overline{u_3}:=\left\{
\begin{array}{c l}
u_3^* & \mathrm{if}\ (u_1^*,u_3^*,u_4^*) \neq (0,0,0),\\
\overline{u_3}^{\sharp} & \mathrm{if}\ (u_1^*,u_3^*,u_4^*) = (0,0,0),
\end{array}
\right.
\ \mathrm{and}\ 
\overline{h_3}:=\left\{
\begin{array}{c l}
0 & \mathrm{if}\ (u_1^*,u_3^*,u_4^*) \neq (0,0,0),\\
\overline{h_3}^{\sharp} & \mathrm{if}\ (u_1^*,u_3^*,u_4^*) = (0,0,0),
\end{array}
\right.
\end{equation}

\begin{equation}
(\zeta,\widehat{h^3}):=(\zeta_1, \zeta_2, \zeta_3, \zeta_4,\widehat{h_1},\widehat{h_2},\widehat{h_3}):=(u_1-u_1^*,u_2-u_2^*,u_3-\overline{u_3},u_4-u_4^*,h_1,h_2,h_3-\overline{h_3}).
\label{defzeta3}
\end{equation}
Thus, $(u,h^3)$ is a trajectory of \eqref{syst} if and only if $(\zeta,\widehat{h^3})$ is a trajectory of the following system
\begin{align}
&\forall 1 \leq i \leq 4,\notag \\
&\left\{
\begin{array}{l l}
\partial_t \zeta_i - d_i \Delta \zeta_{i} \\ \qquad= (-1)^{i}((\overline{u_3}+\zeta_3)\zeta_1 - (u_4^* + \zeta_4) \zeta_2+u_1^* \zeta_3- u_2^* \zeta_4) + \widehat{h_{i}} 1_{\omega} 1_{i \leq 3} &\mathrm{in}\ (0,T)\times\Omega,\\
\frac{\partial \zeta_i}{\partial n} = 0 &\mathrm{on}\ (0,T)\times\partial\Omega, \\
\zeta_i(0,.)=u_{i,0}-u_i^* &\mathrm{in}\  \Omega.
\end{array}
\right.
\end{align}
Then, $(\zeta,\widehat{h^3})$ is a trajectory of 
\begin{equation}
\left\{
\begin{array}{l l}
\partial_t \zeta - D_3 \Delta \zeta = G(\zeta) \zeta + B_3 \widehat{h^3} 1_{\omega} &\mathrm{in}\ (0,T)\times\Omega,\\
\frac{\partial\zeta}{\partial n} = 0 &\mathrm{on}\ (0,T)\times\partial\Omega,\\
\zeta(0,.)=  \zeta_0 &\mathrm{in}\  \Omega,
\end{array}
\right.
\label{systgenezeta}
\end{equation}
where
\begin{equation}
D_3 := \begin{pmatrix}d_1&0&0&0\\ 
0&d_2&0&0\\ 
0&0&d_3&0\\ 
0&0&0&d_4
\end{pmatrix},\qquad
G(\zeta) :=\begin{pmatrix}-\overline{u_3}-\zeta_3&u_4^*+\zeta_4&-u_1^*&u_2^*\\ 
\overline{u_3}+\zeta_3&-u_4^*-\zeta_4&u_1^*&-u_2^*\\ 
-\overline{u_3}-\zeta_3&u_4^*+\zeta_4&-u_1^*&u_2^*\\
\overline{u_3}+\zeta_3&-u_4^*-\zeta_4&u_1^*&-u_2^*
\end{pmatrix}.
\label{defd3}
\end{equation}
\indent Note that $G_{41}(0,0,0,0) = \overline{u_3}$. To simplify, we suppose the following fact:\\ if $(u_1^*,u_3^*,u_4^*) \neq (0,0,0)$, then $u_3^* \neq 0$. Otherwise, we can easily adapt our proof strategy (see \Cref{autredemarche}). Then, from \eqref{defg}, there exist $t_1< t_2\in (0,T)$, a nonempty open subset $\omega_0 \subset \subset \omega$ and $M>0$ such that
\[ \forall (t,x) \in (t_1,t_2)\times\omega_0,\  G_{41}(0,0,0,0)(t,x) \geq 2/M,\]
\[ \forall (k,l) \in \{1,\dots,4\}^2,\  \norme{G_{kl}(0,0,0,0)}_{L^{\infty}(Q)} \leq M/2.\]
Consequently, we study the null-controllability of the linear systems 
\begin{equation}
\left\{
\begin{array}{l l}
\partial_t \zeta - D_3 \Delta \zeta = A \zeta + B_3 \widehat{h^3} 1_{\omega} &\mathrm{in}\ (0,T)\times\Omega,\\
\frac{\partial\zeta}{\partial n} = 0 &\mathrm{on}\ (0,T)\times\partial\Omega,\\
\zeta(0,.)=  \zeta_0 &\mathrm{in}\  \Omega,
\end{array}
\right.
\label{syst3c}
\end{equation}
where the matrix $A$ verifies the following assumptions
\begin{equation}
\forall (t,x) \in (t_1,t_2)\times \omega_0,\  a_{41}(t,x) \geq 1/M,
\label{3csign}
\end{equation}
\begin{equation}
\forall (k,l) \in \{1,\dots,4\}^2,\ \norme{a_{kl}}_{L^{\infty}(Q)} \leq M.
\label{3cbded}
\end{equation}

\begin{rmk}
To simplify the notations, we now denote $\widehat{h^3}$ by ${h^3}$.
\end{rmk}

\subsubsection{2 controls, adequate change of variables}\label{systul2controls}

By \Cref{suvan2}, we can assume that $(u_3^*,u_4^*)\neq (0,0)$.
\paragraph{First case: $d_3 = d_4$} From \eqref{lienfu3u4} and \eqref{defstat2}, system \eqref{syst} reduces to
\begin{align}
&\forall 1 \leq i \leq 3,\notag\\ 
&\left\{
\begin{array}{l l}
\partial_t u_i - d_i \Delta u_{i} = (-1)^{i}(u_1 u_3 - u_2 (u_3^*+u_4^*-u_3)) + h_{i} 1_{\omega}1_{i \leq 2 } &\mathrm{in}\ (0,T)\times\Omega,\\
\frac{\partial u_i}{\partial n} = 0 &\mathrm{on}\ (0,T)\times\partial\Omega,\\
u_i(0,.)=u_{i,0} &\mathrm{in}\  \Omega.
\end{array}
\right.
\label{syst3comp}
\end{align}
We do not give the complete proof of \Cref{mainresult} in this case because it is an easy adaptation of the study of the null-controllability of the linear systems \eqref{syst3c} which satisfy \eqref{3csign}, \eqref{3cbded} (with three equations instead of four). Indeed, by linearization around $(u_i^*)_{1 \leq i \leq 4}$ of \eqref{syst3comp}, the equation satisfied by $u_3$ becomes
\begin{equation}
\partial_t u_3 - d_3 \Delta u_{3} = - u_3^* u_1 + (u_3^*+u_4^*) u_2 - (u_1^*+u_2^*) u_3\ \mathrm{in}\ (0,T)\times\Omega.
\label{couplu3}
\end{equation}
Then, there is a coupling term in \eqref{couplu3} if and only if
\begin{equation} (u_3^*,u_3^*+u_4^*)\neq (0,0)\ \mathrm{i.e.}\  (u_3^*,u_4^*)\neq(0,0).
\label{cond2c1}
\end{equation}
\paragraph{Second case: $d_3 \neq d_4$} We remark that 
\begin{align}
&\boxed{(u_1,u_2,u_3,u_4)(T,.) = (u_1^*,u_2^*,u_3^*,u_4^*)}\notag\\
&\mathrm{if}\ \mathrm{and}\ \mathrm{only}\ \mathrm{if}\label{keyrk2c2}\\
&\boxed{(u_1,u_2,u_3,u_3+u_4)(T,.) = (u_1^*,u_2^*,u_3^*,u_3^*+u_4^*)}\ .\notag
\label{keyrk2c2}
\end{align}
Therefore, we study the system satisfied by $(v_1,v_2,v_3,v_4):=(u_1,u_2,u_3,u_3+u_4)$,
\begin{equation}
\forall 1 \leq i \leq 3,\ 
\left\{
\begin{array}{l l}
\partial_t v_i - d_i \Delta v_{i} = (-1)^{i}(v_1 v_3 - v_2 (v_4-v_3)) + h_{i} 1_{\omega}1_{i \leq 2} &\mathrm{in}\ (0,T)\times\Omega,\\
\partial_t v_4 -d_4 \Delta v_4 = (d_3-d_4) \Delta v_3 &\mathrm{in}\ (0,T)\times\Omega,\\
\frac{\partial v_i}{\partial n} = \frac{\partial v_4}{\partial n}  = 0 &\mathrm{on}\ (0,T)\times\partial\Omega,\\
(v_i,v_4)(0,.)=(u_{i,0},u_{3,0}+u_{4,0}) &\mathrm{in}\  \Omega.
\end{array}
\right.
\label{systvcomp}
\end{equation}
Roughly speaking, $v_4$ can be controlled by $v_3$ thanks to the coupling term of second order $(d_3 - d_4) \Delta v_3$ in the second equation of \eqref{systvcomp} and $v_3$ can be controlled by $v_1$ or $v_2$ because the linearization of the first equation of \eqref{systvcomp} with $i=3$ is
\[\partial_t v_3 - d_3 \Delta v_{3}  =  -u_3^* v_1 + u_4^*v_2 - (u_1^*+u_2^*) v_3 +u_2^*v_4 \ \mathrm{in}\ (0,T)\times\Omega,\]
and $(u_3^*,u_4^*)\neq (0,0)$. Then, the proof of the controllability properties of the linearized-system of \eqref{systvcomp} follows the ideas of \Cref{propcascade} and \Cref{crosseddifussion}. The main difference is the nature of the coupling terms: one coupling term of second order $(d_3 - d_4) \Delta v_3$ and one coupling term of zero order $-u_3^* v_1$ if $u_3^* \neq 0$ or $u_4^*v_2 $ if $u_4^* \neq 0$. 
\paragraph{Linearization in $L^{\infty}(Q)$ and null-controllability of a family of linear systems when $d_3 \neq d_4$} We define
\begin{equation}
(\zeta,h^2):=(\zeta_1, \zeta_2, \zeta_3, \zeta_4,h_1,h_2):=(v_1-u_1^*,v_2-u_2^*,v_3-u_3^*,v_4-(u_3^*+u_4^*),h_1,h_2).
\label{defzeta2}
\end{equation}
Then, $(u,h^2)$ is a trajectory of \eqref{syst} if and only if $(\zeta,h^2)$ is a trajectory of
\[
\left\{
\begin{array}{l l}
\partial_t \zeta - D_2 \Delta \zeta = G(\zeta) \zeta + B_2 h^2 1_{\omega} &\mathrm{in}\ (0,T)\times\Omega,\\
\frac{\partial \zeta}{\partial n} = 0 &\mathrm{on}\ (0,T)\times\partial\Omega,\\
\zeta(0,.)= \zeta_{0}  &\mathrm{in}\  \Omega,
\end{array}
\right.
\]
where 
\small
\begin{equation}
D_2 := \begin{pmatrix}d_1&0&0&0\\ 
0&d_2&0&0\\ 
0&0&d_3&0\\ 
0&0&(d_3-d_4)&d_4
\end{pmatrix},\qquad
G(\zeta):=\begin{pmatrix}-(u_3^*+\zeta_3)&u_4^*+\zeta_4-\zeta_3&-u_1^*-u_2^*&u_2^*\\ 
u_3^*+\zeta_3&-(u_4^*+\zeta_4-\zeta_3)&u_1^*+u_2^*&-u_2^*\\ 
-(u_3^*+\zeta_3)&u_4^*+\zeta_4-\zeta_3&-u_1^*-u_2^*&u_2^*\\ 
0&0&0&0
\end{pmatrix}.
\label{defd2}
\end{equation}
\normalsize
Note that $G_{31}(0,0,0,0) = -u_3^*$ and $G_{32}(0,0,0,0) = u_4^*$. Then, $(G_{31}(0,0,0,0), G_{32}(0,0,0,0))\neq (0,0)$. To simplify, we suppose that $G_{31}(0,0,0,0) \neq 0$. The other case is similar. There exist $t_1< t_2\in (0,T)$, a nonempty open subset $\omega_0 \subset \subset \omega$ and $M>0$ such that
\begin{equation*}
\forall (t,x) \in (t_1,t_2)\times\omega_0,\  G_{31}(0,0,0,0)(t,x) \leq -2/M,
\end{equation*}
\begin{equation*}
\forall (k,l) \in \{1,\dots,3\}\times\{1,\dots,3\},\  \norme{G_{kl}(0,0,0,0)}_{L^{\infty}(Q)} \leq M/2,
\end{equation*}
\begin{equation*}
G_{14} = - G_{24} = G_{34} = u_2^*, \ G_{41} = G_{42} = G_{43} = G_{44} = 0.
\end{equation*}
Consequently, we study the null-controllability of the linear systems 
\begin{equation}
\left\{
\begin{array}{l l}
\partial_t \zeta- D_2 \Delta \zeta = A \zeta + B_2 h^2 1_{\omega} &\mathrm{in}\ (0,T)\times\Omega,\\
\frac{\partial \zeta}{\partial n} = 0 &\mathrm{on}\ (0,T)\times\partial\Omega,\\
\zeta(0,.)= \zeta_0 &\mathrm{in}\  \Omega,
\end{array}
\right.
\label{syst2c2}
\end{equation}
where the matrix $A$ verifies the following assumptions
\begin{equation}
\forall (t,x) \in (t_1,t_2)\times \omega_0,\  a_{31}(t,x) \leq -1/M,
\label{2c2sign}
\end{equation}
\begin{equation}
\forall (k,l) \in \{1,\dots,3\}\times\{1,\dots,3\},\ \norme{a_{kl}}_{L^{\infty}(Q)} \leq M,
\label{2c2bded}
\end{equation}
\begin{equation}
a_{14} = - a_{24} = a_{34} = u_2^*,
\label{2c2cst1}
\end{equation}
\begin{equation}
a_{41} = a_{42} = a_{43} = a_{44} = 0.
\label{2c2cst2}
\end{equation}
\begin{rmk}
Actually, we can show the null controllability of a bigger family of linear systems. Indeed, we can replace \eqref{2c2cst1} by the more general assumption: $a_{14}$, $a_{24}$, $a_{34} \in \R$ because it does not change the proof of the null-controllability result of the linear systems like \eqref{syst2c2} (see \Cref{contrlinlinfty}). But, the more general case $a_{14}$, $a_{24}$, $a_{34} \in L^{\infty}(Q)$ is not handled by our proof of \Cref{contrlinlinfty} (see \Cref{preuve2èmeobs} and in particular \eqref{deltaadj}).
\end{rmk}
\begin{rmk}
The algebraic relation \eqref{2c2cst2} is useful to prove the null-controllability result of the linear systems like \eqref{syst2c2} (see \Cref{contrlinlinfty}) because it creates the cascade form of \eqref{syst2c2}. Indeed, the fourth and the third equation of \eqref{syst2c2} are
\[ \partial_t \zeta_4 -d_4 \Delta \zeta_4 = (d_3-d_4) \Delta \zeta_3 \ \mathrm{in}\ (0,T)\times\Omega,\ \text{and}\ d_3-d_4 \neq 0,\]
\[ \partial_t \zeta_3 -d_3 \Delta \zeta_3 = a_{31} \zeta_1 + a_{32} \zeta_2 + a_{33} \zeta_3 + u_2^* \zeta_4 \ \mathrm{in}\ (0,T)\times\Omega,\ \text{and}\ \forall (t,x) \in (t_1,t_2)\times \omega_0,\  a_{31}(t,x) \leq -1/M.\]
\end{rmk}

\subsubsection{1 control, adequate change of variables}\label{systul1control}

By \Cref{suvan1}, we can assume that $u_3^* \neq 0$.

\paragraph{First case: $\exists k \neq l \in \{2,3,4\},\ d_k = d_l$} We treat  the case $d_2=d_3$, $d_3 \neq d_4$. The other cases are similar. From \eqref{lienfukul} and \eqref{defstat1}, system \eqref{syst} reduces to
\begin{align}
&\forall i \in \{1,2,4\},\notag\\ 
&\left\{
\begin{array}{l l}
\partial_t u_i - d_i \Delta u_{i} = (-1)^{i}(u_1 (u_2^*+u_3^*-u_2) - u_2 u_4) + h_{i} 1_{\omega}1_{i \leq1} &\mathrm{in}\ (0,T)\times\Omega,\\
\frac{\partial u_i}{\partial n} = 0 &\mathrm{on}\ (0,T)\times\partial\Omega,\\
u_i(0,.)=u_{i,0} &\mathrm{in}\  \Omega.
\end{array}
\right.
\label{syst3comp2}
\end{align}
We remark that
\begin{align}
&(u_1,u_2,u_4)(T,.) = (u_1^*,u_2^*,u_4^*)\notag\\
&\mathrm{if}\ \mathrm{and}\ \mathrm{only}\ \mathrm{if}\label{keyrk1c2}\\
&(u_1,u_2,u_2-u_4)(T,.) = (u_1^*,u_2^*,u_2^*-u_4^*).\notag
\end{align}
Therefore, we study the system satisfied by $(v_1,v_2,v_3):=(u_1,u_2,u_2-u_4)$,
\begin{align}
&\forall 1 \leq i \leq 2,\notag\\ 
&\left\{
\begin{array}{l l}
\partial_t v_i - d_i \Delta v_{i} = (-1)^{i}(v_1 (u_2^*+u_3^*-v_2) - v_2 (v_2-v_3)) + h_{i} 1_{\omega}1_{i\leq 1} &\mathrm{in}\ (0,T)\times\Omega,\\
\partial_t v_3 -d_4 \Delta v_3 = (d_2-d_4) \Delta v_2 &\mathrm{in}\ (0,T)\times\Omega,\\
\frac{\partial v_i}{\partial n} = \frac{\partial v_3}{\partial n}  = 0 &\mathrm{on}\ (0,T)\times\partial\Omega,\\
(v_i(0,.),v_3(0,.))=(u_{i,0},u_{2,0}-u_{4,0}) &\mathrm{in}\  \Omega.
\end{array}
\right.
\label{systvcomp2}
\end{align}
We do not give the complete proof of \Cref{mainresult} in this case because it is an easy adaptation of the study of the null-controllability of the linear systems \eqref{syst2c2} which satisfy \eqref{2c2sign}, \eqref{2c2bded}, \eqref{2c2cst1} and \eqref{2c2cst2} (with three equations instead of four). Indeed, $v_3$ can be controlled by $v_2$ thanks to the coupling term of second order $(d_2 - d_4) \Delta v_2$ in the second equation of \eqref{systvcomp2} and $v_2$ can be controlled by $v_1$ because the linearization of the first equation of \eqref{systvcomp2} with $i=2$ is 
\[
\partial_t v_2 - d_2 \Delta v_{2}=u_3^* v_1 + (-v_1^* -2v_2^*+v_3^*)v_2 + u_2^* v_3 \ \mathrm{in}\ (0,T)\times\Omega,\]
where  $(v_1^*,v_2^*,v_3^*):=(u_1^*,u_2^*,u_2^*-u_4^*)$ and $u_3^* \neq 0$.
\paragraph{Second case: $d_2 \neq d_3$, $d_3 \neq d_4$, $d_2 \neq d_4$.} We introduce $\alpha \neq \beta$ such that 
\begin{equation}
\alpha (d_2-d_4) = \beta (d_3-d_4)=1,\ \text{i.e.}\ \alpha = \frac{1}{d_2-d_4}\ \text{and}\ \beta =  \frac{1}{d_3-d_4}.
\label{rel2}
\end{equation}
Then, we define $\gamma \neq 0$ by the algebraic relation
\begin{equation}
\alpha - \beta + \gamma = 0,\ \text{i.e.}\ \gamma = \beta-\alpha.
\label{rel1}
\end{equation}
We remark that
\begin{align}
&\boxed{(u_1,u_2,u_3,u_4)(T,.) = (u_1^*,u_2^*,u_3^*,u_4^*)}\notag\\
&\mathrm{if}\ \mathrm{and}\ \mathrm{only}\ \mathrm{if}\notag\\
&\boxed{(u_1,u_2,u_2+u_3,\alpha u_2 + \beta u_3 + \gamma u_4)(T,.)= (u_1^*,u_2^*,u_2^*+u_3^*,\alpha u_2^* + \beta u_3^* + \gamma u_4^*)}\ .
\label{keyrk1c2bis}
\end{align}
Therefore, we study the system satisfied by $(v_1,v_2,v_3,v_4):=(u_1,u_2,u_2+u_3,\alpha u_2 + \beta u_3 + \gamma u_4)$. We introduce the following notations
\begin{equation}
g_1(v_2,v_3,v_4):= \frac{\beta-\alpha}{\gamma}v_2 - \frac{\beta}{\gamma}v_3 +\frac{1}{\gamma} v_4 =u_4,\ g_2(v_2,v_3) := v_3-v_2 = u_3.
\label{deffg}
\end{equation}
We have
\begin{align}
&\forall 1 \leq i \leq 2,\notag\\
&\left\{
\begin{array}{l l}
\partial_t v_i - d_i \Delta v_i = (-1)^{i}\left(g_2(v_2,v_3)v_1 -  g_1(v_2,v_3,v_4)v_2 \right)+h_i 1_{\omega} 1_{i\leq 1}  &\mathrm{in}\ (0,T)\times\Omega,\\
\partial_t v_3 - d_3 \Delta v_3 = (d_2-d_3) \Delta v_2 &\mathrm{in}\ (0,T)\times\Omega,\\
\partial_t v_4 - d_4 \Delta v_4 =  \Delta v_3 &\mathrm{in}\ (0,T)\times\Omega,\\
\frac{\partial v_i}{\partial n}= \frac{\partial v_3}{\partial n}=\frac{\partial v_4}{\partial n}= 0 &\mathrm{on}\ (0,T)\times\partial\Omega, \\
(v_i,v_3,v_4)(0,.)=(u_{i,0},u_{2,0}+u_{3,0},\alpha u_{2,0} + \beta u_{3,0} + \gamma u_{4,0}) &\mathrm{in}\ \Omega.
\end{array}
\right.
\label{systvdernier}
\end{align}
Roughly speaking, $v_4$ can be controlled by $v_3$ thanks to the coupling term of second order $\Delta v_3$ in the third equation of \eqref{systvdernier} and $v_3$ can be controlled by $v_2$ thanks to the coupling term of second order $(d_2-d_3)\Delta v_2$ in the second equation of \eqref{systvdernier} and $v_2$ can be controlled by $v_1$ because the linearization of the first equation of \eqref{systvdernier} with $i=2$ is
\begin{align*}\partial_t v_2 - d_2 \Delta v_{2} &=g_2(v_2^*,v_3^*) v_1-g_1(v_2^*,v_3^*,v_4^*)v_2 + v_1^* g_2(v_2,v_3) - v_2^* g_1(v_2,v_3,v_4)\\
& =u_3^* v_1-g_1(v_2^*,v_3^*,v_4^*)v_2 + v_1^* g_2(v_2,v_3) - v_2^* g_1(v_2,v_3,v_4)  \qquad \qquad \mathrm{in}\ (0,T)\times\Omega,
\end{align*}
and $u_3^* \neq 0$. Then, the proof of the controllability properties of the linearized-system of \eqref{systvdernier} follows the ideas of \Cref{propcascade} and \Cref{crosseddifussion}. The main difference is the nature of the coupling terms: two coupling terms of second order $\Delta v_3$, $(d_2-d_3)\Delta v_2$ and one coupling term of zero order $u_3^* v_1$. 
\paragraph{Linearization in $L^{\infty}(Q)$ and null-controllability of a family of linear systems when $d_2\neq d_3$, $d_2 \neq d_4$, $d_3 \neq d_4$} We define
\begin{equation}
(\zeta,h^1):=(\zeta_1, \zeta_2, \zeta_3, \zeta_4,h_1):=(v_1-u_1^*,v_2-u_2^*,v_3-(u_2^*+u_3^*),v_4-(\alpha u_2^* + \beta u_3^* + \gamma u_4^*),h_1).
\label{defzeta1}
\end{equation}
Then, $(u,h^1)$ is a trajectory of \eqref{syst} if and only if $(\zeta,h^1)$ is a trajectory of 
\[
\left\{
\begin{array}{l l}
\partial_t \zeta - D_1 \Delta \zeta = G(\zeta) \zeta + B_1 h^1 1_{\omega} &\mathrm{in}\ (0,T)\times\Omega,\\
\frac{\partial \zeta}{\partial n} = 0 &\mathrm{on}\ (0,T)\times\partial\Omega,\\
\zeta(0,.)= \zeta_{0}  &\mathrm{in}\  \Omega,
\end{array}
\right.
\]
where 
\footnotesize
\begin{equation}
D_1 := \begin{pmatrix}d_1&0&0&0\\ 
0&d_2&0&0\\ 
0&d_2-d_3&d_3&0\\ 
0&0&1&d_4
\end{pmatrix},\quad G(\zeta):=\begin{pmatrix}-(u_3^*+g_2(\zeta_2,\zeta_3))&m_1+g_1(\zeta_2,\zeta_3,\zeta_4)&-m_2&m_3\\ 
u_3^*+g_2(\zeta_2,\zeta_3)&-(m_1+g_1(\zeta_2,\zeta_3,\zeta_4))&m_2&-m_3\\ 
0&0&0&0\\ 
0&0&0&0
\end{pmatrix},
\label{defd1}
\end{equation}
\normalsize
with $m_1 := u_1^* + u_2^* + u_4^*$, $m_2:=u_1^* + \frac{\beta}{\gamma} u_2^*$ and $m_3 = \frac{1}{\gamma} u_2^*$.
Note that $G_{21}(0,0,0,0) = u_3^*$. There exist $t_1< t_2\in (0,T)$, a nonempty open subset $\omega_0 \subset \subset \omega$ and $M>0$ such that
\begin{equation*}
\forall (t,x) \in (t_1,t_2)\times\omega_0,\  G_{21}(0,0,0,0)(t,x) \geq 2/M,
\end{equation*}
\begin{equation*}
\forall (k,l) \in \{1,2\}\times\{1,2\},\  \norme{G_{kl}(0,0,0,0)}_{L^{\infty}(Q)} \leq M/2,
\end{equation*}
\begin{equation*}
G_{13} =  - G_{23} = -m_2, \ G_{14} = -G_{24} = m_3,\ G_{kl} = 0,\  3 \leq k \leq 4,\ 1 \leq l \leq 4.
\end{equation*}
Consequently, we study the null-controllability of the linear systems 
\begin{equation}
\left\{
\begin{array}{l l}
\partial_t \zeta - D_1 \Delta \zeta = A \zeta + B_1 h^1 1_{\omega} &\mathrm{in}\ (0,T)\times\Omega,\\
\frac{\partial \zeta}{\partial n} = 0 &\mathrm{on}\ (0,T)\times\partial\Omega,\\
\zeta(0,.)= \zeta_0 &\mathrm{in}\  \Omega,
\end{array}
\right.
\label{syst1c2}
\end{equation}
where the matrix $A$ verifies the following assumptions
\begin{equation}
\forall (t,x) \in (t_1,t_2)\times\omega_0,\  a_{21}(t,x) \geq 1/M,
\label{1c2sign}
\end{equation}
\begin{equation}
\forall (k,l) \in \{1,2\}\times\{1,2\},\  \norme{a_{kl}}_{L^{\infty}(Q)} \leq M,
\label{1c2bded}
\end{equation}
\begin{equation}
a_{13} = -a_{23} = -m_2,\ a_{14} = -a_{24} = m_3,
\label{1c2cst1}
\end{equation}
\begin{equation}
a_{kl} = 0,\  3 \leq k \leq 4,\ 1 \leq l \leq 4.
\label{1c2cst2}
\end{equation}
\begin{rmk}
Actually, we can show the null controllability of a bigger family of linear systems. Indeed, we can replace \eqref{1c2cst1} by the more general assumption: $a_{13}$, $a_{23}$, $a_{14}$, $a_{24} \in \R$ because it does not change the proof of the null-controllability result of the linear systems like \eqref{syst1c2} (see \Cref{contrlinlinfty}). But, the more general case $a_{13}$, $a_{23}$, $a_{14}$, $a_{24} \in L^{\infty}(Q)$ is not handled by our proof of \Cref{contrlinlinfty} (see \Cref{preuve1controle} and in particular \eqref{deltadeltaadj} and \eqref{deltaadjbis}).
\end{rmk}
\begin{rmk}
The algebraic relation \eqref{1c2cst2} is useful to prove the null-controllability result of the linear systems like \eqref{syst1c2} (see \Cref{contrlinlinfty}) because it creates the cascade form of \eqref{syst1c2}. Indeed, the fourth, the third and the second equation of \eqref{syst1c2} are
\[ \partial_t \zeta_4 -d_4 \Delta \zeta_4 = \Delta \zeta_3 \ \mathrm{in}\ (0,T)\times\Omega,\]
\[ \partial_t \zeta_3 -d_3 \Delta \zeta_3 = (d_2-d_3)\Delta \zeta_2 \ \mathrm{in}\ (0,T)\times\Omega,\ \text{and}\ (d_2-d_3)\neq 0,\]
\[ \partial_t \zeta_2 -d_2 \Delta \zeta_2 = a_{21} \zeta_1 + a_{22} \zeta_2 + m_2 \zeta_3 -m_3 \zeta_4 \ \mathrm{in}\ (0,T)\times\Omega,\ \text{and}\ \forall (t,x) \in (t_1,t_2)\times \omega_0,\  a_{21}(t,x) \geq 1/M.\]
\end{rmk}

\subsection{Null controllability in $L^{2}(\Omega)^4$ with controls in $L^{\infty}(Q)^j$ of a family of linear control systems}

\subsubsection{Main result of this subsection}

We introduce the following notations,
\begin{align}
\mathcal{E}_3 &:=\{A \in \mathcal{M}_4(L^{\infty}(Q))\ ;\ A\  \text{verifies the assumptions \eqref{3csign} and \eqref{3cbded}}\},\label{defe3}\\
H_3 &:= L^2(\Omega)^4, \label{defh3}\\
\mathcal{E}_2 &:=\{A \in \mathcal{M}_4(L^{\infty}(Q))\ ;\ A\  \text{verifies the assumptions \eqref{2c2sign}, \eqref{2c2bded}, \eqref{2c2cst1} and \eqref{2c2cst2}}\},\label{defe2}\\ 
H_2 &:=\left\{\zeta_0 \in L^2(\Omega)^4\ ; \int_{\Omega} \zeta_{0,4} = 0\right\},\label{defh2}\\ 
\mathcal{E}_1&:=\{ A \in \mathcal{M}_4(L^{\infty}(Q))\ ;\ A\  \text{verifies the assumptions  \eqref{1c2sign}, \eqref{1c2bded}, \eqref{1c2cst1} and \eqref{1c2cst2}}\},\label{defe1}\\ H_1&:=\left\{\zeta_0 \in L^2(\Omega)^4\ ; \int_{\Omega} \zeta_{0,3} =  \int_{\Omega} \zeta_{0,4}= 0\right\} \label{defh1}.
\end{align}
\indent The main result of this subsection is a null-controllability result in $L^2(\Omega)^4$ with controls in $L^{\infty}(Q)^j$  for families of linear control systems.
\begin{prop}\label{contrlinlinfty}
Let $j \in \{1,2,3\}$, $D_j$ defined by \eqref{defd3}, \eqref{defd2} or \eqref{defd1}. There exists $C >0$ such that, for every $A \in \mathcal{E}_j$ and $\zeta_0=(\zeta_{0,1},\zeta_{0,2},\zeta_{0,3},\zeta_{0,4})\in H_j$, there exists $h^j \in L^{\infty}(Q)^j$ satisfying
\begin{equation}\norme{h^j}_{L^{\infty}(Q)^j} \leq C\norme{\zeta_0}_{L^2(\Omega)^4},
\label{estinf}
\end{equation}
such that the solution $\zeta \in Y^4$ to the Cauchy problem
\begin{equation}
\left\{
\begin{array}{l l}
\partial_t \zeta - D_j \Delta \zeta = A \zeta + B_j h^j 1_{\omega} &\mathrm{in}\ (0,T)\times\Omega,\\
\frac{\partial\zeta}{\partial n} = 0 &\mathrm{on}\ (0,T)\times\partial\Omega,\\
\zeta(0,.)= \zeta_0 &\mathrm{in}\  \Omega,
\end{array}
\right.
\label{systzeta}
\end{equation}
verifies
\[\zeta(T,.) =0.\]
\end{prop}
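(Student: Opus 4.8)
The plan is the classical two-step scheme: first produce, for every $A\in\mathcal{E}_j$ and every $\zeta_0\in H_j$, a control $h^j\in L^2(Q)^j$ driving the solution of \eqref{systzeta} to zero with $\norme{h^j}_{L^2(Q)^j}\le C\norme{\zeta_0}_{L^2(\Omega)^4}$, and then bootstrap the regularity of $h^j$ up to $L^\infty(Q)^j$. By the Hilbert Uniqueness Method (see \Cref{HUM}), the first step amounts to proving an observability inequality
\[
\norme{\varphi(0,\cdot)}_{L^2(\Omega)^4}^2 \le C\int_0^T\!\!\int_\omega \abs{B_j^{t}\varphi}^2
\]
for every solution $\varphi$ of the adjoint system $-\partial_t\varphi - D_j^{t}\Delta\varphi = A^{t}\varphi$ with homogeneous Neumann data, where $B_j^{t}\varphi=(\varphi_1,\dots,\varphi_j)$. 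A preliminary remark: integrating the last equation (for $j=2$) or the last two equations (for $j=1$) of \eqref{systzeta} over $\Omega$, using the Neumann condition and the vanishing relations \eqref{2c2cst2}, \eqref{1c2cst2}, shows that $\int_\Omega\zeta_4$, and also $\int_\Omega\zeta_3$ when $j=1$, are conserved; this is why the constraints defining $H_2$ and $H_1$ in \eqref{defh2}, \eqref{defh1} are necessary, and one checks they cause no obstruction beyond that.

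The core of the argument is the observability inequality, obtained from a global Carleman estimate that exploits the cascade structure of \eqref{systzeta}, following the mechanisms of \Cref{propcascade} (zeroth-order couplings) and \Cref{crosseddifussion} (second-order couplings). Using Fursikov--Imanuvilov weights $e^{-s\alpha}\xi^k$ adapted to the Neumann condition and a fixed subdomain $\omega_0\subset\subset\omega$, one writes one scalar Carleman inequality per equation and recovers the unobserved adjoint components successively. For the zeroth-order coupling (coefficient $a_{41}$ in the $3$-control case, see \eqref{3csign}, or $a_{31}$, resp. $a_{21}$, in the $2$- and $1$-control cases, see \eqref{2c2sign}, \eqref{1c2sign}), the adjoint equation for $\varphi_1$ — which, thanks to \eqref{2c2cst2}, resp. \eqref{1c2cst2}, contains no $\varphi_4$ (nor $\varphi_3,\varphi_4$) term — lets one express the next component on $(t_1,t_2)\times\omega_0$ in terms of $\varphi_1$, $\partial_t\varphi_1$, $\Delta\varphi_1$ and already-treated components; the resulting local term with one time and two space derivatives is absorbed by integrations by parts with cut-off functions, as in the arguments of Guerrero and of Chaves-Silva--Guerrero. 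For each second-order coupling (the entry $d_3-d_4$ in $D_2$; the entries $d_2-d_3$ and $1$ in $D_1$, all nonzero), the adjoint equation of the preceding component expresses the \emph{Laplacian} of the next component in terms of lower components and their derivatives, and the Carleman inequality for the heat operator with a localized observation of $\Delta$ — exactly the tool behind \Cref{crosseddifussion} (\cite{Gu}, \cite{FCGBT}) — converts this into a bound on the component itself; for $j=1$ this step is iterated twice. Putting the pieces together, absorbing the lower-order and cross terms using the large parameter $s$, and passing from the weighted global integral of $\abs{\varphi}^2$ to $\norme{\varphi(0,\cdot)}_{L^2(\Omega)^4}^2$ by a standard energy estimate, one obtains the observability inequality. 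Every constant depends only on $\Omega$, $\omega_0$, $t_1$, $t_2$, $T$, the $(d_i)$ and the bound $M$ from \eqref{3cbded}, \eqref{2c2bded}, \eqref{1c2bded}, hence not on the particular $A\in\mathcal{E}_j$; this gives the uniform constant in \eqref{estinf}.

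It remains to upgrade the control from $L^2(Q)^j$ to $L^\infty(Q)^j$. Here one replaces the plain Hilbert Uniqueness Method by its penalized version (see \Cref{pHUM}), which yields a control that drives the state to zero, is slightly more regular than $L^2$ and vanishes (together with the state) as $t\to T$, still with the estimate $\lesssim\norme{\zeta_0}_{L^2(\Omega)^4}$. A finite number of parabolic regularity steps — based on the smoothing of the heat semigroup and on the $L^\infty$ well-posedness estimate \eqref{estl2faiblelinfty} of \Cref{wpl2linfty}, the number of steps depending on $N$ — then improves the regularity of $\zeta$ and of $h^j$ up to $L^\infty(Q)$ while preserving $\zeta(T,\cdot)=0$ and the bound $\norme{h^j}_{L^\infty(Q)^j}\le C\norme{\zeta_0}_{L^2(\Omega)^4}$ (see \Cref{btm}).

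I expect the Carleman estimate to be the main obstacle, and within it the handling of the second-order coupling terms: recovering a component of the adjoint state from a localized observation of its Laplacian forces one to control higher-order space (and mixed) derivatives, and this must be iterated twice in the one-control case, so both the design of the weights and the bookkeeping of the absorbed terms are delicate. The zeroth-order couplings and the uniformity of the constant in $A$ are comparatively routine once the appropriate Carleman inequalities are available.
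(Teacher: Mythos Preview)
Your outline matches the paper's strategy (HUM reduced to observability, Carleman estimates exploiting the cascade with zero- and second-order couplings, penalized HUM, bootstrap), and your diagnosis that the one-control case is the delicate one is right. Two points need sharpening, however.

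First, the observability inequality you wrote is \emph{false} for $j\in\{1,2\}$: take $\varphi_T=(0,0,0,c)$ with $c$ a nonzero constant in the $j=2$ adjoint system \eqref{heatnondiagsystem1}; then $\varphi\equiv(0,0,0,c)$, so $B_2^{t}\varphi\equiv0$ while $\varphi(0,\cdot)\neq0$. What the paper actually proves is \eqref{inobs2c2} (resp.\ \eqref{inobs1c2}), where $\varphi_4$ (resp.\ $\varphi_3,\varphi_4$) is replaced by its mean-free part $\varphi_i-(\varphi_i)_\Omega$; this is exactly enough for HUM restricted to $H_j$ because $\int_\Omega\zeta_{0,4}=0$ kills the missing mean (see \eqref{sousvariete}). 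The way this comes out of the Carleman is not by ``observing $\Delta$'' directly: one first applies $\Delta$ (resp.\ $\Delta$ and $\Delta\Delta$) to the adjoint equations to obtain genuine heat equations for $\Delta\varphi_4$ (resp.\ $\Delta\varphi_3$ and $\Delta\Delta\varphi_4$), runs the scalar Carleman on those, and only then converts the resulting bounds on $\Delta\varphi_i$ into bounds on $\varphi_i-(\varphi_i)_\Omega$ via Poincar\'e--Wirtinger (\Cref{lempw}, \Cref{corpw}). For $j=1$ the paper also needs the special Carleman estimate \eqref{carl3} of \Cref{lemcarl3} (source of the form $\Delta f$ with the bound in terms of $f$, not $\Delta f$); without it the first right-hand side term after summing cannot be absorbed.

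Second, the bootstrap does not act on $\zeta$. The penalized HUM produces $h_i^{\varepsilon}=e^{2s\alpha}(s\phi)^{M_j}\varphi_i^{\varepsilon}1_\omega$ (see \eqref{control-adj}); the iteration of \Cref{btm} is run on the weighted \emph{adjoint} variable $\psi^{\varepsilon,k}=e^{\widehat\alpha(s+\delta_k)}\widetilde{\varphi^{\varepsilon}}$ (with the means removed for $j\le2$), using maximal $L^p$ regularity (\Cref{wplp}) and the embeddings of \Cref{injsobo}, and the $L^\infty$ bound on $h^{j}$ then follows from the weight inequality \eqref{inpoids}. The estimate \eqref{estl2faiblelinfty} is not the engine here.
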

\begin{rmk}
For every $1 \leq j \leq 3$, the diffusion matrices $D_j$ defined by \eqref{defd3}, \eqref{defd2} or \eqref{defd1} verify the assumption of \Cref{wpl2linfty} because they are similar to $diag(d_1,d_2,d_3,d_4)$.
\end{rmk}
\subsubsection{Proof strategy of \Cref{contrlinlinfty}: Null controllability in $L^{2}(\Omega)^4$ with controls in $L^{\infty}(Q)^j$ of a family of linear control systems}
\begin{itemize}[nosep]
\item We let evolve the system without control in $(0,t_1)$ (take $h^j(t,.)=0$ in $(0,t_1)$). From \Cref{injclassique} and \Cref{wpl2linfty}, we get the existence of $C >0$ such that for every $A \in \mathcal{E}_j$, $\zeta_0\in L^2(\Omega)^4$, the solution to the Cauchy problem satisfies
\[ \norme{\zeta^{*}}_{L^2(\Omega)^4} \leq C \norme{\zeta_0}_{L^2(\Omega)^4},\]
where
\[ \zeta^{*}=\zeta(t_1,.).\]
\item Then, we find $h^j:(t_1,t_2)\times \Omega \rightarrow \mathbb{R}$ such that
\[ \norme{h^j}_{L^{\infty}((t_1,t_2)\times\Omega)^j} \leq C\norme{\zeta(t_1,.)}_{L^2(\Omega)^4},\]
and the solution to the Cauchy problem
\[
\left\{
\begin{array}{l l}
\partial_t \zeta - D_j \Delta \zeta = A \zeta + B_j h^j 1_{\omega} &\mathrm{in}\ (t_1,t_2)\times\Omega,\\
\frac{\partial\zeta}{\partial n} = 0 &\mathrm{on}\ (t_1,t_2)\times\partial\Omega,\\
\zeta(t_1,.)=\zeta^{*} &\mathrm{in}\ \Omega,
\end{array}
\right.
\]
verifies
\[\zeta(t_2,.) =0.\]
\item Then, we set $h^j(t,.)=0$ so that $h^j(t,.) = 0$ for $t \in (t_2,T)$.\\
\end{itemize}
This strategy gives 
\[\zeta(T,.)=0\  \mathrm{and} \norme{h^j}_{L^{\infty}((0,T)\times\omega)^j} \leq C\norme{\zeta_0}_{L^2(\Omega)^4}.\]
To simplify, we now suppose
\[ (t_1,t_2)  =(0,T).\]

\subsection{First step: Controls in $L^{2}(Q)^j$}
The goal of this section is the proof of the following result.
\begin{prop}\label{controll2}
Let $j \in \{1,2,3\}$. There exists $C >0$ such that, for every $A \in \mathcal{E}_j$ and for every 
$\zeta_0 \in H_j$, there exists a control $h^j \in L^2(Q)^j$ satisfying
\begin{equation}
\norme{h^j}_{L^2(Q)^j} \leq C\norme{\zeta_0}_{L^2(\Omega)^4}
\label{est2ctrl}
\end{equation}
such that the solution $\zeta \in Y^4$ to the Cauchy problem \eqref{systzeta} satisfies $\zeta(T,.) = 0$.
\end{prop}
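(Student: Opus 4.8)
The plan is to prove \Cref{controll2} by the Hilbert Uniqueness Method (see \Cref{HUM}), turning the $L^2(Q)^j$ null-controllability together with the quantitative bound \eqref{est2ctrl} into an observability inequality for the adjoint system, and then to obtain that observability inequality from a global Carleman estimate (see \Cref{Carlemanest}). For $\varphi_T$ I would consider the backward adjoint problem
\begin{equation*}
\left\{
\begin{array}{l l}
-\partial_t \varphi - D_j^T \Delta \varphi = A^T \varphi &\mathrm{in}\ (0,T)\times\Omega,\\
\frac{\partial\varphi}{\partial n} = 0 &\mathrm{on}\ (0,T)\times\partial\Omega,\\
\varphi(T,.)=\varphi_T &\mathrm{in}\  \Omega,
\end{array}
\right.
\end{equation*}
which is well posed in $Y^4$ by \Cref{wpl2linfty} run backward in time, since $D_j^T$ is again diagonalizable with positive spectrum. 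The classical duality then says that the conclusion of \Cref{controll2}, with constant $C$, is equivalent to
\begin{equation*}
\norme{\varphi(0,.)}_{L^2(\Omega)^4}^2 \leq C^2 \int_0^T\int_{\omega} \abs{B_j^T \varphi}^2,
\end{equation*}
for every $\varphi_T$ in the invariant space $H_j$, where $B_j^T\varphi = (\varphi_1,\dots,\varphi_j)$. One works in $H_j$ rather than in all of $L^2(\Omega)^4$ because the controlled and free dynamics preserve the linear constraints defining $H_j$ (this uses \eqref{2c2cst2}, resp. \eqref{1c2cst2}, together with the structure of $B_j$ and $D_j$), so that only the $H_j$-component of $\varphi_T$ is ever tested against the state; for $j=3$ one has $H_3=L^2(\Omega)^4$ and no constraint at all.

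Second, I would prove this observability inequality from a global parabolic Carleman estimate, treating the three cases separately according to the coupling structures described in \Cref{sytul3controls}, \Cref{systul2controls} and \Cref{systul1control}. For $j=3$ the argument is the standard one behind \Cref{propcascade}: one writes a Carleman estimate for each scalar heat equation $-\partial_t\varphi_i - d_i\Delta\varphi_i=(\text{source}_i)$, uses that $a_{41}$ is bounded below on $(t_1,t_2)\times\omega_0$ (see \eqref{3csign}) to dominate a local weighted integral of $\varphi_4$ by local weighted integrals of $\varphi_1,\varphi_2,\varphi_3$ (the observed components), sums the four estimates with suitably tuned powers of the large parameter, absorbs the lower-order cross terms, and finally uses an energy estimate to replace the degenerate weight at $t=0$ by $\norme{\varphi(0,.)}_{L^2(\Omega)^4}$.

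For $j=2$ and $j=1$ the same scheme is followed, but the propagation from the observed components to the remaining ones now passes through the second-order coupling terms $(d_3-d_4)\Delta\varphi$ (and, for $j=1$, also $(d_2-d_3)\Delta\varphi$ and $\Delta\varphi$) which appear in the adjoint because $D_j^T$ is no longer diagonal. For $j=2$ one first recovers $\varphi_3$ locally from the $\varphi_1$-equation using $a_{31}\le -1/M$ on $(t_1,t_2)\times\omega_0$ (see \eqref{2c2sign}), then recovers $\Delta\varphi_4$ locally from the $\varphi_3$-equation; the trouble is that $\varphi_4$ re-enters the source of $\varphi_3$ through two spatial derivatives, so a naive sum of Carleman estimates cannot absorb the $\Delta\varphi_4$ term (its weight is too weak). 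This is the main obstacle. It is overcome exactly as in the proof of \Cref{crosseddifussion} and as in \cite{FCGBT}, \cite{CSG}: one establishes the Carleman estimate for the pair $(\varphi_3,\varphi_4)$ (resp.\ for the triple when $j=1$) simultaneously, uses $d_3\neq d_4$ to solve $\Delta\varphi_4$ in terms of $\partial_t\varphi_3$, $\Delta\varphi_3$ and lower-order terms, and tunes the weights and parameters so that the second-order quantities balance; the algebraic normalizations \eqref{2c2cst2} and \eqref{1c2cst2} (together with $\alpha\neq\beta$, $\gamma\neq0$ from \eqref{rel2}--\eqref{rel1}) are precisely what puts the system in cascade form so that this closes. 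The restriction that $a_{14}=-a_{24}=a_{34}$ be constant (resp.\ \eqref{1c2cst1}) enters here because constant zeroth-order coefficients commute with the Carleman conjugation, whereas general $L^\infty(Q)$ ones would not, as noted in the remarks following \eqref{syst2c2} and \eqref{syst1c2}.

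Finally, I would observe that every constant produced by this analysis depends only on $\Omega$, $\omega$, $\omega_0$, $t_1$, $t_2$, $T$, $(d_i)_{1\le i\le4}$, $(u_i^*)_{1\le i\le4}$ and on the bound $M$ of \eqref{3cbded}/\eqref{2c2bded}/\eqref{1c2bded} — never on $A$ itself — so the resulting $C$ is uniform over $A\in\mathcal{E}_j$, which is what \Cref{controll2} asserts. Once $h^j\in L^2(Q)^j$ satisfying \eqref{est2ctrl} is obtained from the minimizer of the HUM functional, the $Y^4$-regularity of $\zeta$ and the associated bound follow directly from \Cref{wpl2linfty}.
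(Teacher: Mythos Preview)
Your outline is correct for $j=3$ and matches the paper's argument there. For $j=2$ and $j=1$, however, your description of how the second-order coupling is handled is too vague to close, and in one place the stated mechanism is not the one that actually works.

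The sentence ``uses $d_3\neq d_4$ to solve $\Delta\varphi_4$ in terms of $\partial_t\varphi_3$, $\Delta\varphi_3$ and lower-order terms'' describes a \emph{local} recovery of $\Delta\varphi_4$ from the $\varphi_3$-equation. That step is indeed performed (it is how the local observation term in $\Delta\varphi_4$ is eliminated), but it presupposes that you already have a global Carleman term of the form $\int_Q e^{2s\alpha}(s\phi)^3|\Delta\varphi_4|^2$ on the left-hand side to absorb against. Applying the scalar Carleman estimate to $\varphi_4$ itself does not produce such a term: the source in the $\varphi_3$-equation is $(d_3-d_4)\Delta\varphi_4$, and the resulting right-hand contribution $\int_Q e^{2s\alpha}(s\phi)^2|\Delta\varphi_4|^2$ is one order of $(s\phi)$ too strong to be absorbed by $I(\beta,\lambda,s,\varphi_4)$. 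The paper's device is to \emph{differentiate} the $\varphi_4$-equation first: one applies $\Delta$ to obtain a heat equation for $\Delta\varphi_4$ with source $u_2^*\Delta(\varphi_1-\varphi_2+\varphi_3)$ (this is where the constancy of $a_{14},a_{24},a_{34}$ is used, not a commutation with Carleman weights), applies the Carleman estimate to $\Delta\varphi_4$ with $\beta=0$, and sums with the Carleman estimates for $\varphi_1,\varphi_2,\varphi_3$ taken with $\beta=2$. Only then does the cascade close. The observability of $\varphi_4$ itself, rather than $\Delta\varphi_4$, is recovered a posteriori by the Poincar\'e--Wirtinger inequality, which is why the final estimate controls only $\varphi_4-(\varphi_4)_\Omega$ and why the restriction to $H_2$ is natural.

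For $j=1$ the same idea is iterated: one applies $\Delta$ to the $\varphi_3$-equation and $\Delta\Delta$ to the $\varphi_4$-equation. But now a second obstruction appears: the source of the $\Delta\Delta\varphi_4$-equation is $m_3\Delta\Delta(\varphi_1-\varphi_2)$, and the standard Carleman estimate \eqref{carl1} would put $\int_Q e^{2s\alpha}|\Delta\Delta\varphi_i|^2$ on the right, which the Carleman for $\varphi_i$ (even with $\beta=5$) cannot absorb. The paper invokes a different Carleman inequality (\Cref{lemcarl3}, from \cite{CSG}) designed for sources of the form $\Delta f$, which trades two derivatives of $f$ for an extra factor $(s\phi)^4$; together with a careful use of both parameters $\lambda$ and $s$ this makes the four estimates summable. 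Your proposal does not mention this ingredient, and without it the $j=1$ case does not close. A further technical point you omit is that applying $\Delta$ and $\Delta\Delta$ requires regularity beyond $Y_2$; the paper handles this by a density argument in which both $\varphi_T$ and the coefficients of $A$ are smoothed (\Cref{density1}, \Cref{density2}).
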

The proof of \Cref{controll2} will be done in \Cref{inobs3comp} for $j=3$, \Cref{preuve2èmeobs} for $j=2$, \Cref{preuve1controle} for $j=1$. It requires technical preliminary results presented in \Cref{HUM}, \Cref{Carlemanest}, \Cref{densityres}, \Cref{carlemanDelta}.
\subsubsection{Hilbert Uniqueness Method}\label{HUM} 
First, for $\Phi \in L^2(\Omega)$, $(\Phi)_{\Omega}$ denotes the mean value of $\Phi$,
\[ (\Phi)_{\Omega} := \frac{1}{|\Omega|} \int_{\Omega} \Phi,\]
and for $\Psi \in C([0,T];L^2(\Omega))$, $t \in [0,T]$, we introduce the notation
\[ (\Psi)_{\Omega}(t) := \frac{1}{|\Omega|} \int_{\Omega} \Psi(t,x)dx.\]

\indent By the HUM (Hilbert Uniqueness Method), the null-controllability result of \Cref{controll2} is equivalent to the following observability inequality: \eqref{inobscgene} (see \cite[Theorem 2.44]{C}).\\
\indent Let $j \in \{1,2,3\}$, $D_j$ defined by \eqref{defd3}, \eqref{defd2} or \eqref{defd1}. There exists $C > 0$ such that, for every $A \in \mathcal{E}_j$ and $\varphi_T \in H_j$ (see \eqref{defe3}, \eqref{defh3}, \eqref{defe2}, \eqref{defh2}, \eqref{defe1}, \eqref{defh1}) the solution $\varphi$ of
\begin{equation}
\left\{
\begin{array}{l l}
-\partial_t {\varphi} - D_j^T \Delta \varphi = A^{T} \varphi &\mathrm{in}\ (0,T)\times\Omega,\\
\frac{\partial\varphi}{\partial n} = 0 &\mathrm{on}\ (0,T)\times\partial\Omega,\\
\varphi(T,.)=\varphi_T &\mathrm{in}\  \Omega,
\end{array}
\right.
\label{adj}
\end{equation}
verifies
\begin{equation}
\int_{\Omega} |\varphi(0,x)|^2 dx \leq C\left(\sum\limits_{i=1}^{j} \int\int_{(0,T)\times\omega} |\varphi_i(t,x)|^2 dxdt\right).
\label{inobscgene}
\end{equation}
It is easy to show that it is sufficient to prove the following observability inequalities.\\
\indent There exists $C > 0$ such that, for every $A \in \mathcal{E}_3$ and $\varphi_T \in L^2(\Omega)^4$, the solution $\varphi$ of the adjoint system \eqref{adj} verifies
\begin{equation}
\int_{\Omega} |\varphi(0,x)|^2 dx \leq C \left(\sum\limits_{i=1}^{3}\int\int_{(0,T)\times\omega} |\varphi_i(t,x)|^2 dxdt\right).
\label{inobs3c}
\end{equation}
\indent There exists $C > 0$ such that, for every $A \in \mathcal{E}_2$ and $\varphi_T \in L^2(\Omega)^4$, the solution $\varphi$ of the adjoint system \eqref{adj} verifies
\begin{equation}
\sum\limits_{i=1}^3 \left(\norme{\varphi_i(0,.)}_{L^2(\Omega)}^2\right) + \norme{\varphi_4(0,.)-(\varphi_4)_{\Omega}(0)}_{L^2(\Omega)}^2 \leq C \left(\sum\limits_{i=1}^2\int\int_{(0,T)\times\omega}  |\varphi_i|^2 dxdt\right).
\label{inobs2c2}
\end{equation}
\indent There exists $C > 0$ such that, for every $A \in \mathcal{E}_1$ and $\varphi_T \in L^2(\Omega)^4$, the solution $\varphi$ of the adjoint system \eqref{adj} verifies
\begin{equation}
\sum\limits_{i=1}^2 \left(\norme{\varphi_i(0,.)}_{L^2(\Omega)}^2\right) + \sum_{i=3}^{4}\left( \norme{\varphi_i(0,.)-(\varphi_i)_{\Omega}(0)}_{L^2(\Omega)}^2 \right) \leq C \left(\int\int_{(0,T)\times\omega}  |\varphi_1|^2 dxdt\right).
\label{inobs1c2}
\end{equation}

\subsubsection{Carleman estimates}\label{Carlemanest}
We introduce several weight functions. Let $\omega'' \subset \subset \omega_0$ be a nonempty open subset and $\eta_0 \in C^2(\overline{\Omega})$ verifying
\[ \forall x \in \Omega,\ \eta_0(x) > 0,\ \eta_0 = 0 \ \mathrm{on}\ \partial\Omega,\ \forall x \in \overline{\Omega\setminus\omega''},\ |\nabla\eta_0(x)| > 0.\]
The existence of such a function is proved in \cite[Lemma 2.68]{C}. Let $\lambda \geq 1$ a parameter. We remark that 
\begin{equation}
1+f(\lambda):=1+\exp(-\lambda \norme{\eta_0}_{\infty}) < 2 .
\label{condlambda}
\end{equation}
We define
\begin{equation}
\forall (t,x) \in (0,T)\times\Omega,\ \phi(t,x) := \frac{e^{\lambda \eta_0(x)}}{t(T-t)}>0,\ \alpha(t,x) := \frac{e^{\lambda \eta_0(x)}-e^{2\lambda \norme{\eta_0}_{\infty}}}{t(T-t)}<0,
\label{defpoidsphialpha}
\end{equation}
\begin{equation}
\forall t \in (0,T),\ \widehat{\alpha}(t) := \min_{x \in \overline{\Omega}} \alpha(t,x) = \frac{1-e^{2\lambda \norme{\eta_0}_{\infty}}}{t(T-t)}<0,\ \widehat{\phi}(t) := \min_{x \in \overline{\Omega}} \phi(t,x) = \frac{1}{t(T-t)} >0.
\label{defpoidsmin}
\end{equation}
\begin{theo}\textbf{Carleman inequality}\\
\label{lemcarl1}
Let $d\in (0,+\infty)$, $\omega'$ an open subset such that $\omega'' \subset \subset \omega' \subset \subset \omega_0$ and $\beta \in \R$. There exist $C = C(\Omega,\omega',\beta)$, $\lambda_0 =C(\Omega,\omega',\beta)$, $s_0 = s_0(\Omega,\omega',\beta)$ such that, for any $\lambda \geq \lambda_0$, $s \geq s_0(T+T^2)$, $\varphi_T \in L^2(\Omega)$ and $f \in L^2(Q)$, the solution $\varphi$ to
\[
\left\{
\begin{array}{l l}
-\partial_t {\varphi} - d \Delta \varphi = f&\mathrm{in}\ (0,T)\times\Omega,\\
\frac{\partial\varphi}{\partial n} = 0 &\mathrm{on}\ (0,T)\times\partial\Omega,\\
\varphi(T,.)=\varphi_T &\mathrm{in}\  \Omega,
\end{array}
\right.
\]
satisfies
\begin{align}
I(\beta,\lambda, s, \varphi) &:=  \int_{0}^{T}\int_{\Omega} e^{2s\alpha} \Bigg(\lambda^4(s\phi)^{\beta+3} |\varphi|^2 +  \lambda^2(s\phi)^{\beta+1} |\nabla\varphi|^2 + (s\phi)^{\beta-1}\left( |\partial_t \varphi|^2 + |\Delta \varphi|^2\right)\Bigg) dxdt \notag\\
& \leq C\left(\int_{0}^{T}\int_{\Omega} e^{2s\alpha} (s\phi)^{\beta} |f|^2 dxdt + \int_{0}^{T}\int_{\omega'} \lambda^4 e^{2s\alpha} (s\phi)^{\beta+3} |\varphi|^2 dxdt\right).
\label{carl1}
\end{align}
\end{theo}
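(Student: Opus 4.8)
\emph{Overall strategy.} The plan is to prove \eqref{carl1} by the classical Fursikov--Imanuvilov conjugation method, adapted to the Neumann boundary condition and to the extra weight power $(s\phi)^{\beta}$. The case $\beta=0$ is the now-standard global Carleman estimate for the heat equation (see \cite{FI}; for the Neumann version one may follow the computations behind \cite{FCGBGP} or \cite{CSG}), and the general $\beta\in\R$ is obtained by the \emph{same} computation performed on $w:=e^{s\alpha}(s\phi)^{\beta/2}\varphi$ rather than on $e^{s\alpha}\varphi$, all the additional terms created by the factor $(s\phi)^{\beta/2}$ being of strictly lower order in $s\phi$ and hence absorbable once $s$ and $\lambda$ are large. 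By a density argument it suffices to establish \eqref{carl1} for $\varphi_T$ in a dense subspace, which guarantees enough regularity to justify the integrations by parts below; $w$ then satisfies $w(0,\cdot)=w(T,\cdot)=0$.

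\emph{The conjugation.} Unlike in the Dirichlet case, $w$ does not obey a homogeneous Neumann condition; using $\partial\varphi/\partial n=0$ one gets on $(0,T)\times\partial\Omega$ that $\partial w/\partial n = w\,\big(s\,\partial_n\alpha+\tfrac{\beta}{2}\,\partial_n\phi/\phi\big)$, and the crucial structural fact --- which follows from $\eta_0=0$, $\eta_0>0$ in $\Omega$ and $|\nabla\eta_0|>0$ on $\partial\Omega$, forcing $\partial\eta_0/\partial n<0$ there --- is that this makes the boundary integrals produced later carry a good sign. Conjugating the operator one obtains
\[
e^{s\alpha}(s\phi)^{\beta/2}\,(-\partial_t-d\Delta)\big((s\phi)^{-\beta/2}e^{-s\alpha}w\big)=M_1 w + M_2 w,
\]
where $M_1$ collects the formally self-adjoint terms ($-d\Delta w$, a zero-order term $\simeq -d\,s^2\lambda^2\phi^2|\nabla\eta_0|^2 w$, a time term) and $M_2$ the formally skew-adjoint ones ($-\partial_t w$, a transport term $\simeq 2d\,s\lambda\phi\,\nabla\eta_0\cdot\nabla w$, etc.), plus a remainder $R_{s,\lambda}w$ gathering every genuinely lower-order contribution --- in particular all terms coming from differentiating $(s\phi)^{\beta/2}$. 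Thus $M_1 w+M_2 w=g_{s,\lambda}$ with $g_{s,\lambda}:=e^{s\alpha}(s\phi)^{\beta/2}f-R_{s,\lambda}w$.

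\emph{Energy identity and absorption.} Taking $L^2(Q)$-norms, $\norme{M_1 w}_{L^2(Q)}^2+\norme{M_2 w}_{L^2(Q)}^2+2(M_1 w,M_2 w)_{L^2(Q)}=\norme{g_{s,\lambda}}_{L^2(Q)}^2$. I would expand the cross term $(M_1 w,M_2 w)_{L^2(Q)}$ by integration by parts in $t$ and $x$; after collecting terms the dominant positive contributions are, up to harmless factors,
\[
\int_Q s^3\lambda^4\phi^3 e^{2s\alpha}|\nabla\eta_0|^4\,|w|^2\,dx\,dt\quad\text{and}\quad\int_Q s\lambda^2\phi\,e^{2s\alpha}|\nabla\eta_0|^2\,|\nabla w|^2\,dx\,dt,
\]
the boundary integrals over $(0,T)\times\partial\Omega$ having the favorable sign mentioned above (or vanishing). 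Since $|\nabla\eta_0|>0$ on $\overline{\Omega\setminus\omega''}$, these two terms control the full integrals over $Q$ up to a remainder supported in $(0,T)\times\omega''$; a cutoff supported in $\omega'$ together with a Caccioppoli-type estimate absorbs the local gradient term into a local $|w|^2$ term. Choosing $\lambda\ge\lambda_0$ and then $s\ge s_0(T+T^2)$ absorbs $\norme{R_{s,\lambda}w}_{L^2(Q)}^2$ and all remaining lower-order terms into the left-hand side; one recovers the $|\partial_t\varphi|^2$ and $|\Delta\varphi|^2$ terms of $I(\beta,\lambda,s,\varphi)$ by solving for them in $-\partial_t\varphi-d\Delta\varphi=f$, and finally one undoes the substitution $w=e^{s\alpha}(s\phi)^{\beta/2}\varphi$ to obtain \eqref{carl1}.

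\emph{Main obstacle.} The difficulty is bookkeeping rather than conceptual. One must (i) verify that \emph{every} term generated by the $(s\phi)^{\beta/2}$ factor is bounded by the leading weights times a strictly negative power of $s\phi$ (uniformly for $\beta$ fixed), so that it lands in $R_{s,\lambda}$ and gets absorbed; (ii) track the precise powers of $s$, $\lambda$ and $\phi$ throughout the cross-term computation so that the "good" coefficients genuinely dominate the mixed ones; and (iii) carry out the standard passage from an observation on $\omega''$ to one on $\omega'$. The Neumann boundary terms are handled exactly as in the classical proof and, thanks to $\partial\eta_0/\partial n<0$ on $\partial\Omega$, are not a real source of trouble here.
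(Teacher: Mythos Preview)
The paper does not actually prove this theorem: it simply cites \cite[Lemma 1.2]{FI} for the original proof and, in the following remark, points to \cite{FCGBGP} for the Neumann adaptation. So there is no in-paper argument to compare against; the statement is imported as a black box.

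Your sketch outlines precisely the Fursikov--Imanuvilov conjugation method that underlies those references, including the correct handling of the extra power $(s\phi)^{\beta/2}$ as a lower-order perturbation and the sign mechanism $\partial_n\eta_0<0$ that disposes of the Neumann boundary terms. As a proof plan it is sound; just be aware that the paper expects you to \emph{cite} this result rather than reprove it, so a full write-up would be supplementary to, not a replacement for, what the paper does.
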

The original proof of this inequality can be found in \cite[Lemma 1.2]{FI}.

\begin{rmk}
For a general introduction to global Carleman inequalities and their applications to the controllability of parabolic systems, one can see \cite{FCG} (in particular, see \cite[Lemma 1.3]{FCG}). For Neumann conditions, one can see \cite{FCGBGP} and in particular \cite[Lemma 1]{FCGBGP}.
\end{rmk}

\paragraph{A parabolic regularity result in $L^2$}
In the following, we consider initial conditions $\varphi_T \in C_0^{\infty}(\Omega)^4$ in order to improve the regularity of $\varphi$, solution of \eqref{adj}, and to allow some computations. 
\begin{defi}\label{defspacesl2}
We define the following spaces of functions
\[ H_{Ne}^2(\Omega):= \left\{u \in H^2(\Omega)\ ;\ \frac{\partial u}{\partial n} = 0 \right\},\qquad Y_2 := L^2(0,T;H_{Ne}^2(\Omega))\cap H^1(0,T;L^2(\Omega)).\]
\end{defi}

\begin{prop}\label{wpl2}
Let $k \in \N^*$, $D\in \mathcal{M}_k(\R)$ such that $Sp(D) \subset (0,+\infty)$, $A\in \mathcal{M}_k(L^{\infty}(Q))$, $u_0 \in C^{\infty}_0(\Omega)^k$. From \cite[Theorem 2.1]{DHP}, the following Cauchy problem admits a unique solution $u \in Y_2^k $
\[
\left\{
\begin{array}{l l}
\partial_t u- D \Delta u= A(t,x) u&\mathrm{in}\ (0,T)\times\Omega,\\
\frac{\partial u}{\partial n} = 0 &\mathrm{on}\ (0,T)\times\partial\Omega,\\
u(0,.)=u_0 &\mathrm{in}\  \Omega.
\end{array}
\right.
\]
\end{prop}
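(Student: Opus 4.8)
The plan is to derive \Cref{wpl2} from the maximal $L^{2}$-regularity of the parabolic boundary value problem $\partial_t u - D\Delta u = f$ with homogeneous Neumann conditions, the zeroth-order term $A(t,x)u$ being added afterwards as a lower-order perturbation. First I would treat the homogeneous case $A=0$. The operator $-D\Delta$ with domain $H_{Ne}^2(\Omega)^k$ is closed on $L^2(\Omega)^k$, and since $Sp(D)\subset(0,+\infty)$ the symbol $D\abs{\xi}^2$ is sectorial of angle $0$, so $-D\Delta$ generates a bounded analytic semigroup on $L^2(\Omega)^k$ and the Neumann operator satisfies the Lopatinskii--Shapiro condition. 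Hence \cite[Theorem 2.1]{DHP} applies; alternatively, in the Hilbertian $L^2$-setting one may simply invoke de Simon's theorem, maximal $L^2$-regularity being automatic for generators of bounded analytic semigroups, together with elliptic regularity to identify $\mathrm{dom}(-D\Delta)$ with $H_{Ne}^2(\Omega)^k$. Either way one obtains: for every $f\in L^2(Q)^k$ and every $u_0$ in the trace space $\left(L^2(\Omega)^k,H_{Ne}^2(\Omega)^k\right)_{1/2,2}$, there is a unique $u\in Y_2^k$ solving the problem, with $\norme{u}_{Y_2^k}\leq C\left(\norme{f}_{L^2(Q)^k}+\norme{u_0}_{(L^2,H_{Ne}^2)_{1/2,2}}\right)$. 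The relevant interpolation space is $H^1(\Omega)^k$ (no boundary condition is visible at this regularity level, since $1<3/2$), so $C_0^\infty(\Omega)^k$ is contained in it and no compatibility condition is imposed on $u_0$; this also yields the embedding $Y_2\hookrightarrow C([0,T];H^1(\Omega))$.

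Next I would put the zeroth-order term back. Since $A\in\mathcal{M}_k(L^\infty(Q))$, the map $u\mapsto A(t,\cdot)u$ sends $C([0,\tau];L^2(\Omega)^k)$ — hence a fortiori $Y_2^k$ on $(0,\tau)$ — boundedly into $L^2((0,\tau)\times\Omega)^k$. On the affine set $\set{v\in Y_2^k\ ;\ v(0,\cdot)=u_0}$ over $(0,\tau)$, let $\Lambda v$ be the solution of $\partial_t u - D\Delta u = Av$, $\partial u/\partial n=0$, $u(0,\cdot)=u_0$ furnished by the previous step. Since the difference of two images solves the problem with zero initial datum and source $A(v-w)$, the maximal-regularity estimate combined with $\norme{w(t)}_{L^2(\Omega)}\leq\sqrt{\tau}\,\norme{\partial_t w}_{L^2((0,\tau)\times\Omega)}\leq\sqrt{\tau}\,\norme{w}_{Y_2((0,\tau))}$ for $w$ with $w(0,\cdot)=0$ shows that $\Lambda$ is a contraction once $\tau=\tau(\norme{A}_{L^\infty(Q)})$ is small enough, and $\tau$ can be taken uniform over $[0,T]$. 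Its fixed point is a solution in $Y_2^k$ on $(0,\tau)$; its value at $t=\tau$ lies in $H^1(\Omega)^k$, which is again the trace space, so the construction can be restarted, and finitely many steps cover $[0,T]$. Uniqueness on $[0,T]$ follows from the same contraction argument run from any hypothetical branching time, or from a Gr\"onwall argument after testing the difference of two solutions against itself, as in \Cref{uniqueness}.

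The only genuine work is the bookkeeping in the first step: checking that the maximal-regularity space produced by \cite{DHP} coincides \emph{exactly} with the $Y_2^k$ of \Cref{defspacesl2} — i.e.\ that the Neumann condition is correctly encoded in $H_{Ne}^2(\Omega)$ and that elliptic regularity upgrades $u(t,\cdot)\in\mathrm{dom}(-D\Delta)$ to $H^2(\Omega)^k$ for a.e.\ $t$ — and identifying the trace space so that $u_0\in C_0^\infty(\Omega)^k$ triggers no compatibility obstruction. The concatenation over $[0,T]$ and the treatment of the $L^\infty$ zeroth-order coefficient are then routine. As this proposition is used only qualitatively — to legitimate the computations carried out on $\varphi$, solution of \eqref{adj} with smooth data $\varphi_T\in C_0^\infty(\Omega)^4$ — I would keep the verification of the hypotheses of \cite{DHP} short and defer to that reference for the details.
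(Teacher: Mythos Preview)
The paper does not give a separate proof of this proposition: the statement itself already reads ``From \cite[Theorem 2.1]{DHP}, the following Cauchy problem admits a unique solution $u\in Y_2^k$'', so the paper's proof \emph{is} the citation. Your sketch is a correct and careful unpacking of what that citation delivers --- maximal $L^2$-regularity for the principal part $-D\Delta$ with Neumann conditions, followed by a routine contraction/Gr\"onwall step to absorb the bounded zeroth-order coefficient $A\in\mathcal{M}_k(L^\infty(Q))$ --- and your bookkeeping on the trace space (so that $u_0\in C_0^\infty(\Omega)^k$ raises no compatibility issue) is exactly the point one has to check. In short, your proposal and the paper's one-line invocation of \cite{DHP} are the same argument at two different levels of detail.
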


\paragraph{A technical lemma for Carleman estimates}

By now, unless otherwise specified, we denote by $C$ (respectively $C_{\varepsilon}$) various positive constants varying from line to line (respectively various positive constants varying from line to line and depending on the parameter $\varepsilon$). We insist on the fact that $C$ and $C_{\varepsilon}$ do not depend on $\lambda$ and $s$, unless otherwise specified.

\begin{lem}\label{lemteccarl}
Let $\Phi$, $\Psi \in Y_2$, $a \in L^{\infty}(Q)$, an open subset $\widetilde{\omega} \subset \omega_0$, $\Theta \in C^{\infty}(\overline{\Omega};[0,+\infty[)$ such that $supp(\Theta) \subset \widetilde{\omega}$ and $r \in \N$. Then, for every $\varepsilon > 0$,
\begin{align}
&\forall (k,l) \in \R^2,\ k+l = 2r,\ \forall s \geq C,\notag\\ 
&\left|\int\int_{(0,T)\times\widetilde{\omega}}  \Theta e^{2s\alpha}(s\phi)^r a \Phi \Psi\right| \leq \varepsilon \int\int_{(0,T)\times\Omega}  e^{2s\alpha}(s\phi)^k|\Phi|^2 + C_{\varepsilon} \int\int_{(0,T)\times\widetilde{\omega}}  e^{2s\alpha}(s\phi)^l|\Psi|^2,
\label{lemteccarl1}
\end{align}
\begin{align}
&\forall (k,l) \in \R^2,\ k+l = 2(r+2),\ \forall s \geq C,\notag\\ 
&\left|\int_{0}^{T}\int_{\widetilde{\omega}} \Theta e^{2s\alpha}(s\phi)^r \Phi \partial_t \Psi\right| \leq \varepsilon \left( \int_{0}^{T}\int_{\Omega} e^{2s\alpha} (s\phi)^{k} |\Phi|^2+ \int_{0}^{T}\int_{\Omega} e^{2s\alpha} (s\phi)^{k-4}|\partial_t \Phi|^2\right) \notag \\
&\qquad\qquad\qquad\qquad\qquad\qquad\quad+ C_{\varepsilon} \int_{0}^{T}\int_{\widetilde{\omega}} e^{2s\alpha} (s\phi)^{l} |\Psi|^2,
\label{lemteccarl2}
\end{align}
\begin{align}
&\forall (k,l) \in \R^2,\ k+l = 2(r+2),\ \forall s \geq C,\notag\\ 
&\left| \int_{0}^{T}\int_{\widetilde{\omega}} \Theta e^{2s\alpha} (s\phi)^{r} \Phi  \Delta \Psi \right|\notag\\
&  \leq \varepsilon \left(\int_{0}^{T}\int_{\Omega} e^{2s\alpha} (s\phi)^{k} |\Phi|^2 + \int_{0}^{T}\int_{\Omega} e^{2s\alpha} (s\phi)^{k-2}|\nabla\Phi|^2 + \int_{0}^{T}\int_{\Omega} e^{2s\alpha} (s\phi)^{k-4}|\Delta \Phi|^2\right)\notag\\
&\qquad\qquad+ C_{\varepsilon} \int_{0}^{T}\int_{\widetilde{\omega}} e^{2s\alpha} (s\phi)^{l} |\Psi|^2.
\label{lemteccarl3}
\end{align}
\begin{align}
&\forall (k,l) \in \R^2,\ k+l = 2r,\ \forall s \geq C,
\notag\\ 
&\int_{0}^{T}\int_{\widetilde{\omega}} \Theta  e^{2 s \alpha} (s \phi)^{r} |\nabla  \Phi|^2 \leq \varepsilon \left(\int_{0}^{T}\int_{\Omega}  e^{2 s \alpha} (s \phi)^{k} |\Delta \Phi|^2 + \int_{0}^{T}\int_{\Omega}  e^{2 s \alpha} (s \phi)^{k+2} |\nabla \Phi|^2 \right)\notag\\
&\qquad\qquad\qquad\qquad\qquad\qquad + C_{\varepsilon} \int_{0}^{T}\int_{\widetilde{\omega}}  e^{2 s \alpha} (s \phi)^l |\Phi|^2.
\label{enlevergradin}
\end{align}
\end{lem}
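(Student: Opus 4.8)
The plan is to prove the four inequalities with the same two elementary tools. The first is Young's inequality, applied after splitting the $r$ (resp.\ $r+2$) powers of $s\phi$ carried by the weight according to the prescribed budget, i.e.\ writing $e^{2s\alpha}(s\phi)^{r}=\bigl(e^{s\alpha}(s\phi)^{k/2}\bigr)\bigl(e^{s\alpha}(s\phi)^{l/2}\bigr)$ with $k+l=2r$ (resp.\ $k+l=2(r+2)$). The second is integration by parts, used to move the $\partial_t$ in \eqref{lemteccarl2} and the $\Delta$, $\nabla$ in \eqref{lemteccarl3}--\eqref{enlevergradin} off the factor on which they act. In each spatial integration by parts the boundary terms vanish because $\mathrm{supp}(\Theta)$ is compact and contained in $\widetilde\omega$, which is itself relatively compact in $\Omega$; in the temporal integration by parts the terms at $t=0$ and $t=T$ vanish because $e^{2s\alpha}(s\phi)^{r}\to0$ there (the exponential $e^{2s\alpha}$ kills the polynomial blow-up of $\phi$), while $\Phi,\Psi\in Y_{2}\hookrightarrow C([0,T];L^{2}(\Omega))$ (by \Cref{injclassique}) supply the needed $L^{2}$ bounds at the endpoints. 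I would also use the standard pointwise bounds on the Carleman weights $\abs{\partial_t\phi}+\abs{\partial_t\alpha}\le C\phi^{2}$, $\abs{\nabla\phi}+\abs{\nabla\alpha}\le C\phi$, $\abs{\Delta\phi}+\abs{\Delta\alpha}\le C\phi$ (constants possibly depending on the fixed $\lambda$), together with $s\phi\ge1$ for $s\ge C$, which yield
\[
\abs{\partial_t\bigl(e^{2s\alpha}(s\phi)^{r}\bigr)}\le Ce^{2s\alpha}(s\phi)^{r+2},\qquad\abs{\nabla\bigl(e^{2s\alpha}(s\phi)^{r}\bigr)}\le Ce^{2s\alpha}(s\phi)^{r+1},\qquad\abs{\Delta\bigl(e^{2s\alpha}(s\phi)^{r}\bigr)}\le Ce^{2s\alpha}(s\phi)^{r+2};
\]
this is exactly why the exponent budget picks up ``$+2$'' in the last three inequalities --- each $\partial_t$ or $\Delta$ falling on the weight costs two powers of $s\phi$.

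Inequality \eqref{lemteccarl1} is then immediate: bound $\abs{\Theta}\le\norme{\Theta}_{L^{\infty}(Q)}$ and $\abs{a}\le\norme{a}_{L^{\infty}(Q)}$, split the weight as above, and apply Young's inequality (the $\Phi$--integral being harmlessly enlarged from $\widetilde\omega$ to $\Omega$). For \eqref{lemteccarl2} I would integrate by parts in time; since $\Theta$ depends only on $x$ and the weight vanishes at $t=0,T$, this leaves
\[
\abs{\int_{0}^{T}\!\!\int_{\widetilde\omega}\Theta e^{2s\alpha}(s\phi)^{r}\Phi\,\partial_t\Psi}\le C\int_{0}^{T}\!\!\int_{\widetilde\omega}e^{2s\alpha}(s\phi)^{r+2}\abs{\Phi}\,\abs{\Psi}+C\int_{0}^{T}\!\!\int_{\widetilde\omega}e^{2s\alpha}(s\phi)^{r}\abs{\partial_t\Phi}\,\abs{\Psi}.
\]
The first term is controlled by \eqref{lemteccarl1} with $r$ replaced by $r+2$ (whence the budget $k+l=2(r+2)$), producing $\varepsilon\int e^{2s\alpha}(s\phi)^{k}\abs{\Phi}^{2}$ and $C_{\varepsilon}\int_{\widetilde\omega}e^{2s\alpha}(s\phi)^{l}\abs{\Psi}^{2}$; the second is controlled by Young's inequality with the splitting $r=\tfrac{k-4}{2}+\tfrac{l}{2}$, producing $\varepsilon\int e^{2s\alpha}(s\phi)^{k-4}\abs{\partial_t\Phi}^{2}$ and another $C_{\varepsilon}\int_{\widetilde\omega}e^{2s\alpha}(s\phi)^{l}\abs{\Psi}^{2}$.

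For \eqref{lemteccarl3} I would integrate by parts twice in space, $\int_{\widetilde\omega}\rho\,\Phi\,\Delta\Psi=\int_{\widetilde\omega}\Delta(\rho\Phi)\,\Psi$ with $\rho:=\Theta e^{2s\alpha}(s\phi)^{r}$, expand $\Delta(\rho\Phi)=\Phi\,\Delta\rho+2\nabla\rho\cdot\nabla\Phi+\rho\,\Delta\Phi$, and apply Young's inequality to each of the three pieces against $\abs{\Psi}^{2}$: using $\abs{\Delta\rho}\le Ce^{2s\alpha}(s\phi)^{r+2}$, $\abs{\nabla\rho}\le Ce^{2s\alpha}(s\phi)^{r+1}$, $\abs{\rho}\le Ce^{2s\alpha}(s\phi)^{r}$ and the budget $k+l=2(r+2)$ one gets respectively $\varepsilon\int e^{2s\alpha}(s\phi)^{k}\abs{\Phi}^{2}$, $\varepsilon\int e^{2s\alpha}(s\phi)^{k-2}\abs{\nabla\Phi}^{2}$, $\varepsilon\int e^{2s\alpha}(s\phi)^{k-4}\abs{\Delta\Phi}^{2}$, plus $C_{\varepsilon}\int_{\widetilde\omega}e^{2s\alpha}(s\phi)^{l}\abs{\Psi}^{2}$ (the residual lower powers of $s\phi$ on the local $\Psi$--term, which come from derivatives of $\Theta$, being raised to $(s\phi)^{l}$ thanks to $s\phi\ge1$). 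For \eqref{enlevergradin} I would integrate by parts once, $\int_{\widetilde\omega}\Theta e^{2s\alpha}(s\phi)^{r}\abs{\nabla\Phi}^{2}=-\int_{\widetilde\omega}\bigl(\nabla\rho\cdot\nabla\Phi+\rho\,\Delta\Phi\bigr)\Phi$, and apply Young's inequality to the two products against $\abs{\Phi}^{2}$ with the budget $k+l=2r$, which yields $\varepsilon\int e^{2s\alpha}(s\phi)^{k+2}\abs{\nabla\Phi}^{2}+\varepsilon\int e^{2s\alpha}(s\phi)^{k}\abs{\Delta\Phi}^{2}+C_{\varepsilon}\int_{\widetilde\omega}e^{2s\alpha}(s\phi)^{l}\abs{\Phi}^{2}$.

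There is no conceptual difficulty here: the whole proof is integration by parts plus Young's inequality. The single point demanding care --- and the easiest place to slip --- is the bookkeeping of the powers of $s\phi$: after each integration by parts (one extra power of $s\phi$ per gradient, two per $\partial_t$ or $\Delta$ landing on the weight) one must check that the exponents still add up to the prescribed $2r$ or $2(r+2)$, so that Young's inequality falls \emph{exactly} on the weighted norms listed on the right-hand side, and that the residual low powers of $s\phi$ on the local terms are absorbed using $s\phi\ge1$. The powers of $\lambda$ produced when differentiating the exponential weight are immaterial for the structure of these estimates; if one keeps $\lambda$ free, they are recovered in the applications from the factors $\lambda^{4}$ and $\lambda^{2}$ on the left-hand side $I(\beta,\lambda,s,\varphi)$ of the Carleman estimate \eqref{carl1}, into which the global terms above are reabsorbed.
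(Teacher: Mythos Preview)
Your proof is correct and follows essentially the same approach as the paper: Young's inequality after splitting the weight for \eqref{lemteccarl1}, integration by parts in time for \eqref{lemteccarl2} and in space for \eqref{lemteccarl3}--\eqref{enlevergradin}, combined with the standard pointwise bounds on derivatives of the Carleman weights. Your justification of why the boundary terms vanish (compact support of $\Theta$ in $\widetilde\omega\subset\omega_0\subset\subset\Omega$ for the spatial ones, and the exponential decay of $e^{2s\alpha}$ at $t=0,T$ for the temporal ones) is slightly more explicit than the paper's, but the argument is the same.
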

\begin{proof}
The inequality \eqref{lemteccarl1} is an easy consequence of Young's inequality applied to
\[ \left|\int\int_{(0,T)\times\widetilde{\omega}}  \Theta e^{2s\alpha}(s\phi)^r a \Phi \Psi\right| \leq C \int\int_{(0,T)\times\widetilde{\omega}}  \left(\sqrt{\varepsilon}e^{s\alpha}(s\phi)^{k/2}  |\Phi|\right)  \left(\frac{1}{\sqrt{\varepsilon}}\Theta e^{s\alpha}(s\phi)^{l/2}|\Psi|\right).\]
\indent For \eqref{lemteccarl2}, we integrate by parts with respect to the time variable
\[ -\int_{0}^{T}\int_{\widetilde{\omega}} \Theta e^{2s\alpha}(s\phi)^r \Phi \partial_t \Psi = \int_{0}^{T}\int_{\widetilde{\omega}} \Theta e^{2s\alpha} (s\phi)^{r} \partial_t (\Phi)  \Psi + \int_{0}^{T}\int_{\widetilde{\omega}} (\Theta e^{2s\alpha} (s\phi)^{r})_{t} \Phi\Psi.\]
Moreover, by \eqref{defpoidsphialpha}, we have $ |(\Theta e^{2s\alpha} (s\phi)^{r})_{t}| \leq C e^{2 s \alpha} s^{r+1} \phi^{r+2} \leq e^{2 s \alpha} s^{r+2} \phi^{r+2}$ for $s \geq C$. Then, 
we get \eqref{lemteccarl2} by applying Young's inequality to
\begin{align*}
\left|\int_{0}^{T}\int_{\widetilde{\omega}} \Theta e^{2s\alpha}(s\phi)^r \Phi \partial_t \Psi\right|
&\leq \int_{0}^{T}\int_{\widetilde{\omega}} \left(\sqrt{\varepsilon} e^{s\alpha} (s\phi)^{k/2-2}\partial_t \Phi \right) \left(\frac{1}{\sqrt{\varepsilon}}\Theta e^{s\alpha} (s\phi)^{l/2} \Psi\right)\\
& \ + \int_{0}^{T}\int_{\widetilde{\omega}} \left(\sqrt{\varepsilon}e^{s\alpha} (s\phi)^{k/2} \Phi\right)\left(\frac{1}{\sqrt{\varepsilon}}e^{s\alpha} (s\phi)^{l/2}\Psi\right).
\end{align*}
\indent For \eqref{lemteccarl3}, by twice integrating by parts with respect to the spatial variable, we get
\[ \int_{0}^{T}\int_{\widetilde{\omega}} \Theta e^{2s\alpha} (s\phi)^{r} \Phi \Delta \Psi = \int_{0}^{T}\int_{\widetilde{\omega}} \Delta(\Theta e^{2s\alpha} (s\phi)^{r} \Phi)  \Psi.\]
Moreover, by \eqref{defpoidsphialpha}, we have
\[ |\Delta(\Theta e^{2s\alpha} (s\phi)^{r} \Phi| \leq C \left(e^{2s\alpha} (s\phi)^{r} |\Delta \Phi| + e^{2s\alpha}(s\phi)^{r+1}|\nabla \Phi| + e^{2s\alpha} (s \phi)^{r+2}|\Phi|\right).\]
Then, we deduce \eqref{lemteccarl3} by Young's inequality applied to
\begin{align*}
\left|\int_{0}^{T}\int_{\widetilde{\omega}} \Theta e^{2s\alpha} (s\phi)^{r} \Phi \Delta \Psi\right| &\leq  \int_{0}^{T}\int_{\widetilde{\omega}} \left(\sqrt{\varepsilon}e^{s\alpha} (s\phi)^{k/2-2} |\Delta\Phi|\right)  \left(\frac{1}{\sqrt{\varepsilon}}e^{s\alpha} (s\phi)^{l/2}\Psi\right)\\
&\ + \int_{0}^{T}\int_{\widetilde{\omega}} \left(\sqrt{\varepsilon}e^{s\alpha} (s\phi)^{k/2-1} |\nabla\Phi|\right)  \left(\frac{1}{\sqrt{\varepsilon}}e^{s\alpha} (s\phi)^{l/2}\Psi\right)\\
&\ +\int_{0}^{T}\int_{\widetilde{\omega}} \left(\sqrt{\varepsilon}e^{s\alpha} (s\phi)^{k/2} |\Phi|\right)  \left(\frac{1}{\sqrt{\varepsilon}}e^{s\alpha} (s\phi)^{l/2}\Psi\right).
\end{align*}
\indent For \eqref{enlevergradin}, we integrate by parts with respect to the spatial variable,
\begin{align*}
&\int_{0}^{T}\int_{\widetilde{\omega}} \Theta  e^{2 s \alpha} (s \phi)^{r} |\nabla  \Phi|^2
= - \int_{0}^{T}\int_{\widetilde{\omega}} \Theta  e^{2 s \alpha}  (s \phi)^{r} (\Delta \Phi) \Phi - \int_{\omega_0} \nabla(\Theta  e^{2 s \alpha}  (s \phi)^{r}).(\nabla \Phi) \Phi.
\end{align*}
By using $|\nabla(\Theta  e^{2 s \alpha}  (s \phi)^{r})| \leq C e^{2s\alpha}(s\phi)^{r+1}$ which is a consequence of \eqref{defpoidsphialpha}, we get \eqref{enlevergradin} by Young's inequality. This concludes the proof of \Cref{lemteccarl}.
\end{proof}
\subsubsection{Proof with observation on three components: \eqref{inobs3c}}\label{inobs3comp}
\begin{proof}
\boxed{j=3}\\
\indent The proof is close to the proof of \cite[Lemma 7]{CGR}.\\
\indent Let $A \in \mathcal{E}_3$ (see \eqref{defe3}), $\varphi_T \in C_0^{\infty}(\Omega)^4$ (the general case comes from a density argument, see \eqref{3cin8}, \Cref{density2} and \Cref{density3}), $\varphi \in Y_2^4$ be the solution of \eqref{adj} (see \Cref{wpl2}) and $\omega_1$ be an open subset such that $\omega'' \subset \subset \omega_1 \subset \subset \omega_0$. We have 
\begin{equation}
\forall 1 \leq i \leq 4,\ 
\left\{
\begin{array}{l l}
-\partial_t \varphi_i - d_i \Delta\varphi_i = a_{1i} \varphi_1 + a_{2i} \varphi_2 + a_{3i} \varphi_3 + a_{4i} \varphi_4&\mathrm{in}\ (0,T)\times\Omega,\\
\frac{\partial\varphi_i}{\partial n} = 0 &\mathrm{on}\ (0,T)\times\partial\Omega.
\end{array}
\right.
\label{heatsystemdiag}
\end{equation}
We apply \eqref{carl1} of \Cref{lemcarl1} to each $\varphi_i$, $1\leq i \leq 4$, with $\omega' = \omega_1$ and $\beta = 0$. Then, we sum (by using \eqref{3cbded}): for every $ \lambda \geq C$,
\begin{equation}
\sum\limits_{i=1}^{4} I(0,\lambda, s, \varphi_i)  \leq C\left(\sum\limits_{i=1}^{4} \left(\int_{0}^{T}\int_{\Omega} e^{2s\alpha} |\varphi_i|^2 dxdt + \int_{0}^{T}\int_{\omega_1}\lambda^4 e^{2s\alpha} (s\phi)^{3} |\varphi_i|^2 dxdt\right)\right).
\label{3cin0}
\end{equation}
We fix $\lambda \geq C$ and we take $s$ sufficiently large, then we can absorb the first right hand side term by the left hand side term of \eqref{3cin0}. We get 
\begin{equation}
\sum\limits_{i=1}^{4} I(0,\lambda, s, \varphi_i)  \leq C \sum\limits_{i=1}^{4}\int_{0}^{T}\int_{\omega_1} e^{2s\alpha} (s\phi)^{3} |\varphi_i|^2 dxdt.
\label{3cin1}
\end{equation}
\textbf{Now, $\lambda,s$ are supposed to be fixed such that \eqref{3cin1} holds and the constant $C$ may depend on $\lambda,s$.}\\
\indent We have to get rid of the term $\int_{0}^{T}\int_{\omega_1} e^{2s\alpha} (s\phi)^{3} |\varphi_4|^2 dxdt$ in order to prove the observability inequality \eqref{inobs3c}. For this, we are going to use \eqref{3csign}. So, we are going to estimate $\varphi_4$ by $\varphi_i$ for every $1 \leq i \leq 3$ thanks to the first equation of \eqref{heatsystemdiag} with $i=1$.\\

\underline{Estimate of $\int_{0}^{T}\int_{\omega_1} e^{2s\alpha} (s\phi)^{3} |\varphi_4|^2 dxdt$}.\\

\indent Let us introduce $\chi \in C^{\infty}(\overline{\Omega};[0,+\infty[)$, such that the support of $\chi$ is included in $\omega_0$ and $\chi = 1$ in $\omega_1$. We multiply the first equation of \eqref{heatsystemdiag} with $i=1$ by $\chi(x) e^{2s\alpha} (s\phi)^{3} \varphi_4$ and we integrate on $(0,T)\times\omega_0$, which leads to
\begin{align}
&\int_{0}^{T}\int_{\omega_1} e^{2s\alpha} (s\phi)^{3} |\varphi_4|^2 dxdt\notag\\ &\leq M \int_{0}^{T}\int_{\omega_1} e^{2s\alpha} (s\phi)^{3} a_{41} |\varphi_4|^2 dxdt\ \mathrm{by}\ \eqref{3csign} \notag\\
&\leq M \int_{0}^{T}\int_{\omega_0} \chi(x) e^{2s\alpha} (s\phi)^{3} a_{41} |\varphi_4|^2 dxdt \notag\\
& \leq M \int_{0}^{T}\int_{\omega_0} \chi(x) e^{2s\alpha} (s\phi)^{3} \varphi_4 (-\partial_t \varphi_1 - d_1 \Delta \varphi_1 - a_{11} \varphi_1 - a_{21} \varphi_2 - a_{31} \varphi_3)dxdt.
\label{3cin2}
\end{align}
\begin{rmk}\label{autredemarche}
In \Cref{sytul3controls}, we suppose that if $(u_1^*,u_3^*,u_4^*) \neq (0,0,0)$, then $u_3^* \neq 0$. Consequently, we have \eqref{3csign}. If, $u_1^* \neq 0$ (or respectively $u_4^* \neq 0$), we can easily adapt the preceding strategy. We can assume that 
\[\forall (t,x) \in (t_1,t_2)\times \omega_0,\  a_{43}(t,x) \geq 1/M\ (\text{or respectively}\ a_{42}(t,x) \leq -1/M),\]
and multiply the first equation of \eqref{heatsystemdiag} with $i=3$ (or respectively $i=2$) by $\chi(x) e^{2s\alpha} (s\phi)^{3} \varphi_4$ \\(or $-\chi(x) e^{2s\alpha} (s\phi)^{3} \varphi_4$) and we integrate on $(0,T)\times\omega_0$.
\end{rmk}
Let $\varepsilon > 0$ which will be chosen small enough. Now, we want to estimate the right hand side term of \eqref{3cin2} by $\sum\limits_{i=1}^{3}\int_{0}^{T}\int_{\omega_1} e^{2s\alpha} (s\phi)^{m} |\varphi_i|^2 dxdt$ with $m \in \N$.\\
\indent First, we treat the terms $\int_{0}^{T}\int_{\omega_0} \chi(x) e^{2s\alpha} (s\phi)^{3} \varphi_4 a_{j1} \varphi_jdxdt$, for every $1 \leq j \leq 3$. By applying \Cref{lemteccarl}: \eqref{lemteccarl1} with $\Phi = \varphi_4$, $\Psi = \varphi_j$, $a = a_{j1}$ (recalling \eqref{3cbded}), $\Theta = \chi$, $r=3$ and $(k,l)=(3,3)$, we have
\begin{align}
& \left|\int\int_{(0,T)\times\omega_0}  \chi(x)e^{2s\alpha}(s\phi)^3 \varphi_4 a_{j1}(t,x) \varphi_j dxdt\right| \notag\\
& \leq \varepsilon \int\int_{(0,T)\times\Omega}  e^{2s\alpha}(s\phi)^3|\varphi_4|^2 dxdt + C_{\varepsilon} \int\int_{(0,T)\times\omega_0}  e^{2s\alpha}(s\phi)^3|\varphi_j|^2dxdt.
\label{3cin3}
\end{align}
Then, we treat the term $-\int_{0}^{T}\int_{\omega_0} \chi(x) e^{2s\alpha} (s\phi)^{3} \varphi_4 \partial_t \varphi_1dxdt$. By applying \Cref{lemteccarl}: \eqref{lemteccarl2} with $\Phi = \varphi_4$, $\Psi = \varphi_1$, $a=1$, $\Theta = \chi$, $r=3$ and $(k,l)=(3,7)$, we have
\begin{align}
\left|\int_{0}^{T}\int_{\omega_0} \chi e^{2s\alpha} (s\phi)^{3} \varphi_4 \partial_t \varphi_1\right| &\leq \varepsilon \left( \int_{0}^{T}\int_{\Omega} e^{2s\alpha} \Big\{(s\phi)^{3} |\varphi_4|^2 +(s\phi)^{-1}|\partial_t \varphi_4|^2\Big\}\right) \notag \\
&+ C_{\varepsilon} \int_{0}^{T}\int_{\omega_0} e^{2s\alpha} (s\phi)^{7} |\varphi_1|^2 .
\label{3cin4}
\end{align}
Finally, the last term $- d_1 \int_{0}^{T}\int_{\omega_0} \chi(x) e^{2s\alpha} (s\phi)^{3} \varphi_4  \Delta \varphi_1dxdt$ is estimated as follows. By applying \Cref{lemteccarl}: \eqref{lemteccarl3} with $\Phi = \varphi_4$, $\Psi = \varphi_1$, $a=1$, $\Theta = \chi$, $r=3$ and $(k,l)=(3,7)$, we have 
\begin{align}
\left| d_1 \int_{0}^{T}\int_{\omega_0} \chi e^{2s\alpha} (s\phi)^{3} \varphi_4  \Delta \varphi_1 \right|
&\leq \varepsilon \left(\int_{0}^{T}\int_{\Omega} e^{2s\alpha} \Big\{(s\phi)^{-1} |\Delta\varphi_4|^2 + (s\phi)|\nabla\varphi_4|^2+(s\phi)^3|\varphi_4|^2\Big\}\right)\notag\\
&\  + C_{\varepsilon} \int_{0}^{T}\int_{\omega_0} e^{2s\alpha} (s\phi)^{7} |\varphi_1|^2.
\label{3cin5}
\end{align}
Gathering \eqref{3cin1}, \eqref{3cin2}, \eqref{3cin3}, \eqref{3cin4}, \eqref{3cin5}, we get 
\begin{align}
\sum\limits_{i=1}^{4} I(0,\lambda, s, \varphi_i)& \leq 3 \varepsilon \left(\int_{0}^{T}\int_{\Omega} e^{2s\alpha} \Big\{(s\phi)^{3} |\varphi_4|^2 +(s\phi) |\nabla\varphi_4|^2 + (s\phi)^{-1}\left( |\partial_t \varphi_4|^2 + |\Delta \varphi_4|^2 \right) \Big\} \right)\notag\\
&\  + C_{\varepsilon} \left(\sum\limits_{i=1}^{3}\int_{0}^{T}\int_{\omega_0} e^{2s\alpha} (s\phi)^{7} |\varphi_i|^2 dxdt\right).
\label{3cin6}
\end{align}
By taking $\varepsilon$ small enough, we get
\begin{equation}
\sum\limits_{i=1}^{4} I(0,\lambda, s, \varphi_i) \leq C_{\varepsilon}\left( \sum\limits_{i=1}^{3}\int_{0}^{T}\int_{\omega_0} e^{2s\alpha} (s\phi)^{7} |\varphi_i|^2 dxdt\right).
\label{3cin7}
\end{equation}
In particular, we deduce from \eqref{3cin7} that
\begin{equation}
\sum\limits_{i=1}^{4}\int_{0}^{T}\int_{\Omega} e^{2s\alpha} (s\phi)^{3}|\varphi_i|^2  \leq C\left( \sum\limits_{i=1}^{3}\int_{0}^{T}\int_{\omega_0} e^{2s\alpha} (s\phi)^{7} |\varphi_i|^2 dxdt\right).
\label{3cin7bis}
\end{equation}
Then, by using the facts that
\begin{equation}
\min_{[T/4,3T/4]\times\overline{\Omega}} e^{2s \alpha}(s \phi)^3 > 0,
\label{minoration}
\end{equation}
and
\begin{equation}
e^{2s \alpha}(s \phi)^7 \in L^{\infty}((0,T)\times\Omega),
\end{equation}
we get 
\begin{equation}
\sum\limits_{i=1}^{4}\int_{T/4}^{3T/4}\int_{\Omega} |\varphi_i|^2 dxdt \leq C\left( \sum\limits_{i=1}^{3}\int_{0}^{T}\int_{\omega_0}  |\varphi_i|^2 dxdt\right).
\label{3cin7ter}
\end{equation}
From the dissipation of the energy in time for \eqref{heatsystemdiag} (see \Cref{lemdissipenergy} in the Appendix), we easily get
\begin{equation}
\norme{\varphi(0,.)}_{L^2(\Omega)^4}^2 \leq C\left( \sum\limits_{i=1}^{4}\int_{T/4}^{3T/4}\int_{\Omega} |\varphi_i|^2 dxdt\right).
\label{dissip}
\end{equation}
Then, by using \eqref{3cin7ter} and \eqref{dissip}, we obtain
\begin{equation}
\norme{\varphi(0,.)}_{L^{2}(\Omega)^4}^2 \leq C\left( \sum\limits_{i=1}^{3} \int_{0}^{T}\int_{\omega_0}|\varphi_i|^2 dxdt\right).
\label{3cin8}
\end{equation}
This ends the proof of the observability inequality \eqref{inobs3c} because $\omega_0 \subset \omega$.
\end{proof}

\begin{rmk}
\textbf{Some stronger observability inequalities}\\
We also have the following stronger inequality than \eqref{3cin8} which can be proved from \eqref{3cin7bis}, \eqref{minoration} and \eqref{dissip}. It will be used to find controls in $L_{wght}^2(Q) \subset L^2(Q)$ (see \Cref{pHUM}). We have
\begin{equation}
\norme{\varphi(0,.)}_{L^{2}(\Omega)^4}^2 \leq C\left(\sum\limits_{i=1}^{3} \int_{0}^{T}\int_{\omega}e^{2s\alpha}(s\phi)^7|\varphi_i|^2 dxdt\right).
\label{3cin8bis}
\end{equation}
\indent Moreover, we also have an even stronger inequality (see \eqref{3cin7bis}) than \eqref{3cin8} and \eqref{3cin8bis}. It will be used to find controls in $L^{\infty}(Q) $ (see \Cref{btm}).
\end{rmk}

\subsubsection{Density results}\label{densityres}

In this section, we show that we can assume that the data $\varphi_T$ is regular i.e. $\varphi_T\in C_0^{\infty}(\Omega)^4$. Moreover, we also need some regularity on the coupling matrix $A$ for the case $j=1$. It's the purpose of \Cref{density1}.

\begin{lem}\label{density1}
Let $a \in L^{\infty}(Q)$. There exists $(a_k)\in (C_0^{\infty}(Q))^{\N}$ such that
\begin{equation}
\norme{a_k}_{L^{\infty}(Q)} \leq \norme{a}_{L^{\infty}(Q)},
\end{equation}
\begin{equation}
a_k \underset{k\rightarrow + \infty}{\rightharpoonup^*} a \ \mathrm{in}\  L^{\infty}(Q).
\end{equation}
\end{lem}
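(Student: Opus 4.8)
The statement is a routine but genuinely useful density fact: a bounded measurable function on $Q$ can be approximated, without increasing its sup-norm, by functions of $C_0^\infty(Q)$ in the \emph{weak-}$*$ topology of $L^\infty(Q)$. The weak-$*$ topology is essential here, since strong approximation in $L^\infty(Q)$ is hopeless in general; the whole difficulty is to reconcile a duality-type approximation with the hard pointwise constraint $\norme{a_k}_{L^\infty(Q)} \leq \norme{a}_{L^\infty(Q)}$. The plan is to use the classical cutoff-then-mollify construction and to upgrade $L^1$ convergence to weak-$*$ convergence at the end.

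First I would exhaust the bounded open set $Q=(0,T)\times\Omega$ by the open sets $Q_m := \{z \in Q\ ;\ \mathrm{dist}(z,\partial Q) > 1/m\}$, so that each $\overline{Q_m}$ is compact and contained in $Q$ and $\bigcup_m Q_m = Q$, and pick cutoffs $\chi_m \in C_0^\infty(Q)$ with $0 \leq \chi_m \leq 1$ and $\chi_m \equiv 1$ on $Q_m$. Then $\chi_m a$ is supported in a compact subset of $Q$, satisfies $\norme{\chi_m a}_{L^\infty(Q)} \leq \norme{a}_{L^\infty(Q)}$, and converges to $a$ in $L^1(Q)$ by dominated convergence (using $|Q| < +\infty$ and $\chi_m \to 1$ a.e.).

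Next I would mollify. Let $(\rho_\varepsilon)_{\varepsilon>0}$ be a standard nonnegative mollifier on $\R^{N+1}$ with $\int \rho_\varepsilon = 1$ and $\mathrm{supp}(\rho_\varepsilon) \subset \{|z|<\varepsilon\}$, and set $a_m := (\chi_m a) * \rho_{\varepsilon_m}$, where $\varepsilon_m>0$ is chosen small enough that $\varepsilon_m < \mathrm{dist}(\mathrm{supp}\,\chi_m,\partial Q)$ --- so that $a_m \in C_0^\infty(Q)$ --- and additionally $\norme{a_m - \chi_m a}_{L^1(Q)} \leq 1/m$, which is possible because translation is continuous on $L^1$. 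By Jensen's inequality (equivalently Young's convolution inequality with the $L^1$-normalised kernel $\rho_{\varepsilon_m}$) one has $\norme{a_m}_{L^\infty(Q)} \leq \norme{\chi_m a}_{L^\infty(Q)} \leq \norme{a}_{L^\infty(Q)}$, which is the required uniform bound; combining with the previous step, $a_m \to a$ strongly in $L^1(Q)$.

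Finally I would pass to the limit. To turn strong $L^1$ convergence together with the uniform $L^\infty$ bound into weak-$*$ convergence in $L^\infty(Q)$, I fix $g \in L^1(Q)$ and $\delta>0$, choose $g_\delta \in L^\infty(Q)$ with $\norme{g-g_\delta}_{L^1(Q)} \leq \delta$, and split $\int_Q (a_m-a)g = \int_Q (a_m-a)(g-g_\delta) + \int_Q (a_m-a)g_\delta$, whence $\left|\int_Q (a_m-a)g\right| \leq 2\norme{a}_{L^\infty(Q)}\delta + \norme{g_\delta}_{L^\infty(Q)}\norme{a_m-a}_{L^1(Q)}$; letting $m \to \infty$ and then $\delta \to 0$ gives $\int_Q a_m g \to \int_Q a g$ for every $g \in L^1(Q)$, i.e.\ $a_m \rightharpoonup^* a$ in $L^\infty(Q)$. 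Relabelling $(a_m)$ as $(a_k)$ yields the lemma. There is no real obstacle here: the only points requiring care are that one must \emph{not} aim for strong $L^\infty$ convergence, and that the mollification scale $\varepsilon_m \downarrow 0$ must be tied to the cutoff scale so that $a_m$ stays compactly supported inside $Q$.
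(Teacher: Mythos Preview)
Your proof is correct and follows essentially the same approach as the paper: construct the approximants by combining a smooth cutoff supported in $Q$ with mollification, observe that neither operation increases the $L^\infty$ norm, and conclude weak-$*$ convergence from the $L^1$ convergence together with the uniform $L^\infty$ bound. The only cosmetic differences are that the paper uses a product cutoff $\xi_k(t,x)=\alpha_k(t)\beta_k(x)$ and applies the operations in the opposite order (mollify first, then multiply by the cutoff), writing $a_k := \xi_k\cdot(\rho_k * a)$; both variants yield the same conclusion for the same reasons.
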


\begin{proof}
Let $k\in \N^*$, $\alpha_k \in C_0^{\infty}((0,T);[0,1])$, $\alpha_k(t) = 1$ in $(1/k,T-1/k)$, $\beta_k \in C_0^{\infty}((\Omega);[0,1])$, $\beta_k(x) = 1$ in $\{x \in \Omega\ ;\ d(x,\partial\Omega) \geq 1/k\}$ and $\xi_k \in C_0^{\infty}(Q)$ be defined by $\xi_k(t,x) = \alpha_k(t) \beta_k(x)$. Let $\rho_k$ be a mollifier sequence in $Q$ such that $\int_Q \rho_k =1$.\\
\indent Then, it is easy to show that $a_k := \xi_k . (\rho_k * a)$ satisfies the conclusion of \Cref{density1}.
\end{proof}

\begin{rmk}
Actually, the previous lemma shows the density of $C_0^{\infty}(Q)$ in $L^{\infty}(Q)$ for the weak-star topology.
\end{rmk}

We also recall a particular case of the Aubin-Lions' lemma which is useful for the proof of \Cref{density2}.

\begin{lem}\label{aubinlions}\cite[Section 8, Corollary 4]{S}\\
A bounded subset of $Y$ (see \Cref{defiYspaceL2}) is relatively compact in $L^2(Q)$.
\end{lem}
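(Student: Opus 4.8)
The statement is the classical Aubin--Lions--Simon compactness lemma, specialized here to the Hilbert triple $H^1(\Omega) \hookrightarrow L^2(\Omega) \hookrightarrow (H^1(\Omega))'$, in which the first injection is compact (Rellich--Kondrachov) and the second is continuous. The plan is to reduce everything to two classical ingredients: Ehrling's interpolation inequality and a Fréchet--Kolmogorov-type criterion for relative compactness in the vector-valued space $L^2(Q) = L^2(0,T;L^2(\Omega))$.

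First I would record Ehrling's lemma: for every $\varepsilon > 0$ there exists $C_\varepsilon > 0$ such that $\norme{v}_{L^2(\Omega)}^2 \leq \varepsilon \norme{v}_{H^1(\Omega)}^2 + C_\varepsilon \norme{v}_{(H^1(\Omega))'}^2$ for every $v \in H^1(\Omega)$. This follows by contradiction: a normalized sequence violating it would be bounded in $H^1(\Omega)$, hence relatively compact in $L^2(\Omega)$ by Rellich--Kondrachov and, along a subsequence, weakly convergent in $H^1(\Omega)$; passing to the limit forces the limit to vanish in $(H^1(\Omega))'$, hence to vanish, which contradicts the normalization in $L^2(\Omega)$.

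Next, let $(u_n)_n$ be a bounded sequence in $Y$, say with $\norme{u_n}_{L^2(0,T;H^1(\Omega))} \leq M$ and $\norme{\partial_t u_n}_{L^2(0,T;(H^1(\Omega))')} \leq M'$. I would then check the two hypotheses of the compactness criterion in $L^2(Q)$. For the spatial part, Cauchy--Schwarz in time shows that $\int_{t_1}^{t_2} u_n(t)\,dt$ is bounded in $H^1(\Omega)$ uniformly in $n$, for every $0 \leq t_1 < t_2 \leq T$, hence relatively compact in $L^2(\Omega)$ by Rellich--Kondrachov. For the equicontinuity in time, from $u_n(t+h) - u_n(t) = \int_t^{t+h} \partial_t u_n(s)\,ds$ and Cauchy--Schwarz one gets $\norme{u_n(\cdot + h) - u_n}_{L^2(0,T-h;(H^1(\Omega))')}^2 \leq h^2 (M')^2$; combining this with the integrated form of Ehrling's lemma yields $\norme{u_n(\cdot + h) - u_n}_{L^2(0,T-h;L^2(\Omega))}^2 \leq 4\varepsilon M^2 + C_\varepsilon h^2 (M')^2$, which is made arbitrarily small uniformly in $n$ by choosing first $\varepsilon$ and then $h$. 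The criterion then yields that $(u_n)_n$ is relatively compact in $L^2(Q)$.

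The main subtlety here, more than a genuine obstacle, is to invoke the correct vector-valued compactness criterion: the ``spatial compactness'' must be tested on the time-averages $\int_{t_1}^{t_2} u_n$ rather than on pointwise values $u_n(t)$ (these only make sense thanks to $Y \hookrightarrow C([0,T];L^2(\Omega))$, \Cref{injclassique}), and the equicontinuity in time is naturally controlled only in the weak norm of $(H^1(\Omega))'$ while the desired conclusion is in $L^2(\Omega)$, the gap being bridged precisely by Ehrling's lemma. A shorter alternative, and the one retained here, is simply to quote \cite[Section 8, Corollary 4]{S}.
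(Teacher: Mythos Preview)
Your proof is correct. The paper does not give its own proof of this lemma: it is stated as a direct citation of \cite[Section 8, Corollary 4]{S} and used as a black box (in \Cref{density2}). You have supplied a standard and correct argument via Ehrling's lemma and Simon's criterion, and you correctly note at the end that one may simply quote the reference, which is exactly what the paper does.
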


\begin{lem}\label{density2}
Let $j \in \{1,2,3\}$, $D_j$ defined by \eqref{defd3}, \eqref{defd2} or \eqref{defd1}, $A \in \mathcal{E}_j$ (see \eqref{defe3}, \eqref{defe2} and \eqref{defe1}), $\varphi_T \in L^2(\Omega)^4$. We assume that 
\begin{equation}
\varphi_{T,k} \in C_0^{\infty}(\Omega)^4\underset{k \rightarrow + \infty}\rightarrow\varphi_T\ \mathrm{in}\ L^2(\Omega)^4,
\label{convcondinitiale}
\end{equation}  
\begin{equation}
A_k \in \mathcal{M}_4(C_0^{\infty}(Q))\underset{k \rightarrow + \infty}{\rightharpoonup^*} A \ \mathrm{in}\ L^{\infty}(Q)^{16}.
\label{convcouplage}
\end{equation} 
Then, the sequence of solutions $\varphi_k \in Y^4$ of 
\begin{equation}
\left\{
\begin{array}{l l}
- \partial_t{\varphi}_k - D_j^T \Delta \varphi_k = A_k^{T} \varphi_k &\mathrm{in}\ (0,T)\times\Omega,\\
\frac{\partial\varphi_k}{\partial n} = 0 &\mathrm{on}\ (0,T)\times\partial\Omega,\\
\varphi_k(T,.)=\varphi_{T ,k}&\mathrm{in}\  \Omega,
\end{array}
\right.
\label{adjsequel}
\end{equation}
weakly converges in $Y^4$ and strongly converges in $L^2(Q)^4$ to $\varphi$, the solution of \eqref{adj}.
\end{lem}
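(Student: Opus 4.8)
The plan is to combine a uniform parabolic energy estimate, the Aubin--Lions compactness result \Cref{aubinlions}, and the uniqueness statement of \Cref{wpl2linfty}, passing to the limit in the variational formulation of the system solved by $\varphi_k$. First I would observe that a weakly-$*$ convergent sequence in $L^{\infty}(Q)$ is bounded, hence $M' := \sup_k \norme{A_k}_{L^{\infty}(Q)^{16}} < +\infty$ (in the concrete situation supplied by \Cref{density1} one even has $\norme{A_k}_{L^{\infty}(Q)} \leq \norme{A}_{L^{\infty}(Q)}$). Reversing time, $\psi_k(t,\cdot) := \varphi_k(T-t,\cdot)$ solves a forward Cauchy problem of the type treated in \Cref{wpl2linfty}, with diffusion matrix $D_j^T$ (which, like $D_j$, is similar to $\mathrm{diag}(d_1,\dots,d_4)$, hence diagonalizable with positive spectrum), coupling matrix $A_k^T(T-\cdot,\cdot)$ of $L^{\infty}$-norm $\leq M'$, zero source term and initial datum $\varphi_{T,k}$. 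Since the constant in \eqref{estl2faible} depends on the coupling matrix only through $M'$ (via a Gronwall argument after testing the equation by $\psi_k$, then bounding $\partial_t \psi_k$ in $L^2(0,T;(H^1(\Omega)^4)')$ from the equation), one gets $\norme{\varphi_k}_{Y^4} \leq C\norme{\varphi_{T,k}}_{L^2(\Omega)^4} \leq C'$, the bound being uniform in $k$ because $\varphi_{T,k}$ converges in $L^2(\Omega)^4$.

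Since $Y^4$ is a Hilbert space, up to a subsequence $\varphi_k \rightharpoonup \widetilde{\varphi}$ weakly in $Y^4$, so that $\partial_t \varphi_k \rightharpoonup \partial_t \widetilde{\varphi}$ in $L^2(0,T;(H^1(\Omega)^4)')$ and $\nabla \varphi_k \rightharpoonup \nabla \widetilde{\varphi}$ in $L^2(Q)^{4N}$; by \Cref{aubinlions} applied componentwise, up to a further subsequence $\varphi_k \to \widetilde{\varphi}$ strongly in $L^2(Q)^4$.

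Next I would insert these convergences into the variational formulation of \eqref{adjsequel} (the backward analogue of \eqref{formvar}): the time-derivative and diffusion terms pass to the limit by the weak convergences above, and for the coupling term, given $w \in L^2(0,T;H^1(\Omega)^4)$ I would split
\[
\int_Q A_k^T \varphi_k \cdot w - \int_Q A^T \widetilde{\varphi}\cdot w = \int_Q A_k^T(\varphi_k - \widetilde{\varphi})\cdot w + \int_Q (A_k - A)^T\widetilde{\varphi}\cdot w ,
\]
the first term being bounded by $M'\norme{\varphi_k - \widetilde{\varphi}}_{L^2(Q)^4}\norme{w}_{L^2(Q)^4} \to 0$, and the second tending to $0$ because the relevant products of components of $\widetilde{\varphi}$ and $w$ belong to $L^1(Q)$ while $A_k \rightharpoonup^* A$ in $L^{\infty}(Q) = (L^1(Q))'$. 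Hence $\widetilde{\varphi}$ satisfies the variational formulation associated with \eqref{adj}. For the datum at time $T$, the trace map $u \mapsto u(T,\cdot)$ from $Y$ to $L^2(\Omega)$ is linear and continuous (see \Cref{injclassique}), hence weakly continuous, so $\varphi_{T,k} = \varphi_k(T,\cdot) \rightharpoonup \widetilde{\varphi}(T,\cdot)$ in $L^2(\Omega)^4$; comparing with $\varphi_{T,k} \to \varphi_T$ yields $\widetilde{\varphi}(T,\cdot) = \varphi_T$.

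By the uniqueness part of \Cref{wpl2linfty} (applied to the time-reversed problem) $\widetilde{\varphi} = \varphi$. Since this limit is independent of the extracted subsequence, the usual subsequence argument shows that the whole sequence $(\varphi_k)$ converges to $\varphi$, weakly in $Y^4$ and strongly in $L^2(Q)^4$. The only delicate point is the coupling term in the passage to the limit: the weak-$*$ convergence of $A_k$ alone does not suffice, and it is the strong $L^2(Q)$ compactness furnished by \Cref{aubinlions} that makes the product $A_k^T \varphi_k$ converge.
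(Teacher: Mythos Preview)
Your proof is correct and follows essentially the same route as the paper: uniform $Y^4$-bound from \Cref{wpl2linfty}, weak compactness in $Y^4$, strong $L^2(Q)^4$ compactness via \Cref{aubinlions}, passage to the limit in the equation, identification of the final datum through the weak continuity of the trace map (\Cref{injclassique}), and the subsequence argument based on uniqueness. Your presentation is slightly more explicit than the paper's in two places---the dependence of the energy constant on the $L^\infty$-bound of $A_k$ only, and the splitting of $\int_Q A_k^T\varphi_k\cdot w - \int_Q A^T\widetilde{\varphi}\cdot w$---but these are exactly the points the paper leaves implicit.
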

\begin{proof}
First, recalling \eqref{convcondinitiale}, we remark that $(\varphi_{T,k})_{k \in \N}$ is bounded in $L^2(\Omega)^4$. Secondly, recalling \eqref{convcouplage}, we remark that $(A_k)$ is bounded in $\mathcal{M}_4(L^{\infty}(Q))$. Then, from \Cref{wpl2linfty}: \eqref{estl2faible}, we get that $(\varphi_k)_{k \in \N}$ is bounded in $Y^4$. Then, up to a subsequence, we can suppose that there exists $\widetilde{\varphi} \in Y^4$ such that
\begin{equation}
\varphi_k \underset{k \rightarrow + \infty}\rightharpoonup\widetilde{\varphi}\ \mathrm{in}\ Y^4.
\label{conv1Y}
\end{equation}
By \Cref{injclassique}, we can also suppose that
\begin{equation}
\varphi_k(T,.) \underset{k \rightarrow + \infty}\rightharpoonup\widetilde{\varphi}(T,.)\ \mathrm{in}\ L^2(\Omega)^4.
\label{convcondinitvarphitilde}
\end{equation}
But, by \eqref{convcondinitiale}, we deduce that 
\begin{equation}
\varphi_k(T,.) = \varphi_{T,k} \underset{k \rightarrow + \infty}\rightharpoonup\varphi_T\ \mathrm{in}\ L^2(\Omega)^4.
\label{convcondinitvarphitildebis}
\end{equation}
Therefore, by \eqref{convcondinitvarphitilde} and \eqref{convcondinitvarphitildebis}, we get
\begin{equation}
\widetilde{\varphi}(T,.) = \varphi_T.
\label{egalitécondfinale}
\end{equation}
By \Cref{aubinlions}, up to a subsequence, we can also assume that 
\begin{equation}
\varphi_k \underset{k \rightarrow + \infty}\rightarrow \widetilde{\varphi}\ \mathrm{in}\ L^2(Q)^4.
\label{conv2L2}
\end{equation}
Consequently, from \eqref{conv2L2} and \eqref{convcouplage}, we have
\begin{equation}
A_k^T \varphi_k \underset{k \rightarrow + \infty}\rightharpoonup A^T \widetilde{\varphi} \ \mathrm{in}\ L^2(Q)^4.
\label{conv2L2faible}
\end{equation}
By using \eqref{conv1Y}, \eqref{conv2L2faible}, \eqref{egalitécondfinale} and by letting $k \rightarrow + \infty$ in \eqref{adjsequel}, we have
\begin{equation}
\left\{
\begin{array}{l l}
- \partial_t{\widetilde{\varphi}} - D_j^T \Delta \widetilde{\varphi} = A^{T} \widetilde{\varphi} &\mathrm{in}\ (0,T)\times\Omega,\\
\frac{\partial\widetilde{\varphi}}{\partial n} = 0 &\mathrm{on}\ (0,T)\times\partial\Omega,\\
\widetilde{\varphi}(T,.)=\varphi_T&\mathrm{in}\  \Omega.
\end{array}
\right.
\end{equation}
By uniqueness in \Cref{wpl2linfty}, we have $\widetilde{\varphi} = \varphi$. Then, $(\varphi_k)_{k \in \N}$ only has one limit-value: $\varphi$ for the weak-convergence in $Y^4$ and for the strong convergence in $L^2(Q)^4$. The sequence $(\varphi_k)_{k \in \N}$ is relatively compact in $Y$ equipped with the weak topology and $(\varphi_k)_{k \in \N}$ is relatively compact in $L^2(Q)^4$ equipped with the strong topology. Therefore,
\[ \varphi_k \underset{k \rightarrow + \infty}\rightharpoonup \varphi\ \mathrm{in}\ Y^4,\]
\[ \varphi_k \underset{k \rightarrow + \infty}\rightarrow \varphi\ \mathrm{in}\ L^2(Q)^4.\]
This concludes the proof of \Cref{density2}.
\end{proof}

\begin{lem}\label{density3}
Let us suppose that $(\varphi_k)_{k \in \N} \in Y^{\N}$ weakly converges to $\varphi$ in $Y$ and strongly converges to $\varphi$ in $L^2(Q)$. Then, we have
\[ \forall r \in \N,\ \int_{0}^{T}\int_{\Omega}e^{2s\alpha}(s\phi)^r|\varphi_k|^2 dxdt \underset{k \rightarrow + \infty} \rightarrow \int_{0}^{T}\int_{\Omega}e^{2s\alpha}(s\phi)^r|\varphi|^2 dxdt,\]
\[ \norme{\varphi(0,.)}_{L^2(\Omega)} \leq \underset{k \rightarrow + \infty}\liminf\norme{\varphi_k(0,.)}_{L^2(\Omega)}.\]
\end{lem}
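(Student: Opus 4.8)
The plan is to prove the two assertions separately; both are soft consequences of the hypotheses once one observes that the Carleman weight $w_r := e^{2s\alpha}(s\phi)^r$ (with the fixed values of $\lambda$ and $s$, as in the discussion following \eqref{3cin1}) belongs to $L^{\infty}(Q)$. To see this, note that since $\eta_0 \in C^2(\overline\Omega)$, $\eta_0 = 0$ on $\partial\Omega$ and $0 \leq \eta_0 \leq \norme{\eta_0}_{\infty}$, no blow-up occurs near $\partial\Omega$, and one has on all of $Q$
\[ 0 < \phi(t,x) \leq \frac{e^{\lambda\norme{\eta_0}_{\infty}}}{t(T-t)}, \qquad \alpha(t,x) \leq \frac{e^{\lambda\norme{\eta_0}_{\infty}} - e^{2\lambda\norme{\eta_0}_{\infty}}}{t(T-t)} =: \frac{-c_1}{t(T-t)}, \quad c_1 > 0. \]
Hence $0 \leq w_r(t,x) \leq \big(s\,e^{\lambda\norme{\eta_0}_{\infty}}\big)^r (t(T-t))^{-r} \exp\!\big(-2sc_1/(t(T-t))\big)$, and the right-hand side tends to $0$ as $t(T-t)\to 0^+$ because the exponential beats any negative power. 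Thus $w_r$ extends to an element of $C(\overline Q)$ vanishing on $\{t=0\}\cup\{t=T\}$; in particular $K_r := \norme{w_r}_{L^{\infty}(Q)} < \infty$ (depending on $r$ and on the fixed $\lambda,s,T,\Omega,\eta_0$).

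For the first assertion I would then use the strong $L^2(Q)$ convergence together with this bound. Writing
\[ \left| \int_{0}^{T}\!\int_{\Omega} w_r\big(|\varphi_k|^2 - |\varphi|^2\big)\,dx\,dt \right| \leq K_r \int_{0}^{T}\!\int_{\Omega} |\varphi_k - \varphi|\,\big(|\varphi_k| + |\varphi|\big)\,dx\,dt \leq K_r\, \norme{\varphi_k - \varphi}_{L^2(Q)}\big(\norme{\varphi_k}_{L^2(Q)} + \norme{\varphi}_{L^2(Q)}\big), \]
and using that a convergent sequence is bounded (so $(\norme{\varphi_k}_{L^2(Q)})_k$ is bounded), the right-hand side tends to $0$, which is exactly the claimed limit.

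For the second assertion I would invoke \Cref{injclassique}: the evaluation map $\gamma_0 : Y \to L^2(\Omega)$, $\gamma_0\psi := \psi(0,\cdot)$, is linear and bounded. A bounded linear map between Banach spaces is continuous for the weak topologies, so the hypothesis $\varphi_k \rightharpoonup \varphi$ in $Y$ yields $\varphi_k(0,\cdot) \rightharpoonup \varphi(0,\cdot)$ in $L^2(\Omega)$; the weak lower semicontinuity of the $L^2(\Omega)$-norm then gives $\norme{\varphi(0,\cdot)}_{L^2(\Omega)} \leq \liminf_{k\to\infty}\norme{\varphi_k(0,\cdot)}_{L^2(\Omega)}$.

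There is no genuine obstacle in this lemma; the only step deserving attention is the uniform bound $K_r < \infty$, which is precisely where freezing $\lambda$ and $s$ matters — otherwise the constant would degenerate as $s\to\infty$.
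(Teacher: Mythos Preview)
Your proof is correct and follows exactly the same approach as the paper, which simply states that the result is a consequence of $e^{2s\alpha}(s\phi)^r \in L^{\infty}(Q)$ and \Cref{injclassique}. You have merely unpacked these two ingredients in detail: the exponential decay of the weight for the first assertion, and weak continuity of the trace at $t=0$ followed by weak lower semicontinuity of the norm for the second.
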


\begin{proof}
The result is a consequence of the fact that $e^{2s\alpha}(s\phi)^r \in L^{\infty}(Q)$ and \Cref{injclassique}.
\end{proof}

\subsubsection{Proof with observation on two components: \eqref{inobs2c2}}\label{preuve2èmeobs}

\paragraph{Another parabolic regularity result} For the cases $j=2$ ($2$ controls) and $j=1$ ($1$ control), the diffusion matrix is not diagonal (see \eqref{defd2} and \eqref{defd1}). It creates coupling terms of second order. Roughly speaking, we differentiate some equations of the adjoint system \eqref{adj} in order to benefit from these coupling terms before applying Carleman estimates. The following lemma justifies this strategy.

\begin{lem}\label{lemDelta}
Let $d\in (0,+\infty)$, $f \in L^2(0,T;H_{Ne}^2(\Omega))$ and $y_0 \in C_0^{\infty}(\Omega)$. Let $y \in Y_2$ be the solution  of
\begin{equation}
\left\{
\begin{array}{l l}
 \partial_t y - d \Delta y = f &\mathrm{in}\ (0,T)\times\Omega,\\
\frac{\partial y}{\partial n} = 0 &\mathrm{on}\ (0,T)\times\partial\Omega,\\
y(0,.)=y_0 &\mathrm{in}\  \Omega.
\end{array}
\right.
\label{lemmereg}
\end{equation}
Then, $z:=\Delta y \in Y_2$ is the solution of 
\begin{equation}
\left\{
\begin{array}{l l}
 \partial_t z - d \Delta z = \Delta f &\mathrm{in}\ (0,T)\times\Omega,\\
\frac{\partial z}{\partial n} = 0 &\mathrm{on}\ (0,T)\times\partial\Omega,\\
z(0,.)=\Delta y_0 &\mathrm{in}\  \Omega.
\end{array}
\right.
\label{lemmereg2}
\end{equation}
\end{lem}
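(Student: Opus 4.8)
The plan is to prove the lemma in two steps: first construct a solution $z\in Y_2$ of \eqref{lemmereg2}, and then identify it with $\Delta y$.

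\emph{Step 1 (a $Y_2$ solution of \eqref{lemmereg2}).} Since $f\in L^2(0,T;H^2_{Ne}(\Omega))$ we have $\Delta f\in L^2(0,T;L^2(\Omega))=L^2(Q)$, and since $y_0\in C_0^\infty(\Omega)$ is compactly supported, $\Delta y_0$ again belongs to $C_0^\infty(\Omega)$, hence to the natural trace space for $Y_2$. Applying the maximal $L^2$-regularity result \cite[Theorem 2.1]{DHP} for the heat equation with homogeneous Neumann boundary conditions — as in \Cref{wpl2}, but now allowing the source term $\Delta f$ and the initial datum $\Delta y_0$ — the Cauchy problem \eqref{lemmereg2} admits a unique solution $z\in Y_2$. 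It remains to show that this $z$ equals $\Delta y$.

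\emph{Step 2 (identification).} Let $\Delta_N$ be the Neumann Laplacian on $L^2(\Omega)$, with domain $D(\Delta_N)=H^2_{Ne}(\Omega)$ (recall $\partial\Omega$ is of class $C^2$), and let $(e^{td\Delta_N})_{t\ge 0}$ be the bounded analytic semigroup it generates. Since $y$ and $z$ are the $Y_2$-solutions of \eqref{lemmereg} and \eqref{lemmereg2}, they coincide with the corresponding Duhamel representations: for a.e. $t\in(0,T)$,
\begin{equation*}
y(t)=e^{td\Delta_N}y_0+\int_0^t e^{(t-s)d\Delta_N}f(s)\,ds,\qquad z(t)=e^{td\Delta_N}\Delta y_0+\int_0^t e^{(t-s)d\Delta_N}\Delta f(s)\,ds.
\end{equation*}
Now $y_0\in D(\Delta_N)$ and $f(s,\cdot)\in H^2_{Ne}(\Omega)=D(\Delta_N)$ for a.e. $s$; since $\Delta$ commutes with the resolvent of $\Delta_N$, hence with $e^{\tau d\Delta_N}$ on $D(\Delta_N)$, one has $\Delta e^{td\Delta_N}y_0=e^{td\Delta_N}\Delta y_0$ and $\Delta e^{(t-s)d\Delta_N}f(s)=e^{(t-s)d\Delta_N}\Delta f(s)$, the latter being integrable in $s$ because $\Delta f\in L^2(Q)$. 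Interchanging the closed operator $\Delta$ with the Bochner integral then gives $\Delta y(t)=z(t)$ for a.e. $t$, so $z=\Delta y\in Y_2$ solves \eqref{lemmereg2}, as claimed.

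The main point requiring care is the commutation of $\Delta$ with the semigroup and with the time integral in Step 2; this is routine once one knows that $y_0$ and $f(t,\cdot)$ lie in $D(\Delta_N)=H^2_{Ne}(\Omega)$, which is exactly what the hypotheses $y_0\in C_0^\infty(\Omega)$ and $f\in L^2(0,T;H^2_{Ne}(\Omega))$ ensure. Alternatively, one may avoid semigroup language and identify $z$ with $\Delta y$ by a duality argument: test \eqref{lemmereg} and \eqref{lemmereg2} against the (smooth) solutions of the backward heat equation with homogeneous Neumann conditions and compactly supported right-hand side, and match the two resulting identities via Green's formula, using that the normal derivatives of $y$, $f$ and $y_0$ vanish on $\partial\Omega$.
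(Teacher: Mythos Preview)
Your proof is correct but takes a genuinely different route from the paper. The paper also begins by solving \eqref{lemmereg2} for some $\widetilde z\in Y_2$, but then identifies $\widetilde z$ with $\Delta y$ by going the other way: it checks that $\int_\Omega \widetilde z(t,\cdot)=0$ for all $t$, inverts the Neumann Laplacian pointwise in time to produce $\widetilde y(t,\cdot)$ with $\Delta\widetilde y=\widetilde z$, uses elliptic regularity to place $\widetilde y$ in $Y_2$, verifies that $\widetilde y$ solves \eqref{lemmereg}, and concludes $\widetilde y=y$ by uniqueness. You instead write both $y$ and $z$ via Duhamel's formula and commute $\Delta_N$ with the semigroup and with the Bochner integral (Hille's theorem), which is more streamlined if one is willing to invoke semigroup machinery. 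The paper's argument stays closer to the PDE toolbox already in play (elliptic regularity, uniqueness of weak solutions) and makes the mean-zero compatibility condition for the Neumann inverse explicit; your approach is shorter and arguably more transparent about where the hypotheses $y_0\in C_0^\infty(\Omega)$ and $f\in L^2(0,T;H^2_{Ne}(\Omega))$ are used, namely to land $y_0$ and $f(s,\cdot)$ in $D(\Delta_N)$ so that the commutation is legitimate.
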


\begin{proof}
Let $\widetilde{z} \in Y_2$ be the solution of 
\begin{equation}
\left\{
\begin{array}{l l}
 \partial_t \widetilde{z} - d \Delta \widetilde{z} = \Delta f &\mathrm{in}\ (0,T)\times\Omega,\\
\frac{\partial \widetilde{z}}{\partial n} = 0 &\mathrm{on}\ (0,T)\times\partial\Omega,\\
\widetilde{z}(0,.)=\Delta y_0 &\mathrm{in}\  \Omega.
\end{array}
\right.
\label{equationenz}
\end{equation}
By \Cref{injclassique}, we have $\widetilde{z} \in C([0,T];L^2(\Omega))$. Moreover, a.e. $t \in [0,T]$, 
\[\frac{d}{dt} \int_{\Omega} \widetilde{z}(t,.) = d \int_{\Omega} \Delta \widetilde{z}(t,.) + \int_{\Omega} \Delta f(t,.) =0.\] 
Then, for every $t \in [0,T]$, 
\[ \int_{\Omega} \widetilde{z}(t,.) = \int_{\Omega} \widetilde{z}(0,.) = \int_{\Omega} \Delta y_0 = 0.\]
For every $t \in [0,T]$, let $\widetilde{y}(t,.)$ be the solution of
\begin{equation*}
\left\{
\begin{array}{l l}
 \Delta \widetilde{y}(t,.)= \widetilde{z}(t,.) &\mathrm{in}\ \Omega,\\
\frac{\partial \widetilde{y}(t,.)}{\partial n} = 0 &\mathrm{on}\ \partial\Omega.
\end{array}
\right.
\end{equation*}
By elliptic regularity, $\widetilde{y} \in C([0,T];H_{Ne}^2(\Omega)) \subset L^2(0,T;H_{Ne}^2(\Omega))$, $\partial_t \widetilde{y} \in L^2(0,T;H_{Ne}^2(\Omega)) \subset L^2(0,T;L^2(\Omega))$ since $\Delta \partial_t \widetilde{y} = \partial_t \widetilde{z}$. Moreover, $\widetilde{y}$ is the solution of \eqref{lemmereg} (by applying the operator $\Delta^{-1}$ to \eqref{equationenz} and by using $\Delta^{-1} \partial_t \widetilde{z}= \partial_t \Delta^{-1}\widetilde{z}$). Then, by uniqueness, $\widetilde{y} = y$ and $\widetilde{z} = \Delta y$ is the solution of \eqref{lemmereg2}.
\end{proof}

\paragraph{Proof of the observability inequality: \eqref{inobs2c2}}
\begin{proof}
\boxed{j=2}\\
\indent Let $A \in \mathcal{E}_2$ (see \eqref{defe2}), $\varphi_T \in C_0^{\infty}(\Omega)^4$ (the general case comes from a density argument, see \eqref{2cin16}, \Cref{density2} and \Cref{density3}), $\varphi \in Y_2^4$ be the solution of \eqref{adj} (see \Cref{wpl2}), $\omega_2$ and $\omega_1$ be two open subsets such that $\omega'' \subset\subset \omega_2 \subset \subset \omega_1 \subset \subset \omega_0$. Our goal is to prove \eqref{inobs2c2}.\\
\indent We have: for every $1 \leq i \leq 2$,
\begin{equation}
\left\{
\begin{array}{l l}
- \partial_t \varphi_i - d_i \Delta\varphi_i= a_{1i} \varphi_1 + a_{2i} \varphi_2 + a_{3i} \varphi_3 &\mathrm{in}\ (0,T)\times\Omega,\\
- \partial_t \varphi_3 - d_3 \Delta \varphi_3 = a_{13} \varphi_1 + a_{23} \varphi_2 + a_{33} \varphi_3 + (d_3-d_4) \Delta \varphi_4 &\mathrm{in}\ (0,T)\times\Omega,\\
- \partial_t \varphi_4 - d_4 \Delta \varphi_4= u_2^*(\varphi_1-\varphi_2+\varphi_3) &\mathrm{in}\ (0,T)\times\Omega,\\
\frac{\partial\varphi_i}{\partial n} =\frac{\partial\varphi_3}{\partial n} =\frac{\partial\varphi_4}{\partial n} = 0 &\mathrm{on}\ (0,T)\times\partial\Omega,\\
(\varphi_i,\varphi_3,\varphi_4)(T,.)=(\varphi_{i,T},\varphi_{3,T},\varphi_{4,T}) &\mathrm{in}\  \Omega.
\end{array}
\right.
\label{heatnondiagsystem1}
\end{equation}
From \eqref{heatnondiagsystem1} and \Cref{lemDelta}, we have
\begin{equation}
\left\{
\begin{array}{l l}
- \partial_t (\Delta \varphi_4) - d_4 \Delta(\Delta \varphi_4)= \Delta (u_2^*(\varphi_1-\varphi_2+\varphi_3)) &\mathrm{in}\ (0,T)\times\Omega,\\
\frac{\partial\Delta\varphi_4}{\partial n} = 0 &\mathrm{on}\ (0,T)\times\partial\Omega,\\
\Delta\varphi_4(T,.)=\Delta\varphi_{4,T} &\mathrm{in}\  \Omega.
\end{array}
\right.
\label{deltaadj}
\end{equation}
We apply the Carleman inequality \eqref{carl1} for \eqref{deltaadj} with $\beta =0$ and $\omega'=\omega_2$, for every $\lambda, s \geq C$,
\begin{align}
I(0,\lambda,s,\Delta \varphi_4) \leq C \left(\int_{0}^{T}\int_{\Omega} e^{2s\alpha}(|\Delta \varphi_1|^2+|\Delta \varphi_2|^2|+|\Delta \varphi_3|^2) + \int_{0}^{T}\int_{\omega_2}\lambda^4 e^{2s\alpha} (s\phi)^{3} |\Delta \varphi_4|^2\right).
\label{2cin1}
\end{align}
After this, we apply the Carleman inequality \eqref{carl1} for the first two equations of \eqref{heatnondiagsystem1} with $\beta = 2$ and $\omega' = \omega_2$ to obtain (by \eqref{2c2bded}), for every $\lambda, s \geq C$,
\begin{align}
\sum\limits_{i=1}^3I(2,\lambda,s,\varphi_i)& \leq C\left(\int_{0}^{T}\int_{\Omega} e^{2s\alpha} (s\phi)^{2} (|\varphi_1|^2+|\varphi_2|^2|+|\varphi_3|^2+|\Delta \varphi_4|^2)\right)\notag\\
&\quad +C\left(\int_{0}^{T}\int_{\omega_2}\lambda^4 e^{2s\alpha} (s\phi)^{5} (|\varphi_1|^2+|\varphi_2|^2|+|\varphi_3|^2)\right).
\label{2cin2}
\end{align}
We sum \eqref{2cin1} and \eqref{2cin2}, for every $\lambda, s \geq C$,
\begin{align}
&\sum\limits_{i=1}^3I(2,\lambda,s,\varphi_i)+I(0,\lambda,s,\Delta \varphi_4)\notag\\
& \leq C\left(\int_{0}^{T}\int_{\Omega} e^{2s\alpha} \left((s\phi)^{2} (|\varphi_1|^2+|\varphi_2|^2|+|\varphi_3|^2+|\Delta \varphi_4|^2) + |\Delta \varphi_1|^2+|\Delta \varphi_2|^2|+|\Delta \varphi_3|^2\right)\right)\notag\\
&\ + C \left(\int_{0}^{T}\int_{\omega_2}\lambda^4 e^{2s\alpha} \left((s\phi)^{5} (|\varphi_1|^2+|\varphi_2|^2|+|\varphi_3|^2) + (s\phi)^{3} |\Delta \varphi_4|^2\right)\right).
\label{2cin3}
\end{align}
We fix $\lambda \geq C$ and we absorb the first right-hand side term of \eqref{2cin3} by the left-hand side terms of \eqref{2cin3}, by taking $s$ sufficiently large. Then,
\begin{align}
&\sum\limits_{i=1}^3I(2,\lambda,s,\varphi_i)+I(0,\lambda,s,\Delta \varphi_4) \notag\\
& \leq C\left(\int_{0}^{T}\int_{\omega_2} e^{2s\alpha} \left((s\phi)^{5} (|\varphi_1|^2+|\varphi_2|^2|+|\varphi_3|^2) +(s\phi)^{3} |\Delta \varphi_4|^2\right)\right).
\label{2cin4}
\end{align}
\textbf{Now, $\lambda, s$ are supposed to be fixed and the constant $C$ may depend on $\lambda, s$.}\\
\indent Then, we have to get rid of $\int_{0}^{T}\int_{\omega_2} e^{2s\alpha} (s\phi)^{3} |\Delta \varphi_4|^2 dxdt$ and $\int_{0}^{T}\int_{\omega_2} e^{2s\alpha} (s\phi)^{5}|\varphi_3|^2dxdt$. For the first term, we use the coupling term of second order $(d_3 -d_4) \Delta$. For the second term, we use the coupling term of zero order thanks to property \eqref{2c2sign}.\\

\underline{Estimate of $\int_{0}^{T}\int_{\omega_2} e^{2s\alpha} (s\phi)^{3} |\Delta \varphi_4|^2 dxdt$}.\\

Let us introduce $\chi_2 \in C^{\infty}(\overline{\Omega};[0,+\infty[)$, such that the support of $\chi_2$ is included in $\omega_1$ and $\chi_2 = 1$ in $\omega_2$. We multiply the second equation of \eqref{heatnondiagsystem1} by $sign(d_3-d_4)\chi_2(x) e^{2s\alpha} (s\phi)^{3} \Delta \varphi_4$ and we integrate on $(0,T)\times\omega_1$. As $d_3 \neq d_4$, we have 
\begin{align}
&\int_{0}^{T}\int_{\omega_2} e^{2s\alpha} (s\phi)^{3} |\Delta \varphi_4|^2 dxdt \leq \int_{0}^{T}\int_{\omega_1} \chi_2(x) e^{2s\alpha} (s\phi)^{3} |\Delta \varphi_4|^2 dxdt \notag\\
& \leq C  \int_{0}^{T}\int_{\omega_1} \chi_2(x) e^{2s\alpha} (s\phi)^{3} \Delta \varphi_4 (-\partial_t \varphi_3 - d_3 \Delta \varphi_3 - a_{13} \varphi_1 -  a_{23} \varphi_2 - a_{33} \varphi_3)dxdt.
\label{2cin5}
\end{align}
\indent Let $\varepsilon > 0$ which will be chosen small enough. We estimate the right hand side of \eqref{2cin5} in the same way as the one of \eqref{3cin2}:
\begin{itemize}[nosep]
\item for terms involving $\Delta \varphi_4 a_{i3} \varphi_i$ with $1 \leq i \leq 3$, we apply \eqref{lemteccarl1} with $\Phi=\Delta \varphi_4$, $\Psi =\varphi_i$, $a = a_{i3} \in L^{\infty}(Q)$, $1 \leq i \leq 3$ (recalling \eqref{2c2bded}), $\Theta = \chi_2$ and $r =k=l=3$,
\item for the term involving $\Delta \varphi_4 \partial_t \varphi_3$, we apply \eqref{lemteccarl2} with $\Phi=\Delta \varphi_4$, $\Psi =\varphi_3$, $a = 1$, $\Theta = \chi_2$ and $r =k=3$, $l=7$,
\item for the term involving $\Delta \varphi_4 \Delta \varphi_3$, we apply \eqref{lemteccarl3} with $\Phi=\Delta \varphi_4$, $\Psi =\varphi_3$, $a = d_3$, $\Theta = \chi_2$ and $r =k=3$, $l=7$.
\end{itemize}
From \eqref{2cin4}, \eqref{2cin5}, we get 
\begin{align}
&\sum\limits_{i=1}^3I(2,\lambda,s,\varphi_i)+I(0,\lambda,s, \Delta \varphi_4)\notag\\
& \leq 3 \varepsilon \left(\int_0^T\int_{\Omega} e^{2s\alpha} \Big\{(s\phi)^{3} |\Delta \varphi_4|^2 +(s\phi) |\nabla \Delta \varphi_4|^2 +(s\phi)^{-1}\left( |\partial_t \Delta \varphi_4|^2 + |\Delta \Delta \varphi_4|^2 \right)  \Big\}\right)\notag\\
&\  + C_{\varepsilon}\left(\sum\limits_{i=1}^{3} \int_{0}^{T}\int_{\omega_1} e^{2s\alpha} (s\phi)^{7} |\varphi_i|^2\right).
\label{2cin6}
\end{align}
By taking $\varepsilon$ small enough in \eqref{2cin6}, we get
\begin{equation}
\sum\limits_{i=1}^3I(2,\lambda,s,\varphi_i)+I(0,s, \Delta \varphi_4)
\leq C \left(\sum\limits_{i=1}^{3} \int_{0}^{T}\int_{\omega_1} e^{2s\alpha} (s\phi)^{7} |\varphi_i|^2 dxdt\right).
\label{2cin7}
\end{equation}

\underline{Estimate of $\int_{0}^{T}\int_{\omega_1} e^{2s\alpha} (s\phi)^{7}|\varphi_3|^2dxdt$}.\\

Let us introduce $\chi_1 \in C^{\infty}(\overline{\Omega};[0,+\infty[)$, such that the support of $\chi_1$ is included in $\omega_0$ and $\chi_1 = 1$ in $\omega_1$. We multiply the first equation of the adjoint system \eqref{heatnondiagsystem1} with $i=1$ by $-\chi_1(x) e^{2s\alpha} (s\phi)^{7} \varphi_3$ and we integrate on $(0,T)\times\omega_0$. By using \eqref{2c2sign}, we have 
\begin{align}
&\int_{0}^{T}\int_{\omega_1} e^{2s\alpha} (s\phi)^{7}|\varphi_3|^2dxdt\leq\int_{0}^{T}\int_{\omega_0} \chi_1(x) e^{2s\alpha} (s\phi)^{7} | \varphi_3|^2 dxdt \notag \\
& \leq C  \int_{0}^{T}\int_{\omega_0} \chi_1(x) e^{2s\alpha} (s\phi)^{7} \varphi_3 (-\partial_t \varphi_1 - d_1 \Delta \varphi_1 -a_{11}  \varphi_1 - a_{21} \varphi_2)dxdt.
\label{2cin11}
\end{align}
Let $\varepsilon' > 0$ which will be chosen small enough. We estimate the right hand side of \eqref{2cin11} in the same way as the one of \eqref{3cin2}:
\begin{itemize}[nosep]
\item for terms involving $\varphi_3 a_{i1} \varphi_i$ with $1 \leq i \leq 2$, we apply \eqref{lemteccarl1} with $\Phi= \varphi_3$, $\Psi =\varphi_i$, $a = a_{i3} \in L^{\infty}(Q)$, $1 \leq i \leq 2$ (recalling \eqref{2c2bded}), $\Theta = \chi_1$ and $r =7$, $k=5$, $l=9$,
\item for the term involving $\varphi_3 \partial_t \varphi_1$, we apply \eqref{lemteccarl2} with $\Phi=\varphi_3$, $\Psi =\varphi_1$, $a = 1$, $\Theta = \chi_1$ and $r =7$, $k=5$, $l=13$,
\item for the term involving $\varphi_3 \Delta \varphi_1$, we apply \eqref{lemteccarl3} with $\Phi=\varphi_3$, $\Psi =\varphi_1$, $a = d_1$, $\Theta = \chi_1$ and $r =7$, $k=5$, $l=13$.
\end{itemize}
Then, we obtain
\begin{align}
\int_{0}^{T}\int_{\omega_1} e^{2s\alpha} (s\phi)^{7}|\varphi_3|^2
&\leq 3\varepsilon' \left(\int_{0}^{T}\int_{\Omega} e^{2s\alpha}\Big\{ (s\phi)^{5} |\varphi_3|^2 + (s\phi)^3 |\nabla\varphi_3|^2 +  (s\phi)(|\partial_t \varphi_3|^2 +|\Delta\varphi_3|^2)\Big\} \right)\notag\\
&\quad +C_{\varepsilon'}\left(\sum\limits_{i=1}^2 \int_{0}^{T}\int_{\omega_0} e^{2s\alpha} (s\phi)^{13} |\varphi_i|^{2} \right).
\label{2cin12}
\end{align}
By using \eqref{2cin7}, \eqref{2cin12} and by taking $\varepsilon'$ sufficiently small, we get
\begin{equation}
\sum\limits_{i=1}^3I(2,\lambda,s,\varphi_i)+I(0,\lambda,s, \Delta \varphi_4) \leq C  \left(\sum\limits_{i=1}^2\int\int_{(0,T)\times\omega_0}  e^{2s\alpha}(s\phi)^{13}|\varphi_i|^2\right).
\label{2cin13}
\end{equation}
Then, we deduce from \eqref{2cin13} that we have 
\begin{equation}
\int_{0}^{T}\int_{\Omega} \sum\limits_{i =1}^3e^{2s\alpha} (s\phi)^{5}  |\varphi_i|^2 + e^{2s\widehat{\alpha}} (s\widehat{\phi})^{3} |\Delta \varphi_4|^2\leq C \left(\sum\limits_{i =1}^2\int\int_{(0,T)\times\omega_0}  e^{2s\alpha}(s\phi)^{13}|\varphi_i|^2\right),
\label{2cin14}
\end{equation}
where $\widehat{\phi}$ and $\widehat{\alpha}$ are defined in \eqref{defpoidsmin}. In particular, $\widehat{\phi}$ and $\widehat{\alpha}$ do not depend on the spatial variable $x$.
In order to estimate $\varphi_4$ by $\Delta \varphi_4$, we use the classical lemma and the corollary that follow.
\begin{lem}\textbf{Poincaré-Wirtinger inequality}\label{lempw}\\
There exists $C=C(\Omega)$ such that 
\begin{equation}
\forall u \in H^1(\Omega), \int_{\Omega}\left(u(x) - (u)_{\Omega} \right)^2dx \leq C \int_{\Omega} |\nabla u(x)|^2 dx.
\label{pw}
\end{equation}
\end{lem}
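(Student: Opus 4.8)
The plan is to argue by contradiction, using the compactness of the Sobolev embedding (Rellich--Kondrachov theorem). First I would reduce to mean-zero functions: both sides of \eqref{pw} are unchanged if $u$ is replaced by $u - (u)_{\Omega}$, so it suffices to find $C=C(\Omega)$ such that $\norme{v}_{L^2(\Omega)}^2 \leq C \norme{\nabla v}_{L^2(\Omega)}^2$ for every $v \in H^1(\Omega)$ with $(v)_{\Omega} = 0$.

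Suppose this fails. Then there is a sequence $(v_n)_{n\in\N} \subset H^1(\Omega)$ with $(v_n)_{\Omega}=0$, $\norme{v_n}_{L^2(\Omega)} = 1$ and $\norme{\nabla v_n}_{L^2(\Omega)} \to 0$. In particular $(v_n)$ is bounded in $H^1(\Omega)$. Since $\Omega$ is a bounded open set of class $C^2$, the embedding $H^1(\Omega) \hookrightarrow L^2(\Omega)$ is compact, so up to a subsequence $v_n \to v$ strongly in $L^2(\Omega)$ and $v_n \rightharpoonup v$ weakly in $H^1(\Omega)$. Passing to the limit, $\norme{v}_{L^2(\Omega)} = 1$, $\int_{\Omega} v = \lim_n \int_{\Omega} v_n = 0$, and by weak lower semicontinuity of the $L^2$-norm, $\norme{\nabla v}_{L^2(\Omega)} \leq \liminf_n \norme{\nabla v_n}_{L^2(\Omega)} = 0$, hence $\nabla v = 0$ a.e. in $\Omega$. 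Because $\Omega$ is connected, $v$ is a.e. equal to a constant, and $\int_{\Omega} v = 0$ then forces $v = 0$, contradicting $\norme{v}_{L^2(\Omega)} = 1$. This proves the existence of $C$.

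The argument is essentially routine; the two places where the hypotheses on $\Omega$ enter are the compactness of the embedding $H^1(\Omega)\hookrightarrow L^2(\Omega)$ (which uses boundedness and the regularity of $\partial\Omega$) and the step ``$\nabla v = 0$ on the connected open set $\Omega$ implies $v$ constant'' (which uses connectedness of $\Omega$), both of which are available under the standing assumptions. So I do not expect any genuine obstacle here; the only care needed is to extract a single subsequence along which both the strong $L^2$ convergence and the weak $H^1$ convergence hold, which is immediate since the strong limit in $L^2$ must coincide with the weak limit in $H^1$.
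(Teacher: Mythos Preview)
Your argument is correct and is the standard compactness-and-contradiction proof via Rellich--Kondrachov. The paper itself does not give a proof of this lemma: it states the Poincar\'e--Wirtinger inequality as a classical fact and only proves the corollary that follows, so there is nothing to compare against.
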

\begin{cor}\label{corpw}
There exists $C = C(\Omega)$ such that
\begin{equation}
\forall u \in H_{Ne}^2(\Omega):=\left\{u\in H^2(\Omega)\ ;\ \frac{\partial u}{\partial n} = 0\right\},\ \int_{\Omega} |\nabla u(x)|^2 dx \leq C \int_{\Omega} |\Delta u(x)|^2 dx.
\label{gradlap}
\end{equation}
\end{cor}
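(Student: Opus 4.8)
The plan is to combine Green's formula with the Poincaré--Wirtinger inequality of \Cref{lempw}. First I would reduce to the case of a zero-mean function: replacing $u$ by $u-(u)_{\Omega}$ changes neither $\nabla u$ nor $\Delta u$ and preserves the Neumann condition $\frac{\partial u}{\partial n}=0$, so it suffices to prove \eqref{gradlap} for $u\in H_{Ne}^2(\Omega)$ with $(u)_{\Omega}=0$.

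For such a $u$, I would integrate by parts: since $u\in H^2(\Omega)$ and $\frac{\partial u}{\partial n}=0$ on $\partial\Omega$, Green's formula gives
\begin{equation*}
\int_{\Omega} |\nabla u(x)|^2\,dx = -\int_{\Omega} u(x)\,\Delta u(x)\,dx + \int_{\partial\Omega} u\,\frac{\partial u}{\partial n} = -\int_{\Omega} u(x)\,\Delta u(x)\,dx,
\end{equation*}
the boundary term vanishing because $\frac{\partial u}{\partial n}=0$. By the Cauchy--Schwarz inequality the right-hand side is bounded by $\norme{u}_{L^2(\Omega)}\norme{\Delta u}_{L^2(\Omega)}$. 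Applying \Cref{lempw} to $u$ (which, having zero mean, satisfies $u-(u)_{\Omega}=u$) yields $\norme{u}_{L^2(\Omega)}\leq C(\Omega)\norme{\nabla u}_{L^2(\Omega)}$, hence
\begin{equation*}
\int_{\Omega} |\nabla u|^2 \leq C(\Omega)\,\norme{\nabla u}_{L^2(\Omega)}\norme{\Delta u}_{L^2(\Omega)}.
\end{equation*}

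Finally I would conclude by a standard division argument: if $\norme{\nabla u}_{L^2(\Omega)}=0$ the inequality \eqref{gradlap} is trivial, and otherwise dividing the last estimate by $\norme{\nabla u}_{L^2(\Omega)}$ and squaring gives $\int_{\Omega}|\nabla u|^2\leq C(\Omega)^2\int_{\Omega}|\Delta u|^2$, which is exactly \eqref{gradlap}. I do not expect any genuine obstacle here; the only point needing a little care is the justification of Green's formula at the $H^2$ regularity level --- the normal trace $\frac{\partial u}{\partial n}$ lies in $H^{1/2}(\partial\Omega)\subset L^2(\partial\Omega)$ and $u|_{\partial\Omega}\in L^2(\partial\Omega)$ by the trace theorem, so the boundary integral is well defined and vanishes --- which is classical.
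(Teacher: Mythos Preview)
Your proof is correct and follows essentially the same approach as the paper: integration by parts using the Neumann condition, Cauchy--Schwarz, Poincar\'e--Wirtinger (\Cref{lempw}), and division by $\norme{\nabla u}_{L^2(\Omega)}$. The only cosmetic difference is that you subtract the mean at the outset, whereas the paper keeps $u$ and instead uses $\int_{\Omega}\Delta u=0$ (a consequence of the Neumann condition) to replace $-\int_{\Omega}(\Delta u)u$ by $-\int_{\Omega}(\Delta u)(u-(u)_{\Omega})$ before applying Cauchy--Schwarz and \Cref{lempw}.
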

\begin{proof}Let $u \in H_{Ne}^2(\Omega)$ satisfying $\norme{\nabla u}_{L^2(\Omega} \neq 0$. Otherwise, the inequality \eqref{gradlap} is trivial. We have by an integration by parts and by using \eqref{pw},
\begin{align*}\int_{\Omega} |\nabla u|^2 = - \int_{\Omega} (\Delta u)u = -\int_{\Omega} (\Delta u)(u- (u)_{\Omega}) &\leq \ \norme{\Delta u}_{L^2(\Omega)} \norme{u- (u)_{\Omega}}_{L^2(\Omega}\\ &\leq C \norme{\Delta u}_{L^2(\Omega)} \norme{\nabla u}_{L^2(\Omega}.\end{align*} We conclude the proof of \Cref{corpw} by simplifying by $\norme{\nabla u}_{L^2(\Omega}$.
\end{proof}
\noindent Then, by applying the Poincaré-Wirtinger inequality \eqref{pw} and \eqref{gradlap} to $\varphi_4$, we deduce from \eqref{2cin14} that
\begin{equation}
\int_{0}^{T}\int_{\Omega}\sum\limits_{i =1}^3  e^{2s\alpha} (s\phi)^{5} |\varphi_i|^2 + e^{2s\widehat{\alpha}} (s\widehat{\phi})^{3} |\varphi_4-(\varphi_4)_{\Omega}|^2 \leq C \left(\sum\limits_{i =1}^2\int\int_{(0,T)\times\omega_0}  e^{2s\alpha}(s\phi)^{13}|\varphi_i|^2 \right).
\label{2cin15}
\end{equation}
\indent Now, from the dissipation in time of the energy of $(\varphi_1,\varphi_2,\varphi_3,\varphi_4-(\varphi_4)_{\Omega})$ (see \Cref{lemdissipenergy} in the Appendix), we get
\begin{align}
&\sum\limits_{i =1}^3 \left(\norme{\varphi_i(0,.)}_{L^2(\Omega)}^2\right) + \norme{\varphi_4(0,.)-(\varphi_4)_{\Omega}(0)}_{L^2(\Omega)}^2\notag\\
& \leq C \int_{T/4}^{3T/4} \left(\sum\limits_{i =1}^3\left(\norme{\varphi_i(t,.)}_{L^2(\Omega)}^2 \right)+ \norme{\varphi_4(t,.)-(\varphi_4)_{\Omega}(t)}_{L^2(\Omega)}^2\right)dt.
\label{dissip2}
\end{align}
Consequently, from \eqref{2cin15}, \eqref{dissip2} and the same arguments given between \eqref{3cin7bis} and \eqref{3cin8}, we easily deduce that
\begin{equation}
\sum\limits_{i =1}^3 \left(\norme{\varphi_i(0,.)}_{L^2(\Omega)}^2\right) + \norme{\varphi_4(0,.)-(\varphi_4)_{\Omega}(0)}_{L^2(\Omega)}^2 \leq C \left(\sum\limits_{i=1}^2\int\int_{(0,T)\times\omega}  e^{2s\alpha}(s\phi)^{13}|\varphi_i|^2 dxdt\right),
\label{2cin16}
\end{equation}
and consequently the observability inequality \eqref{inobs2c2} because $e^{2s\alpha} (s\phi)^{13}$ is bounded.\\

This ends the proof of the observability inequality \eqref{inobs2c2}.
\end{proof}

\subsubsection{Another Carleman inequality}\label{carlemanDelta}

\begin{theo}\textbf{Carleman inequality}\\
\label{lemcarl3}
Let $d\in(0,+\infty)$, $\omega'$ an open subset such that $\omega'' \subset \subset \omega' \subset \subset \omega_0$. There exist $C = C(\Omega,\omega')$, $\lambda_0 = \lambda_0(\Omega,\omega')$ such that, for every $\lambda \geq \lambda_0$, there exists $s_0 = s_0(\Omega,\omega',\lambda)$ such that, for any $s \geq s_0(T+T^2)$, any $\varphi_T \in L^2(\Omega)$ and any $f \in L^2(0,T;H_{Ne}^2(\Omega))$, the solution $\varphi$ of
\[
\left\{
\begin{array}{l l}
- \partial_t \varphi - d \Delta \varphi = \Delta f &\mathrm{in}\ (0,T)\times\Omega,\\
\frac{\partial\varphi}{\partial n} = 0 &\mathrm{on}\ (0,T)\times\partial\Omega,\\
\varphi(T,.)=\varphi_T &\mathrm{in}\  \Omega,
\end{array}
\right.
\]
satisfies
\begin{align}
&\int_{0}^{T}\int_{\Omega} e^{2s\alpha} (s\phi)^{3} |\varphi|^2 dxdt 
\leq C\left(\int_{0}^{T}\int_{\Omega} e^{2s\alpha} (s\phi)^{4} |f|^2 dxdt + \int_{0}^{T}\int_{\omega'} e^{2s\alpha} (s\phi)^{3} |\varphi|^2 dxdt\right).
\label{carl3}
\end{align}
\end{theo}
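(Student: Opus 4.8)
The plan is to deduce \eqref{carl3} from the standard Carleman inequality \eqref{carl1} by first "integrating'' the source term. Since $f \in L^2(0,T;H_{Ne}^2(\Omega))$ one has $\int_\Omega \Delta f = 0$, hence $\frac{d}{dt}\int_\Omega \varphi = 0$ and the spatial mean $m := (\varphi_T)_\Omega$ of $\varphi$ is conserved in time. Let $z$ be the solution of $-\partial_t z - d\Delta z = f$ with homogeneous Neumann condition and final datum $z(T,\cdot)$ equal to the unique zero-mean solution of the Neumann problem $\Delta z(T,\cdot) = \varphi_T - m$. By the backward analogue of \Cref{lemDelta}, both $\Delta z$ and $\varphi - m$ solve $-\partial_t(\cdot) - d\Delta(\cdot) = \Delta f$ with homogeneous Neumann condition and the same final datum, so uniqueness gives $\Delta z(t,\cdot) = \varphi(t,\cdot) - m$ for every $t$. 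By a density argument (approximating $\varphi_T$ and $f$ by smooth functions, as in the proofs of \Cref{density2} and \Cref{density3}, the weights $e^{2s\alpha}(s\phi)^j$ being bounded) it is enough to prove \eqref{carl3} when $\varphi_T$ and $f$ are smooth, so $z$ may be assumed as regular as needed.

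First I would apply \eqref{carl1} to $z$, with the given $d$, the exponent $\beta = 4$, and an observation set $\omega^\flat$ with $\omega'' \subset\subset \omega^\flat \subset\subset \omega'$ (chosen moreover so that $\omega'$ is a ball): for $\lambda \geq \lambda_0$ and $s$ large,
\begin{equation*}
I(4,\lambda,s,z) \leq C\left(\int_0^T\int_\Omega e^{2s\alpha}(s\phi)^4\abs{f}^2 + \lambda^4\int_0^T\int_{\omega^\flat} e^{2s\alpha}(s\phi)^7\abs{z}^2\right).
\end{equation*}
The left-hand side contains in particular $\int_Q e^{2s\alpha}(s\phi)^3\abs{\Delta z}^2 = \int_Q e^{2s\alpha}(s\phi)^3\abs{\varphi-m}^2$, as well as the terms in $\abs{\nabla z}^2$, $\abs{\partial_t z}^2$ and $\abs{\Delta z}^2$ with their weights. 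Since $\eta_0$ attains its maximum over $\overline\Omega$ at an interior critical point, which therefore lies in $\omega'' \subset \omega'$, one has (for $s$ large) $\int_Q e^{2s\alpha}(s\phi)^3 \leq C_1\int_0^T\int_{\omega'} e^{2s\alpha}(s\phi)^3$; combining this with the pointwise identity $\varphi = \Delta z + m$ (whence $m^2 \leq 2\abs{\varphi}^2 + 2\abs{\Delta z}^2$ and $\abs{\varphi}^2 \leq 2\abs{\Delta z}^2 + 2m^2$) and with $\int_0^T\int_{\omega'} e^{2s\alpha}(s\phi)^3\abs{\Delta z}^2 \leq I(4,\lambda,s,z)$ yields
\begin{equation*}
\int_Q e^{2s\alpha}(s\phi)^3\abs{\varphi}^2 \leq C\,I(4,\lambda,s,z) + C\int_0^T\int_{\omega'} e^{2s\alpha}(s\phi)^3\abs{\varphi}^2.
\end{equation*}

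It then remains to absorb the local term $\lambda^4\int_0^T\int_{\omega^\flat} e^{2s\alpha}(s\phi)^7\abs{z}^2$ into $\varepsilon\,I(4,\lambda,s,z)$ plus an admissible right-hand side. To this end, with a cutoff $\chi$ equal to $1$ on $\omega^\flat$ and supported in $\omega'$, I would use the identity $\Delta z = \varphi - m$ to integrate by parts in space so as to replace the two "lost'' derivatives, controlling the resulting gradient and zero-order contributions by \Cref{lemteccarl} (especially \eqref{enlevergradin}), by the Poincaré--Wirtinger inequality \eqref{pw} on the ball $\omega'$ and by \Cref{corpw}, and splitting the time interval so that the central part $[T/4,3T/4]$ (where all weights are bounded above and below) is treated directly while the degenerate part near $t = 0,T$ is absorbed into $\varepsilon\,I(4,\lambda,s,z)$. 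This should give, for every $\varepsilon>0$ and $\lambda,s$ large,
\begin{equation*}
\lambda^4\int_0^T\int_{\omega^\flat} e^{2s\alpha}(s\phi)^7\abs{z}^2 \leq \varepsilon\,I(4,\lambda,s,z) + C_\varepsilon\left(\int_0^T\int_\Omega e^{2s\alpha}(s\phi)^4\abs{f}^2 + \int_0^T\int_{\omega'} e^{2s\alpha}(s\phi)^3\abs{\varphi}^2\right).
\end{equation*}
Fixing $\lambda$ and then $s$ large, one absorbs $\varepsilon\,I(4,\lambda,s,z)$ into the left-hand side of the first displayed inequality, obtains a bound for $I(4,\lambda,s,z)$ in terms of the admissible right-hand side, and plugs it into the second displayed inequality to conclude \eqref{carl3}.

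I expect the main obstacle to be precisely the last step — converting the local observation of $z$ provided by \eqref{carl1} into a local observation of $\varphi = \Delta z + m$ — because the Carleman weights restricted to $\omega'$ are not uniformly comparable (in $x$) to functions of $t$ alone near $t = 0$ and $t = T$, so one cannot simply factor the weight out of the Poincaré-type inequalities; the weight bookkeeping in the integrations by parts, together with the splitting of the time interval and the absorption of the degenerate parts, is the delicate point.
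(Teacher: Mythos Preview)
The paper does not prove this theorem: it simply refers to \cite[Lemma A.1]{CSG}. So there is no ``paper's own proof'' to compare with, and your proposal is an attempt to supply one.

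Your reduction to the auxiliary function $z$ with $\Delta z = \varphi - m$ and the application of \eqref{carl1} with $\beta = 4$ are correct and do produce the global term $\int_Q e^{2s\alpha}(s\phi)^3|\varphi - m|^2$ inside $I(4,\lambda,s,z)$, together with the right source term $\int_Q e^{2s\alpha}(s\phi)^4|f|^2$. The handling of the constant $m$ via the maximum of $\eta_0$ is also fine.

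The genuine gap is exactly where you fear it is: the conversion of the local term $\lambda^4\int_{\omega^\flat} e^{2s\alpha}(s\phi)^7|z|^2$ into $\int_{\omega'} e^{2s\alpha}(s\phi)^3|\varphi|^2$. This step asks you to bound a \emph{lower}-order quantity ($z$) locally by a \emph{higher}-order one ($\Delta z$), and none of the tools you invoke do this. Estimate \eqref{enlevergradin} goes in the opposite direction (it trades a local $|\nabla\Phi|^2$ for a local $|\Phi|^2$ plus small global higher-order terms). The Poincar\'e--Wirtinger inequality \eqref{pw} and \Cref{corpw} on $\omega'$ would require either that $z$ have zero mean on $\omega'$ or that $\partial z/\partial n = 0$ on $\partial\omega'$, and $z$ satisfies neither. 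Integration by parts starting from $\int \chi\,e^{2s\alpha}(s\phi)^7 z^2$ produces $-\int \chi\,e^{2s\alpha}(s\phi)^7|\nabla z|^2$ plus commutator terms, not anything like $\int z\,\Delta z$; there is no identity that replaces $|z|^2$ by $|\Delta z|^2$ locally. Finally, the global term $\lambda^4\int_\Omega e^{2s\alpha}(s\phi)^7|z|^2$ on the left of $I(4,\lambda,s,z)$ carries the \emph{same} power $(s\phi)^7$ and the \emph{same} $\lambda^4$ as the local term on the right, so no choice of $s$ or $\lambda$ allows absorption. The time-splitting idea does not rescue this, because on $[T/4,3T/4]$ you are still left with a bare $\int_{\omega^\flat}|z|^2$ that cannot be controlled by $\int_{\omega'}|\Delta z|^2$ without a boundary or mean-value condition on $z|_{\omega'}$.

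The argument in \cite{CSG} does not proceed by reduction to \eqref{carl1}; it is a direct multiplier computation in which one exploits the divergence form $\Delta f$ of the source to integrate by parts once against the Carleman weight before the standard manipulations, so that only $f$ and $\nabla f$ appear and the extra power $(s\phi)^4$ arises from the derivatives of the weight. If you want to pursue your route, you would need an additional ingredient---for instance, a local elliptic estimate of the form $\int_{\omega^\flat}|z|^2 \leq C\int_{\omega'}|\Delta z|^2 + (\text{boundary terms on }\partial\omega')$---which is false in general.
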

The proof of this inequality can be found in \cite[Lemma A.1]{CSG} (see in particular that the equality \cite[(A.3)]{CSG} still holds for $f \in L^2(0,T;H_{Ne}^2(\Omega))$).
\begin{rmk}
The estimate \eqref{carl3} is different from \eqref{carl1} because \eqref{carl1} gives us
\begin{equation}
\int_{0}^{T}\int_{\Omega} e^{2s\alpha} (s\phi)^{3} |\varphi|^2 dxdt 
\leq C\left(\int_{0}^{T}\int_{\Omega} e^{2s\alpha}|\Delta f|^2 dxdt + \int_{0}^{T}\int_{\omega'} e^{2s\alpha} (s\phi)^{3} |\varphi|^2 dxdt\right).
\label{exemplecarl1carl3}
\end{equation}
Therefore, \eqref{carl3} is useful when one wants an observation of $\varphi$ in term of $f$ (but not in term of $\Delta f$). Roughly, we remark that we have to \textit{pay} this type of estimate with a weight $(s\phi)^{4}$ (see the first right hand side terms of \eqref{carl3} and \eqref{exemplecarl1carl3}).
\end{rmk}
\subsubsection{Proof with observation on one component: \eqref{inobs1c2}}\label{preuve1controle}
\indent We have seen in \Cref{preuve2èmeobs} that parabolic regularity allows us to apply $\Delta$ to the third equation of \eqref{heatnondiagsystem1} (see \eqref{deltaadj}) in order to benefit from the coupling term of second order $(d_3-d_4)\Delta \varphi_4$. The case $j=1$ requires more regularity because we have to benefit from \textbf{two} terms of coupling of second order. Therefore, we need to apply $\Delta \Delta$ (see \eqref{deltadeltaadj}). There are two main difficulties. First, \Cref{wpl2} only shows us that $\varphi$, the solution of \eqref{adj} is in $Y_2^4$. However, we need: $\Delta \varphi \in Y_2^4$. That is why we regularize the coupling matrix $A \in \mathcal{E}_1$ (see \Cref{density1}). Secondly, we want an observation of $\Delta \Delta \varphi_4$ in term of $\Delta \varphi_1$, $\Delta \varphi_2$ (and not in term of $\Delta \Delta \varphi_1$, $\Delta \Delta \varphi_2$ because we do not have these terms in Carleman estimates applied to $\varphi_1$ and $\varphi_2$: see \eqref{1cin3} and \eqref{1cin4}). That is why we use \Cref{lemcarl3}.

\begin{proof}
\boxed{j=1}\\
\indent Let $A \in \mathcal{M}_4(C_0^{\infty}(Q))\cap \mathcal{E}_1$ (see \eqref{defe1}), $\varphi_T \in C_0^{\infty}(\Omega)^4$ (the general case comes from a density argument, see \eqref{1cin26}, \Cref{density1}, \Cref{density2} and \Cref{density3}), $\varphi \in Y_2^4$ be the solution of \eqref{adj} (see \Cref{wpl2}), $\omega_3$, $\omega_2$, $\omega_2'$ and $\omega_1$ be four open subsets such that $\omega'' \subset \subset \omega_3 \subset\subset \omega_2 \subset\subset  \omega_2'  \subset\subset \omega_1 \subset\subset \omega_0$. Our goal is to prove \eqref{inobs1c2}.\\
\indent We have 
\begin{equation}
\left\{
\begin{array}{l l}
- \partial_t \varphi_1 - d_1 \Delta\varphi_1= a_{11} \varphi_1 + a_{21} \varphi_2 &\mathrm{in}\ (0,T)\times\Omega,\\
- \partial_t \varphi_2 - d_2 \Delta\varphi_2= a_{12} \varphi_1 + a_{22} \varphi_2 + (d_2-d_3)\Delta \varphi_3&\mathrm{in}\ (0,T)\times\Omega,\\
- \partial_t \varphi_3 - d_3 \Delta \varphi_3 = -m_2(\varphi_1 - \varphi_2) + \Delta \varphi_4 &\mathrm{in}\ (0,T)\times\Omega,\\
- \partial_t \varphi_4 - d_4 \Delta \varphi_4= m_3(\varphi_1 - \varphi_2) &\mathrm{in}\ (0,T)\times\Omega,\\
\frac{\partial\varphi}{\partial n}= 0 &\mathrm{on}\ (0,T)\times\partial\Omega,\\
\varphi(T,.)=\varphi_T &\mathrm{in}\  \Omega.
\end{array}
\right.
\label{heatnondiagsystem2}
\end{equation}
\indent First, by using the regularity: $\varphi \in Y_2^4$ and by applying consecutively \Cref{lemDelta} to the fourth equation of \eqref{heatnondiagsystem2}, the third equation of \eqref{heatnondiagsystem2}, the second equation of \eqref{heatnondiagsystem2}, the first equation of \eqref{heatnondiagsystem2},  we get
\begin{equation}
\Delta \varphi \in L^2(0,T;H_{Ne}^2(\Omega))^4.
\label{regvarphi}
\end{equation}
Consequently, we can apply $\Delta \Delta$ to the fourth equation of \eqref{heatnondiagsystem2} by using \eqref{regvarphi} and \Cref{lemDelta},
\begin{equation}
\left\{
\begin{array}{l l}
- \partial_t (\Delta \Delta  \varphi_4) - d_4 \Delta(\Delta \Delta \varphi_4)= \Delta \Delta (m_3(\varphi_1-\varphi_2)) &\mathrm{in}\ (0,T)\times\Omega,\\
\frac{\partial\Delta \Delta \varphi_4}{\partial n} = 0 &\mathrm{on}\ (0,T)\times\partial\Omega,\\
\Delta \Delta \varphi_4(T,.)=\Delta\Delta \varphi_{4,T} &\mathrm{in}\  \Omega.
\end{array}
\right.
\label{deltadeltaadj}
\end{equation}
Then, we use the Carleman inequality \eqref{carl3} for \eqref{deltadeltaadj} with $\omega'=\omega_3$ and $f=\Delta (m_3(\varphi_1-\varphi_2)) \in L^2(0,T;H_{Ne}^2(\Omega))$, for every $\lambda, s \geq C$,
\small
\begin{equation}
\int_{0}^{T}\int_{\Omega} e^{2s\alpha} (s\phi)^{3} |\Delta \Delta \varphi_4|^2 \leq C\left(\int_{0}^{T}\int_{\Omega} e^{2s\alpha} (s\phi)^{4} \left(|\Delta \varphi_1|^2 + |\Delta \varphi_2|^2\right) + \int_{0}^{T}\int_{\omega_3} e^{2s\alpha} (s\phi)^{3} |\Delta \Delta \varphi_4|^2 \right).
\label{1cin1}
\end{equation}
\normalsize
\begin{rmk}Here, we have to apply the Carleman estimate \eqref{carl3} instead of \eqref{carl1} in order to get in the right hand side of \eqref{1cin1} only terms of order two (and not more) in $\varphi_1$, $\varphi_2$. Otherwise, we cannot absorb the remaining terms thanks to Carleman estimates \eqref{carl1} applied to $\varphi_1$, $\varphi_2$.
\end{rmk}

Then, we apply $\Delta$ to the third equation of \eqref{heatnondiagsystem2} thanks to \eqref{deltadeltaadj} and \Cref{lemDelta}, for every $\lambda, s \geq C$,
\begin{equation}
\left\{
\begin{array}{l l}
- \partial_t (\Delta  \varphi_3) - d_3 \Delta(\Delta \varphi_3)=  \Delta (-m_2(\varphi_1-\varphi_2)) +  \Delta \Delta \varphi_4 &\mathrm{in}\ (0,T)\times\Omega,\\
\frac{\partial\Delta\varphi_3}{\partial n} = 0 &\mathrm{on}\ (0,T)\times\partial\Omega,\\
\Delta \varphi_3(T,.)=\Delta\varphi_{3,T} &\mathrm{in}\  \Omega.
\end{array}
\right.
\label{deltaadjbis}
\end{equation}
We use the Carleman inequality \eqref{carl1} with $\omega' = \omega_3$ and $\beta = 2$, for every $\lambda, s \geq C$,
\small
\begin{equation}
I(2,\lambda,s,\Delta \varphi_3)\leq C \left(\int_{0}^{T}\int_{\Omega} e^{2s\alpha} (s\phi)^{2} (|\Delta \varphi_1|^2+|\Delta \varphi_2|^2| +|\Delta \Delta \varphi_4|^2 )+ \int_{0}^{T}\int_{\omega_2} \lambda ^4 e^{2s\alpha} (s\phi)^{5} |\Delta \varphi_3|^2 \right).
\label{1cin2}
\end{equation}
\normalsize
Then, we apply the Carleman inequality \eqref{carl1} with $\omega' = \omega_3$ and $\beta=5$ to the second equation and the first equation of \eqref{heatnondiagsystem2} (by \eqref{1c2bded}), for every $\lambda, s \geq C$,
\small
\begin{align}
\lambda I(5,\lambda,s,\varphi_2) &\leq C \left(\int_{0}^{T}\int_{\Omega} \lambda e^{2s\alpha} (s\phi)^{5} (|\varphi_1|^2+|\varphi_2|^2| +|\Delta \varphi_3|^2 )+ \int_{0}^{T}\int_{\omega_3} \lambda^5 e^{2s\alpha} (s\phi)^{8} |\varphi_2|^2\right),
\label{1cin3}
\\
\lambda I(5,\lambda,s,\varphi_1) &\leq C \left(\int_{0}^{T}\int_{\Omega}\lambda  e^{2s\alpha} (s\phi)^{5} (|\varphi_1|^2+|\varphi_2|^2| )+ \int_{0}^{T}\int_{\omega_3} \lambda^5 e^{2s\alpha} (s\phi)^{8} |\varphi_1|^2\right).
\label{1cin4}
\end{align}
\normalsize
We sum \eqref{1cin1}, \eqref{1cin2}, \eqref{1cin3}, \eqref{1cin4} and we take $\lambda$ and $s$ sufficiently large,
\begin{align}
&\int_{0}^{T}\int_{\Omega} e^{2s\alpha} (s\phi)^{3} |\Delta \Delta \varphi_4|^2 dxdt + I(2,\lambda,s,\Delta \varphi_3)+ \lambda I(5,\lambda,s,\varphi_2) + \lambda I(5,\lambda,s,\varphi_1) \notag\\
&  \leq C \left(\int_{0}^{T}\int_{\omega_3} e^{2s\alpha} (s\phi)^{3} |\Delta \Delta \varphi_4|^2 dxdt+\int_{0}^{T}\int_{\omega_3} \lambda^4 e^{2s\alpha} (s\phi)^{5} |\Delta \varphi_3|^2 dxdt\right)\notag \\
&\ +C\left( \int_{0}^{T}\int_{\omega_3} \lambda^5 e^{2s\alpha} (s\phi)^{8} |\varphi_2|^2 dxdt+ \int_{0}^{T}\int_{\omega_3} \lambda^5 e^{2s\alpha} (s\phi)^{8} | \varphi_1|^2 dxdt\right).
\label{1cin5bis}
\end{align}
\textbf{Now, $\lambda$ and $s$ are supposed to be fixed. The constant $C$ may depend on $\lambda$ and $s$.} We have 
\begin{align}
&\int_{0}^{T}\int_{\Omega} e^{2s\alpha} (s\phi)^{3} |\Delta \Delta \varphi_4|^2 dxdt + I(2,\lambda,s,\Delta \varphi_3)+  I(5,\lambda,s,\varphi_2) +  I(5,\lambda,s,\varphi_1) \notag\\
&  \leq C \left(\int_{0}^{T}\int_{\omega_3} e^{2s\alpha} (s\phi)^{3} |\Delta \Delta \varphi_4|^2 dxdt+\int_{0}^{T}\int_{\omega_3} e^{2s\alpha} (s\phi)^{5} |\Delta \varphi_3|^2 dxdt\right)\notag \\
&\ +C\left( \int_{0}^{T}\int_{\omega_3} e^{2s\alpha} (s\phi)^{8} |\varphi_2|^2 dxdt+ \int_{0}^{T}\int_{\omega_3} e^{2s\alpha} (s\phi)^{8} | \varphi_1|^2 dxdt\right).
\label{1cin5}
\end{align}
\begin{rmk}
Here, we take advantage of the two parameters $\lambda$ and $s$ in \Cref{lemcarl1}. Indeed, if we forget $\lambda$, we would need to sum $\int_{0}^{T}\int_{\Omega} e^{2s\alpha} (s\phi)^{3} |\Delta \Delta \varphi_4|^2 dxdt$, $I(4,s,\Delta \varphi_3)$, $ I(6,s,\varphi_2)$ and $I(6,s,\varphi_1)$. Therefore, we would get in the right hand side $\int_{0}^{T}\int_{\Omega} e^{2s\alpha} (s\phi)^{4} |\Delta \Delta\varphi_4|^2 dxdt$ which cannot be absorbed by the left hand side.
\end{rmk}
\indent Then ,we have to get rid of $\int_{0}^{T}\int_{\omega_3} e^{2s\alpha} (s\phi)^{3} |\Delta \Delta \varphi_4|^2 dxdt$, $\int_{0}^{T}\int_{\omega_3} e^{2s\alpha} (s\phi)^{5} |\Delta \varphi_3|^2 dxdt$ and\\ $\int_{0}^{T}\int_{\omega_3} e^{2s\alpha} (s\phi)^{8} |\varphi_2|^2 dxdt$. For the first term, we use the coupling term of fourth order $\Delta \Delta$. For the second term, we use the coupling term of second order $(d_2-d_3) \Delta$. For the third term, we use the coupling term of zero order thanks to property \eqref{1c2sign}.\\

\underline{Estimate of $\int_{0}^{T}\int_{\omega_3} e^{2s\alpha} (s\phi)^{3} |\Delta \Delta \varphi_4|^2 dxdt$}.\\

Let us introduce $\chi_3 \in C^{\infty}(\overline{\Omega};[0,+\infty[)$, such that the support of $\chi_3$ is included in $\omega_2$ and $\chi_3 = 1$ in $\omega_3$. We multiply the first equation \eqref{deltaadjbis} by $(\chi_3(x))^2 e^{2s\alpha} (s\phi)^{3} \Delta \Delta \varphi_4$ and we integrate on $(0,T)\times\omega_2$. We have 
\begin{align}
&\int_{0}^{T}\int_{\omega_2} (\chi_3(x))^2 e^{2s\alpha} (s\phi)^{3} |\Delta \Delta  \varphi_4|^2 dxdt \notag\\
& \leq C  \int_{0}^{T}\int_{\omega_2} (\chi_3(x))^2 e^{2s\alpha} (s\phi)^{3} \Delta \Delta  \varphi_4 (-\partial_t \Delta \varphi_3 - d_3 \Delta \Delta  \varphi_3 + m_2 \Delta \varphi_1 - m_2  \Delta \varphi_2 )dxdt.
\label{1cin6}
\end{align}
\begin{rmk}
One can see the presence of $(\chi_3(x))^2$ instead of $\chi_3(x)$ as before (see for example \eqref{3cin2}). It is purely technical (see the proofs of \Cref{lemtechnique1} and \Cref{lemtechnique2}).
\end{rmk}
Let $\varepsilon \in (0,1)$ which will be chosen small enough. First, for every $1 \leq i \leq 2$, by applying \Cref{lemteccarl}: \eqref{lemteccarl1} with $\Phi = \Delta\Delta \varphi_4$, $\Psi = \Delta\varphi_i$, $a = m_2$, $\Theta = (\chi_3)^2$, $r=3$ and $(k,l)=(3,3)$, we have
\small
\begin{equation}
\int_{0}^{T}\int_{\omega_2} \chi_3^2 e^{2s\alpha} (s\phi)^{3} (\Delta \Delta  \varphi_4)  m_2 \Delta\varphi_i \leq \varepsilon \int_{0}^{T}\int_{\Omega} e^{2s\alpha} (s\phi)^{3} |\Delta \Delta  \varphi_4|^2+ C_{\varepsilon} \int_{0}^{T}\int_{\omega_2} \chi_3^2 e^{2s\alpha} (s\phi)^{3} |\Delta \varphi_i|^2 .
\label{1cin7}
\end{equation}
\normalsize
But, the other terms in the right hand side of \eqref{1cin6} i.e. $ \int_{0}^{T}\int_{\omega_2} (\chi_3(x))^2 e^{2s\alpha} (s\phi)^{3} (\Delta \Delta  \varphi_4) (\partial_t \Delta \varphi_3) dxdt$ and  $\int_{0}^{T}\int_{\omega_2} (\chi_3(x))^2 e^{2s\alpha} (s\phi)^{3} (\Delta \Delta  \varphi_4) (\Delta \Delta  \varphi_3) dxdt$ cannot be estimated as in \Cref{lemteccarl} because we have not enough derivative terms in $\varphi_4$ in the left hand side of \eqref{1cin5}. In order to estimate these two terms, we follow the strategy developed in the proof of \cite[Theorem 2.2]{CSG} (see \Cref{estimationstechniques1c} for the proof of the two following lemmas). 
\begin{lem}\label{lemtechnique1} We have
\begin{align}
&\int_{0}^{T}\int_{\omega_2}\chi_3^2 e^{2s\alpha} (s\phi)^{3} (\Delta \Delta  \varphi_4) (\Delta \Delta  \varphi_3) \notag\\
& \leq \varepsilon \left(\int_{0}^{T}\int_{\Omega} e^{2s \alpha} \Big\{(s \phi)^{4} (|\Delta \varphi_1|^2+|\Delta \varphi_2|^2 )+ (s \phi) |\Delta\Delta \varphi_3|^2 + (s \phi)^3 |\Delta \Delta \varphi_4|^2 \Big\} \right)\notag\\
& \ + C_{\varepsilon} \left( \int_{0}^{T}\int_{\omega_2} e^{2s \alpha} \Big\{(s \phi)^{24} (|\varphi_1|^2 + |\varphi_2|^2 + |\Delta \varphi_3|^2)+
(s \phi)^{22} (|\nabla \varphi_1|^2 +  |\nabla\varphi_2|^2 + |\nabla \Delta \varphi_3|^2)\Big\}\right).
\label{1cin12lem}
\end{align}
\end{lem}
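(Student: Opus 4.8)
The plan is to reduce the two fourth-order factors $\Delta \Delta \varphi_{4}$ and $\Delta \Delta \varphi_{3}$, by means of the adjoint equations \eqref{heatnondiagsystem2}, to quantities that the left-hand side of \eqref{1cin5} already controls in a weighted $L^{2}$-sense — namely $\Delta \Delta \varphi_{4}$ with weight $(s\phi)^{3}$, $\Delta \Delta \varphi_{3}$ with weight $(s\phi)$, $\nabla \Delta \varphi_{3}$ with weight $(s\phi)^{3}$, $\Delta \varphi_{3}$ with weight $(s\phi)^{5}$, $\partial_{t}\Delta \varphi_{3}$ with weight $(s\phi)$, and $\Delta \varphi_{1},\Delta \varphi_{2}$ with weight $(s\phi)^{4}$ — at the cost of local integrals over $\omega_{2}$ carrying higher powers of $s\phi$, which is precisely what \eqref{1cin12lem} permits. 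Before any computation I would record that, by \eqref{regvarphi} and $A\in\mathcal{M}_{4}(C_{0}^{\infty}(Q))$, all the derivatives appearing below belong to $L^{2}(Q)$, so that the manipulations are legitimate, and that every boundary term generated by integration by parts vanishes: those on $\partial\Omega$ because $\chi_{3}$ is supported in $\omega_{2}\subset\subset\Omega$, those at $t=0$ and $t=T$ because $e^{2s\alpha}$ together with its time derivatives vanishes there.

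Applying $\Delta$ to the third equation of \eqref{heatnondiagsystem2} (via \Cref{lemDelta}) gives $d_{3}\Delta \Delta \varphi_{3}=-\partial_{t}\Delta \varphi_{3}+m_{2}\Delta(\varphi_{1}-\varphi_{2})-\Delta \Delta \varphi_{4}$. Substituting this identity for one factor $\Delta \Delta \varphi_{3}$ in the integral splits it, up to the factor $1/d_{3}$, into three pieces: (a) $-\int_{0}^{T}\int_{\omega_{2}}\chi_{3}^{2}e^{2s\alpha}(s\phi)^{3}(\Delta \Delta \varphi_{4})(\partial_{t}\Delta \varphi_{3})$; (b) $m_{2}\int_{0}^{T}\int_{\omega_{2}}\chi_{3}^{2}e^{2s\alpha}(s\phi)^{3}(\Delta \Delta \varphi_{4})\Delta(\varphi_{1}-\varphi_{2})$; (c) $-\int_{0}^{T}\int_{\omega_{2}}\chi_{3}^{2}e^{2s\alpha}(s\phi)^{3}|\Delta \Delta \varphi_{4}|^{2}\le 0$, which is simply dropped from the upper bound. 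In (b) I would use Young's inequality keeping the weight $(s\phi)^{3}$ on the factor $\Delta \Delta \varphi_{4}$ — this produces the admissible term $\varepsilon\int e^{2s\alpha}(s\phi)^{3}|\Delta \Delta \varphi_{4}|^{2}$ — and then reprocess the remaining local integral $\int_{\omega_{2}}\chi_{3}^{2}e^{2s\alpha}(s\phi)^{3}|\Delta \varphi_{i}|^{2}$ by invoking the $\varphi_{i}$-equations once more, trading $\Delta \varphi_{i}$ for $\partial_{t}\varphi_{i}$, for zeroth-order terms in $\varphi_{1},\varphi_{2}$, and (from the second equation) for $\Delta \varphi_{3}$; an integration by parts in time on the $\partial_{t}\varphi_{i}$ contributions and one in space on the rest leave only local integrals of $\varphi_{i},\nabla\varphi_{i},\Delta \varphi_{3},\nabla\Delta \varphi_{3}$ with $s\phi$-powers growing up to the $(s\phi)^{24}$ and $(s\phi)^{22}$ of \eqref{1cin12lem}, together with global $\varepsilon$-terms that fall into the slots listed above.

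The delicate point, and the step genuinely taken from the proof of \cite[Theorem 2.2]{CSG}, is term (a). A direct Young's inequality is not allowed: it would leave either $\int_{\omega_{2}}e^{2s\alpha}(s\phi)^{3}|\partial_{t}\Delta \varphi_{3}|^{2}$, which is not controlled ($\partial_{t}\Delta \varphi_{3}$ is available only at weight $(s\phi)$ and this local term cannot be absorbed), or $\int_{\omega_{2}}e^{2s\alpha}(s\phi)^{r}|\Delta \Delta \varphi_{4}|^{2}$ with $r>3$, also out of reach. Instead I would integrate by parts in time so as to move $\partial_{t}$ off $\Delta \varphi_{3}$; the term carrying a time derivative of the weight is harmless, but the one now carrying $\partial_{t}\Delta \Delta \varphi_{4}$ must be rewritten, using the fourth equation of \eqref{heatnondiagsystem2}, and then combined with integrations by parts in space that shed the Laplacians onto the smooth factors $\chi_{3},m_{2},m_{3}$ and onto $\Delta \varphi_{3},\nabla\Delta \varphi_{3}$, alternating with further uses of the adjoint equations. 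The guiding constraint throughout is that no $\varphi_{4}$-derivative of order higher than $\Delta \Delta \varphi_{4}$, no $\varphi_{3}$-derivative of order higher than $\Delta \Delta \varphi_{3}$, and no weight exceeding $(s\phi)^{3}$ on $\Delta \Delta \varphi_{4}$ or $(s\phi)$ on $\Delta \Delta \varphi_{3}$, may be allowed to survive; every integration by parts compatible with this constraint raises the $s\phi$-exponent of the surviving local term, which is the mechanism behind the exponents $24$ and $22$.

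The main obstacle is exactly this bookkeeping — organising the substitutions and integrations by parts in the one order that never creates a term outside the admissible list — rather than any single estimate; the detailed computation, following \cite{CSG}, is carried out in \Cref{estimationstechniques1c}. Once all the pieces are gathered, collecting the global contributions into the $\varepsilon$-terms and the local ones into the $C_{\varepsilon}$-terms of \eqref{1cin12lem}, and relabelling $\varepsilon$, yields the claimed inequality.
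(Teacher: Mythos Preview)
Your route is genuinely different from the paper's, and it has a gap you do not address. The paper does \emph{not} substitute the third adjoint equation to replace $\Delta\Delta\varphi_{3}$; instead it writes $\chi_{3}e^{s\alpha}(s\phi)^{3}\Delta\Delta\varphi_{4}=\eta+\psi$, where $\eta$ and $\psi$ solve auxiliary Dirichlet heat problems (equations \eqref{deltadeltaadjeta}--\eqref{deltadeltaadjpsi}), and then estimates $\int\chi_{3}e^{s\alpha}\eta\,\Delta\Delta\varphi_{3}$ and $\int\chi_{3}e^{s\alpha}\psi\,\Delta\Delta\varphi_{3}$ separately. The point of this decomposition, taken from \cite{CSG}, is that parabolic regularity plus a duality argument lets one trade the $\Delta\Delta$ in the source of $\eta$ for a higher weight exponent, without ever regenerating the product $\Delta\Delta\varphi_{4}\cdot\Delta\Delta\varphi_{3}$; a second iteration of the same trick handles the resulting local $(s\phi)^{10}|\Delta\varphi_{i}|^{2}$ term, and this two-step descent is precisely where the exponents $24$ and $22$ come from.

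In your scheme, after integrating (a) by parts in time and invoking the fourth equation, the piece $-\tfrac{d_{4}}{d_{3}}\int W\,\Delta^{3}\varphi_{4}\,\Delta\varphi_{3}$ must be integrated by parts in space to lower the order on $\varphi_{4}$. Expanding $\Delta(W\Delta\varphi_{3})$ inevitably produces $W\Delta\Delta\varphi_{3}$, hence the term $-\tfrac{d_{4}}{d_{3}}I$ with $I$ the very integral you are estimating. Your description claims the Laplacians are shed only onto ``$\Delta\varphi_{3},\nabla\Delta\varphi_{3}$'': that is not so, and the product $(\Delta\Delta\varphi_{4})(\Delta\Delta\varphi_{3})$ at weight $(s\phi)^{3}$ is exactly the one combination Young's inequality cannot split into your admissible list. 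This is not fatal --- since $d_{3},d_{4}>0$ one can move $-\tfrac{d_{4}}{d_{3}}I$ to the left and divide by $1+d_{4}/d_{3}$ --- but that absorption step is the crux of the whole estimate and you never mention it; without it the argument is circular. The same phenomenon recurs in your ``reprocessing'' of the local $|\Delta\varphi_{i}|^{2}$ terms. The paper's $\eta/\psi$ machinery avoids this issue by design.
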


\begin{lem}\label{lemtechnique2} We have
\begin{align}
&\int_{0}^{T}\int_{\omega_2} \chi_3^2 e^{2s\alpha} (s\phi)^{3} (\Delta \Delta  \varphi_4) (\partial_t \Delta \varphi_3) \notag\\
&\leq \varepsilon\left(\int_{0}^{T}\int_{\Omega} e^{2s \alpha} \Big\{(s \phi)^{4} (|\Delta \varphi_1|^2+|\Delta \varphi_2|^2)+ (s \phi) |\partial_t \Delta \varphi_3|^2 + (s \phi)^3 |\Delta \Delta \varphi_4|^2  \Big\}\right)\notag\\
& \ + C_{\varepsilon} \left(\int_{0}^{T}\int_{\omega_2} e^{2s \alpha} \Big\{(s \phi)^{24} (|\varphi_1|^2 + |\varphi_2|^2 + |\Delta \varphi_3|^2)+(s \phi)^{22} (|\nabla \varphi_1|^2 +  |\nabla\varphi_2|^2 + |\nabla \Delta \varphi_3|^2) \Big\}\right).
\label{1cin16lem}
\end{align}
\end{lem}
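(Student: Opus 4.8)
The plan is to reduce \Cref{lemtechnique2} to \Cref{lemtechnique1} plus an elementary cross term, and to establish the integration‑by‑parts estimate behind \Cref{lemtechnique1} along the lines of \cite[Theorem 2.2]{CSG}. By \eqref{regvarphi} (applying once more the operator $\Delta$ to the third equation of \eqref{heatnondiagsystem2}, which is legitimate by \Cref{lemDelta}) one has, in $L^2(Q)$,
\[
\partial_t \Delta \varphi_3 = -d_3 \Delta \Delta \varphi_3 + m_2 \Delta(\varphi_1-\varphi_2) - \Delta \Delta \varphi_4 .
\]
Substituting this identity in the left‑hand side of \eqref{1cin16lem} gives
\begin{align*}
&\int_{0}^{T}\int_{\omega_2} \chi_3^2 e^{2s\alpha} (s\phi)^{3} (\Delta \Delta  \varphi_4)(\partial_t \Delta \varphi_3) = -d_3 \int_{0}^{T}\int_{\omega_2} \chi_3^2 e^{2s\alpha} (s\phi)^{3} (\Delta \Delta  \varphi_4)(\Delta \Delta  \varphi_3)\\
&\qquad + m_2 \int_{0}^{T}\int_{\omega_2} \chi_3^2 e^{2s\alpha} (s\phi)^{3} (\Delta \Delta  \varphi_4)\, \Delta(\varphi_1-\varphi_2) - \int_{0}^{T}\int_{\omega_2} \chi_3^2 e^{2s\alpha} (s\phi)^{3} |\Delta \Delta  \varphi_4|^2 .
\end{align*}
The last integral is nonpositive; the first is $|d_3|$ times the left‑hand side of \eqref{1cin12lem}, hence bounded by \Cref{lemtechnique1}; and the middle one, $m_2\int_{0}^{T}\int_{\omega_2}\chi_3^2 e^{2s\alpha}(s\phi)^{3}(\Delta\Delta\varphi_4)\,\Delta(\varphi_1-\varphi_2)$, is of the very type one has to handle in the proof of \Cref{lemtechnique1} and will be estimated there. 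Thus \Cref{lemtechnique2} follows once \Cref{lemtechnique1} and this cross term are established.

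For \Cref{lemtechnique1} the difficulty is that the Carleman sum on the left‑hand side of \eqref{1cin5} controls $\int_{0}^{T}\int_{\Omega}e^{2s\alpha}(s\phi)^{3}|\Delta\Delta\varphi_4|^2$ but \emph{no} derivative of $\Delta\Delta\varphi_4$, while, through $I(2,\lambda,s,\Delta\varphi_3)$ and $I(5,\lambda,s,\varphi_1)$, $I(5,\lambda,s,\varphi_2)$, it only controls $\int_{0}^{T}\int_{\Omega}e^{2s\alpha}(s\phi)(|\partial_t\Delta\varphi_3|^2+|\Delta\Delta\varphi_3|^2)$ and $\int_{0}^{T}\int_{\Omega}e^{2s\alpha}(s\phi)^{4}(|\Delta\varphi_1|^2+|\Delta\varphi_2|^2)$; a direct use of Young's inequality on $(\Delta\Delta\varphi_4)(\Delta\Delta\varphi_3)$ fails, both because those derivatives are missing and because the weights cannot be split respecting the available powers. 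Following \cite[Theorem 2.2]{CSG}, I would integrate by parts — in the space variable (no boundary terms, since $\chi_3$, and hence $\chi_3^2$ and $\chi_3^4$, is compactly supported in $\omega_2\subset\subset\Omega$) and in the time variable (no boundary terms, since $e^{2s\alpha}$ and all its time derivatives vanish at $t=0$ and $t=T$ and $\varphi_T\in C_0^{\infty}(\Omega)^4$) — and repeatedly substitute the adjoint equations of \eqref{heatnondiagsystem2} for $\varphi_3$ and $\varphi_4$, trading each spatial or temporal derivative that is ``too high'' for lower‑order derivatives plus controlled quantities; the squared cutoff $\chi_3^2$ (rather than $\chi_3$) keeps a factor $\chi_3$ after each differentiation, so the localization in $\omega_2$ survives the recursion. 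The computation is organized so that every term produced is either a global integral $\int_{0}^{T}\int_{\Omega}e^{2s\alpha}(s\phi)^{k}(\cdot)$ with $(\cdot)$ one of $|\Delta\varphi_1|^2$, $|\Delta\varphi_2|^2$, $|\Delta\Delta\varphi_3|^2$, $|\partial_t\Delta\varphi_3|^2$, $|\Delta\Delta\varphi_4|^2$ at exactly the power $k$ bounded by the left‑hand side of \eqref{1cin5} — put into the $\varepsilon$‑term — or a local integral $\int_{0}^{T}\int_{\omega_2}e^{2s\alpha}(s\phi)^{k}(\cdot)$ with $(\cdot)$ among $|\varphi_1|^2$, $|\varphi_2|^2$, $|\Delta\varphi_3|^2$ (with $k\le24$) or $|\nabla\varphi_1|^2$, $|\nabla\varphi_2|^2$, $|\nabla\Delta\varphi_3|^2$ (with $k\le22$) — put into the $C_\varepsilon$‑term; Young's inequality then closes each product. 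The cross term of the reduction is treated in the same spirit: one spatial integration by parts moves the Laplacian off $\varphi_1-\varphi_2$ to produce the admissible local factor $\nabla(\varphi_1-\varphi_2)$, and the $\varphi_4$‑equation of \eqref{heatnondiagsystem2} re‑expresses the accompanying $\nabla\Delta\Delta\varphi_4$ in terms of controlled quantities.

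The hard part is entirely this bookkeeping: one must verify step by step that no integration by parts ever places a derivative on an uncontrolled quantity — in particular never on $\Delta\Delta\varphi_1$, $\Delta\Delta\varphi_2$ or a second time derivative, none of which appear on the left‑hand side of \eqref{1cin5} — and that the powers of $(s\phi)$ surviving on the global terms never exceed those available from the Carleman estimates used earlier. This is precisely why \Cref{lemcarl1} was applied with $\beta=2$ to $\Delta\varphi_3$ and with $\beta=5$ to $\varphi_1$, $\varphi_2$, and why \Cref{lemcarl3} rather than \Cref{lemcarl1} was used for $\Delta\Delta\varphi_4$, so that the deficits created by the integrations by parts match exactly. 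Finally, the local gradient terms $|\nabla\varphi_1|^2$, $|\nabla\varphi_2|^2$, $|\nabla\Delta\varphi_3|^2$ on $\omega_2$ that inevitably come out of the spatial integrations by parts are harmless: they are absorbed afterwards, away from the left‑hand side, by the gradient‑removal estimate \eqref{enlevergradin} of \Cref{lemteccarl}.
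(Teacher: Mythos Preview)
Your reduction of \Cref{lemtechnique2} to \Cref{lemtechnique1} plus a cross term, via the identity $\partial_t\Delta\varphi_3=-d_3\Delta\Delta\varphi_3+m_2\Delta(\varphi_1-\varphi_2)-\Delta\Delta\varphi_4$ coming from the third adjoint equation in \eqref{heatnondiagsystem2}, is correct and is a genuinely different route from the paper's. The paper does \emph{not} reduce one lemma to the other: it proves both in parallel, via the same decomposition $\theta\Delta\Delta\varphi_4=\eta+\psi$ (with $\theta=\chi_3 e^{s\alpha}(s\phi)^3$) where $\eta,\psi$ solve auxiliary \emph{Dirichlet} heat problems (\Cref{lemmededecomposition}); for \Cref{lemtechnique2} the $\psi$-piece is then handled by the $\partial_t$-estimate \eqref{estimationpsidt} of \Cref{corestimationspsi}. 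Your reduction is more economical once \Cref{lemtechnique1} is in hand, but note that the $\varepsilon$-term you obtain contains $(s\phi)|\Delta\Delta\varphi_3|^2$ rather than $(s\phi)|\partial_t\Delta\varphi_3|^2$; this is a different inequality than the one stated, though equally absorbable by $I(2,\lambda,s,\Delta\varphi_3)$ in \eqref{1cin17}.

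The gap is in how you propose to handle both the cross term and \Cref{lemtechnique1} itself. For the cross term $m_2\int_{\omega_2}\chi_3^2 e^{2s\alpha}(s\phi)^3(\Delta\Delta\varphi_4)\Delta(\varphi_1-\varphi_2)$ you suggest one spatial integration by parts followed by the $\varphi_4$-equation to ``re-express $\nabla\Delta\Delta\varphi_4$ in terms of controlled quantities''. That substitution gives $\nabla\Delta\Delta\varphi_4=-d_4^{-1}\partial_t\nabla\Delta\varphi_4-d_4^{-1}m_3\nabla\Delta(\varphi_1-\varphi_2)$, and $\partial_t\nabla\Delta\varphi_4$ is no more controlled by the left-hand side of \eqref{1cin5} than $\nabla\Delta\Delta\varphi_4$ was: the only $\varphi_4$-quantity present there is $(s\phi)^3|\Delta\Delta\varphi_4|^2$. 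The same obstruction breaks your sketch for \Cref{lemtechnique1}: any spatial integration by parts that lands a derivative on $\Delta\Delta\varphi_4$ cannot be undone by substituting the $\varphi_4$-equation. The correct handling of the cross term is simply Young's inequality at $(k,l)=(3,3)$ together with the auxiliary bound \eqref{lemmebisbis} for the resulting local $\int_{\omega_2}e^{2s\alpha}(s\phi)^3|\Delta\varphi_i|^2$ --- exactly as in \eqref{1cin7}--\eqref{1cin7bis}. And the paper's mechanism for \Cref{lemtechnique1} is \emph{not} repeated integration by parts on $\Delta\Delta\varphi_4$: the $\eta+\psi$ splitting allows one to estimate $\psi/(s\phi)^k$ only in $L^2(0,T;H_0^1(\Omega))$ and $\partial_t(\psi/(s\phi)^k)$ only in $L^2(0,T;H^{-1}(\Omega))$ (see \eqref{estimationenpsi}), and the subsequent pairing with $\Delta\Delta\varphi_3$ (resp.\ $\partial_t\Delta\varphi_3$) is done in $H^{-1}$--$H_0^1$ duality --- this is precisely what avoids ever putting a derivative on $\Delta\Delta\varphi_4$.
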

Moreover, the proof of these two lemmas (see \eqref{1cin12tildeaprouver}) provides us another estimate which is useful to treat the right hand side of \eqref{1cin7}.
\begin{lem}
For every $1 \leq i \leq 2$, $\delta > 0$, we have
\begin{align}
&\int_{0}^{T}\int_{\omega_2}  e^{2s \alpha} (s \phi)^{3} |\Delta\varphi_i|^2 \notag\\
& \leq \delta \left(\int_{0}^{T}\int_{\Omega} e^{2s \alpha} (s\phi)^4|\Delta  \varphi_i|^2 \right)\notag\\
&\ + C_{\delta} \left(\int_{0}^{T}\int_{\omega_2} e^{2s \alpha}\Big\{(s \phi)^{24} (|\varphi_1|^2 + |\varphi_2|^2+|\Delta \varphi_3|^2) + (s \phi)^{22} |\nabla  \varphi_i|^2 \Big\}\right).
\label{lemmebisbis}
\end{align}
\end{lem}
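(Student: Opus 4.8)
The plan is to prove \eqref{lemmebisbis} by a purely local integration by parts, using the adjoint system \eqref{heatnondiagsystem2} twice: first to trade the second-order quantity $\Delta\varphi_i$ for the first-order time derivative $\partial_t\varphi_i$ plus genuinely lower-order coupling terms, and once more at the end to dispose of the $|\partial_t\varphi_i|^2$ generated in the process. The extra regularity $\Delta\varphi\in L^2(0,T;H_{Ne}^2(\Omega))^4$ of \eqref{regvarphi} makes all of the following manipulations licit in $L^2(Q)$; $\lambda$ and $s$ being fixed here, the constants may depend on them. This mirrors the strategy of the proof of \cite[Theorem 2.2]{CSG}.

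First I would fix $\chi\in C^{\infty}(\overline\Omega;[0,+\infty[)$ with $0\le\chi\le 1$, $\chi\equiv 1$ on $\omega_2$ and $\mathrm{supp}(\chi)\subset\subset\omega_2'$, so that the left-hand side of \eqref{lemmebisbis} is bounded by $\int_Q\chi^2 e^{2s\alpha}(s\phi)^3|\Delta\varphi_i|^2$ (all local terms produced below will live on $\mathrm{supp}(\chi)\subset\omega_2'\subset\omega_1$, which is harmless since in the end only an observation on $\omega\supset\omega_1$ is needed). From \eqref{heatnondiagsystem2} one has $d_i\Delta\varphi_i=-\partial_t\varphi_i-G_i$, where $G_1:=a_{11}\varphi_1+a_{21}\varphi_2$ and $G_2:=a_{12}\varphi_1+a_{22}\varphi_2+(d_2-d_3)\Delta\varphi_3$, so that $|G_i|+|\partial_t\varphi_i|\le C(|\Delta\varphi_i|+|\varphi_1|+|\varphi_2|+|\Delta\varphi_3|)$. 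Multiplying this identity by $\chi^2 e^{2s\alpha}(s\phi)^3\,d_i\Delta\varphi_i$ and integrating over $Q$ gives
\begin{equation*}
d_i^2\int_Q\chi^2 e^{2s\alpha}(s\phi)^3|\Delta\varphi_i|^2
= -d_i\int_Q\chi^2 e^{2s\alpha}(s\phi)^3(\Delta\varphi_i)\,\partial_t\varphi_i
- d_i\int_Q\chi^2 e^{2s\alpha}(s\phi)^3(\Delta\varphi_i)\,G_i .
\end{equation*}
Young's inequality on the last integral lets one absorb a small multiple of $\int_Q\chi^2 e^{2s\alpha}(s\phi)^3|\Delta\varphi_i|^2$ into the left-hand side and leaves a term $\le C_\delta\int_Q\mathbf{1}_{\mathrm{supp}\chi}\,e^{2s\alpha}(s\phi)^3(|\varphi_1|^2+|\varphi_2|^2+|\Delta\varphi_3|^2)$, controlled by the second local term of \eqref{lemmebisbis} since $(s\phi)^3\le(s\phi)^{24}$ for $s$ large. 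In the first integral I would write $\Delta\varphi_i=\mathrm{div}(\nabla\varphi_i)$ and integrate by parts in space (no boundary term), obtaining $d_i\int_Q\chi^2 e^{2s\alpha}(s\phi)^3\,\nabla(\partial_t\varphi_i)\cdot\nabla\varphi_i + d_i\int_Q\nabla(\chi^2 e^{2s\alpha}(s\phi)^3)\,\partial_t\varphi_i\cdot\nabla\varphi_i$. In the first of these, $\nabla(\partial_t\varphi_i)\cdot\nabla\varphi_i=\tfrac12\partial_t|\nabla\varphi_i|^2$, and an integration by parts in time — the weight $\chi^2 e^{2s\alpha}(s\phi)^3$ vanishing at $t=0$ and $t=T$ — combined with $|\partial_t(\chi^2 e^{2s\alpha}(s\phi)^3)|\le C\chi^2 e^{2s\alpha}(s\phi)^5$ turns it into a quantity $\le C\int_Q\mathbf{1}_{\mathrm{supp}\chi}\,e^{2s\alpha}(s\phi)^5|\nabla\varphi_i|^2$, absorbed by the last term of \eqref{lemmebisbis} ($(s\phi)^5\le(s\phi)^{22}$).

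The remaining term $d_i\int_Q\nabla(\chi^2 e^{2s\alpha}(s\phi)^3)\,\partial_t\varphi_i\cdot\nabla\varphi_i$, where $|\nabla(\chi^2 e^{2s\alpha}(s\phi)^3)|\le C(\chi^2+\mathbf{1}_{\mathrm{supp}\nabla\chi})\,e^{2s\alpha}(s\phi)^4$, is the crux. On the $\chi^2$-part, Young's inequality with the weight split so that $|\partial_t\varphi_i|^2$ keeps $(s\phi)^3$ produces $\delta''\int_Q\chi^2 e^{2s\alpha}(s\phi)^3|\partial_t\varphi_i|^2$ plus a $|\nabla\varphi_i|^2$-term handled as before; substituting $|\partial_t\varphi_i|^2\le C(|\Delta\varphi_i|^2+|\varphi_1|^2+|\varphi_2|^2+|\Delta\varphi_3|^2)$, the $|\Delta\varphi_i|^2$-piece is still multiplied by $\chi^2$, hence absorbable into the left-hand side, while the rest feeds the $(s\phi)^{24}$ local term. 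On the $\mathbf{1}_{\mathrm{supp}\nabla\chi}$-part one instead splits the weight so that $|\partial_t\varphi_i|^2$ keeps $(s\phi)^4$; after one further use of $|\partial_t\varphi_i|^2\le C(|\Delta\varphi_i|^2+\cdots)$, the $|\Delta\varphi_i|^2$-piece is no longer localized by $\chi^2$, cannot be absorbed, and — bounded crudely by the integral over all of $\Omega$ — becomes exactly the global term $\delta\int_Q e^{2s\alpha}(s\phi)^4|\Delta\varphi_i|^2$ of \eqref{lemmebisbis} (with $\delta$ proportional to the Young parameter), the remaining pieces going into the $(s\phi)^{24}$ and $(s\phi)^{22}$ local terms. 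Choosing all Young parameters small enough in terms of $d_i$, so that the left-hand side absorbs every $\chi^2$-localized $|\Delta\varphi_i|^2$-contribution, yields \eqref{lemmebisbis}. The genuine difficulty is precisely this bookkeeping — keeping track of which $|\Delta\varphi_i|^2$- and $|\partial_t\varphi_i|^2$-contributions remain multiplied by $\chi^2$ (hence absorbable) and which do not (hence forcing the global $\delta$-term) — together with checking, via \eqref{regvarphi}, that the two spatial integrations by parts, the integration by parts in time, and the pointwise substitutions are all justified in $L^2(Q)$.
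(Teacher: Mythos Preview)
Your argument is correct. The only wrinkle is the one you flag yourself: the local right-hand side terms live on $\mathrm{supp}(\chi)\subset\omega_2'$ rather than on $\omega_2$, so what you actually prove is \eqref{lemmebisbis} with $\omega_2'$ in place of $\omega_2$ on the right. This does not match the literal statement, but is immaterial for the sole use of the lemma in \eqref{1cin7bis}, and the paper's own derivation (via \eqref{1cin12tildeaprouver}, where the left-hand side carries the cutoff $(\widetilde{\chi_3})^2$) exhibits the same harmless looseness. The regularity needed for your spatial and time integrations by parts --- in particular $\nabla\partial_t\varphi_i\in L^2(Q)$ --- does follow from \eqref{regvarphi} together with \eqref{heatnondiagsystem2} and the standing assumption $A\in\mathcal{M}_4(C_0^\infty(Q))$.

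The route, however, is genuinely different from the paper's. The paper obtains \eqref{lemmebisbis} as a rerun of Step~2 in the proof of \Cref{lemeta}: one writes $\widetilde{\theta}\,\Delta\varphi_i=\widetilde{\eta_i}+\widetilde{\psi_i}$ with $\widetilde{\eta_i},\widetilde{\psi_i}$ solving auxiliary Dirichlet heat problems (\Cref{lemmededecomposition}), bounds $\widetilde{\eta_i}$ by the duality estimate \eqref{estimationeneta} and $\widetilde{\psi_i}$ by \eqref{estimationenpsi}--\eqref{estimationpsidelta}. You bypass this machinery entirely: substitute $d_i\Delta\varphi_i=-\partial_t\varphi_i-G_i$ from \eqref{heatnondiagsystem2}, integrate by parts once in space to get $\tfrac12\partial_t|\nabla\varphi_i|^2$ plus a commutator, then once in time, and close with Young's inequality and the $\chi^2$--versus--$1_{\mathrm{supp}\nabla\chi}$ bookkeeping. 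Your approach is more elementary and self-contained; the paper's $\eta+\psi$ decomposition, on the other hand, is a modular tool set up once and reused verbatim for the harder mixed-term estimates of \Cref{lemtechnique1} and \Cref{lemtechnique2}, where a direct integration-by-parts argument like yours would not apply as cleanly.
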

Gathering \eqref{1cin7} and \eqref{lemmebisbis} with $\delta = \varepsilon/C_{\varepsilon}$, we find that for $1 \leq i \leq 2$,
\begin{align}
&\int_{0}^{T}\int_{\omega_2} (\chi_3(x))^2 e^{2s\alpha} (s\phi)^{3} (\Delta \Delta  \varphi_4)  m_2 \Delta\varphi_i dxdt\notag\\
& \leq \varepsilon \left(\int_{0}^{T}\int_{\Omega} e^{2s\alpha} (s\phi)^{3} |\Delta \Delta  \varphi_4|^2 + \int_{0}^{T}\int_{\omega_2} e^{2s \alpha} (s\phi)^4|\Delta  \varphi_i|^2 \right)\notag\\
&\  + C_{\varepsilon} \left(\int_{0}^{T}\int_{\omega_2} e^{2s \alpha} (s \phi)^{24} (|\varphi_1|^2 + |\varphi_2|^2+|\Delta \varphi_3|^2) + \int_{0}^{T}\int_{\omega_2}  e^{2 s \alpha} (s \phi)^{22} |\nabla  \varphi_i|^2 \right).
\label{1cin7bis}
\end{align}
From \eqref{1cin6}, \eqref{1cin7bis}, \eqref{1cin12lem}, \eqref{1cin16lem}, we get
\begin{align}
&\int_{0}^{T}\int_{\omega_2} (\chi_3(x))^2 e^{2s\alpha} (s\phi)^{3} |\Delta \Delta  \varphi_4|^2 dxdt \notag\\
& \leq \varepsilon\left(\int_{0}^{T}\int_{\Omega} e^{2s \alpha} \Big\{(s \phi)^{4} (|\Delta \varphi_1|^2+|\Delta \varphi_2|^2) + (s \phi) (|\partial_t \Delta \varphi_3|^2+|\Delta \Delta \varphi_3|^2)+(s \phi)^3 |\Delta \Delta \varphi_4|^2\Big\}\right)\notag\\
&\ + C_{\varepsilon} \left(\int_{0}^{T}\int_{\omega_2} e^{2s \alpha} \Big\{(s \phi)^{24} (|\varphi_1|^2 + |\varphi_2|^2 + |\Delta \varphi_3|^2)+(s \phi)^{22} (|\nabla \varphi_1|^2 +  |\nabla\varphi_2|^2 + |\nabla \Delta \varphi_3|^2)\Big\}\right).
\label{1cin17}
\end{align} 
By using \eqref{1cin5}, \eqref{1cin17} and by taking $\varepsilon$ small enough, we have
\begin{align}
&\int_{0}^{T}\int_{\Omega} e^{2s\alpha} (s\phi)^{3} |\Delta \Delta \varphi_4|^2 dxdt + I(2,\lambda,s,\Delta \varphi_3)+  I(5,\lambda,s,\varphi_2) +  I(5,\lambda,s,\varphi_1) \notag\\
&  \leq C\left(\int_{0}^{T}\int_{\omega_2} e^{2s \alpha} \Big\{(s \phi)^{24} (|\varphi_1|^2 + |\varphi_2|^2 + |\Delta \varphi_3|^2)+(s \phi)^{22} (|\nabla \varphi_1|^2 +  |\nabla\varphi_2|^2 + |\nabla \Delta \varphi_3|^2)\Big\}\right).
\label{1cin18}
\end{align}

\underline{Estimate of $\int_{0}^{T}\int_{\omega_2}  e^{2 s \alpha} (s \phi)^{22} |\nabla \Delta \varphi_3|^2 dxdt$}.\\ 

Let us introduce $\widetilde{\chi_2} \in C^{\infty}(\overline{\Omega};[0;+\infty[)$ such that $supp(\widetilde{\chi_2}) \subset \omega_2'$ and $\widetilde{\chi_2} = 1$ on $\omega_2$. Then, by \Cref{lemteccarl}: \eqref{enlevergradin} (with $\Phi = \Delta \varphi_3$, $\widetilde{\omega} = \omega_2$, $\Theta =\widetilde{\chi_2}$, $r=22$ and $(k,l)=(1,43)$), for any $\varepsilon'>0$, we have 
\begin{align}
&\int_{0}^{T}\int_{\omega_2}  e^{2 s \alpha} (s \phi)^{22} |\nabla \Delta \varphi_3|^2 \notag\\
& \leq \int_{0}^{T}\int_{\omega_2'} \widetilde{\chi_2}   e^{2 s \alpha} (s \phi)^{22} |\nabla \Delta \varphi_3|^2 \notag\\
& \leq \varepsilon' \left(\int_{0}^{T}\int_{\Omega} e^{2 s \alpha}\Big\{ (s \phi)  |\Delta \Delta \varphi_3|^2 + (s \phi)^{3} |\nabla \Delta \varphi_3|^2 \Big\}\right)+ C_{\varepsilon'} \int_{0}^{T}\int_{\omega_2'}  e^{2 s \alpha} (s \phi)^{43}|\Delta \varphi_3|^2 .
\label{1cin19}
\end{align}
By taking $\varepsilon'$ small enough and by using \eqref{1cin18} and \eqref{1cin19}, we have
\begin{align}
&\int_{0}^{T}\int_{\Omega} e^{2s\alpha} (s\phi)^{3} |\Delta \Delta \varphi_4|^2 dxdt + I(2,\lambda,s,\Delta \varphi_3)+  I(5,\lambda,s,\varphi_2) +  I(5,\lambda,s,\varphi_1) \notag\\
&  \leq C\left(\int_{0}^{T}\int_{\omega_2''} e^{2s \alpha} (s \phi)^{43} (|\varphi_1|^2 + |\varphi_2|^2 + |\Delta \varphi_3|^2) +\int_{0}^{T}\int_{\omega_2''} e^{2s \alpha} (s \phi)^{22} (|\nabla \varphi_1|^2 +  |\nabla\varphi_2|^2 )\right).
\label{1cin20}
\end{align}

\underline{Estimate of $\int_{0}^{T}\int_{\omega_2'} e^{2s \alpha} (s \phi)^{43} |\Delta \varphi_3|^2dxdt$}.\\

Let us introduce $\chi_2 \in C^{\infty}(\overline{\Omega};[0,+\infty[)$, such that the support of $\chi_2$ in included in $\omega_1$ and $\chi_2 = 1$ in $\omega_2'$. We multiply the second equation of \eqref{heatnondiagsystem2} by $sign(d_2-d_3)\chi_2(x) e^{2s \alpha} (s \phi)^{45} \Delta \varphi_3$ and we integrate on $(0,T) \times \omega_1$. As $d_2 \neq d_3$, we have
\begin{align}
&\int_{0}^{T}\int_{\omega_1} \chi_2(x) e^{2 s \alpha} (s \phi)^{43} |\Delta \varphi_3|^2 dxdt\notag\\
& \leq C \int_{0}^{T}\int_{\omega_1} \chi_2(x) e^{2 s \alpha} (s \phi)^{43} \Delta \varphi_3 (-\partial_t \varphi_2 - d_2 \Delta \varphi_2 - a_{12} \varphi_1 - a_{22} \varphi_2) dxdt.
\label{1cin21}
\end{align}
Let $\varepsilon''>0$ which will be chosen small enough. We estimate the right hand side of \eqref{1cin21} in the same way as the one of \eqref{3cin2}:
\begin{itemize}[nosep]
\item for terms involving $\Delta \varphi_3 a_{i2} \varphi_i$ with $1 \leq i \leq 2$, we apply \eqref{lemteccarl1} with $\Phi=\Delta \varphi_3$, $\Psi =\varphi_i$, $a = a_{i2} \in L^{\infty}(Q)$, $1 \leq i \leq 2$ (recalling \eqref{1c2bded}), $\Theta = \chi_2$ and $r =43$, $k=5$, $l=81$,
\item for the term involving $\Delta \varphi_3 \partial_t \varphi_2$, we apply \eqref{lemteccarl2} with $\Phi=\Delta \varphi_3$, $\Psi =\varphi_2$, $a = 1$, $\Theta = \chi_2$ and $r =43$, $k=5$, $l=85$,
\item for the term involving $\Delta \varphi_3 \Delta \varphi_2$, we apply \eqref{lemteccarl3} with $\Phi=\Delta \varphi_3$, $\Psi =\varphi_2$, $a = d_2$, $\Theta = \chi_2$ and $r =43$, $k=5$, $l=85$.
\end{itemize}
We get
\begin{align}
&\int_{0}^{T}\int_{\omega_1} \chi_2 e^{2 s \alpha} (s \phi)^{43} |\Delta \varphi_3|^2\notag\\
& \leq \varepsilon'' \left(\int_{0}^{T}\int_{\Omega} e^{2 s \alpha} \Big\{(s \phi)^{5}  |\Delta \varphi_3|^2  +(s \phi)^{3} |\nabla \Delta \varphi_3|^2+(s \phi) (|\partial_t \Delta \varphi_3|^2 + |\Delta \Delta \varphi_3|^2 \Big\}\right)\notag\\
&\quad+ C_{\varepsilon''}\int_{0}^{T}\int_{\omega_1} e^{2s \alpha} (s \phi)^{85} (|\varphi_1|^2 + |\varphi_2|^2).
\label{1cin22}
\end{align}
By taking $\varepsilon''$ sufficiently small, we get from \eqref{1cin20}, \eqref{1cin22}
\begin{align}
&\int_{0}^{T}\int_{\Omega} e^{2s\alpha} (s\phi)^{3} |\Delta \Delta \varphi_4|^2 dxdt + I(2,\lambda,s,\Delta \varphi_3)+  I(5,\lambda,s,\varphi_2) +  I(5,\lambda,s,\varphi_1) \notag\\
&\leq C\int_{0}^{T}\int_{\omega_1} e^{2s \alpha} (s \phi)^{85} (|\varphi_1|^2 + |\varphi_2|^2) + \int_{0}^{T}\int_{\omega_2} e^{2s \alpha} (s \phi)^{22} (|\nabla \varphi_1|^2 +  |\nabla\varphi_2|^2) .
\label{1cin23}
\end{align}

\underline{Estimate of $\int_{0}^{T}\int_{\omega_2} e^{2s \alpha} (s \phi)^{22} |\nabla \varphi_i|^2dxdt$ for $1 \leq i \leq 2$}.\\

Applying \Cref{lemteccarl}: \eqref{enlevergradin} (with $\Phi =  \varphi_i$, $\widetilde{\omega} = \omega_1$, $\Theta =\chi_2$, $r=22$ and $(k,l)=(4,40)$), for any $\varepsilon'''>0$, we have 
\begin{align}
&\int_{0}^{T}\int_{\omega_2}  e^{2 s \alpha} (s \phi)^{22} |\nabla \varphi_i|^2 dxdt\notag\\
& \leq \int_{0}^{T}\int_{\omega_1} \chi_2   e^{2 s \alpha} (s \phi)^{22} |\nabla \varphi_i|^2 dxdt\notag\\
& \leq \varepsilon''' \left(\int_{0}^{T}\int_{\Omega} e^{2 s \alpha} \Big\{(s \phi)^4  |\Delta \varphi_i|^2 + (s \phi)^{6} |\nabla \varphi_i|^2\big\} dxdt\right) +C_{\varepsilon'''} \int_{0}^{T}\int_{\omega_1}  e^{2 s \alpha} (s \phi)^{40}|\varphi_i|^2 dxdt.
\label{1cin23bisbis}
\end{align}
By taking $\varepsilon'''$ small enough and by using \eqref{1cin23} and \eqref{1cin23bisbis}, we have
\small
\begin{align}
\int_{0}^{T}\int_{\Omega} e^{2s\alpha} (s\phi)^{3} |\Delta \Delta \varphi_4|^2 + I(2,\lambda,s,\Delta \varphi_3)+  \sum\limits_{i=1}^2I(5,\lambda,s,\varphi_i) 
\leq C\int_{0}^{T}\int_{\omega_1} e^{2s \alpha} (s \phi)^{85} (|\varphi_1|^2 + |\varphi_2|^2).
\label{1cin23bis}
\end{align}
\normalsize

\underline{Estimate of $\int_{0}^{T}\int_{\omega_1} e^{2s \alpha} (s \phi)^{85} |\varphi_2|^2 dxdt$}.\\

Let us introduce $\chi_1 \in C^{\infty}(\overline{\Omega};[0,+\infty[)$, such that the support of $\chi_1$ in included in $\omega_0$ and $\chi_1 = 1$ in $\omega_1$. We multiply the first equation of \eqref{adj} by $\chi_1(x) e^{2s \alpha} (s \phi)^{85} \varphi_2$ and we integrate on $(0,T) \times \omega_0$. Recalling \eqref{1c2sign}, we have
\begin{align}
&\int_{0}^{T}\int_{\omega_0} \chi_1(x) e^{2 s \alpha} (s \phi)^{85} | \varphi_2|^2 dxdt\notag\\
& \leq C \int_{0}^{T}\int_{\omega_0} \chi_1(x) e^{2 s \alpha} (s \phi)^{85} \varphi_2 (-\partial_t \varphi_1 - d_1 \Delta \varphi_2 - a_{11} \varphi_1) dxdt.
\label{1cin23encore}
\end{align}
We estimate the right hand side of \eqref{1cin23encore} in the same way as the one of \eqref{3cin2}:
\begin{itemize}[nosep]
\item for the term involving $\varphi_2 a_{11} \varphi_1$, we apply \eqref{lemteccarl1} with $\Phi= \varphi_2$, $\Psi =\varphi_1$, $a = a_{11} \in L^{\infty}(Q)$ (recalling \eqref{1c2bded}), $\Theta = \chi_1$ and $r =85$, $k=8$, $l=162$,
\item for the term involving $\varphi_2 \partial_t \varphi_1$, we apply \eqref{lemteccarl2} with $\Phi=\varphi_2$, $\Psi =\varphi_1$, $a = 1$, $\Theta = \chi_1$ and $r =85$, $k=8$, $l=166$,
\item for the term involving $\varphi_2 \Delta \varphi_1$, we apply \eqref{lemteccarl3} with $\Phi=\varphi_2$, $\Psi =\varphi_1$, $a = d_1$, $\Theta = \chi_1$ and $r =85$, $k=8$, $l=166$.
\end{itemize}
We get
\begin{equation}
\int_{0}^{T}\int_{\Omega} e^{2s\alpha} (s\phi)^{3} |\Delta \Delta \varphi_4|^2  + I(2,\lambda,s,\Delta \varphi_3)+\sum\limits_{i=1}^2I(5,\lambda,s,\varphi_i) \leq C\int_{0}^{T}\int_{\omega_0} e^{2s \alpha} (s \phi)^{166} |\varphi_1|^2 .
\label{1cin24}
\end{equation}
Then, we can deduce from \eqref{defpoidsmin} and \eqref{1cin24}
\small
\begin{equation}
\sum\limits_{i=1}^2 \int_{0}^{T}\int_{\Omega} e^{2s{\alpha}} (s{\phi})^{8} |\varphi_i|^2 + \int_{0}^{T}\int_{\Omega} e^{2s\widehat{\alpha}} \Big\{(s\widehat{\phi})^{5} |\Delta \varphi_3|^2 +(s\widehat{\phi})^{3} |\Delta \Delta \varphi_4|^2\Big\}  \leq C\int_{0}^{T}\int_{\omega} e^{2s \alpha} (s \phi)^{166} |\varphi_1|^2 .
\label{1cin25}
\end{equation}
\normalsize
Now, we use Poincaré-Wirtinger inequality as in \eqref{2cin15} to get
\begin{align}
&\int_{0}^{T}\int_{\Omega} e^{2s{\alpha}} (s{\phi})^{8} (|\varphi_1|^2+|\varphi_2|^2) +e^{2s\widehat{\alpha}} \Big\{(s\widehat{\phi})^{5} |\varphi_3-(\varphi_3)_{\Omega}|^2 +(s\widehat{\phi})^{3} | \varphi_4-(\varphi_4)_{\Omega}|^2 \Big\}\notag\\
&\leq C\int_{0}^{T}\int_{\omega} e^{2s \alpha} (s \phi)^{166} |\varphi_1|^2 .
\label{1cin25bis}
\end{align}
\indent Now, from the dissipation of the energy of $(\varphi_1,\varphi_2,\varphi_3-(\varphi_3)_{\Omega},\varphi_4-(\varphi_4)_{\Omega})$ (see \Cref{lemdissipenergy} in \Cref{appendix}) and by using the same arguments as for $2$ controls (see \eqref{dissip2} and \eqref{2cin16}), we easily get
\begin{align}
&\sum\limits_{i=1}^2\norme{\varphi_i(0,.)}_{L^2(\Omega)}^2 + \sum\limits_{i=3}^4 \norme{\varphi_i(0,.)-(\varphi_i)(0,.)_{\Omega}}_{L^2(\Omega)}^2 \leq C\int_{0}^{T}\int_{\omega} e^{2s \alpha} (s \phi)^{166} |\varphi_1|^2  dxdt,
\label{1cin26}
\end{align}
and consequently the observability inequality \eqref{inobs1c2}.\\

This ends the proof of the observability inequality \eqref{inobs1c2}.
\end{proof}

\subsection{Second step: Controls in $L^{\infty}(Q)^j$}

\subsubsection{Penalized Hilbert Uniqueness Method}\label{pHUM} The proof in this subsection follows ideas of \cite{B} and \cite[Section 3.1.2]{CGR}. The goal is to get more regular controls in some sense (see \eqref{estimatehjphum}) by considering a penalized problem.\\
\indent Let $\varepsilon \in (0,1)$ and
\[\ M_3 := 7,\ M_2:= 13,\ M_1:=166.\  \]
We choose $\lambda$ and $s$ large enough such that \eqref{3cin8bis}, \eqref{2cin16}, \eqref{1cin26} hold.\\
\indent Let $j \in \{1,2,3\}$, $A \in \mathcal{E}_j$ (see \eqref{defe1}, \eqref{defe2} and \eqref{defe3}), $\zeta_0 \in H_j$ (see \eqref{defh1}, \eqref{defh2}, \eqref{defh3}). We introduce the notation $L_{wght}^2((0,T)\times\omega)^j$ for the set of functions $h^j$ such that for every $1\leq i\leq j$, $( e^{- 2 s \alpha} (s \phi)^{-M_j})^{1/2} h_i \in L^2((0,T)\times\omega)$. The set $L_{wght}^2((0,T)\times\omega_0)^j$ is an Hilbert space equipped with the inner product $(h,k)=\sum\limits_{i=1}^{j}\int\int_{(0,T)\times\omega_0} e^{- 2 s \alpha} (s \phi)^{-M_j} h_i k_i dxdt$. We define
\[ \forall h^j \in L_{wght}^2((0,T)\times\omega)^j,\ J(h^j):= \frac{1}{2} \int\int_{(0,T)\times\omega} e^{- 2 s \alpha} (s \phi)^{-M_j} |h^j|^2dxdt + \frac{1}{2\varepsilon} \norme{ \zeta(T,.)}_{L^2(\Omega)^4}^2,\]
where $\zeta=(\zeta_1,\zeta_2,\zeta_3,\zeta_4)$ is the solution to the Cauchy problem \eqref{systzeta} associated to the control $h^j$.\\
\indent The mapping $J$ is a continuous, coercive, strictly convex functional on the Hilbert space $L_{wght}^2((0,T)\times\omega)^j$, then $J$ has a unique minimum $h^{j,\varepsilon}$ with $(e^{- 2 s \alpha} (s \phi)^{-M_j})^{1/2} h^{j,\varepsilon} \in L^2((0,T)\times\omega)^j$. Let $\zeta^{\varepsilon}$ be the solution to the Cauchy problem \eqref{systzeta} with control $h^{j,\varepsilon}$ and initial condition $\zeta_0$.\\
\indent The Euler-Lagrange equation gives
\begin{equation}
\forall h^j \in L_{wght}^2((0,T)\times\omega)^j,\ \sum\limits_{i=1}^{j}\int\int_{(0,T)\times\omega} e^{- 2 s \alpha} (s \phi)^{-M_j} h_i^{\varepsilon}h_i + \frac{1}{\varepsilon} \int_{\Omega}\zeta^{\varepsilon}(T,.).\zeta(T,.) = 0,
\label{eulerla}
\end{equation}
where $\zeta=(\zeta_1,\zeta_2,\zeta_3,\zeta_4)$ is the solution to the Cauchy problem \eqref{systzeta} associated to the control $h^j$ and initial condition $\zeta_0 = 0$. \\
\indent We introduce $\varphi^{\varepsilon}$ the solution to the adjoint problem \eqref{adj} with final condition $\varphi^{\varepsilon}(T,.) = -\frac{1}{\varepsilon}\zeta^{\varepsilon}(T,.)$. A duality argument between $\zeta$ and $\varphi^{\varepsilon}$ gives
\begin{equation}
-\frac{1}{\varepsilon} \int_{\Omega} \zeta^{\varepsilon}(T,x).\zeta(T,x)dx = \sum\limits_{i=1}^{j}\int\int_{(0,T)\times\omega_0} h_i \varphi_i^{\varepsilon}dxdt.
\label{dual1}
\end{equation}
Then, we deduce from \eqref{eulerla} and \eqref{dual1} that 
\[ \forall h^j \in L_{wght}^2((0,T)\times\omega)^j,\ \sum\limits_{i=1}^{j}\int\int_{(0,T)\times\omega} e^{- 2 s \alpha} (s \phi)^{-M_j} h_i^{\varepsilon}h_i dxdt = \sum\limits_{i=1}^{j}\int\int_{(0,T)\times\omega} h_i \varphi_i^{\varepsilon}dxdt.\]
Consequently,
\begin{equation}
\forall i \in \{1,\dots, j\},\ h_i^{\varepsilon} = e^{2 s \alpha} (s \phi)^{M_j} \varphi_i^{\varepsilon} 1_{\omega}.
\label{control-adj}
\end{equation}
Another duality argument applied between $\zeta^{\varepsilon}$ and $\varphi^{\varepsilon}$ together with \eqref{control-adj} gives
\begin{equation}
- \frac{1}{\varepsilon} \norme{\zeta^{\varepsilon}(T,.)}_{L^2(\Omega)^4}^2 = \sum\limits_{i=1}^{j}\int\int_{(0,T)\times\omega} e^{2 s \alpha} (s \phi)^{M_j}|\varphi_i^{\varepsilon}|^2 dxdt + \int_{\Omega} \varphi^{\varepsilon}(0,x).\zeta_0(x)dx.
\label{dual2}
\end{equation}
If $j=2$, we have $\int_{\Omega} \zeta_{0,4}(x) dx = 0$. Then,
\begin{equation}
\int_{\Omega} \varphi^{\varepsilon}(0,x).\zeta_0(x) dx = \sum\limits_{i =1}^3 \int_{\Omega} \varphi_i^{\varepsilon}(0,x)\zeta_{0,i}(x) dx + \int_{\Omega}( \varphi_4^{\varepsilon}(0,x) - (\varphi_4)_{\Omega}(0))\zeta_{0,4}(x) dx.
\label{sousvariete}
\end{equation}
If $j=1$, we have $\int_{\Omega} \zeta_{0,3}(x) dx=0 $ and $\int_{\Omega} \zeta_{0,4}(x) dx = 0$. Then,
\begin{align}
\int_{\Omega} \varphi^{\varepsilon}(0,x).\zeta_0(x) dx= \sum\limits_{i =1}^2 \int_{\Omega} \varphi_i^{\varepsilon}(0,.)\zeta_{0,i}(.)+\sum\limits_{i=3}^4 \int_{\Omega}( \varphi_i^{\varepsilon}(0,.) - (\varphi_i)_{\Omega}(0))\zeta_{0,i}(.).
\label{sousvariete2}
\end{align}
Then, from  \eqref{3cin8bis} for $j=3$,  \eqref{2cin16}, \eqref{sousvariete} for $j=2$, \eqref{1cin26}, \eqref{sousvariete2} for $j=1$ and \eqref{control-adj}, \eqref{dual2}, we have
\begin{equation}
\frac{1}{\varepsilon} \norme{\zeta^{\varepsilon}(T,.)}_{L^2(\Omega)^4}^2 + \frac{1}{2}\norme{(e^{-2 s \alpha} (s \phi)^{-M_j})^{1/2}h^{j,\varepsilon}}_{L^2((0,T)\times\omega)^j}^2 \leq C \norme{\zeta_0}_{L^2(\Omega)^4}^2.
\label{inhum}
\end{equation}
In particular, from \eqref{inhum},
\begin{equation}
\zeta^{\varepsilon}(T,.) \underset{\varepsilon \rightarrow 0}{\rightarrow} 0\  \mathrm{in}\ L^2(\Omega)^4,
\label{convzeta0}
\end{equation}
and
\begin{equation}
\norme{B_j h^{j,\varepsilon}}_{L^2(Q)^j} \leq C.
\label{unebornecontrole}
\end{equation}
Then, by using $A \in \mathcal{M}_4(L^{\infty}(Q))$ (see \eqref{defe1}, \eqref{defe2} and \eqref{defe3}) and recalling \eqref{unebornecontrole}, from \Cref{wpl2linfty} applied to \eqref{systzeta}, we deduce that
\begin{equation}
\norme{\zeta^{\varepsilon}}_{Y^4} \leq C.
\label{uneborneenY}
\end{equation}
So, from \eqref{uneborneenY}, up to a subsequence, we can suppose that  there exists $\zeta \in Y^4$ such that
\begin{equation}
\zeta^{\varepsilon} \underset{\varepsilon \rightarrow 0}{\rightharpoonup} \zeta\ \text{in}\ L^2(0,T;H^1(\Omega)^4),
\label{wkVeps}
\end{equation}
\begin{equation}
\partial_t \zeta^{\varepsilon} \underset{\varepsilon \rightarrow 0}{\rightharpoonup} \partial_t\zeta\ \text{in}\ L^2(0,T;(H^1(\Omega))'^4),
\label{wkV'eps}
\end{equation}
and  from \Cref{injclassique},
\begin{equation}
\zeta^{\varepsilon}(0,.) \underset{\varepsilon \rightarrow 0}{\rightharpoonup} \zeta(0,.)\ \text{in}\ L^2(\Omega)^4,\ \zeta^{\varepsilon}(T,.) \underset{\varepsilon \rightarrow 0}{\rightharpoonup} \zeta(T,.)\ \text{in}\  L^2(\Omega)^4.
\end{equation}
Then, as we have $\zeta^{\varepsilon}(0,.)=\zeta_0$ and \eqref{convzeta0}, we deduce that \begin{equation}
\zeta(0,.) = \zeta_0,\  \mathrm{and}\  \zeta(T,.) = 0.
\label{cicf}
\end{equation}
Moreover, from \eqref{inhum}, up to a subsequence, we can suppose that there exists $h^j \in L_{wght}^{2}((0,T)\times\omega)^j$ such that
\begin{equation}
(h^{j,\varepsilon}) \underset{\varepsilon \rightarrow 0}\rightharpoonup h^j \ \text{in}\ L_{wght}^{2}((0,T)\times\omega)^j,
\label{wk*eps}
\end{equation}
and
\begin{equation}
\norme{(e^{- 2 s \alpha} (s \phi)^{-M_j})^{1/2}h^j}_{L^2((0,T)\times\omega)^j}^2 \leq \underset{\varepsilon \rightarrow 0}\lim\inf \norme{(e^{- 2 s \alpha} (s \phi)^{-M_j})^{1/2}h^{j,\varepsilon}}_{L^2((0,T)\times\omega)^j}^2 \leq C \norme{\zeta_0}_{L^2(\Omega)^4}^2.
\label{estlimiteeps}
\end{equation}
Then, from \eqref{wkVeps}, \eqref{wkV'eps}, \eqref{wk*eps}, we let $\varepsilon \rightarrow 0$ in the following equations
\[
\left\{
\begin{array}{l l}
\partial_t\zeta^{\varepsilon} - D \Delta \zeta^{\varepsilon} = A(t,x)\zeta^{\varepsilon} + B_j h^{j,\varepsilon} 1_{\omega} &\mathrm{in}\ (0,T)\times\Omega,\\
\frac{\partial\zeta^{\varepsilon}}{\partial n} = 0 &\mathrm{on}\ (0,T)\times\partial\Omega,
\end{array}
\right.
\]
and by using \eqref{cicf}, we deduce
\begin{equation}
\left\{
\begin{array}{l l}
\partial_t{\zeta} - D \Delta \zeta =A(t,x)\zeta + B_j h^j 1_{\omega} &\mathrm{in}\ (0,T)\times\Omega,\\
\frac{\partial\zeta}{\partial n} = 0 &\mathrm{on}\ (0,T)\times\partial\Omega,\\
(\zeta(0,.),\zeta(T,.)) = (\zeta_0,0) &\mathrm{in}\ \Omega.
\end{array}
\right.
\label{systlimite}
\end{equation}
Therefore, we have proved the existence of a control $h^j$ such that $(e^{- 2 s \alpha} (s \phi)^{-M_j})^{1/2} h^j \in L^2((0,T)\times\omega)^j$ that drives the solution $\zeta$ of \eqref{systzeta} to $0$, and we have the estimate
\begin{equation}
 \norme{(e^{- 2 s \alpha} (s \phi)^{-M_j})^{1/2}h^j}_{L^2((0,T)\times\omega)^j}^2 \leq  C \norme{\zeta_0}_{L^2(\Omega)^4}^2.
\label{estimatehjphum}
\end{equation} 

\subsubsection{Bootstrap method}\label{btm}

In the previous subsection, we proved the existence of a control $h^j \in L_{wght}^2((0,T)\times\omega)^j$ i.e. a control $h^j$ more regular than $L^2(Q)$. The key points are the link between $h^{j,\varepsilon}$ and $\varphi^{\varepsilon}$ (i.e. \eqref{control-adj}) and the weights of Carleman estimates. Now, we use an iterative process in order to find controls in $L^{\infty}(Q)^j$. We use the same key points together with parabolic regularity theorems. This section is inspired by \cite[Section 3.1.2]{CGR} and \cite{WZ} (for the Neumann conditions). First, we are going to present the boostrap method for the case $j=3$ and after that, we explain the main differences for the case $j=2$ and $j=1$.

\paragraph{Strong observability inequalities}
From \eqref{3cin7bis} for the case $j=3$, \eqref{2cin15} for the case $j=2$, \eqref{1cin25bis} for the case $j=1$, \eqref{control-adj} and \eqref{inhum}, we deduce these inegalities which are useful for the bootstrap method:
\begin{equation}
\big(j=3\big) \Rightarrow \left(\sum\limits_{i=1}^{4} \int_{0}^{T}\int_{\Omega} e^{2s \widehat{\alpha}} (s\widehat{\phi})^3 |\varphi_i^{\varepsilon}|^2 dxdt \leq C \norme{\zeta_0}_{L^2(\Omega)^4}^2\right),
\label{inboot1}
\end{equation}
\begin{equation}
\big(j=2\big) \Rightarrow \left( \int_{0}^{T}\int_{\Omega} \sum\limits_{i =1}^3e^{2s \widehat{\alpha}} (s\widehat{\phi})^3 |\varphi_i^{\varepsilon}|^2 + e^{2s \widehat{\alpha}} (s\widehat{\phi})^3 |\varphi_4^{\varepsilon}-(\varphi_4^{\varepsilon})_{\Omega}|^2 \leq C \norme{\zeta_0}_{L^2(\Omega)^4}^2\right),
\label{inboot2}
\end{equation}
\begin{equation}
\big(j=1\big) \Rightarrow \left( \int_{0}^{T}\int_{\Omega} \sum\limits_{i =1}^2 e^{2s \widehat{\alpha}} (s\widehat{\phi})^3 |\varphi_i^{\varepsilon}|^2  + \sum\limits_{i =3}^4  e^{2s \widehat{\alpha}} (s\widehat{\phi})^3 |\varphi_i^{\varepsilon}-(\varphi_i^{\varepsilon})_{\Omega}|^2 \leq C \norme{\zeta_0}_{L^2(\Omega)^4}^2\right).
\label{inboot3}
\end{equation}

\paragraph{Bootstrap} Let $\delta>0$ which will be chosen sufficiently small and $(\delta_k)_{k \in \N} \in ({\R^{+,*}})^{\N}$ be a strictly increasing sequence such that $\delta_k \underset{k \rightarrow + \infty}\rightarrow \delta$. Let $(p_k)_{k \in \N}$ be the following sequence defined by induction
\begin{equation*}
p_0 = 2, 
\end{equation*}
\[ p_{k+1} :=
\left\{
\begin{array}{c l}
\frac{(N+2)p_{k}}{N+2-2p_{k}} & \mathrm{if}\  p_{k} < \frac{N+2}{2},\\
2 p_{k} & \mathrm{if}\  p_{k} = \frac{N+2}{2},\\
+\infty & \mathrm{if}\  p_{k} > \frac{N+2}{2}.\\
\end{array}
\right.
\]
Clearly, we have that
\begin{equation}
\exists l \in \N,\ \forall k \geq l,\ p_k = + \infty.
\label{pinfty}
\end{equation}
\begin{defi}\label{defspaceslr}
We introduce the following spaces: for every $r\in [1,+\infty]$,
\[W_{Ne}^{2,r}(\Omega) :=\left\{u \in W^{2,r}(\Omega)\ ;\ \frac{\partial u}{\partial n} = 0 \right\},\qquad Y_r = L^r(0,T;W_{Ne}^{2,r}(\Omega))\cap W^{1,r}(0,T;L^{r}(\Omega)).\]
\end{defi}
\begin{defi}
Let $u$ be a function on $Q$. For $0 < \beta < 1$, we define
\[ [u]_{\beta/2,\beta} = \sup\limits_{(t,x), (t',x') \in Q, (t,x) \neq (t',x')} \frac{|u(t,x)-u(t',x')|}{{(|t-t'|+|x-x'|^2})^{\beta/2}},\]
which is a semi-norm, and we denote by $C^{\beta/2,\beta}(\overline{Q})$ the set of all functions on $Q$ such that $[u]_{\beta/2,\beta} < + \infty$, endowed with the norm
\[ \norme{u}_{\beta/2,\beta} = \left(\sup\limits_{(t,x) \in Q} |u(t,x)|\right) +  [u]_{\beta/2,\beta}.\]
\end{defi}
\begin{prop}\label{wplp}
Let $1 < p < +\infty$, $m \in \N^*$, $D\in \mathcal{M}_m(\R)$ such that $Sp(D) \subset (0,+\infty)$, $A \in \mathcal{M}_{m}(L^{\infty}(Q))$, $f \in L^p(Q)^m$. From \cite[Theorem 2.1]{DHP}, the following Cauchy problem admits a unique solution $u \in Y_p^m $
\[
\left\{
\begin{array}{l l}
\partial_t u - D \Delta u= A(t,x) u + f&\mathrm{in}\ (0,T)\times\Omega,\\
\frac{\partial u}{\partial n} = 0 &\mathrm{on}\ (0,T)\times\partial\Omega,\\
u(0,.)=0 &\mathrm{in}\  \Omega.
\end{array}
\right.
\]
Moreover, there exists $ C >0$ independent of $f$ such that
\[ \norme{u}_{Y_p^m} \leq C \norme{f}_{L^p(Q)^k}.\]
\end{prop}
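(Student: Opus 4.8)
The plan is to treat the statement as a bounded lower-order perturbation of a maximal $L^{p}$-regularity problem. The only substantial input is the maximal regularity of the Neumann realization of $-D\Delta$, which is exactly \cite[Theorem 2.1]{DHP}: indeed, since $Sp(D)\subset(0,+\infty)$ is finite there are $0<c\le C$ with $Sp(D)\subset[c,C]$, so the principal symbol $D|\xi|^{2}$ has spectrum in $[c|\xi|^{2},C|\xi|^{2}]\subset(0,+\infty)$ for $\xi\neq0$; thus $-D\Delta$ with domain $W^{2,p}_{Ne}(\Omega)^{m}$ is parameter-elliptic of angle $0$, and the Neumann boundary operator satisfies the Lopatinskii--Shapiro condition (the boundary symbols of the system being decoupled, this reduces to the scalar Neumann Laplacian). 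Consequently \cite[Theorem 2.1]{DHP} provides, for every $g\in L^{p}(Q)^{m}$, a unique $u\in Y_{p}^{m}$ with $u(0,\cdot)=0$ solving $\partial_{t}u-D\Delta u=g$ with homogeneous Neumann conditions, together with $\norme{u}_{Y_{p}^{m}}\le C_{0}\norme{g}_{L^{p}(Q)^{m}}$ with $C_{0}$ independent of $g$. (Alternatively one can bring $D$ to real Jordan form by a linear change of unknowns, admissible because its eigenvalues are real and positive, and solve the resulting upper-triangular system from the last component upwards, each step being a scalar Neumann heat equation with an $L^{p}$ right-hand side.)

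Writing $L:L^{p}(Q)^{m}\to Y_{p}^{m}$ for this bounded solution operator, a function $u\in Y_{p}^{m}$ with $u(0,\cdot)=0$ solves the problem in the statement iff $u=L(Au+f)$, i.e. $(\mathrm{Id}-LM_{A})u=Lf$, where $M_{A}v:=Av$ is bounded from $Y_{p}^{m}$ into $L^{p}(Q)^{m}$ since $A\in\mathcal{M}_{m}(L^{\infty}(Q))$ and $Y_{p}\hookrightarrow L^{p}(Q)$. To invert $\mathrm{Id}-LM_{A}$ I would work on short time intervals: for $v\in Y_{p}^{m}$ with $v(0,\cdot)=0$ one has $\norme{v}_{L^{p}((0,\tau)\times\Omega)^{m}}\le\tau^{1/p}\norme{v}_{L^{\infty}(0,\tau;L^{p}(\Omega))^{m}}\le c_{1}\tau^{1/p}\norme{v}_{Y_{p}^{m}}$, with $c_{1}$ the embedding constant of $Y_{p}\hookrightarrow C([0,\tau];L^{p}(\Omega))$, which can be taken independent of $\tau$ once $v(0,\cdot)=0$ (extend $v$ by zero to negative times). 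Hence $\norme{LM_{A}v}_{Y_{p}^{m}}\le C_{0}\norme{A}_{L^{\infty}}c_{1}\tau^{1/p}\norme{v}_{Y_{p}^{m}}$, and choosing $\tau$ so that $C_{0}\norme{A}_{L^{\infty}}c_{1}\tau^{1/p}\le 1/2$ makes $\mathrm{Id}-LM_{A}$ invertible on $Y_{p}^{m}$ over $(0,\tau)$ with norm $\le 2$, producing a unique solution there with $\norme{u}_{Y_{p}((0,\tau))^{m}}\le 2\norme{Lf}_{Y_{p}^{m}}\le C\norme{f}_{L^{p}(Q)^{m}}$. Since $\tau$ depends only on $p,\Omega,D,\norme{A}_{L^{\infty}}$, I would then advance over $(\tau,2\tau),(2\tau,3\tau),\dots$; on the $k$-th step the initial value $u(k\tau,\cdot)$ lies in the trace space $(L^{p}(\Omega)^{m},W^{2,p}_{Ne}(\Omega)^{m})_{1-1/p,p}$ with norm controlled by $\norme{u}_{Y_{p}((k-1)\tau,k\tau)^{m}}$, and the same fixed-point argument applies with non-zero initial data (still covered by maximal regularity), the contraction estimate being unchanged since it only involves the zero-initial-data part of the solution operator. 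Gluing the finitely many pieces yields $u\in Y_{p}^{m}$ on $(0,T)$ with $\norme{u}_{Y_{p}^{m}}\le C\norme{f}_{L^{p}(Q)^{m}}$, $C$ independent of $f$.

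Uniqueness follows by the same scheme: if $u\in Y_{p}^{m}$ solves the homogeneous problem ($f=0$, $u(0,\cdot)=0$), then on $(0,\tau)$ one has $u=LM_{A}u$ and $\norme{u}_{Y_{p}((0,\tau))^{m}}\le\tfrac12\norme{u}_{Y_{p}((0,\tau))^{m}}$, so $u\equiv0$ there, and stepping forward interval by interval gives $u\equiv0$ on $(0,T)$. The only genuinely non-trivial ingredient is the maximal $L^{p}$-regularity of the Neumann realization of $-D\Delta$; granting \cite[Theorem 2.1]{DHP} the rest is a routine perturbation-and-patching argument, and if one argued from scratch the verification of the Lopatinskii--Shapiro condition (reducible here to the scalar Neumann Laplacian because $D$ has positive real spectrum) would be the main point.
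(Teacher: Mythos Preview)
Your argument is correct. The paper itself gives no proof of this proposition at all: it simply invokes \cite[Theorem 2.1]{DHP} in the statement and moves on, treating the result (including the lower-order term $A(t,x)u$) as a direct consequence of that reference. Your write-up makes explicit the standard perturbation step---maximal $L^{p}$-regularity for $-D\Delta$ with Neumann conditions from DHP, then a contraction/patching argument on short subintervals to absorb the bounded zero-order term---which the paper leaves entirely implicit. So you are not taking a different route; you are spelling out what the citation covers.
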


\begin{prop}\label{injsobo}\cite[Theorem 1.4.1]{WYW}\\
Let $r \in [1,+\infty[$, we have
\[ 
Y_r \hookrightarrow 
\left\{
\begin{array}{c l}
L^{\frac{(N+2)r}{N+2-2r}}(Q) & \mathrm{if}\  r < \frac{N+2}{2},\\
L^{2 r}(Q) & \mathrm{if}\  r = \frac{N+2}{2},\\
C^{\beta/2,\beta}(\overline{Q})\hookrightarrow L^{\infty}(Q)\ \text{with}\ 0<\beta\leq 2-\frac{N+2}{r} & \mathrm{if}\  r > \frac{N+2}{2}.
\end{array}
\right.
\]
\end{prop}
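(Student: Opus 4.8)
The statement is the classical anisotropic (``parabolic'') Sobolev embedding for the space $W^{2,1}_r(Q):=L^r(0,T;W^{2,r}(\Omega))\cap W^{1,r}(0,T;L^r(\Omega))$, of which $Y_r$ is a closed subspace; there is nothing genuinely new to prove, so the plan is to recall how the argument runs (and, in the paper, to simply invoke \cite[Theorem 1.4.1]{WYW}). First I would reduce to the whole space $\R^{N+1}$. Since $\Omega$ is of class $C^2$, it carries a Stein-type extension operator $\mathcal E_x$ that is simultaneously bounded $W^{2,r}(\Omega)\to W^{2,r}(\R^N)$ and $L^r(\Omega)\to L^r(\R^N)$; as $\mathcal E_x$ does not depend on $t$ it commutes with $\partial_t$, so composing it with a reflection-plus-cutoff extension in the time variable produces a bounded operator $Y_r\to W^{2,1}_r(\R^{N+1})$. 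Hence it suffices to treat $u\in W^{2,1}_r(\R^{N+1})$ with control of its norm.

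Next, I would use the heat-potential representation. Put $g:=(\partial_t-\Delta+1)u$, so that $\norme{g}_{L^r(\R^{N+1})}\le\norme{u}_{W^{2,1}_r}$ trivially, and let $E(t,x):=(4\pi t)^{-N/2}e^{-|x|^2/4t}e^{-t}$ for $t>0$ (and $E\equiv0$ for $t\le0$) be the fundamental solution of $\partial_t-\Delta+1$; thanks to the factor $e^{-t}$ one has $E\in L^1(\R^{N+1})$, and similarly $\partial_x^\alpha E\in L^1(\R^{N+1})$ for $|\alpha|\le2$. Taking the space-time Fourier transform of $u-E*g$, whose symbol $(i\tau+|\xi|^2+1)$ never vanishes, forces $u=E*g$, and likewise $\partial_x^\alpha u=(\partial_x^\alpha E)*g$.

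The core of the proof is then fractional integration against $E$ in the parabolic scaling. Writing $\rho(t,x):=(|x|^2+|t|)^{1/2}$, the space $(\R^{N+1},\rho,dt\,dx)$ is a homogeneous space of homogeneous dimension $N+2$, and using the Gaussian factor to absorb the region $|x|^2\gtrsim t$ one gets $|\partial_x^\alpha E(t,x)|\lesssim\rho(t,x)^{-(N+|\alpha|)}$, so $\partial_x^\alpha E\in L^{(N+2)/(N+|\alpha|),\infty}$. The Hardy--Littlewood--Sobolev inequality in this homogeneous space gives $\norme{\partial_x^\alpha u}_{L^q}\lesssim\norme{g}_{L^r}$ whenever $1<r<q<\infty$ and $\tfrac1q=\tfrac1r-\tfrac{2-|\alpha|}{N+2}$; with $|\alpha|=0$ and $r<\tfrac{N+2}{2}$ this is precisely $Y_r\hookrightarrow L^{(N+2)r/(N+2-2r)}(Q)$ (the case $r=1$ is a classical endpoint variant). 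For the borderline $r=\tfrac{N+2}{2}$ I would use that $Q$ has finite measure, hence $Y_r\hookrightarrow Y_{r_1}$ for any $r_1<r$, and let $r_1\uparrow\tfrac{N+2}{2}$ so that the target exponent $(N+2)r_1/(N+2-2r_1)$ exceeds the required $2r$. For $r>\tfrac{N+2}{2}$ one has $r'<\tfrac{N+2}{N}$, so $E\in L^{r'}(\R^{N+1})$ and $u=E*g$ is bounded; moreover $|u(z)-u(z')|\le\norme{g}_{L^r}\,\norme{E(z-\cdot)-E(z'-\cdot)}_{L^{r'}}$, and a standard Morrey/Campanato estimate of the difference of the Riesz-type kernel $E$ yields $\norme{E(z-\cdot)-E(z'-\cdot)}_{L^{r'}}\lesssim\rho(z,z')^{\,2-(N+2)/r}$, i.e. $u\in C^{\beta/2,\beta}(\overline Q)$ with $\beta=2-\tfrac{N+2}{r}$, which embeds into $L^\infty(Q)$. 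Restricting back to $Q$ through the extension of the first step concludes the proof.

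The two delicate points — and the only reason the statement is not one line — are: (i) the extension $Q\to\R^{N+1}$ must respect the \emph{mixed} structure of $W^{2,1}_r$ (two spatial derivatives tied to one time derivative by the parabolic scaling), which is why one extends separately in $x$ and in $t$ and checks that $\partial_t$ commutes with $\mathcal E_x$; and (ii) the Hardy--Littlewood--Sobolev and Morrey estimates have to be carried out in the anisotropic parabolic metric, where the homogeneous dimension is $N+2$ and the time variable ``counts twice'' — and it is exactly this bookkeeping of exponents that produces the three regimes $r<\tfrac{N+2}{2}$, $r=\tfrac{N+2}{2}$, $r>\tfrac{N+2}{2}$ in the statement. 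Since this is a well-known result, in the paper it is simply quoted from \cite[Theorem 1.4.1]{WYW}, and its only role here is to feed the bootstrap of \Cref{btm}.
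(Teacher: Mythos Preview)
Your proposal is correct, and you have already anticipated the situation exactly: the paper gives no proof of this proposition at all and simply cites \cite[Theorem 1.4.1]{WYW}. There is therefore nothing to compare your argument against; your sketch via extension to $\R^{N+1}$, the heat-potential representation $u=E*g$, and anisotropic Hardy--Littlewood--Sobolev/Morrey estimates in the parabolic metric of homogeneous dimension $N+2$ is a standard and sound way to establish the result, and your final sentence identifying its sole role in the paper (feeding the bootstrap of \Cref{btm}) is also accurate.
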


\boxed{j=3}\\
\indent In the following, $C$ denotes various positive constants varying from one line to the other and does not depend of $\norme{\zeta_0}_{L^2(\Omega}$.\\

\indent We define for every $k \in \N$,
\begin{equation}
\psi^{\varepsilon,k}:=e^{\widehat{\alpha}(s+\delta_k)}\varphi^{\varepsilon}.
\label{defpsiepsk}
\end{equation}
For $k \in \N^{*}$, by using \eqref{defpsiepsk} and the adjoint system \eqref{adj} satisfied by $\varphi^{\varepsilon}$, we have
\begin{equation}
\left\{
\begin{array}{l l}
-\partial_t{\psi^{\varepsilon,k}} - D_3\Delta\psi^{\varepsilon,k}  = A(t,x)\psi^{\varepsilon,k} + f_k &\mathrm{in}\ (0,T)\times\Omega,\\
\frac{\partial\psi^{\varepsilon,k}}{\partial n} = 0 &\mathrm{on}\ (0,T)\times\partial\Omega,\\
\psi^{\varepsilon,k}(T,.)=0 &\mathrm{in}\ \Omega,\\
\end{array}
\right.
\label{systpsi}
\end{equation}
with
\[f_k(t,x)= -\partial_t(e^{\widehat{\alpha}(s+\delta_k)})\varphi^{\varepsilon}.\]
By using the fact that $(\delta_k)_{k \in \N}$ is strictly increasing, we easily have that
\begin{equation}
|f_k| \leq C e^{\widehat{\alpha}(s+\delta_{k-1})}|\varphi^{\varepsilon}|=C|\psi^{\varepsilon,k-1}|\ \mathrm{in}\ (0,T)\times\Omega.
\label{insource}
\end{equation}
We show, by induction, that for every $0 \leq k \leq l$ (see \eqref{pinfty}), we have
\begin{equation}
\psi^{\varepsilon,k} \in L^{p_k}(Q)^4\ \mathrm{and} \norme{\psi^{\varepsilon,k}}_{L^{p_k}(Q)^4} \leq C \norme{\zeta_0}_{L^2(\Omega)^4}.
\label{estsolu}
\end{equation}
\indent The case $k=0$ can be deduced from the fact that $\delta_0 > 0$ and the strong observability inequality \eqref{inboot1}.\\
\indent Let $1 \leq k \leq l$. We suppose that 
\begin{equation}
\psi^{\varepsilon,k-1} \in L^{p_{k-1}}(Q)^4\ \mathrm{and} \norme{\psi^{\varepsilon,k-1}}_{L^{p_{k-1}}(Q)^4} \leq C \norme{\zeta_0}_{L^2(\Omega)^4}.
\label{hr}
\end{equation}
Then, from \eqref{systpsi}, \eqref{insource}, \eqref{hr} and from the maximal regularity theorem: \Cref{wplp}, we get
\begin{equation}
\psi^{\varepsilon,k} \in X_{p_{k-1}}^4\ \mathrm{and} \norme{\psi^{\varepsilon,k}}_{X_{p_{k-1}}^4} \leq C \norme{\zeta_0}_{L^2(\Omega)^4}.
\label{regpar}
\end{equation}
Moreover, by the Sobolev embedding \Cref{injsobo}, we have
\begin{equation*}
\psi^{\varepsilon,k} \in L^{p_k}(Q)^4\ \mathrm{and} \norme{\psi^{\varepsilon,k}}_{L^{p_k}(Q)^4} \leq C \norme{\zeta_0}_{L^2(\Omega)^4}.
\end{equation*}
This concludes the induction.\\
\indent From \eqref{defpoidsphialpha} and \eqref{defpoidsmin}, we remark that we have the following inequality
\begin{equation}
\alpha \leq \frac{\widehat{\alpha}}{1+f(\lambda)},
\label{inpoids}
\end{equation}
because
\[ (e^{\lambda \eta_0(x)}-e^{2 \lambda \norme{\eta_0}_{\infty}})(1 + e^{- \lambda \norme{\eta_0}_{\infty}}) = e^{\lambda \eta_0(x)} - e^{\lambda \norme{\eta_0}_{\infty}} + 1-e^{2 \lambda \norme{\eta_0}_{\infty}} \leq 1 -e^{2 \lambda \norme{\eta_0}_{\infty}}.\]
Moreover, from \eqref{condlambda}, we can pick $\delta> 0 $ such that 
\begin{equation}
2s - (1+f(\lambda))(s + \delta) = s(2 - (1+f(\lambda))) - \delta(1+f(\lambda)) > 0.
\label{conddelta}
\end{equation}
Now, by applying consecutively \eqref{pinfty}, \eqref{control-adj}, \eqref{inpoids}, \eqref{conddelta} and \eqref{estsolu}, we have for every $i \in \{1,\dots,3\}$,
\begin{align}
\norme{h_i^{\varepsilon}}_{L^{\infty}(Q)} =\norme{h_i^{\varepsilon}}_{L^{p_l}(Q)} &=\norme{e^{2s\alpha}(s\phi)^7\varphi_i^{\varepsilon}}_{L^{p_l}(Q)}\notag\\
& \leq \norme{e^{\widehat{\alpha}\left(\frac{2s}{1+f(\lambda)}-(s+\delta)\right)}(s\phi)^7}_{L^{\infty}(Q)}\norme{e^{\widehat{\alpha}(s+\delta)}\varphi_i^{\varepsilon}}_{L^{p_l}(Q)}\notag\\
&\leq C \norme{e^{\widehat{\alpha}(s+\delta)}\varphi_i^{\varepsilon}}_{L^{p_l}(Q)}\notag\\
&\leq C \norme{e^{\widehat{\alpha}(s+\delta_l)}\varphi_i^{\varepsilon}}_{L^{p_l}(Q)}\ (\delta_l \leq \delta\ \text{and}\ \widehat{\alpha} < 0)\notag\\
& \leq C \norme{\zeta_0}_{L^2(\Omega)^4}.
\label{estinftyhaux}
\end{align}
Therefore, from \eqref{estinftyhaux}, we get
\begin{equation}
\norme{h_i^{\varepsilon}}_{L^{\infty}(Q)} \leq C \norme{\zeta_0}_{L^2(\Omega)^4}.
\label{estinftyh}
\end{equation}
So, $(h^{3,\varepsilon})_{\varepsilon}$ is bounded in $L^{\infty}(Q)^3$, then up to a subsequence, we can suppose that there exists $h^3 \in L^{\infty}(Q)^3$ such that
\begin{equation}
h^{3,\varepsilon} \underset{\varepsilon \rightarrow 0}{\rightharpoonup}^* h^3\ \text{in}\ L^{\infty}(Q)^3,
\label{convhjweakst}
\end{equation} 
and
\[\norme{h^{3}}_{L^{\infty}(Q)^3}\leq C \norme{\zeta_0}_{L^2(\Omega)^4}.\]
From \eqref{wkVeps}, \eqref{wkV'eps}, \eqref{convhjweakst}, \eqref{cicf}, we have
\begin{equation}
\left\{
\begin{array}{l l}
\partial_t {\zeta} - D_3 \Delta \zeta =A(t,x)\zeta + B_3 h^3 1_{\omega} &\mathrm{in}\ (0,T)\times\Omega,\\
\frac{\partial\zeta}{\partial n} = 0 &\mathrm{on}\ (0,T)\times\partial\Omega,\\
(\zeta(0,.),\zeta(T,.) ) = (\zeta_0,0) &\mathrm{in}\ \Omega.
\end{array}
\right.
\label{systlimitebis}
\end{equation}
This ends the proof of \Cref{contrlinlinfty} for the case $j=3$.\\

\boxed{j=2}\\
For every $k \in \N$, we introduce
\begin{align}
\widetilde{\varphi^{\varepsilon}}&:=(\varphi_1^{\varepsilon},\varphi_2^{\varepsilon},\varphi_3^{\varepsilon},\varphi_4^{\varepsilon}-(\varphi_4^{\varepsilon})_{\Omega})^T,\\
\psi^{\varepsilon,k}&:=e^{\widehat{\alpha}(s+\delta_k)}\widetilde{\varphi^{\varepsilon,k}}.
 \end{align}
For $k \in \N^{*}$, we have
\begin{equation}
\left\{
\begin{array}{l l}
-\partial_t {\psi^{\varepsilon,k}} - D_2\Delta\psi^{\varepsilon,k}  = A(t,x)\psi^{\varepsilon,k} + f_k &\mathrm{in}\ (0,T)\times\Omega,\\
\frac{\partial\psi^{\varepsilon,k}}{\partial n} = 0 &\mathrm{on}\ (0,T)\times\partial\Omega,\\
\psi^{\varepsilon,k}(T,.)=0 &\mathrm{in}\ \Omega,\\
\end{array}
\right.
\label{systpsi2}
\end{equation}
with
\[f_k(t,x)= -(e^{\widehat{\alpha}(s+\delta_k)})_{t}\widetilde{\varphi^{\varepsilon,k}}+ \left(0,0,0,\left(u_2^*e^{\widehat{\alpha}(s+\delta_k)} \varphi_1^{\varepsilon}-u_2^*e^{\widehat{\alpha}(s+\delta_k)} \varphi_2^{\varepsilon}+u_2^*e^{\widehat{\alpha}(s+\delta_k)} \varphi_3^{\varepsilon}\right)_{\Omega}\right)^T,\]
because $A \in \mathcal{E}_2$ (see \eqref{defe2}).
From the fact that $(\delta_k)_{k \in \N}$ is strictly increasing, we easily have
\begin{equation}
|f_k| \leq C e^{\widehat{\alpha}(s+\delta_{k-1})}|\widetilde{\varphi^{\varepsilon}}|=C|\psi^{\varepsilon,k-1}|\ \mathrm{in}\ (0,T)\times\Omega.
\label{insourcebis}
\end{equation}
Then, the strategy of bootstrap is exactly the same. The starting point comes from the strong observability inequality \eqref{inboot2}. \\

\boxed{j=1}\\
We apply the same strategy as for the case $j=2$. For every $k \in \N$, we introduce
\begin{align}
\widetilde{\varphi^{\varepsilon}}&:=(\varphi_1^{\varepsilon},\varphi_2^{\varepsilon},\varphi_3^{\varepsilon}-(\varphi_3^{\varepsilon})_{\Omega},\varphi_4^{\varepsilon}-(\varphi_4^{\varepsilon})_{\Omega})^T,\\
\psi^{\varepsilon,k}&:=e^{\widehat{\alpha}(s+\delta_k)}\widetilde{\varphi^{\varepsilon,k}}.
 \end{align}
The starting point comes from the strong observability inequality \eqref{inboot3}.\\

\textbf{This ends the proof of \Cref{contrlinlinfty}}.
\subsection{Nonlinear problem}\label{np}

In order to prove \Cref{mainresult}, we use \Cref{contrlinlinfty} together with a standard fixed-point argument. 
\subsubsection{Reduction to a fixed point problem}
Let $j \in \{1,2,3\}$. We remark that $G : L^{\infty}(Q)^4 \rightarrow \mathcal{M}_4(L^{\infty}(Q))$ is continuous (see \eqref{defd3}, \eqref{defd2} and \eqref{defd1}). Then, we get the existence of $\nu >0$ small enough such that for every $z=(z_1,z_2,z_3,z_4) \in L^{\infty}(Q)^4$,
\begin{equation} (\norme{z}_{L^{\infty}(Q)^4} \leq \nu) \Rightarrow ((G(z_1,z_2,z_3,z_4))\in \mathcal{E}_j),\label{defrayonbouleptfixe}\end{equation}
where $\mathcal{E}_j$ are defined in \eqref{defe3}, \eqref{defe2} and \eqref{defe1}.\\
\indent Let $\mathcal{Z}$ be the set of $z=(z_1,z_2,z_3,z_4) \in L^{\infty}(Q)^4$ such that $\norme{z}_{L^{\infty}(Q)^4} \leq \nu$. From \Cref{contrlinlinfty}, we have proved that there exists $C_0 > 0$ such that for all $z=(z_1,z_2,z_3,z_4) \in \mathcal{Z}$ and for all $\zeta_0 \in L^{\infty}(Q)^4$, there exists a control $h^j\in L^{\infty}(Q)^j$ satisfying
 \begin{equation}
\norme{h^j}_{L^{\infty}(Q)^j} \leq C_0 \norme{\zeta_0}_{L^2(\Omega)^4},
\label{estinfctrl2}
\end{equation}
such that the solution $\zeta=(\zeta_1,\zeta_2,\zeta_3,\zeta_4)^T \in (Y^4 \cap L^{\infty}(Q)^4)$ to the Cauchy problem
\begin{equation}
\left\{
\begin{array}{l l}
\partial_t {\zeta} - D_j \Delta \zeta = G(z) \zeta + B_j h^j 1_{\omega} &\mathrm{in}\ (0,T)\times\Omega,\\
\frac{\partial\zeta}{\partial n} = 0 &\mathrm{on}\ (0,T)\times\partial\Omega,\\
\zeta(0,.)= \zeta_0 &\mathrm{in}\  \Omega,
\end{array}
\right.
\label{systfix}
\end{equation}
verifies
\begin{equation}
\zeta(T,.)=0.
\label{nul}
\end{equation}

We fix $\zeta_0 \in L^{\infty}(Q)^4$.\\
\indent We define $B : \mathcal{Z} \rightarrow L^{\infty}(Q)^4$ in the following way. For every $z=(z_1,z_2,z_3,z_4) \in \mathcal{Z}$, $B(z)$ is the set of $\zeta=(\zeta_1,\zeta_2,\zeta_3,\zeta_4) \in L^{\infty}(Q)^4$ solution to the Cauchy problem \eqref{systfix}, associated to a control $h^j \in L^{\infty}(Q)^j$ satisfying \eqref{estinfctrl2}, and which verifies \eqref{nul}.\\

\textbf{Our main result (i.e. \Cref{mainresult}) will be proved if we show that $B$ has a fixed point (i.e. $z$ is such that $z \in B(z)$).}\\

We use the \textbf{Kakutani's fixed point theorem}.
\begin{theo}{Kakutani's fixed point theorem.}
\begin{enumerate}[nosep]
\item For every $z \in \mathcal{Z}$, $B(z)$ is a nonempty convex and closed subset of $L^{\infty}(Q)^4$.
\item There exists a convex compact set $K \subset \mathcal{Z}$ such that for every $z \in \mathcal{Z}, B(z) \subset K$.
\item $B$ is upper semicontinuous in $L^{\infty}(Q)^4$, that is to say for all closed subset $\mathcal{A} \subset \mathcal{Z}$, $B^{-1}(\mathcal{A}) = \{z \in \mathcal{Z}; B(z)\cap\mathcal{A} \neq \emptyset \}$ is closed.
\end{enumerate}
Then, $B$ has a fixed point.
\end{theo}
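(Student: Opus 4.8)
The plan is to recognize the statement as the Kakutani--Fan--Glicksberg fixed point theorem for set-valued maps on a compact convex subset of a locally convex space, here the Banach space $L^{\infty}(Q)^4$, and to deduce it from Brouwer's theorem by a finite-dimensional approximation argument. First I would reduce to a self-map of the compact set $K$. By hypothesis 2 one has $B(z) \subset K$ for every $z \in \mathcal{Z}$, and $K \subset \mathcal{Z}$, so any fixed point of the restriction $B|_K : K \to 2^K$ is a fixed point of $B$; moreover $K$ is a nonempty convex compact subset of $L^{\infty}(Q)^4$. Hypothesis 3 is exactly upper semicontinuity of $B$: for any open $U$, the set $\{z : B(z) \subset U\}$ is the complement of $B^{-1}(U^c)$, which is closed by hypothesis 3 since $U^c$ is closed, so $\{z : B(z) \subset U\}$ is open. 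Combined with the closed convex values provided by hypothesis 1 and the compactness of $K$, this guarantees in particular that $B|_K$ has closed graph.

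Next I would build finite-dimensional approximations. Fix $\varepsilon > 0$. By compactness of $K$, cover $K$ by finitely many balls of radius $\varepsilon$ centered at points $x_1, \dots, x_n \in K$, and pick a continuous partition of unity $p_1, \dots, p_n$ subordinate to this cover. For each $i$ choose $y_i \in B(x_i)$, which is possible since $B(x_i)$ is nonempty by hypothesis 1. On the finite-dimensional simplex $\Delta_{\varepsilon} := \mathrm{conv}\{y_1, \dots, y_n\}$, which is contained in $K$ by convexity of $K$, define the continuous single-valued map $f_{\varepsilon}(x) := \sum_{i=1}^{n} p_i(x) y_i$. Since $f_{\varepsilon}(x)$ is a convex combination of the $y_i$, it maps $\Delta_{\varepsilon}$ into itself, so Brouwer's fixed point theorem yields $x_{\varepsilon} \in \Delta_{\varepsilon} \subset K$ with $x_{\varepsilon} = f_{\varepsilon}(x_{\varepsilon})$, an approximate fixed point.

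Finally I would pass to the limit $\varepsilon \to 0$. Using compactness of $K$, extract a subsequence $x_{\varepsilon} \to x^{*} \in K$; the claim is $x^{*} \in B(x^{*})$, which is the fixed point sought. Fix a convex open set $U \supset B(x^{*})$. By upper semicontinuity there is a neighborhood $W$ of $x^{*}$ with $B(x) \subset U$ for all $x \in W$. For $\varepsilon$ small, every center $x_i$ contributing to $f_{\varepsilon}(x_{\varepsilon})$ satisfies $\norme{x_i - x_{\varepsilon}}_{L^{\infty}(Q)^4} < \varepsilon$, hence lies in $W$, so $y_i \in B(x_i) \subset U$; as $U$ is convex, $x_{\varepsilon} \in U$, and letting $\varepsilon \to 0$ gives $x^{*} \in \overline{U}$. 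Since this holds for every convex open $U \supset B(x^{*})$ and $B(x^{*})$ is closed and convex, the Hahn--Banach separation theorem shows $\bigcap_{U} \overline{U} = B(x^{*})$, whence $x^{*} \in B(x^{*})$.

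The main obstacle is this last step: one must reconcile the convex-combination structure of the Brouwer fixed points with upper semicontinuity. The convexity and closedness of the values in hypothesis 1, together with the separation argument in the locally convex space $L^{\infty}(Q)^4$, are precisely what force the limit to land inside $B(x^{*})$ rather than merely near it; without convex closed values the limiting inclusion could fail. The reduction to $K$ and the appeal to Brouwer are routine once the topological framework is set up, so the delicate part is controlling the passage to the limit uniformly over the shrinking covers.
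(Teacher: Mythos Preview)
The paper does not prove Kakutani's fixed point theorem at all: it quotes the theorem as a known tool and then, in the section entitled ``Hypotheses of Kakutani's fixed point theorem'', verifies that the \emph{specific} set-valued map $B$ (built from the controlled linear systems \eqref{systfix}) satisfies each of the three hypotheses. Those verifications --- nonemptiness, convexity and closedness of $B(z)$; existence of a compact convex $K$ via parabolic regularity and Ascoli; upper semicontinuity --- constitute the paper's ``proof'', and they rely entirely on properties of the PDE (\Cref{wpl2linfty}, \Cref{wplp}, \Cref{injsobo}, estimate \eqref{estinfctrl2}). Your proposal instead gives a self-contained argument for the abstract Kakutani--Fan--Glicksberg theorem; that is simply not the task the paper is carrying out.

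That said, as a proof of the abstract theorem your argument is essentially correct and is the standard approach: reduction to a self-map of $K$, finite-dimensional Brouwer approximation through a partition of unity subordinate to an $\varepsilon$-cover, and a limiting step using upper semicontinuity together with Hahn--Banach separation to force $x^{*}\in B(x^{*})$. Two small points worth tightening: the convergence $x_{\varepsilon}\to x^{*}$ is only along a subsequence, so the concluding argument should be phrased along that subsequence; and you should say a word about why $K$ is nonempty (any $B(z)$ with $z\in\mathcal{Z}$ provides an element, and $\mathcal{Z}\neq\emptyset$ in the paper's setup). These are cosmetic. The substantive point is just that you and the paper are doing different things: you prove the black box, the paper checks its inputs.
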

\subsubsection{Hypotheses of Kakutani's fixed point theorem}
\paragraph{Proof of the point $1$} Let $z \in \mathcal{Z}$.\\
\indent $B(z)$ is \underline{nonempty} because we have proved the existence of at least one control satisfying \eqref{estinfctrl2} that drives the solution to $0$.\\
\indent $B(z)$ is \underline{convex} because the mapping $h \in L^{\infty}(Q)^j \mapsto \zeta\in L^{\infty}(Q)^4$, where $\zeta$ is the solution to the Cauchy problem \eqref{systfix}, is affine and \eqref{estinfctrl2} is clearly verified by convex combinations of controls satisfying it.\\
\indent $B(z)$ is \underline{closed}. Indeed, let $(\zeta_k)_{k\in \N}$ be a sequence of $B(z)$ such that 
\begin{equation}
\zeta_k \underset{k \rightarrow + \infty}{\rightarrow} \zeta\ \text{in}\ L^{\infty}(Q)^4.
\label{convdelasuitezeta}
\end{equation}
We introduce $(h_k^j)_{k \in \N}$ the sequence of controls associated to $(\zeta_k)_{k \in \N}$. In particular, for every $k \in \N$,
\begin{equation}
\norme{h_k^j}_{L^{\infty}(Q)^j} \leq C_0 \norme{\zeta_0}_{L^2(\Omega)^4}.
\label{estsuite}
\end{equation}
From \eqref{convdelasuitezeta} and \eqref{estsuite}, for every $k \in \N$,
\begin{equation}
\norme{G(z) \zeta_k + B_j h_k^j}_{L^{\infty}(Q)^4} \leq C.
\label{estimationsourceg}
\end{equation}
Then, from \eqref{estimationsourceg} and \Cref{wpl2linfty} applied to $\zeta_k$ which satisfies \eqref{systfix}, we deduce that for every $k \in \N$, 
\begin{equation}
\norme{\zeta_k}_{(Y \cap L^{\infty}(Q))^4} \leq C.
\label{estimationzeta_k}
\end{equation}
So, from \eqref{estimationzeta_k}, up to a subsequence, we can suppose that there exists $\zeta \in Y^4$ such that
\begin{equation}
\zeta_k \underset{k\rightarrow + \infty}\rightharpoonup \zeta\ \text{in}\  L^2(0,T;H^1(\Omega)^4),
\label{wkV}
\end{equation}
\begin{equation}
\partial_t \zeta_k \underset{k\rightarrow + \infty}\rightharpoonup \partial_t\zeta\ \text{in}\ L^2(0,T;(H^1(\Omega))'^4),
\label{wkV'}
\end{equation}
and, from \Cref{injclassique},
\begin{equation}
\zeta_k(0,.) \underset{k \rightarrow + \infty}{\rightharpoonup} \zeta(0,.)\ \text{in}\  L^2(\Omega)^4,\ \zeta_k(T,.) \underset{k \rightarrow + \infty}{\rightharpoonup} \zeta(T,.)\ \text{in}\ L^2(\Omega)^4.
\label{condinitcondfini}
\end{equation}
Then, as we have $\zeta_k(0,.)=\zeta_0$ and $\zeta_k(T,.)=0$ for every $k\in \N$, we deduce that 
\begin{equation}
\zeta(0,.) = \zeta_0\ \text{and}\ \zeta(T,.) = 0.
\label{conditionsaubordentmps}
\end{equation}
Moreover, from \eqref{estsuite}, up to a subsequence, we can suppose that there exists $h^j \in L^{\infty}(Q)^j$ such that
\begin{equation}
h_k^j \underset{k \rightarrow + \infty}{\rightharpoonup^*} h^j\ \text{in}\ L^{\infty}(Q)^j,
\label{wk*}
\end{equation}
and
\begin{equation}
\norme{h^j}_{L^{\infty}(Q)^j} \leq \underset{k \rightarrow + \infty}{\lim\inf}\norme{h_k^j}_{L^{\infty}(Q)^j} \leq C_0 \norme{\zeta_0}_{L^2(\Omega)^4}.
\label{estlimite}
\end{equation}
Then, from \eqref{wkV}, \eqref{wkV'}, \eqref{condinitcondfini}, \eqref{conditionsaubordentmps} and \eqref{wk*}, we let $k \rightarrow + \infty$ in the following equations (i.e. passing to the limit in the variational formulation \eqref{formvar})
\[
\left\{
\begin{array}{l l}
\partial_t {\zeta_k} - D_j \Delta \zeta_k = G(z)\zeta_k + B_j h_k^j 1_{\omega} &\mathrm{in}\ (0,T)\times\Omega,\\
\frac{\partial\zeta_k}{\partial n} = 0 &\mathrm{on}\ (0,T)\times\partial\Omega,\\
(\zeta_k(0,.),\zeta_k(T,.)) = (\zeta_0,0) &\mathrm{in}\ \Omega.
\end{array}
\right.
\]
We deduce that
\begin{equation}
\left\{
\begin{array}{l l}
\partial_t {\zeta} - D_j \Delta \zeta = G(z)\zeta + B_j h^j 1_{\omega} &\mathrm{in}\ (0,T)\times\Omega,\\
\frac{\partial\zeta}{\partial n} = 0 &\mathrm{on}\ (0,T)\times\partial\Omega,\\
(\zeta(0,.),\zeta(T,.)) = (\zeta_0,0) &\mathrm{in}\ \Omega.
\end{array}
\right.
\label{systlimiten}
\end{equation}
Finally, from \eqref{systlimiten} and \eqref{estlimite}, we have $\zeta \in B(z)$.

\paragraph{Proof of the point $2$} Let $z \in \mathcal{Z}$.\\
\indent By \Cref{wpl2linfty} and \eqref{estinfctrl2}, we deduce that there exists $C_1 > 0$ such that
\[ \forall z \in \mathcal{Z},\ \forall \zeta \in B(z),\ \norme{\zeta}_{L^{\infty}(Q)^4} \leq C_1\norme{\zeta_0}_{L^{\infty}(\Omega)^4}.\]
Now, we suppose that $\zeta_0 \in L^{\infty}(\Omega)^4$ verifies
\begin{equation} 
\norme{\zeta_0}_{L^{\infty}(\Omega)^4} \leq \nu/C_1.
\label{defrayonboule}
\end{equation}
Then, we have
\begin{equation}
\forall z \in \mathcal{Z},\ B(z) \subset \mathcal{Z}.
\label{inclusion}
\end{equation}
Let $F \in L^{\infty}(Q)^4$ be the solution to the Cauchy problem
\begin{equation}
\left\{
\begin{array}{l l}
\partial_t {F} - D_j \Delta F = 0 &\mathrm{in}\ (0,T)\times\Omega,\\
\frac{\partial F}{\partial n} = 0 &\mathrm{on}\ (0,T)\times\partial\Omega,\\
F(0,.)=\zeta_0 &\mathrm{in}\  \Omega.
\end{array}
\right.
\end{equation}
Let $\zeta^{*} = \zeta-F$, where $\zeta \in B(z)$ with $z \in \mathcal{Z}$. We also denote by $h^j$ the control associated to $\zeta$. Then, $\zeta^{*}$ is the solution to
\begin{equation}
\left\{
\begin{array}{l l}
\partial_t {\zeta^{*}} - D_j \Delta \zeta^{*} = G(z)\zeta + B_j h^j 1_{\omega} &\mathrm{in}\ (0,T)\times\Omega,\\
\frac{\partial\zeta^{*}}{\partial n} = 0 &\mathrm{on}\ (0,T)\times\partial\Omega,\\
\zeta^{*}(0,.)=0 &\mathrm{in}\  \Omega.
\end{array}
\right.
\label{zetaetoile}
\end{equation}
From \eqref{defrayonbouleptfixe}, \eqref{inclusion} and \eqref{estinfctrl2}, we can remark that there exists $C > 0$ such that 
\begin{equation}
\norme{G(z)\zeta + B_j h^j 1_{\omega}}_{L^{\infty}(Q)^4} \leq C.
\label{estimationsourcezeta*}
\end{equation}
From \eqref{estimationsourcezeta*}, \Cref{wplp} with $p=N+2$ applied to $\zeta^*$ (see \eqref{zetaetoile}) and the Sobolev embedding theorem $Y_p \hookrightarrow C^{\beta/2,\beta}(\overline{Q})$ with $\beta>0$ (see \Cref{injsobo}), we deduce that $\zeta^{*} \in C^{0}(\overline{Q})^4$ and there exists $C_2 > 0$ such that
\begin{equation}
\forall (t,x) \in \overline{Q},\ \forall (t',x') \in \overline{Q},\ |\zeta^{*}(t,x) - \zeta^{*}(t',x')|\leq C_2(|t-t'|^{\beta/2} + |x-x'|^{\beta}).
\label{holder}
\end{equation}
Let $K^{*}$ be the set of $\zeta^{*}$ such that \eqref{holder} holds. Then, we have $(F+ K^{*}) \cap \mathcal{Z}$ is a compact convex subset of $L^{\infty}(Q)^4$ by Ascoli's theorem and
\[ \forall z \in \mathcal{Z},\ B(z) \subset (F+ K^{*}) \cap \mathcal{Z}.\]
Then, $K:=(F + K^{*}) \cap \mathcal{Z}$ is a convex compact subset of $\mathcal{Z}$ such that the point $2$ holds.

\paragraph{Proof of the point $3$} Let $A$ be a closed subset of $\mathcal{Z}$. Let $(z_k)_{k \in \N}$ be a sequence of elements in $\mathcal{Z}$, $(\zeta_k)_{k \in \N}$ be a sequence of elements in $L^{\infty}(Q)^4$, and $z \in \mathcal{Z}$ be such that
\[ z_k \underset{k \rightarrow + \infty} \rightarrow z \ \mathrm{in}\ L^{\infty}(Q)^4,\]
\[ \forall k \in \N,\ \zeta_k \in \mathcal{A},\]
\[ \forall k \in \N,\ \zeta_k \in B(z_k).\]
Let $(h_k^j)_{k \in \N}$ the sequence of controls associated to $(\zeta_k)_{k \in \N}$. As $\zeta_k \in B(z_k)$, we have 
\[\forall k \in \N,\ \norme{h_k^j}_{L^{\infty}(Q)^j} \leq C_0 \norme{\zeta_0}_{L^2(\Omega)^4}.\]
By the point $2$, we get that there exists a strictly increasing sequence $(k_l)_{l \in \N}$ of integers such that $\zeta_{k_l} \rightarrow \zeta$ in $L^{\infty}(Q)^4$ as $l \rightarrow + \infty$. As $\mathcal{A}$ is closed, we have $\zeta \in \mathcal{A}$, then it suffices to show that $\zeta \in B(z)$. The same arguments as in the point $1$ give the result. This ends the proof of the point $3$.\\

\textbf{This concludes the proof of \Cref{mainresult}.}

\section{Proof of \Cref{mainresult2}: the global controllability to constant stationary states}

\begin{proof}
Let $N \in \{1,2\}$, $j=3$ (we only prove the result for this case, the other cases are similar), $u_0 \in L^{\infty}(\Omega)^4$ satisfying the hypothesis \eqref{hypglobalcontr}, $(u_i^*)_{1 \leq i \leq 4} \in (\R^{+})^4$ satisfying \eqref{stat}.\\
\indent From \cite[Theorem 3]{PSZ} and \cite[Theorem 3]{PSY} (see also \cite{DF}), we deduce that the solution $u \in L^{\infty}((0,\infty)\times\Omega)^4$ of 
\begin{equation}
\forall 1 \leq i \leq 4,\ 
\left\{
\begin{array}{l l}
\partial_t u_i - d_i \Delta u_{i} = (-1)^{i}(u_1 u_3 - u_2 u_4) &\mathrm{in}\ (0,\infty)\times\Omega,\\
\frac{\partial u_i}{\partial n} = 0 &\mathrm{on}\ (0,\infty)\times\partial\Omega,\\
u_i(0,.)=u_{i,0} &\mathrm{in}\  \Omega,
\end{array}
\right.
\label{systscinfty}
\end{equation}
satisfies
\begin{equation}
\underset{T \rightarrow + \infty}\lim \norme{u(T,.)-z}_{L^{\infty}(\Omega)^4} = 0,
\label{convstat}
\end{equation}
where $z \in (\R^{+,*})^4$ is the unique nonnegative solution of 
\begin{align}
&z_1 z_3 = z_2 z_4,\\
&z_1 + z_2 = (u_{1,0})_{\Omega} + (u_{2,0})_{\Omega},\ z_1 + z_4 = (u_{1,0})_{\Omega} + (u_{4,0})_{\Omega},\\
&z_3 + z_2 = (u_{3,0})_{\Omega} + (u_{2,0})_{\Omega},\ z_3 + z_4 = (u_{3,0})_{\Omega} + (u_{4,0})_{\Omega}.
\end{align}
\indent \underline{Case 1: $u_3^*\neq 0$}. Let us define a path $\gamma$ between $z$ and $(u_i^*)_{1 \leq i \leq 4}$,
\begin{equation}
\begin{array}{l|rcl}
\gamma : & [0,1] & \longrightarrow & \{(v_1,v_2,v_3,v_4) \in \R^{+}\times\R^{+}\times\R^{+,*}\times\R^{+}\  ;\  v_1 v_3 = v_2 v_4\} \\
    & \theta & \longmapsto & \left(\frac{((1-\theta)z_2+\theta u_2^*)((1-\theta)z_4+\theta u_4^*)}{(1-\theta)z_3+\theta u_3^*},(1-\theta)z_2+\theta u_2^*,(1-\theta)z_3+\theta u_3^*,(1-\theta)z_4+\theta u_4^*\right). \end{array}
\label{defgamma}
\end{equation}
Let us define $\Phi$ in the following way,
\begin{equation}
\begin{array}{l|rcl}
\Phi : & \Gamma:=\{\gamma(\theta),\theta\in [0,1]\} & \longrightarrow & \R^{+,*} \\
    & (v_i) & \longmapsto & r_v, \end{array}
\label{defPhi}
\end{equation}
where $r_v>0$ is the radius of the ball of $L^{\infty}(\Omega)^4$ centered in $(v_i)_{1 \leq i \leq 4}$ in which we have proved controllability to $(v_i)_{1 \leq i \leq 4}$ (see \Cref{mainresult}). Precisely, $r_v$ is given by \eqref{defrayonboule}. It is straightforward but tedious to see that 
\begin{equation}
r := \inf \Phi > 0,
\label{defmin}
\end{equation}
because there exists $\varepsilon >0$ such that for every $\theta \in [0,1]$, $v_3=(1-\theta)z_3+\theta u_3^* \geq \varepsilon$. For more details, one can follow the dependence of the constant $r_v = \nu/{C_1}$ in function of the parameters $(v_i)_{1 \leq i \leq 4}$ (see \eqref{defrayonboule}, \eqref{defrayonbouleptfixe}, \eqref{estinfctrl2}, \Cref{contrlinlinfty} for the definition of the constant $C_0$, \eqref{defd3}, \eqref{3csign}, \eqref{3cbded} and \Cref{inobs3comp} for the dependence of this constant $C_0$ in term of $(v_i)_{1 \leq i \leq 4}$).\\

\indent By \eqref{convstat}, there exists $T_1 > 0$ such that $\norme{u(T_1,.)-z}_{L^{\infty}(\Omega)^4} < r$, where $u$ is the solution of \eqref{systscinfty}. By \eqref{defPhi} and \eqref{defmin}, there exists $h^{3,1} \in L^{\infty}((T_1,T_1+T)\times\Omega)^3$ such that the solution $u^1$ of \eqref{syst}, with $(0,T) = (T_1,T_1+T)$ and $u^1(T_1,.)=u(T_1,.)$, satisfies $u^1(T_1+T,.) = z$.\\
\indent The mapping $\gamma$ is continuous on the compact set $[0,1]$, so $\gamma$ is uniformly continuous on $[0,1]$ by Heine's theorem. Consequently, there exists $\eta > 0$ such that for every $\theta_1, \theta_2 \in [0,1]$, verifying $|\theta_1 - \theta_2 | \leq \eta$, $\norme{\gamma(\theta_1)-\gamma(\theta_2)}_{\infty} < r$. Moreover, there exists $m \in \N^*$ sufficiently large such that $m \eta \leq 1 < (m+1)\eta$. Therefore, let us define $\theta_k = k \eta$ for $k \in \{0,\dots,m\}$ and $\theta_{m+1} = 1$. Then, we have 
\begin{equation}
\Gamma \subset \bigcup\limits_{i=0}^{m+1} B(\gamma(\theta_i),r).
\end{equation}
We remark that we have $\gamma(\theta_0) = z$, $\gamma(\theta_{m+1}) = u^*$ and $\norme{\gamma(\theta_i) - \gamma(\theta_{i+1})}_{\infty} < r$ for every $i \in \{1,\dots, m\}$ by definition of $\eta$.\\
\indent We have $\norme{z - \gamma(\theta_1)}_{\infty} = \norme{\gamma(\theta_0) - \gamma(\theta_1)}_{\infty} < r$. Then, by  \eqref{defPhi} and \eqref{defmin}, there exists $h^{3,2} \in L^{\infty}((T_1+T,T_1+2T)\times\Omega)^3$ such that the solution $u^2$ of \eqref{syst}, with $(0,T) = (T_1+T,T_1+2T)$ and $u^1(T_1+T,.)=z$, satisfies $u^1(T_1+2T,.) = \gamma(\theta_1)$.\\
\indent By repeating $m$ times this strategy, we get the existence of a control $h^3 \in L^{\infty}((0,T_1+(m+2)T)\times\Omega)$ so that $h^3(t,.) = 0\ \mathrm{for}\ t \in (0,T_1)$, $h^3(t,.) = h^{3,1}(t,.)\ \mathrm{for}\ t \in (T_1,T_1+T)$, ... , $h^3(t,.) = h^{3,m+2}(t,.)\ \mathrm{for}\ t \in (T_1+(m+1)T,T_1+(m+2)T)$, such the solution $u$ of 
\begin{equation}
\forall 1 \leq i \leq 4,\ 
\left\{
\begin{array}{l l}
\partial_t u_i - d_i \Delta u_{i} = (-1)^{i}(u_1 u_3 - u_2 u_4) + h_{i} ^31_{\omega}&\mathrm{in}\ (0,T_1+(m+2)T)\times\Omega,\\
\frac{\partial u_i}{\partial n} = 0 &\mathrm{on}\ (0,T_1+(m+2)T)\times\partial\Omega,\\
u_i(0,.)=u_{i,0} &\mathrm{in}\  \Omega,
\end{array}
\right.
\label{systentempslong}
\end{equation}
satisfies $u(T_1+(m+2)T,.)=u^*$.\\

\underline{Case 2: $u_3^* =0$}. From \eqref{stat}, we have $u_2^*=0$ or $u_4^* = 0$. We can assume that $u_2^*=0$. The other case is similar. By \Cref{mainresult}, we know that there exists $\widehat{r} >0$ such that for every $\widetilde{u}^* \in B(u^*,\widehat{r})_{L^{\infty}(\Omega)^4}$, we can find a control $h^3 \in L^{\infty}((0,T)\times\Omega)^3$ that enables to go from $\widetilde{u}^*$ to $u^*$. Consequently, we choose $\beta$ such that $0< \beta < \widehat{r}/2$ and $\frac{\beta(u_4^* + \widehat{r}/2)}{u_1^* + \widehat{r}/2} < \widehat{r}/2$ and we set $\widetilde{u}^* :=(u_1^*+\widehat{r}/2,\beta,\frac{\beta(u_4^* + \widehat{r}/2)}{u_1^* + \widehat{r}/2},u_4^*+\widehat{r}/2) \in B(u^*,\widehat{r})$. We remark that $\widetilde{u}^*$ satisfies \eqref{stat} and $\widetilde{u_3}^* \neq 0$. Then, from the first case of the proof, we can find a control which drives $z$ to $\widetilde{u}^*$. Next, we can find a control which drives $\widetilde{u}^*$ to $u^*$.
\end{proof}

\section{Comments, perspectives and open problems}

\subsection{$\omega_i$ instead of $\omega$}

An interesting open problem could be the generalization of \Cref{mainresult} to the system
\begin{equation}
\forall 1 \leq i \leq 4,\ 
\left\{
\begin{array}{l l}
\partial_t u_i - d_i \Delta u_{i} = (-1)^{i}(u_1 u_3 - u_2 u_4) + h_{i} 1_{{\omega_i}}1_{i \leq j}&\mathrm{in}\ (0,T)\times\Omega,\\
\frac{\partial u_i}{\partial n} = 0 &\mathrm{on}\ (0,T)\times\partial\Omega,\\
u_i(0,.)=u_{i,0} &\mathrm{in}\  \Omega,
\end{array}
\right.
\label{systomegai}
\end{equation}
where for every $i \in \{1,\dots,j\}$, $\omega_i$ are nonempty open subsets such that $\omega_i \subset \Omega$ and $\bigcap\limits_{i=1}^{j} \omega_i = \emptyset$ (otherwise, the generalization is straightforward).

\subsection{Stationary solutions}

We only have considered nonnegative stationary \textbf{constant} solutions of \eqref{systsc}. It is not restrictive because of the following proposition.
\begin{prop}\label{solutionspositivesconstantes}
Let $(u_i)_{1 \leq i \leq 4} \in C^2(\overline{\Omega})^4$ be a nonnegative solution of
\begin{equation}
\forall 1 \leq i \leq 4,\ 
\left\{
\begin{array}{l l}
-d_i \Delta u_{i} = (-1)^{i}(u_1 u_3 - u_2 u_4) &\mathrm{in}\ \Omega,\\
\frac{\partial u_i}{\partial n} = 0 &\mathrm{on}\ \partial\Omega.
\end{array}
\right.
\label{syststat}
\end{equation}
Then, for every $1 \leq i \leq 4$, $u_i$ is constant.
\end{prop}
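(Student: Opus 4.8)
The plan is to exploit the entropic structure of the reversible reaction. Write $r := u_1 u_3 - u_2 u_4$, so that the $i$-th equation of \eqref{syststat} reads $-d_i \Delta u_i = (-1)^i r$ in $\Omega$ with $\partial u_i/\partial n = 0$ on $\partial\Omega$. First, integrating any one of these four equations over $\Omega$ and using the Neumann condition together with the divergence theorem gives the conservation identity $\int_\Omega r = 0$. Second, I would establish a dichotomy: each $u_i$ is either identically $0$ on $\overline{\Omega}$, or strictly positive on $\overline{\Omega}$. Indeed, rewriting the equations shows that each $u_i$ is a nonnegative supersolution of a linear elliptic operator with nonnegative zeroth-order coefficient, e.g. $-d_1\Delta u_1 + u_3 u_1 = u_2 u_4 \geq 0$ with $u_3 \geq 0$ (and analogously for $u_2,u_3,u_4$). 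If such a $u_i$ vanishes at an interior point, the strong maximum principle forces $u_i \equiv 0$ in $\Omega$ (using connectedness of $\Omega$); if it vanishes only at a boundary point, Hopf's lemma — applicable because $\partial\Omega$ is $C^2$ — would give a strictly negative outer normal derivative there, contradicting $\partial u_i/\partial n = 0$.

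In the case where all four $u_i$ are strictly positive on $\overline{\Omega}$ (hence bounded away from $0$), I would multiply the $i$-th equation by $\ln u_i$, integrate over $\Omega$, and integrate by parts using the Neumann condition, obtaining $d_i \int_\Omega |\nabla u_i|^2/u_i = (-1)^i \int_\Omega r\,\ln u_i$. Summing over $i=1,\dots,4$ yields
\[ \sum_{i=1}^{4} d_i \int_\Omega \frac{|\nabla u_i|^2}{u_i}\,dx = -\int_\Omega r\,\ln\!\frac{u_1 u_3}{u_2 u_4}\,dx. \]
Since $t\mapsto \ln t$ is increasing, $r = u_1 u_3 - u_2 u_4$ and $\ln(u_1 u_3)-\ln(u_2 u_4)$ have the same sign, so the right-hand side is $\leq 0$, while the left-hand side is $\geq 0$ because $d_i>0$ and $u_i>0$. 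Hence both sides vanish, so $\nabla u_i \equiv 0$ for every $i$, and each $u_i$ is constant.

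In the remaining case, some $u_i \equiv 0$. If $u_2 \equiv 0$ or $u_4 \equiv 0$, then $r = u_1 u_3 \geq 0$; if $u_1 \equiv 0$ or $u_3 \equiv 0$, then $r = -u_2 u_4 \leq 0$. In either situation $r$ is continuous with a constant sign, and combined with $\int_\Omega r = 0$ this forces $r \equiv 0$ in $\Omega$. Then each equation reduces to $\Delta u_i = 0$ with homogeneous Neumann data on the connected domain $\Omega$, so every $u_i$ is constant.

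The conceptual heart of the argument is the recognition that the test functions $\ln u_i$ convert the system into the entropy-dissipation identity above, in which the reaction contribution $r\,\ln(u_1u_3/u_2u_4)$ has a definite sign; this is the step I expect to be the "idea," whereas the degenerate cases come cheaply from the conserved quantity $\int_\Omega r = 0$. The only genuinely technical point is the maximum-principle dichotomy, which is what rules out solutions that are positive in part of $\overline{\Omega}$ and zero elsewhere (where $\ln u_i$ would fail to be integrable), and which relies on the $C^2$ regularity of $\partial\Omega$ through the interior ball condition needed for Hopf's lemma.
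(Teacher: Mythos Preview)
Your proof is correct, and the entropic core — multiplying by $\ln u_i$, summing, and using the sign of $(u_1u_3-u_2u_4)\ln(u_1u_3/u_2u_4)$ — is exactly the idea in the paper's proof. The genuine difference lies in how the two arguments handle the possibility that some $u_i$ vanishes somewhere, which would make $\ln u_i$ singular. The paper avoids any case analysis by an $\varepsilon$-regularization: it replaces $u_i$ by $u_i^\varepsilon=u_i+\varepsilon$, derives the same entropy identity for $u_i^\varepsilon$ up to an error term of size $O(\varepsilon|\log\varepsilon|)$, and then sends $\varepsilon\to 0$ to conclude $\int_\Omega|\nabla\sqrt{u_i}|^2=0$. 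Your approach instead invokes the strong maximum principle and Hopf's lemma to obtain a clean dichotomy (each $u_i$ is either identically zero or uniformly positive on $\overline{\Omega}$), then treats the all-positive case directly and dispatches the degenerate case with the elementary sign argument on $r$. Your route is perhaps more conceptual and avoids the mildly delicate limit in $\varepsilon$, at the cost of importing elliptic maximum-principle machinery (and its dependence on the $C^2$ boundary through the interior ball condition); the paper's route is entirely self-contained and treats all cases uniformly, at the cost of the regularization bookkeeping. Both are perfectly valid.
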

\begin{proof}
Let $\varepsilon > 0$. For every $i \in \{1,\dots,4\}$, let us denote $u_i^{\varepsilon} = u_i +\varepsilon$ and $w_i^{\varepsilon} = u_i^{\varepsilon}(\log u_i^{\varepsilon} - 1)+1$. Note that $w_i^{\varepsilon} \geq 0$ for every $i \in \{1,\dots,4\}$. We have 
\begin{equation}
\forall 1 \leq i \leq 4,\ \nabla w_i^{\varepsilon} = \log(u_i^{\varepsilon}) \nabla u_i^{\varepsilon},\ \Delta w_i^{\varepsilon} = \log(u_i^{\varepsilon}) \Delta u_i^{\varepsilon} + \frac{|\nabla u_i^{\varepsilon}|^2}{u_i^{\varepsilon}}.
\label{statsol2}
\end{equation}
Then, from \eqref{syststat} and \eqref{statsol2}, we have that for every $1 \leq i \leq 4$,
\begin{equation}
\left\{
\begin{array}{l l}
-d_i \Delta w_{i}^{\varepsilon} + d_i \frac{|\nabla u_i^{\varepsilon}|^2}{u_i^{\varepsilon}} = (-1)^{i}\log(u_i^{\varepsilon})(u_1^{\varepsilon} u_3^{\varepsilon} - u_2^{\varepsilon} u_4^{\varepsilon}-\varepsilon(u_1 + u_3 - u_2-u_4)) &\mathrm{in}\ \Omega,\\
\frac{\partial w_i^{\varepsilon}}{\partial n} = 0 &\mathrm{on}\ \partial\Omega.
\end{array}
\right.
\label{syststatw}
\end{equation}
We add the four equations of \eqref{syststatw} and we integrate on $\Omega$. We get
\begin{align}
&0 + \int_{\Omega} \sum\limits_{i=1}^{4} d_i \frac{|\nabla u_i^{\varepsilon}|^2}{u_i^{\varepsilon}} \notag\\
&= - \left(\int_{\Omega} (\log(u_1^{\varepsilon}u_3^{\varepsilon}) - \log(u_2^{\varepsilon} u_4^{\varepsilon}))(u_1^{\varepsilon} u_3^{\varepsilon} - u_2^{\varepsilon} u_4^{\varepsilon})\right)\notag\\
&\quad + \varepsilon\left(\int_{\Omega} (\log(u_1^{\varepsilon}u_3^{\varepsilon}) - \log(u_2^{\varepsilon} u_4^{\varepsilon}))(u_1 + u_3 - u_2-u_4)\right)\notag\\
& \leq  \varepsilon\left(\int_{\Omega} (\log(u_1^{\varepsilon}u_3^{\varepsilon}) - \log(u_2^{\varepsilon} u_4^{\varepsilon}))(u_1 + u_3 - u_2-u_4)\right).
\label{addfourequation}
\end{align}
Moreover,
\begin{equation}
\forall 1 \leq i \leq 4,\ \int_{\Omega}d_i \frac{|\nabla u_i^{\varepsilon}|^2}{u_i^{\varepsilon}} = \int_{\Omega}4 d_i|\nabla \sqrt{u_i^{\varepsilon}}|^2.\label{uiepsnonconstant}\end{equation} Consequently, from \eqref{addfourequation}, \eqref{uiepsnonconstant} and by taking $\varepsilon$ sufficiently small, for every $1 \leq i \leq 4$,
\begin{align*}
\int_{\Omega}4 d_i|\nabla \sqrt{u_i^{\varepsilon}}|^2 &\leq \varepsilon\left(\int_{\Omega} (\log(u_1^{\varepsilon}u_3^{\varepsilon}) - \log(u_2^{\varepsilon} u_4^{\varepsilon}))(u_1 + u_3 - u_2-u_4)\right)\\
& \leq \varepsilon\left(\int_{\Omega} |\log(\varepsilon^4)||u_1 + u_3 - u_2-u_4|\right).
\end{align*}
Then, by letting $\varepsilon \rightarrow 0$, we get that
$$\forall 1 \leq i \leq 4,\ \int_{\Omega}4 d_i|\nabla \sqrt{u_i}|^2 = 0.$$
Consequently, for every $1 \leq i \leq 4$, $u_i$ is constant.
\end{proof}

We can also remark that there exist non constant solutions of \eqref{syststat}. For example, in the case of $(d_1,d_2,d_3,d_4)=(1,1,1,1)$, $(u_1^*,u_2^*,u_3^*,u_4^*) = (\varphi_{\lambda},-\varphi_{\lambda},\varphi_{\lambda}-\lambda,-\varphi_{\lambda})$, where $\lambda >0$ and $\varphi_{\lambda}$ are respectively an eigenvalue and a corresponding eigenfunction of the unbounded operator $(-\Delta,H_{Ne}^2(\Omega))$ (see \Cref{defspacesl2}), is a solution of \eqref{syststat}. The result of \Cref{mainresult} is still valid for non constant stationary solutions under a natural condition 
of sign of $(u_1^*,u_2^*,u_3^*,u_4^*)$ on a nonempty open subset $\omega_0 \subset \omega$ (see \eqref{3csign}, \eqref{2c2sign}, \eqref{1c2sign} after linearization). There is only one nontrivial thing to verify. For the proof of the observability inequalities \eqref{inobs2c2} and \eqref{inobs1c2}, the application of $\Delta$ to some equations does not create “bad” terms. A good meaning to be convinced is to look at the inequality \eqref{2cin1} which becomes
\begin{align}
&I(0,\lambda, s,\Delta \varphi_4)
\leq C \left(\int_{0}^{T}\int_{\Omega} e^{2s\alpha}\left\{\sum\limits_{i=1}^{3}|\Delta \varphi_i|^2+|\nabla \varphi_i|^2|+| \varphi_i|^2\right\}+ \int_{0}^{T}\int_{\omega_2} e^{2s\alpha} (s\phi)^{3} |\Delta \varphi_4|^2 \right).
\label{2cin1bis}
\end{align}
It is clear that $\int_{0}^{T}\int_{\Omega} e^{2s\alpha}\left(\sum\limits_{i=1}^{3}|\Delta \varphi_i|^2+|\nabla \varphi_i|^2|+| \varphi_i|^2\right)$ can be absorbed by the left hand side of \eqref{2cin3} by taking $s$ sufficiently large.

\subsection{Nonnegative solutions and nonnegative controls}

In the spirit of the works \cite{LoTrZu} and \cite{PiZu} and in order to make the model more realistic, an interesting open problem could be: for nonnegative initial conditions $(u_{i,0})_{1 \leq i \leq 4}$, and nonnegative stationary state $(u_i^{*})_{1 \leq i \leq 4}$, does there exit a control $(h_i)_{1 \leq i \leq j}$ such that the solution $(u_i)_{1 \leq i \leq 4}$ of \eqref{syst} remains nonnegative and satisfies \eqref{conditionfinale}?

\subsection{Constraints on the initial condition for the controllability of the linearized system}\label{constraintsandopenproblem}

The goal of this section is to show that the linear transformation we do before linearization (see \eqref{keyrk2c2} and \eqref{keyrk1c2bis}), seems to be essential. Indeed, this adequate change of variable leads to control all possible initial conditions (see the necessary conditions on the initial conditions due to invariant quantities of the nonlinear dynamics: \Cref{invquant}). One could think about \cite[Theorem 5.3]{AKBDGB3} which gives sufficient conditions of controllability when the rank condition of \Cref{kalman} is not verified. But it reduces the space of initial condition once more and it becomes “artificial” in our case.\\

The linearized-system of \eqref{syst} around $(u_i^*)_{1 \leq i \leq 4}$ is
\begin{equation}
\left\{
\begin{array}{l l}
\partial_t u - D \Delta u = Au + B_j h^j 1_{\omega} &\mathrm{in}\ (0,T)\times\Omega,\\
\frac{\partial u}{\partial n} = 0&\mathrm{on}\ (0,T)\times\partial\Omega,\\
u(0,.)=u_0& \mathrm{in}\ \Omega,
\end{array}
\right.
\label{systul2}
\end{equation}
where 
\begin{equation}
u = (u_1,u_2,u_3,u_4)^T,\  D = diag(d_1,d_2,d_3,d_4),\  A =  \begin{pmatrix}-u_3^*&u_4^*&-u_1^*&u_2^*\\ 
u_3^*&-u_4^*&u_1^*&-u_2^*\\ 
-u_3^*&u_4^*&-u_1^*&u_2^*\\
u_3^*&-u_4^*&u_1^*&-u_2^*
\end{pmatrix},
\label{uda}
\end{equation}
and $B_j$, $h^j$ are defined in \eqref{notationbh}.
\begin{defi}
The system \eqref{systul2} is $(u_i^*)_{1 \leq i \leq 4}$-controllable if for every $u_0 \in L^2(\Omega)^4$, there exists $h^j \in L^2(Q)^j$ such that the solution $u$ of \eqref{systul2} satisfies $u(T,.)=u^*$.
\end{defi}
\indent We would also use \cite[Theorem 1]{AKBDGB3} in order to deduce the necessary and sufficient condition of controllability to $(u_i^*)_{1 \leq i \leq 4}$ for \eqref{systul2}. First, let us denote by $(\lambda_k)_{k \in \N}$ the increasing sequence of the eigenvalues of the unbounded operator $(-\Delta,H_{Ne}^2(\Omega))$ (see \Cref{defspacesl2} for the definition of $H_{Ne}^2(\Omega)$). In particular, $\lambda_0=0$.
\begin{theo}
The system \eqref{systul2} is $(u_i^*)_{1 \leq i \leq 4}$-controllable if and only if
\begin{equation}
\forall k \in \N,\  rank(-\lambda_k D + A | B_j)=4,
\label{cnscontr}
\end{equation}
where 
\[ ((-\lambda_k D + A)|B_j) : =\big(B_j, (-\lambda_k D + A)B_j, (-\lambda_k D + A)^2 B_j, (-\lambda_k D + A)^{3} B_j\big).\]
\end{theo}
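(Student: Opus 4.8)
The plan is to apply the Fattorini–Hautus–type criterion of \cite[Theorem 1]{AKBDGB3} for controllability to a target state, which is exactly the statement quoted. Controllability to $u^*$ of the linear parabolic system \eqref{systul2} (with diffusion $D$, coupling $A$, control operator $B_j$, and Neumann conditions, so that $-\Delta$ has spectrum $(\lambda_k)_{k\in\N}$ with $\lambda_0=0$) is equivalent to the Kalman rank condition $\mathrm{rank}\big((-\lambda_k D+A)\mid B_j\big)=4$ for every $k\in\N$. The only genuine work, therefore, is to verify that this abstract theorem applies to our setting: the coupling matrix $A$ in \eqref{uda} is constant, the diffusion matrix $D=\mathrm{diag}(d_1,d_2,d_3,d_4)$ has positive spectrum, and $B_j$ is a constant matrix, so \cite[Theorem 1]{AKBDGB3} applies verbatim after checking that "controllability to the stationary state $u^*$" is in this case equivalent to "null-controllability of the system shifted by $u^*$".

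Concretely, I would first observe that $u^*$ is itself a stationary solution of the uncontrolled equation, i.e. $A u^* = 0$ (this holds precisely because of \eqref{stat}: each row of $A$ applied to $(u_1^*,u_2^*,u_3^*,u_4^*)^T$ gives $\pm(u_1^*u_3^*-u_2^*u_4^*)=0$) and of course $D\Delta$ annihilates the constant $u^*$. Hence the affine change of unknown $v:=u-u^*$ transforms \eqref{systul2} into the homogeneous system $\partial_t v-D\Delta v=Av+B_jh^j1_\omega$, $\partial v/\partial n=0$, $v(0,\cdot)=u_0-u^*$, and $u(T,\cdot)=u^*$ is equivalent to $v(T,\cdot)=0$. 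Thus "$(u_i^*)$-controllability of \eqref{systul2}" is exactly null-controllability of a constant-coefficient parabolic system of the form \eqref{systemekalman} (with Neumann instead of Dirichlet boundary conditions), and for such systems \cite[Theorem 1]{AKBDGB3} — the analogue for target states of \Cref{kalman}, whose Neumann version is valid by the remark at the start of \Cref{bibcomments} — yields precisely the stated equivalence with \eqref{cnscontr}.

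Then I would spell out the two implications. For the "only if" direction, if \eqref{cnscontr} fails for some $k$, project the system onto the $k$-th eigenspace of $-\Delta$: the projected ODE $\dot{y}=(-\lambda_k D+A)y+B_j g$ is not controllable in $\R^4$ by the classical Kalman criterion \cite[Theorem 1.16]{C}, and one produces an initial Fourier mode that cannot be steered to $0$, hence an $u_0$ for which no control drives the solution to $u^*$. For the "if" direction, when \eqref{cnscontr} holds for all $k$, one combines the controllability of every finite-dimensional mode with a uniform observability estimate for the adjoint system; this is exactly the content of \cite[Theorem 1]{AKBDGB3}, whose hypotheses (constant $D$ diagonal with positive entries, constant $A$, constant $B_j$) are satisfied here, so no new analysis is needed.

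The main obstacle — and the reason this statement is not a one-line corollary — is not the abstract theorem but the bookkeeping needed to make it meaningful, namely checking that \eqref{cnscontr} actually holds or fails for the specific matrices $A$, $D$, $B_j$ in \eqref{uda}--\eqref{notationbh}. For $k=0$ one has $\lambda_0=0$ and must compute $\mathrm{rank}(A\mid B_j)$; since $A$ has rank $1$ (all four rows are $\pm(-u_3^*,u_4^*,-u_1^*,u_2^*)$) the rank condition at $k=0$ typically fails when $j\le 2$, which is precisely the obstruction coming from the invariant quantities identified in \Cref{invquant} and the reason the adequate changes of variables \eqref{keyrk2c2}, \eqref{keyrk1c2bis} are needed before linearizing. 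So the role of this theorem in the paper is diagnostic: applying it honestly shows where naive linearization breaks down, which is the point made in the surrounding discussion. I would therefore present the proof as: (i) reduce to null-controllability via $v=u-u^*$ using $Au^*=0$; (ii) invoke the Neumann version of \cite[Theorem 1]{AKBDGB3}; (iii) record that the resulting rank condition \eqref{cnscontr} is the announced criterion.
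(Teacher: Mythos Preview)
Your proposal is correct and follows exactly the paper's approach: the paper does not give an independent proof of this theorem but simply records it as a direct application of \cite[Theorem 1]{AKBDGB3}, which is precisely what you invoke. Your additional step of reducing controllability to $u^*$ to null-controllability via $v=u-u^*$ and the verification $Au^*=0$ (using \eqref{stat}) is a helpful clarification that the paper leaves implicit, but it is not a different route.
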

For $j=3$, we can check that for every $k \in \N,\  rank(-\lambda_k D + A | B_3)=4$ if and only if $(u_1^*,u_3^*,u_4^*) \neq (0,0,0)$. It is consistent with \Cref{casfacile}.\\
\indent For $j=2$ and $d_3 \neq d_4$, we can check that $rank(\lambda_0 + A |B_2) < 4$, then \eqref{systul2} is not $(u_i^*)_{1 \leq i \leq 4}$-controllable. It is consistent with the hypothesis we have to make for the initial condition i.e. \eqref{ci1}. But, we can deduce from \cite[Theorem 5.3]{AKBDGB3} that \eqref{systul2} is $(u_i^*)_{1 \leq i \leq 4}$-controllable for initial conditions verifying
\begin{equation}
\forall i \in \{1,\dots,4\},\ \frac{1}{|\Omega|} \int_{\Omega} u_{i,0}(x) =u_i^*.
\label{citheo}
\end{equation}
The condition \eqref{citheo} is a more restrictive hypothesis than \eqref{ci1}. It is only a sufficient condition. Actually, we have found a necessary and sufficient condition on the initial data for $(u_i^*)_{1 \leq i \leq 4}$-controllability.
\begin{prop}
Let $j=2$, $d_3 \neq d_4$.\\
For every $u_0 \in L^2(\Omega)^4$ such that $\frac{1}{|\Omega|} \int_{\Omega} (u_{3,0}+u_{4,0}) = u_3^* + u_4^*$, there exists $h^2 \in L^2(Q)^2$ such that the solution $u$ of \eqref{systul2} satisfies $u(T,.)=u^*$.\\
If $u_0 \in L^2(\Omega)^4$ does not satisfy $\frac{1}{|\Omega|} \int_{\Omega} (u_{3,0}+u_{4,0}) = u_3^* + u_4^*$, for every $h^2 \in L^2(Q)^2$, the solution $u$ of \eqref{systul2} does not satisfy $u(T,.)=u^*$.\\
\end{prop}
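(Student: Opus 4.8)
The plan is to reduce the $(u_i^*)$-controllability of \eqref{systul2} to the null-controllability (in the usual sense) of a system of four equations by the same change of variables used in \Cref{systul2controls}, and then to invoke \Cref{contrlinlinfty} for the affirmative part while exploiting the invariant quantity of \Cref{invquant2} for the negative part. Concretely, set $v=(u_1,u_2,u_3,u_3+u_4)$ and $\zeta=(u_1-u_1^*,u_2-u_2^*,u_3-u_3^*,(u_3+u_4)-(u_3^*+u_4^*))$, exactly as in \eqref{defzeta2}. Then, as observed in \eqref{keyrk2c2}, $u(T,.)=u^*$ holds if and only if $\zeta(T,.)=0$, and $\zeta$ solves a constant-coefficient system of the form $\partial_t\zeta-D_2\Delta\zeta=\widetilde A\zeta+B_2h^2 1_\omega$ with $\widetilde A$ the constant matrix obtained from $G(0,0,0,0)$ in \eqref{defd2}. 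Since $(u_3^*,u_4^*)\neq(0,0)$ is not assumed here, but the constant matrix $\widetilde A$ built from \eqref{uda} still satisfies $\widetilde A_{31}=-u_3^*$, $\widetilde A_{32}=u_4^*$, one of $\widetilde A_{31},\widetilde A_{32}$ is nonzero whenever $(u_3^*,u_4^*)\ne(0,0)$; the sign assumptions \eqref{2c2sign}--\eqref{2c2bded} and the algebraic relations \eqref{2c2cst1}--\eqref{2c2cst2} are readily verified for this constant matrix (they hold trivially on any subinterval since the coefficients are constant), so $\widetilde A\in\mathcal E_2$. When $(u_3^*,u_4^*)=(0,0)$, the affirmative statement still follows directly: the relation $u_1u_3=u_2u_4$ and \eqref{stat} force nothing extra, and the linearized fourth and third equations decouple so that $(u_1,u_2)$ are controlled by two heat equations and $(u_3,u_4)$ are driven to $0$ by backward uniqueness of the coupled parabolic pair, exactly as in the converse part of \Cref{prop2ctargetv}; but it is cleaner to note that the hypothesis $\frac1{|\Omega|}\int_\Omega(u_{3,0}+u_{4,0})=u_3^*+u_4^*$ places $\zeta_0$ in the space $H_2$ of \eqref{defh2}, so \Cref{contrlinlinfty} with $j=2$ applies uniformly.

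The first step, then, is: given $u_0\in L^2(\Omega)^4$ with $\frac1{|\Omega|}\int_\Omega(u_{3,0}+u_{4,0})=u_3^*+u_4^*$, form $\zeta_0$ as above and check $\int_\Omega\zeta_{0,4}=\int_\Omega\big((u_{3,0}+u_{4,0})-(u_3^*+u_4^*)\big)=0$, i.e. $\zeta_0\in H_2$. The second step is to apply \Cref{contrlinlinfty} (case $j=2$) with the constant matrix $\widetilde A\in\mathcal E_2$: this produces $h^2\in L^\infty(Q)^2\subset L^2(Q)^2$ steering $\zeta$ to $0$ in time $T$. The third step is to translate back via $u_i=\zeta_i+u_i^*$ for $i=1,2$, $u_3=\zeta_3+u_3^*$, $u_4=(\zeta_4+(u_3^*+u_4^*))-u_3$, and to check that $u$ solves \eqref{systul2} with the same control $h^2$ and reaches $u^*$ at time $T$; this is the content of the ``if and only if'' \eqref{keyrk2c2}, now read in the \emph{linear} setting, which is immediate because the linear change of variables commutes with the linear dynamics. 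This gives the affirmative half.

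For the negative half, the key is the invariant quantity: if $((u_i),h^2)$ is any trajectory of the \emph{linearized} system \eqref{systul2}, then summing the third and fourth equations (with $d_3\ne d_4$ now irrelevant — we only use that no control enters equations $3,4$ and that the zeroth-order coupling in those equations is $(-1)^i(-u_3^*u_1+u_4^*u_2+u_1^*u_3-u_2^*u_4)$, whose $i=3$ and $i=4$ versions are opposite) yields $\frac{d}{dt}\int_\Omega(u_3+u_4)\,dx=\int_\Omega\big(d_3\Delta u_3+d_4\Delta u_4\big)\,dx=0$ by the Neumann conditions. Hence $\frac1{|\Omega|}\int_\Omega(u_3(t,.)+u_4(t,.))\,dx$ is constant in $t$; if $u(T,.)=u^*$ then this constant equals $u_3^*+u_4^*$, forcing $\frac1{|\Omega|}\int_\Omega(u_{3,0}+u_{4,0})=u_3^*+u_4^*$. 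This is precisely the linear analogue of \eqref{ci1}, and it shows that an initial datum violating the mean condition cannot be steered to $u^*$ by any $h^2\in L^2(Q)^2$.

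The only mildly delicate point — and the ``main obstacle,'' though it is minor — is bookkeeping: making sure the constant matrix $\widetilde A$ arising from \eqref{uda} after the change of variables genuinely lands in $\mathcal E_2$, i.e. that the entries $\widetilde A_{14},\widetilde A_{24},\widetilde A_{34}$ are constants (they equal $u_2^*,-u_2^*,u_2^*$, so yes) and $\widetilde A_{4\ell}=0$ for all $\ell$ (yes, by construction of the fourth equation $\partial_t v_4-d_4\Delta v_4=(d_3-d_4)\Delta v_3$), and that the sign condition \eqref{2c2sign} can be arranged on some $(t_1,t_2)\times\omega_0$ — which is automatic for a nonzero constant. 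One must also handle separately the edge case $(u_3^*,u_4^*)=(0,0)$, where $\widetilde A_{31}=\widetilde A_{32}=0$ so $\widetilde A\notin\mathcal E_2$ via the sign condition; there one argues directly as in \Cref{prop2ctargetv}, or observes that in that sub-case $u_3^*=u_4^*=0$ implies via \eqref{stat} nothing obstructing, and the system reduces to two decoupled controllable heat equations for $(u_1,u_2)$ plus a homogeneous linear parabolic pair for $(u_3,u_4)$ with data already forced to zero in mean — actually one needs pointwise $\zeta_0\in H_2$ only, and the cascade still closes. Once these cases are dispatched, assembling the statement is routine.
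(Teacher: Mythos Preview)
Your main argument — the change of variables $(v_1,v_2,v_3,v_4)=(u_1,u_2,u_3,u_3+u_4)$ followed by \Cref{contrlinlinfty} (equivalently, the observability inequality \eqref{inobs2c2}) for the sufficiency, and the conservation of $\int_\Omega(u_3+u_4)$ for the necessity — is correct and is exactly the paper's approach.

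However, your handling of the degenerate case $(u_3^*,u_4^*)=(0,0)$ is wrong, and you should simply drop it. In that case the linearized third and fourth equations of \eqref{systul2} read
\[
\partial_t u_3-d_3\Delta u_3=-u_1^*u_3+u_2^*u_4,\qquad
\partial_t u_4-d_4\Delta u_4=u_1^*u_3-u_2^*u_4,
\]
a closed subsystem independent of $u_1,u_2$ and of the controls. Backward uniqueness (\Cref{unires}) then shows that $u_3(T,\cdot)=u_4(T,\cdot)=0$ forces $u_{3,0}=u_{4,0}=0$ pointwise; the mean condition $\int_\Omega(u_{3,0}+u_{4,0})=0$ alone is \emph{not} sufficient, so your claim that ``the cascade still closes'' is false. (Take for instance $u_{3,0}=-u_{4,0}$ equal to any nonzero eigenfunction of the Neumann Laplacian.) Thus the affirmative part of the proposition genuinely fails when $(u_3^*,u_4^*)=(0,0)$. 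The paper's statement is to be read under the standing assumption $(u_3^*,u_4^*)\neq(0,0)$ made explicit in \Cref{systul2controls} via \Cref{prop2ctargetv}; this is precisely what ensures $\widetilde A\in\mathcal E_2$ and makes the proof of \eqref{inobs2c2} applicable. Your contradictory remark that ``\Cref{contrlinlinfty} with $j=2$ applies uniformly'' is therefore also incorrect, as you yourself later notice.
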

\begin{proof}
The necessary condition of controllability is a consequence of 
\[\text{a.e.}\ t \in [0,T],\  \frac{d}{dt} \left(\int_{\Omega} (u_3(t,x) + u_4(t,x)) dx\right) = 0.\]
The sufficient condition of controllability is a consequence of the adequate change of variable $(v_1,v_2,v_3,v_4):=(u_1,u_2,u_3,u_3+u_4)$ and the proof of the observability inequality \eqref{inobs2c2}.
\end{proof}
\begin{rmk}
We chose to state our previous result in the particular case $j=2$ and $d_3 \neq d_4$ for simplicity but one can generalize this proposition to other cases.
\end{rmk}
An interesting open problem could consist in trying to find precisely the initial conditions that can be controlled for systems of the form \eqref{systul2} when \eqref{cnscontr} is not satisfied. This will lead to a better understanding of the controllability properties of a large class of nonlinear reaction-diffusion systems.

\subsection{More general nonlinear reaction-diffusion systems}

Let $k \in \N^{*}$, $(\alpha_1, \dots, \alpha_k) \in (\N)^n$, $(\beta_1, \dots, \beta_k) \in (\N)^k$ such that for every $1 \leq i \leq k$, $\alpha_i \neq \beta_i$, $(d_1, \dots, d_k) \in (0,+\infty)^k$ and $J \subset \{1, \dots, k\}$. We consider the following nonlinear controlled reaction-diffusion system:
\begin{equation}
\label{systNLCGen}
\forall 1 \leq i \leq k,\ \left\{
\begin{array}{l l}
\partial_t u_i - d_i \Delta u_{i} =\\ \qquad(\beta_i-\alpha_i)\left(\prod\limits_{k=1}^n u_k^{\alpha_k} - \prod\limits_{k=1}^n u_k^{\beta_k}\right) + h_i 1_{\omega} 1_{i \in J} &\mathrm{in}\ (0,T)\times\Omega,\\
\frac{\partial u_i}{\partial n} = 0 &\mathrm{on}\ (0,T)\times\partial\Omega,\\
u_i(0,.)=u_{i,0} &\mathrm{in}\  \Omega.
\end{array}
\right.
\end{equation}
The article \cite{LB3} by the author treats the local-controllability of \eqref{systNLCGen} around nonnegative (constant) stationary states by using the same kind of change of variables as in \eqref{keyrk2c2} and \eqref{keyrk1c2bis}. Nevertheless, the proof of observability inequalities for the linearized system cannot follow the same strategy as performed in \Cref{preuve1controle}. Indeed, if we apply Carleman estimates to each equation of the adjoint system, it leads to some global terms in the right hand side of the inequality that cannot be absorbed by the left hand side. Thus, as in \cite[Hypothesis 3]{FCGBT}, a similar technical obstruction appears. Inspired by the recent work of Pierre Lissy and Enrique Zuazua (see \cite[Section 3]{LiZu}), who obtained sharp results for the null-controllability of non-diagonalizable systems of parabolic equations, the author proves the null-controllability of the linearized system. Then, the source term method introduced by Yuning Liu, Takéo Takahashi, Marius Tucsnak (see \cite{LTT}) enables to go back to the nonlinear reaction-diffusion system.

\appendix
\gdef\thesection{\Alph{section}} % corrected redefinition of "\thesection"
\makeatletter
\renewcommand\@seccntformat[1]{Appendix \csname the#1\endcsname.\hspace{0.5em}}
\makeatother

\section{Appendix}\label{appendix}

\subsection{$L^{\infty}$-estimate for parabolic systems}\label{preuveestlinfty-section}

We give the proof of \Cref{wpl2linfty}.

\begin{proof}
By using the fact that $D$ is diagonalizable and $ Sp(D) \subset (0,+\infty)$, we only have to prove the result when $D = diag(d_1, \dots, d_k)$ with $d_i \in (0,+\infty)$.\\
\indent The first point of the proof i.e. the existence and the uniqueness of the weak solution $u \in Y^k$ is based on Galerkin approximations and energy estimates. One can easily adapt the arguments given in \cite[Section 7.1.2]{E} to the Neumann cases.\\
\indent The second point of the proof i.e. the $L^{\infty}$ estimate is based on Stampacchia's method. We introduce \begin{equation}l(t) = (l_1(t),\dots, l_k(t))^{T} := l_0 \exp(tM)(1,\dots,1)^T =: L(t) (1,\dots,1)^T \in \R^k,\label{definitionlstamp}\end{equation} for every $t \in [0,T]$ and $l_0, M \in (0,+\infty)$ which will be chosen later. By \eqref{formvar}, we have
\begin{align}
&\forall w \in L^2(0,T;H^1(\Omega)^k),\notag\\
& \int_0^T (\partial_t u ,w)_{(H^1(\Omega)^k)',H^1(\Omega)^k)} - \int_Q (sign(u)l') . w +  \int_Q D \nabla u .  \nabla w = \int_Q (Au + g) . w- \int_Q (sign(u)l') . w,
\label{formvark}
\end{align}
where $sign(u) l ' = (sign(u_1)l_1',\dots,sign(u_k)l_k')^T$.
We fix $t \in [0,T]$ and we apply \eqref{formvark} with $w$ defined by
\begin{align*}
\forall (\tau,x)\in [0,T]\times\Omega, w(\tau,x) &:= sign(u)(|u|(t,x)-l(t))^{+} 1_{[0,t]}(\tau) \\
&:= \big(sign(u_1)(|u_1|(t,x)-l_1(t))^{+}, \dots, (sign(u_k)(|u_k|(t,x)-l(t))^{+}\big)^T 1_{[0,t]}(\tau).
 \end{align*}
We get
\begin{align}
&\int_0^t \frac{1}{2} \frac{d}{d\tau} \int_{\Omega} \sum\limits_{i=0}^k \Big((|u_i|(\tau,x) - l_i(\tau))^{+}\Big)^2 dx d\tau + \int_0^t \int_{\Omega} \sum\limits_{i=0}^k d_i \nabla u_i . \nabla u_i 1_{|u_i| \geq l_i} \notag\\
&= \int_0^t \int_{\Omega} \sum\limits_{i=0}^k \left(\sum\limits_{j=0}^k a_{ij} u_j + g_i -sign(u_i)l_i'\right)sign(u_i)(|u_i|-l_i)^{+}.
\label{informvar}
\end{align}
We remark that $$-sign(u_i)l_i'sign(u_i)(|u_i|-l_i)^{+} = -l_i'(|u_i|-l_i)^{+}.$$
Moreover, we have
\begin{align}
\left(\sum\limits_{j=0}^k a_{ij} u_j + g_i -sign(u_i)l_i'\right)sign(u_i)(|u_i|-l_i)^{+} &\leq \left(\sum\limits_{j=0}^k |a_{ij}| |u_j| + |g_i| -l_i'\right)(|u_i|-l_i)^{+}\notag\\
& \leq \left(\sum\limits_{j=0}^k |a_{ij}| (|u_j|-l_j)^{+} +  A_i\right)(|u_i|-l_i)^{+},
\label{inrhsformvar}
\end{align}
where $A_i : =  \sum\limits_{j=0}^k l_j |a_{ij}| + g_i -l_i' = L \sum\limits_{j=0}^k|a_{ij}| + g_i - M L $ (see \eqref{definitionlstamp}). We choose $l_0, M \in (0,+\infty)$ such that
\begin{equation}M \geq \max_i \left\{ \norme{\sum\limits_{j=0}^k |a_{ij}|}_{\infty}+ 1\right\},\ l_0 = \max_i \left\{\norme{u_{0i}}_{\infty} + \norme{g_i}_{\infty}\right\}.\label{choixl0M}\end{equation}
Then, we find 
\begin{equation}
A_{i} \leq L (M-1)+ l_0 -ML \leq L (M-1)+ L-ML  \leq 0.
\label{aineg}
\end{equation}
By using $l_0 \geq \max_i \norme{u_{0i}}_{\infty}$, $\int_0^t \int_{\Omega} \sum\limits_{i=0}^k d_i \nabla u_i . \nabla u_i 1_{|u_i| \geq l_i} \geq 0$, \eqref{inrhsformvar}, \eqref{aineg}, together with \eqref{informvar}, we have that for every $t \in [0,T]$,
\begin{equation}
\int_{\Omega} \sum\limits_{i=0}^k \Big((|u_i|(t,x) - l_i(t))^{+}\Big)^2 dx \leq  2\int_0^t \int_{\Omega} \sum\limits_{i=0}^k\sum\limits_{j=0}^k |a_{ij}| (|u_j|-l_j)^{+}(|u_i|-l_i)^{+} dx d\tau.
\label{presquegronwall}
\end{equation}
Cauchy-Schwartz inequality applied to the right hand side term of \eqref{presquegronwall} gives
\begin{equation}
\forall t \in [0,T],\  \int_{\Omega} \sum\limits_{i=0}^k \Big((|u_i|(t,x) - l_i(t))^{+}\Big)^2 dx \leq C \int_0^t \int_{\Omega} \sum\limits_{i=0}^k \Big((|u_i|(\tau,x) - l_i(\tau))^{+}\Big)^2 dx d\tau,
\label{indifferentielle}
\end{equation}
where $C := 2 k \max_{i,j} \norme{a_{ij}}_{\infty}$. Gronwall's lemma applied to \eqref{indifferentielle} gives
\begin{equation}
\forall i \in \{1,\dots,k\},\ \forall t \in [0,T],\ |u_i(t)| \leq l_i(t) = l_0 \exp(tM).
\label{maxstampachhia}
\end{equation}
Therefore, from \eqref{maxstampachhia}, we deduce \eqref{estl2faiblelinfty} with our choice of $l_0$ (see \eqref{choixl0M}).
\end{proof}

\subsection{Dissipation of the energy for crossed-diffusion parabolic systems}

The goal of this section is to give a sketch of the proof of the dissipation of the energy (in time) for some parabolic systems. 
\begin{lem}\label{lemdissipenergy}
Let $j \in \{1,2,3\}$, $D_j$ defined by \eqref{defd3}, \eqref{defd2}, \eqref{defd1}, $A \in \mathcal{E}_j$ (see \eqref{defe3}, \eqref{defe2} and \eqref{defe1}), $\varphi_T \in L^2(\Omega)^4$ and $\varphi$ be the solution of the following Cauchy problem
\[
\left\{
\begin{array}{l l}
- {\varphi}_t - D_j^T \Delta \varphi = A^{T} \varphi &\mathrm{in}\ (0,T)\times\Omega,\\
\frac{\partial\varphi}{\partial n} = 0 &\mathrm{on}\ (0,T)\times\partial\Omega,\\
\varphi(T,.)=\varphi_T &\mathrm{in}\  \Omega.
\end{array}
\right.
\]
Then, there exists $C> 0$ such that for every $(t_1,t_2)\in [0,T]^2, t_1<t_2,$
\begin{align}
&\sum\limits_{i =1}^{j+1} \norme{\varphi_i(t_1,.)}_{L^2(\Omega)}^2 +\sum\limits_{i =j+2}^{4} \norme{\varphi_i(t_1,.)-(\varphi_i)_{\Omega}(t_1)}_{L^2(\Omega)}^2\notag\\
& \leq C\left( \sum\limits_{i =1}^{j+1} \norme{\varphi_i(t_2,.)}_{L^2(\Omega)}^2 +\sum\limits_{i =j+2}^{4} \norme{\varphi_i(t_2,.)-(\varphi_i)_{\Omega}(t_2)}_{L^2(\Omega)}^2\right).
\label{dissiplem}
\end{align}
\end{lem}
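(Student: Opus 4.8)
The plan is to prove the "dissipation of energy" estimate \eqref{dissiplem} by a standard energy method, treating separately the ``good'' components (which satisfy a genuine dissipation inequality) and the ``averaged'' components $\varphi_i - (\varphi_i)_\Omega$ (which must be handled via Poincaré–Wirtinger once the mean value is shown to be essentially constant in time). Throughout, one works backward in time: the point is that for the backward system, the quantity $t \mapsto \|\varphi(t,\cdot)\|^2$ (suitably modified) is \emph{nonincreasing} as $t$ decreases, up to exponential factors coming from the zero-order coupling $A^T$ and from the nondiagonal structure of $D_j$.

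First I would reduce to the case $(t_1,t_2)=(0,T)$, or rather keep $t_1<t_2$ general but observe that the argument is uniform. I would test the equation $-\partial_t\varphi - D_j^T\Delta\varphi = A^T\varphi$ with $\varphi$ itself (componentwise against $\varphi_i$), using that $t\mapsto\|\varphi(t)\|_{L^2(\Omega)^4}^2$ is absolutely continuous with $\frac{d}{dt}\|\varphi(t)\|^2 = 2(\partial_t\varphi,\varphi)$, exactly as in \Cref{uniqueness}. This gives, for a.e. $t$,
\begin{equation}
-\frac12\frac{d}{dt}\|\varphi(t)\|_{L^2(\Omega)^4}^2 + \int_\Omega D_j^T\nabla\varphi\cdot\nabla\varphi = \int_\Omega (A^T\varphi)\cdot\varphi.
\end{equation}
The subtlety is that $D_j$ is not symmetric (it has the off-diagonal entries $(d_3-d_4)$, resp. $(d_2-d_3)$ and $1$, coming from the cross-diffusion), so $\int_\Omega D_j^T\nabla\varphi\cdot\nabla\varphi$ is not sign-definite. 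To handle this I would use that $D_j$ is similar to $\mathrm{diag}(d_1,d_2,d_3,d_4)$ via a fixed (lower-triangular) change of basis $P$, i.e. work with $\psi := P^{-1}\varphi$, or alternatively absorb the cross terms: $|\int_\Omega(d_3-d_4)\Delta\varphi_3\,\varphi_4| = |\int_\Omega (d_3-d_4)\nabla\varphi_3\cdot\nabla\varphi_4| \le \varepsilon\|\nabla\varphi_3\|^2 + C_\varepsilon\|\nabla\varphi_4\|^2$, which after choosing $\varepsilon$ small is dominated by the diagonal part $\sum d_i\|\nabla\varphi_i\|^2$ provided we keep a fixed fraction of it. Combined with $|\int_\Omega(A^T\varphi)\cdot\varphi|\le \|A\|_{L^\infty}\|\varphi\|^2$, one gets a differential inequality of the form $-\frac{d}{dt}\|\varphi(t)\|^2 \le C\|\varphi(t)\|^2$ (throwing away the nonnegative remaining gradient terms), hence by Grönwall, integrating from $t_1$ to $t_2$, $\|\varphi(t_1)\|^2 \le e^{C(t_2-t_1)}\|\varphi(t_2)\|^2 \le e^{CT}\|\varphi(t_2)\|^2$.

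Next I would deal with the averaged components. Integrating the $i$-th equation over $\Omega$ and using the Neumann condition, for $i\in\{j+2,\dots,4\}$ the Laplacian terms integrate to zero (for $D_j$ diagonal on those rows, which is the case: rows $j+2,\dots,4$ of $D_j$ carry only the off-diagonal cross-diffusion against \emph{lower-indexed} components, whose Laplacians also integrate to zero); and by the structure of $A\in\mathcal{E}_j$ — precisely the algebraic relations \eqref{2c2cst1}–\eqref{2c2cst2} for $j=2$ and \eqref{1c2cst1}–\eqref{1c2cst2} for $j=1$ — the right-hand sides, once integrated, combine so that $\frac{d}{dt}(\varphi_i)_\Omega(t)$ is controlled by (a linear combination of) the $(\varphi_k)_\Omega$, giving a closed linear ODE system for the means. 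Actually the cleanest route is to note that $\chi_i := \varphi_i - (\varphi_i)_\Omega$ satisfies the same PDE as $\varphi_i$ up to a spatially-constant forcing, so $\|\chi_i(t)\|^2$ enters the same Grönwall scheme; one applies the energy identity to the modified vector $\widetilde\varphi = (\varphi_1,\dots,\varphi_{j+1},\varphi_{j+2}-(\varphi_{j+2})_\Omega,\dots,\varphi_4-(\varphi_4)_\Omega)$ and checks that the cross-diffusion and coupling terms still close up because the mean-value corrections are killed by $\nabla$ and, in the zero-order terms, produce only contributions already bounded by $\|\widetilde\varphi\|^2$ thanks to the sign/structure conditions. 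I expect \textbf{the main obstacle} to be precisely this bookkeeping: verifying that after subtracting the means and after the (non-orthogonal) similarity transform diagonalizing $D_j$, no uncontrolled global term survives — this is where the algebraic constraints \eqref{2c2cst2}, \eqref{1c2cst2} that were built into $\mathcal{E}_j$ are essential, and it has to be checked case by case ($j=3$ being trivial since there are no averaged components, $j=2$ with one averaged component, $j=1$ with two). Once the differential inequality $-\frac{d}{dt}\|\widetilde\varphi(t)\|^2 \le C\|\widetilde\varphi(t)\|^2$ is established with $C$ depending only on $\|A\|_{L^\infty}\le M$ (hence uniform over $\mathcal{E}_j$) and on $(d_i)$, Grönwall from $t_1$ to $t_2$ gives \eqref{dissiplem} with $C = e^{CT}$, completing the proof.
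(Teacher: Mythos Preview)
Your approach is essentially the paper's: diagonalize $D_j$ by a fixed similarity transform, pass to the mean-subtracted vector $\widetilde\varphi$, test the resulting system against $\widetilde\varphi$ to obtain a differential inequality $-\frac{d}{dt}\|\widetilde\varphi(t)\|^2 \le C\|\widetilde\varphi(t)\|^2$, and conclude by Gronwall. One caveat: your ``alternative'' route of absorbing the cross-diffusion term directly by Young's inequality does \emph{not} work for arbitrary $d_i$ --- the quadratic form $d_3\|\nabla\varphi_3\|^2 + d_4\|\nabla\varphi_4\|^2 + (d_3-d_4)\int_\Omega\nabla\varphi_3\cdot\nabla\varphi_4$ is indefinite whenever $(d_3-d_4)^2 > 4d_3d_4$ --- so you must rely on the similarity transform (your option (a)), exactly as the paper does.
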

\begin{proof}
By using the fact that $D_j$ is diagonalizable, we only have to prove the result when $D$ is diagonal. First, we introduce $\psi = \Big(\varphi_1, \dots, \varphi_{j+1}, \varphi_{j+2} - (\varphi_{j+2})_{\Omega}(.), \dots, \varphi_{4} - (\varphi_{4})_{\Omega}(.)\Big)$. We look for the parabolic system satisfied by $\psi$. Then, we multiply the variational formulation (see \eqref{formvar}) by $w(t,x) = \psi(t,x) 1_{[t_1,t_2]}(t)$. By Young inequalities, we find a differential inequality as follows
\[ \text{a.e.}\ t \in [t_1,t_2],\ \frac{d}{dt} \norme{\psi(t)}_{L^2(\Omega)}^2 \leq C  \norme{\psi(t)}_{L^2(\Omega)}^2.\]
Then, we use Gronwall's lemma to deduce \eqref{dissiplem}.
\end{proof}

\subsection{Technical estimates for the observability inequality in the case of $1$ control}\label{estimationstechniques1c}
The goal of this section is to prove \Cref{lemtechnique1} and \Cref{lemtechnique2}. We use the same notations as in \Cref{preuve1controle}. We recall that $s$ is supposed to be fixed and the constants $C$ may depend on $s$.\\

First, we recall two classical facts on the heat equation for Dirichlet conditions: a well-posedness result and a regularity result.

\subsubsection{General lemmas}
\begin{prop}\label{wpl2dirichlet}
Let $d \in (0,+\infty)$, $u_0 \in L^2(\Omega)$, $g \in L^{2}(Q)$. From \cite[Section 7.1, Theorem 3 and Theorem 4]{E}, the following Cauchy problem admits a unique weak solution $u \in Z := L^2(0,T;H_0^1(\Omega)) \cap W^{1,2}(0,T;H^{-1}(\Omega))$
\[
\left\{
\begin{array}{l l}
\partial_t u - d \Delta u=  g&\mathrm{in}\ (0,T)\times\Omega,\\
u = 0 &\mathrm{on}\ (0,T)\times\partial\Omega,\\
u(0,.)=u_0 &\mathrm{in}\  \Omega.
\end{array}
\right.
\]
This means that $u$ is the unique function in $Z$ that satisfies the variational fomulation
\begin{equation}
\forall w \in L^2(0,T;H_0^1(\Omega)),\ \int_0^T (\partial_t u ,w)_{H^{-1}(\Omega),H_0^1(\Omega)} + \int_Q d \nabla u . \nabla w = \int_Q g w,
\label{formvar2}
\end{equation}
and
\begin{equation}
u(0,.) = u_0 \ \mathrm{in}\ L^2(\Omega).
\end{equation}
Moreover, there exists $ C >0$ independent of $u_0$ and $g$ such that
\[ \norme{u}_{Z} \leq C \left(\norme{u_0}_{L^{2}(\Omega)}+\norme{g}_{L^{2}(Q)}\right).\]
\end{prop}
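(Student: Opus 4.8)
The statement to prove is \Cref{wpl2dirichlet}, a standard well-posedness and energy estimate for the heat equation with Dirichlet boundary conditions and an $L^2$ source term. The plan is to follow the classical Galerkin approximation scheme exactly as in \cite[Section 7.1, Theorems 3 and 4]{E}, adapted here to the case of a general diffusion coefficient $d\in(0,+\infty)$ (which only changes the constants in the a priori estimates).

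First I would fix a countable orthonormal basis $(w_k)_{k\geq 1}$ of $L^2(\Omega)$ consisting of eigenfunctions of the Dirichlet Laplacian, which also forms an orthogonal basis of $H_0^1(\Omega)$. For each $m\in\N^*$ I would look for an approximate solution $u_m(t,\cdot)=\sum_{k=1}^m d_m^k(t)w_k$ solving the finite-dimensional system of ODEs obtained by testing the equation against $w_1,\dots,w_m$ with initial data the $L^2$-projection of $u_0$ onto $\mathrm{span}(w_1,\dots,w_m)$; Carathéodory's existence theorem (or the Picard--Lindelöf theorem, since the coefficients are $L^2$ in time) gives a unique absolutely continuous solution on $[0,T]$. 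Then I would derive the two key energy estimates: testing against $u_m$ and using $2\,ab\leq a^2+b^2$ gives, after Gronwall, a bound on $u_m$ in $L^\infty(0,T;L^2(\Omega))\cap L^2(0,T;H_0^1(\Omega))$ in terms of $\norme{u_0}_{L^2(\Omega)}+\norme{g}_{L^2(Q)}$; a duality argument bounding $(\partial_t u_m,v)$ for $v\in H_0^1(\Omega)$, $\norme{v}_{H_0^1}\leq 1$, gives a bound on $\partial_t u_m$ in $L^2(0,T;H^{-1}(\Omega))$. Hence $(u_m)$ is bounded in $Z$.

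Next I would extract a weakly convergent subsequence $u_m\rightharpoonup u$ in $L^2(0,T;H_0^1(\Omega))$ with $\partial_t u_m\rightharpoonup \partial_t u$ in $L^2(0,T;H^{-1}(\Omega))$, pass to the limit in the weak formulation tested against finite linear combinations $\sum_{k\leq N}c_k(t)w_k$ (dense in $L^2(0,T;H_0^1(\Omega))$), and conclude that $u$ satisfies \eqref{formvar2}. The initial condition $u(0,\cdot)=u_0$ follows from the embedding $Z\hookrightarrow C([0,T];L^2(\Omega))$ (the Dirichlet analogue of \Cref{injclassique}) together with an integration by parts in time in the weak formulation. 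Uniqueness follows by testing the equation for the difference of two solutions (with zero data) against itself and applying Gronwall. Finally the estimate $\norme{u}_Z\leq C(\norme{u_0}_{L^2(\Omega)}+\norme{g}_{L^2(Q)})$ is inherited from the uniform bounds on $(u_m)$ by weak lower semicontinuity of the norm.

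Since this is an entirely classical result, there is no genuine obstacle; the only point requiring a word of care is that the diffusion coefficient $d$ enters the coercivity constant in the $H_0^1$ estimate, so one should track that the constant $C$ depends on $d$ (and $T$, $\Omega$) but not on $u_0$ or $g$, which is exactly what the statement asserts. For this reason the cleanest presentation is simply to invoke \cite[Section 7.1, Theorems 3 and 4]{E} verbatim, noting that their proof applies mutatis mutandis with the operator $-d\Delta$ in place of $-\Delta$.
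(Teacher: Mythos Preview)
Your proposal is correct and matches the paper's approach exactly: the paper does not give a separate proof of this proposition but simply invokes \cite[Section 7.1, Theorems 3 and 4]{E} within the statement itself, which is precisely the Galerkin scheme you outlined. Your final remark that the cleanest presentation is to cite Evans verbatim (with the trivial change $-\Delta\mapsto -d\Delta$) is exactly what the paper does.
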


\begin{prop}\label{regl2dirichlet}
Let $d \in (0,+\infty)$, $g \in L^{2}(Q)$, $u_0 \in C_0^{\infty}(\Omega)$. From \Cref{wpl2dirichlet}, the following Cauchy problem admits a unique weak solution $u \in Z$
\[
\left\{
\begin{array}{l l}
\partial_t u - d \Delta u=  g&\mathrm{in}\ (0,T)\times\Omega,\\
u = 0 &\mathrm{on}\ (0,T)\times\partial\Omega,\\
u(0,.)=u_0 &\mathrm{in}\  \Omega.
\end{array}
\right.
\]
Moreover, from \cite[Section 7.1, Theorem 5]{E}, $u \in Z_2 := L^2(0,T,H^2(\Omega)\cap H_0^1(\Omega)) \cap W^{1,2}(0,T;L^2(\Omega))$ and if $u_0 = 0$, then there exists $ C >0$ independent of $g$ such that
\[ \norme{u}_{Z_2} \leq C \norme{g}_{L^{2}(Q)}.\]
\end{prop}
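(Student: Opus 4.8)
The plan is to regard the statement as an instance of the standard $L^2$ parabolic regularity theory, most of which is already at our disposal. The existence and uniqueness of a weak solution $u \in Z$ is precisely \Cref{wpl2dirichlet} applied with source $g$ and datum $u_0$; nothing new is needed there. Since $u_0 \in C_0^{\infty}(\Omega) \subset H_0^1(\Omega)$, the improved regularity theorem \cite[Section 7.1, Theorem 5]{E} applies and gives $u \in Z_2$ together with a bound for $\norme{u}_{Z_2}$ in terms of $\norme{g}_{L^2(Q)}$ and $\norme{u_0}_{H_0^1(\Omega)}$. Thus the only point requiring attention is that, once $u_0 = 0$, this bound collapses to $\norme{u}_{Z_2} \leq C \norme{g}_{L^2(Q)}$, which I would obtain by exhibiting the energy estimate underlying the cited theorem.

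First I would carry out the formal computation on which that estimate rests, namely testing the equation against $\partial_t u$. Using $\int_{\Omega} \nabla u \cdot \nabla \partial_t u = \tfrac{1}{2}\tfrac{d}{dt}\norme{\nabla u}_{L^2(\Omega)}^2$ and Young's inequality, this yields for a.e. $t$
\[
\norme{\partial_t u(t,.)}_{L^2(\Omega)}^2 + \frac{d}{2}\frac{d}{dt}\norme{\nabla u(t,.)}_{L^2(\Omega)}^2 \leq \frac{1}{2}\norme{\partial_t u(t,.)}_{L^2(\Omega)}^2 + \frac{1}{2}\norme{g(t,.)}_{L^2(\Omega)}^2.
\]
Absorbing the first right-hand term and integrating over $(0,T)$, while using $\nabla u(0,.) = 0$ (because $u_0 = 0$), I would get $\norme{\partial_t u}_{L^2(Q)} \leq \norme{g}_{L^2(Q)}$, which already controls the $W^{1,2}(0,T;L^2(\Omega))$ part of the $Z_2$-norm.

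To recover the remaining $L^2(0,T;H^2(\Omega))$ part, I would rewrite the equation pointwise in time as $-d\,\Delta u(t,.) = g(t,.) - \partial_t u(t,.)$, whose right-hand side lies in $L^2(\Omega)$ for a.e. $t$; since $u(t,.) \in H_0^1(\Omega)$, elliptic $H^2$-regularity on the $C^2$ domain $\Omega$ (see \cite[Section 6.3]{E}) gives $\norme{u(t,.)}_{H^2(\Omega)} \leq C\norme{\Delta u(t,.)}_{L^2(\Omega)}$, and squaring and integrating in time then bounds $\norme{u}_{L^2(0,T;H^2(\Omega))}$ by $\norme{g}_{L^2(Q)}$. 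Combining the two pieces yields $\norme{u}_{Z_2} \leq C \norme{g}_{L^2(Q)}$. The only genuinely delicate point — the one the quoted theorem handles — is that the test function $\partial_t u$ is not admissible for a mere weak solution (a priori $\partial_t u \in L^2(0,T;H^{-1}(\Omega))$ only); the estimate must be derived on Galerkin approximations built from the Dirichlet eigenfunctions of $-\Delta$, where these manipulations are legitimate, and then passed to the limit. This bookkeeping, rather than any conceptual difficulty, is the main obstacle.
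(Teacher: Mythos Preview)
Your proposal is correct. The paper does not give its own proof of this proposition: the statement itself already embeds the argument by citing \Cref{wpl2dirichlet} for existence/uniqueness in $Z$ and \cite[Section 7.1, Theorem 5]{E} for the $Z_2$-regularity and estimate, treating both as black boxes. What you have done is unpack the content of the Evans citation by reproducing the standard energy argument (test against $\partial_t u$, then recover $H^2$ via elliptic regularity applied to $-d\Delta u = g - \partial_t u$, with the Galerkin justification), which is precisely the proof in \cite{E}; so your approach and the paper's are the same, yours simply being more explicit.
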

The following lemma is inspired by the proof of \cite[Theorem 2.2]{CSG}.
\begin{lem}\label{lemmededecomposition}
Let $d \in (0,+\infty)$, $f \in Y_2$ (see \Cref{defspacesl2}), $\Phi_T \in C_0^{\infty}(\Omega)$, $\widetilde{\omega}$ be an open subset such that $\widetilde{\omega} \subset\subset \omega_0$,  $\chi \in C^{\infty}(\overline{\Omega};[0,+\infty[)$ such that $supp(\chi)\subset\subset \widetilde{\omega}$, $(r,k) \in \R\times[1,+\infty)$, $\Theta = \chi e^{s \alpha} (s \phi)^r$. Let $\Phi \in Z_2$ (see \Cref{regl2dirichlet}) be the solution of 
\begin{equation}
\left\{
\begin{array}{l l}
- \partial_t \Phi - d \Delta\Phi=  \Delta f&\mathrm{in}\ (0,T)\times\Omega,\\
\Phi = 0 &\mathrm{on}\ (0,T)\times\partial\Omega,\\
\Phi(T,.)=\Phi_T &\mathrm{in}\  \Omega.
\end{array}
\right.
\label{equationenPhi}
\end{equation} 
We decompose
\begin{equation}
\Theta \Phi = \eta + \psi,
\label{decompositiondePhi}
\end{equation}
where $\eta \in Z_2$ and $\psi \in Z_2$ satisfy
\begin{equation}
\left\{
\begin{array}{l l}
- \partial_t \eta - d \Delta\eta= \Theta \Delta f&\mathrm{in}\ (0,T)\times\Omega,\\
\eta = 0 &\mathrm{on}\ (0,T)\times\partial\Omega,\\
\eta(T,.)=0 &\mathrm{in}\  \Omega,
\end{array}
\right.
\label{equationeneta}
\end{equation}
\begin{equation}
\left\{
\begin{array}{l l}
- \partial_t \psi - d \Delta\psi = -(\partial_t \Theta) \Phi - 2 d \nabla \Theta . \nabla \Phi - d (\Delta \Theta) \Phi&\mathrm{in}\ (0,T)\times\Omega,\\
\psi = 0 &\mathrm{on}\ (0,T)\times\partial\Omega,\\
\psi(T,.)=0 &\mathrm{in}\  \Omega.
\end{array}
\right.
\label{equationenpsi}
\end{equation} 
Then, there exist $\widetilde{\chi} \in C^{\infty}(\overline{\Omega};[0,+\infty[)$ such that $supp(\widetilde{\chi})\subset\subset \widetilde{\omega}$, $\widetilde{\chi} = 1$ on $supp(\chi)$ and $C >0$ such that 
\begin{equation}
\norme{\eta}_{L^2(Q)}^2 \leq C \int_0^T \int_{\widetilde{\omega}} \widetilde{\chi}^2 e^{2s \alpha} (s \phi)^{2(r+2)} |f|^2,
\label{estimationeneta}
\end{equation}
\begin{align}
&\norme{\frac{\psi}{(s\phi)^{k}}}_{L^2(0,T;H_0^1(\Omega))}^2+ \norme{\left(\frac{\psi}{(s\phi)^{k}}\right)_t}_{L^2(0,T;H^{-1}(\Omega))}^2\notag\\
& \leq C \left(\norme{\eta}_{L^2(Q)}^2 + \int_{0}^{T} \int_{\Omega} e^{2s \alpha} (s \phi)^{2(r+2-k)} |\Phi|^2\right).
\label{estimationenpsi}
\end{align}
\end{lem}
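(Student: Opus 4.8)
\textbf{Proof proposal for \Cref{lemmededecomposition}.}

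The plan is to establish the two estimates separately, using standard parabolic well-posedness and regularity results recalled in \Cref{wpl2dirichlet} and \Cref{regl2dirichlet}, together with the explicit bounds on the derivatives of the weight function coming from \eqref{defpoidsphialpha}. First I would verify the decomposition \eqref{decompositiondePhi}: applying the operator $-\partial_t - d\Delta$ to $\Theta \Phi$ and using \eqref{equationenPhi} gives
\[
-\partial_t(\Theta \Phi) - d\Delta(\Theta\Phi) = \Theta \Delta f - (\partial_t \Theta)\Phi - 2d\,\nabla\Theta\cdot\nabla\Phi - d(\Delta\Theta)\Phi \quad \mathrm{in}\ (0,T)\times\Omega,
\]
with $\Theta\Phi = 0$ on the lateral boundary (since $\Phi$ vanishes there and $\chi$ is smooth) and $(\Theta\Phi)(T,.) = 0$ because the weight $e^{s\alpha}$ vanishes as $t\to T$. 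Splitting the right-hand side into the two pieces $\Theta\Delta f$ and $-(\partial_t\Theta)\Phi - 2d\nabla\Theta\cdot\nabla\Phi - d(\Delta\Theta)\Phi$ and defining $\eta$, $\psi$ by \eqref{equationeneta}, \eqref{equationenpsi} then yields $\Theta\Phi = \eta + \psi$ by uniqueness (both solve the same Cauchy problem); one must check that both source terms lie in $L^2(Q)$, which holds because $f \in Y_2$ and $\Phi \in Z_2$ so that $\nabla\Phi, \Delta\Phi \in L^2(Q)$.

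For \eqref{estimationeneta}, I would apply the energy estimate of \Cref{wpl2dirichlet} to \eqref{equationeneta} to get $\norme{\eta}_{L^2(Q)}^2 \leq C \norme{\Theta\Delta f}_{L^2(Q)}^2$. The point is now to integrate by parts twice in space to move the two derivatives off $f$: writing $\Theta \Delta f = \Delta(\Theta f) - 2\nabla\Theta\cdot\nabla f - (\Delta\Theta) f$ would still leave first-order terms in $f$, so instead I would estimate $\norme{\eta}_{L^2(Q)}$ directly by a duality/integration-by-parts argument against a test function, transferring the Laplacian onto a smooth object. Concretely, for $g \in L^2(Q)$ one writes $\int_Q \eta g = \int_Q (\Theta \Delta f)\, G$ where $G$ solves the adjoint problem with source $g$, and then integrates by parts in the $x$-variable; choosing $\widetilde\chi$ with $\widetilde\chi \equiv 1$ on $\mathrm{supp}(\chi)$ and using $|\Delta(\Theta \cdot)| \lesssim e^{s\alpha}(s\phi)^{r+2}$ (a consequence of \eqref{defpoidsphialpha}), everything is localized in $\widetilde\omega$ and one obtains \eqref{estimationeneta}. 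This is essentially the argument of \cite[Theorem 2.2]{CSG}, adapted to our weights.

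For \eqref{estimationenpsi}, set $\widetilde\psi := \psi/(s\phi)^k$ and derive the PDE it satisfies: since $(s\phi)^k$ depends on $(t,x)$, there are extra commutator terms involving $\partial_t((s\phi)^{-k})$, $\nabla((s\phi)^{-k})$ and $\Delta((s\phi)^{-k})$ acting on $\psi$, each gaining a factor of $(s\phi)$ per derivative by \eqref{defpoidsphialpha}. I would then apply the $Z$-estimate of \Cref{wpl2dirichlet} (and elliptic regularity) to the equation for $\widetilde\psi$; the source terms are, up to constants, $(s\phi)^{-k}$ times $-(\partial_t\Theta)\Phi - 2d\nabla\Theta\cdot\nabla\Phi - d(\Delta\Theta)\Phi$ plus the commutators. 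Bounding $|\partial_t\Theta|, |\nabla\Theta|, |\Delta\Theta| \lesssim e^{s\alpha}(s\phi)^{r+2}$ and using $|\nabla\Theta\cdot\nabla\Phi| \leq |\nabla\Theta|\,|\nabla\Phi|$ with the gradient of $\Phi$ controlled (via \Cref{corpw}-type arguments or directly from $\Phi \in Z_2$) leads, after reabsorbing the $\psi$-commutator terms using $\Theta\Phi = \eta + \psi$ once more, to the right-hand side $\norme{\eta}_{L^2(Q)}^2 + \int_Q e^{2s\alpha}(s\phi)^{2(r+2-k)}|\Phi|^2$ as in \eqref{estimationenpsi}. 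The main obstacle is the bookkeeping: tracking the weight powers through the two integrations by parts in the $\eta$-estimate and through the commutators in the $\psi$-estimate so that the final exponents match exactly $2(r+2)$ and $2(r+2-k)$, and making sure the boundary terms in the integrations by parts vanish — which they do thanks to the cutoff $\chi$ being compactly supported in $\omega_0$ and $\Phi$ satisfying Dirichlet conditions.
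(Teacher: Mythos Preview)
Your overall plan matches the paper's proof: the decomposition is verified as you describe, the $\eta$-estimate is obtained by the duality argument you sketch (pair $\eta$ against an arbitrary $\Gamma\in L^2(Q)$, let $z$ solve the forward Dirichlet problem with source $\Gamma$, use $\norme{z}_{L^2(0,T;H^2)}\leq C\norme{\Gamma}_{L^2}$ from \Cref{regl2dirichlet}, integrate $\int \Theta(\Delta f)z = \int f\,\Delta(\Theta z)$ by parts, bound $|D_x^i\Theta|\leq C\widetilde\chi\,e^{s\alpha}(s\phi)^{r+i}$, and take $\Gamma=\eta$), and the $\psi$-estimate is obtained by writing the equation for $\rho\psi$ with $\rho=(s\phi)^{-k}$ and applying the weak $Z$-estimate of \Cref{wpl2dirichlet}.

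There is, however, one genuine gap in your treatment of the $\psi$-estimate. You propose to handle the term $\rho\,\nabla\Theta\cdot\nabla\Phi$ pointwise via $|\nabla\Theta\cdot\nabla\Phi|\leq |\nabla\Theta|\,|\nabla\Phi|$ and then to control $\nabla\Phi$ ``via \Cref{corpw}-type arguments or directly from $\Phi\in Z_2$''. This cannot give \eqref{estimationenpsi} as stated: any such bound produces a weighted integral of $|\nabla\Phi|^2$ on the right-hand side, whereas \eqref{estimationenpsi} involves only $|\Phi|^2$. (And \Cref{corpw} goes the wrong way --- it bounds $\nabla$ by $\Delta$, not by the function itself, besides being stated for Neumann data.) The paper avoids this by estimating the offending terms in $L^2(0,T;H^{-1}(\Omega))$ rather than in $L^2(Q)$, after rewriting them in divergence form:
\[
\rho\,\nabla\Theta\cdot\nabla\Phi \;=\; \nabla\cdot(\rho\Phi\,\nabla\Theta) - \rho(\Delta\Theta)\Phi - (\nabla\rho\cdot\nabla\Theta)\Phi,
\]
so that the $H^{-1}$ norm is bounded by $\norme{\rho\Phi\,\nabla\Theta}_{L^2}$ plus zero-order terms, all of which are controlled by $\int e^{2s\alpha}(s\phi)^{2(r+2-k)}|\Phi|^2$. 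The same trick is needed for the commutator $\nabla\rho\cdot\nabla\psi$: write it as $\nabla\cdot(\psi\,\nabla\rho)-\psi\,\Delta\rho$, then substitute $\psi=\Theta\Phi-\eta$ (this is the ``reabsorption'' you correctly anticipate), and estimate in $H^{-1}$. Only after these two divergence-form manipulations does the source of the $\rho\psi$-equation lie in $L^2(0,T;H^{-1})$ with the right weights, and \Cref{wpl2dirichlet} then yields \eqref{estimationenpsi}.
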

\begin{proof}
Let $\Gamma \in L^2(Q)$ and let $z \in Z_2$ be the solution of
\begin{equation}
\left\{
\begin{array}{l l}
\partial_t z - d \Delta z= \Gamma&\mathrm{in}\ (0,T)\times\Omega,\\
z = 0 &\mathrm{on}\ (0,T)\times\partial\Omega,\\
z(0,.)=0 &\mathrm{in}\  \Omega.
\end{array}
\right.
\label{systemeenz}
\end{equation} 
By \Cref{regl2dirichlet}, we have
\begin{equation}
\norme{z}_{L^2(0,T;H^2(\Omega))}^2 \leq C \norme{\Gamma}_{L^2(Q)}^2.
\label{regparz}
\end{equation}
A duality argument between \eqref{equationeneta} and \eqref{systemeenz} gives
\begin{equation}
\int_0^T\int_{\Omega} \eta \Gamma dxdt = \int_0^T\int_{\Omega} \Theta \Delta (f) z dxdt.
\label{dualeta}
\end{equation}
We integrate by parts with respect to the spatial variable, 
\begin{equation}
\int_0^T\int_{\Omega} \Theta \Delta (f) z dxdt = \int_0^T\int_{\Omega}  f \Delta (\Theta z) dxdt.
\label{intbypartseta}
\end{equation}
There exists $\widetilde{\chi} \in C^{\infty}(\overline{\Omega};[0,+\infty[)$ such that $supp(\widetilde{\chi})\subset\subset \widetilde{\omega}$, $\widetilde{\chi} = 1$ on $supp(\chi)$ and
\begin{equation}
\forall i \in \{1,2\},\ |D_x^i\Theta| \leq C \widetilde{\chi} (s\phi)^{r+i} e^{s\alpha}\ \mathrm{in}\ (0,T)\times\Omega.
\label{estimationTheta}
\end{equation}
Therefore, from \eqref{regparz} and \eqref{estimationTheta}, we can deduce that 
\begin{equation}
\int_0^T\int_{\Omega} f \Delta (\Theta z) dxdt \leq \frac{1}{2} \norme{\Gamma}_{L^2(Q)}^2 + C \int_0^T\int_{\Omega} \widetilde{\chi}^2 e^{2s\alpha}(s\phi)^{2(r+2)} |f|^2dxdt.
\label{youngeta}
\end{equation}
By using \eqref{dualeta}, \eqref{intbypartseta}, \eqref{youngeta} and by taking $\Gamma = \eta$, we deduce \eqref{estimationeneta}.\\

We introduce 
\begin{equation}
\rho = (s\phi)^{-k}.
\label{defrho}
\end{equation}
Then, we have 
\begin{equation}
\left\{
\begin{array}{l l}
- \partial_t (\rho \psi) - d \Delta(\rho\psi)= \rho(- (\partial_t \Theta) \Phi - 2 d \nabla \Theta . \nabla \Phi - d (\Delta \Theta) \Phi)\\
\qquad\qquad\qquad\qquad\qquad - (\partial_t \rho)\psi - 2 d \nabla \rho. \nabla \psi - d (\Delta \rho)\psi &\mathrm{in}\ (0,T)\times\Omega,\\
\rho\psi = 0 &\mathrm{on}\ (0,T)\times\partial\Omega,\\
\rho\psi(T,.)=0 &\mathrm{in}\  \Omega.
\end{array}
\right.
\label{equationenrhopsi}
\end{equation}
We estimate the source term of \eqref{equationenrhopsi}. We have by definition of $\Theta$, the fact that $k \geq 1$, \eqref{decompositiondePhi}, \eqref{defrho} and the embedding $L^2(\Omega) \hookrightarrow H^{-1}(\Omega)$, the following estimates
\begin{align}
&\norme{\rho \partial_t  (\Theta) \Phi}_{L^2(Q)}^2 \leq C\int_{0}^{T} \int_{\Omega} e^{2s \alpha} (s \phi)^{2(r+2-k)} |\Phi|^2 \label{insource1},
\end{align}
\begin{align}
\norme{\rho \nabla \Theta . \nabla \Phi}_{L^2(0,T;H^{-1}(\Omega))}^2 &= \norme{\nabla.(\rho\Phi \nabla \Theta ) - (\rho (\Delta \Theta) \Phi) - (\nabla \rho . \nabla \Theta) \Phi }_{L^2(0,T;H^{-1}(\Omega))}^2 \notag\\
&\ \leq C\left( \norme{\rho \Phi \nabla\Theta }_{L^2(Q)}^2 + \norme{\rho (\Delta \Theta) \Phi}_{L^2(Q)}^2 + \norme{(\nabla \rho . \nabla \Theta) \Phi}_{L^2(Q)}^2\right)\notag\\
&\ \leq C \int_{0}^{T} \int_{\Omega} e^{2s \alpha} \left((s \phi)^{2(r+1-k)}+(s \phi)^{2(r+2-k)}+(s \phi)^{2(r+1-k)}\right) |\Phi|^2\notag\\
&\ \leq C\int_{0}^{T} \int_{\Omega} e^{2s \alpha} (s \phi)^{2(r+2-k)} |\Phi|^2 \label{insource2},
\end{align}
\begin{align}
\norme{(\partial_t \rho) \psi}_{L^2(Q)}^2 &= \norme{(\partial_t \rho) (\Theta \Phi -\eta)}_{L^2(Q)}^2\notag\\
& \leq C \left(\int_{0}^{T}\int_{\Omega} (s \phi)^{2(-k+1)} |\eta|^2 + \int_{0}^{T} \int_{\Omega} e^{2s \alpha} (s \phi)^{2(r+1-k)} |\Phi|^2\right) \notag \\
&\  \leq C \left(\int_{0}^{T}\int_{\Omega} |\eta|^2  + \int_{0}^{T} \int_{\Omega} e^{2s \alpha} (s \phi)^{2(r+2-k)} |\Phi|^2\right), \label{insource3}
\end{align}
\begin{align}
\norme{\nabla \rho . \nabla \psi}_{L^2(0,T;H^{-1}(\Omega))}^2 &= \norme{\nabla. ( \psi \nabla\rho ) -  \psi \Delta \rho }_{L^2(0,T;H^{-1}(\Omega))}^2\notag\\
 &= \norme{\nabla. (  (\Theta \Phi -\eta)\nabla\rho ) -  (\Theta \Phi -\eta)\Delta \rho }_{L^2(0,T;H^{-1}(\Omega))}^2   \notag\\
& \leq C\left(\int_{0}^{T}\int_{\Omega} (s \phi)^{-2k} |\eta|^2 + \int_{0}^{T} \int_{\Omega} e^{2s \alpha} (s \phi)^{2(r-k)} |\Phi|^2\right). \notag \\
&\ \leq C\left(\int_{0}^{T}\int_{\Omega} |\eta|^2+ \int_{0}^{T} \int_{\Omega} e^{2s \alpha} (s \phi)^{2(r+2-k)} |\Phi|^2\right). \label{insource4}
\end{align}
By using \eqref{equationenrhopsi}, \eqref{insource1}, \eqref{insource2}, \eqref{insource3}, \eqref{insource4} and \Cref{wpl2dirichlet}, we deduce \eqref{estimationenpsi}.
\end{proof}

\begin{cor}\label{corestimationspsi}
We take the same notations as in \Cref{lemmededecomposition} and $g \in Y_2$. Then, for every $\delta > 0$, 
\begin{align}
&\int_0^T\int_{\widetilde{\omega}} \chi e^{s \alpha} \psi \Delta g \notag\\
&\leq \delta \left(\int_0^T \int_{\widetilde{\omega}} \widetilde{\chi}^2 e^{2s \alpha} (s \phi)^{2(r+2)}|f|^2 +\int_{0}^{T} \int_{\Omega} e^{2s \alpha} (s \phi)^{2(r+2-k)} |\Phi|^2 \right)\notag\\
&\quad + C_{\delta} \int_0^T\int_{\widetilde{\omega}}  e^{2 s \alpha} (s \phi)^{2(k+1)} |\nabla g|^2,
\label{estimationpsidelta}
\end{align}
\begin{align}
&\int_0^T\int_{\widetilde{\omega}} \chi e^{s \alpha} \psi \partial_t g \notag\\
& \leq \delta \left(\int_0^T \int_{\widetilde{\omega}} \widetilde{\chi}^2 e^{2s \alpha} (s \phi)^{2(r+2)} |f|^2+\int_{0}^{T} \int_{\Omega} e^{2s \alpha} (s \phi)^{2(r+2-k)} |\Phi|^2 \right)\\
&\quad + C_{\delta} \left(\int_0^T\int_{\widetilde{\omega}}  e^{2 s \alpha} (s \phi)^{2(k+2)} |g|^2 + \int_0^T\int_{\widetilde{\omega}}  e^{2 s \alpha} (s \phi)^{2k} |\nabla g|^2 \right).
\label{estimationpsidt}
\end{align}
\end{cor}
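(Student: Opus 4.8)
The plan is to derive both inequalities from the bounds \eqref{estimationeneta} and \eqref{estimationenpsi} on the two pieces $\eta$ and $\psi$ of the decomposition, by integrating the derivative off $g$. Set $\mu := \chi e^{s\alpha}(s\phi)^{k}$ and $w := \psi/(s\phi)^{k}$, so that $\chi e^{s\alpha}\psi = \mu w$. Two facts will be used repeatedly. First, $\mu$ is smooth, compactly supported in $\widetilde{\omega}\subset\subset\Omega$ in the $x$ variable, and it vanishes together with all its derivatives as $t\to 0^{+}$ and $t\to T^{-}$ because of the behaviour of $\alpha$ near $\{t=0\}\cup\{t=T\}$ (see \eqref{defpoidsphialpha}); moreover the definitions \eqref{defpoidsphialpha}--\eqref{defpoidsmin} give the weight bookkeeping $|\nabla\mu|\leq C\widetilde{\chi}\,e^{s\alpha}(s\phi)^{k+1}$ and $|\partial_{t}\mu|\leq C\widetilde{\chi}\,e^{s\alpha}(s\phi)^{k+2}$, with $\widetilde{\chi}$ the cut-off of \Cref{lemmededecomposition} (one extra power of $s\phi$ per space derivative, two per time derivative). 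Second, combining \eqref{estimationeneta} and \eqref{estimationenpsi} produces the master estimate
\[
\norme{w}_{L^{2}(0,T;H_{0}^{1}(\Omega))}^{2}+\norme{\partial_{t}w}_{L^{2}(0,T;H^{-1}(\Omega))}^{2}\leq C\left(\int_{0}^{T}\int_{\widetilde{\omega}}\widetilde{\chi}^{2}e^{2s\alpha}(s\phi)^{2(r+2)}|f|^{2}+\int_{0}^{T}\int_{\Omega}e^{2s\alpha}(s\phi)^{2(r+2-k)}|\Phi|^{2}\right),
\]
whose right-hand side is exactly, up to a constant, the quantity the $\delta$-terms in \eqref{estimationpsidelta} and \eqref{estimationpsidt} are allowed to absorb.

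For \eqref{estimationpsidelta} I would integrate by parts in space: since $\mu w$ is supported away from $\partial\Omega$ no boundary term appears and
\[
\int_{0}^{T}\int_{\widetilde{\omega}}\chi e^{s\alpha}\psi\,\Delta g = -\int_{0}^{T}\int_{\widetilde{\omega}}\big((\nabla\mu)\,w+\mu\,\nabla w\big)\cdot\nabla g .
\]
Applying the weight bounds on $\mu$ and Young's inequality to each of the two resulting terms — placing $\widetilde{\chi}|w|$, resp. $\widetilde{\chi}|\nabla w|$, on one side and the $g$-weight on the other — bounds this by $\delta\big(\int_{0}^{T}\int_{\widetilde{\omega}}\widetilde{\chi}^{2}|w|^{2}+\int_{0}^{T}\int_{\widetilde{\omega}}\widetilde{\chi}^{2}|\nabla w|^{2}\big)+C_{\delta}\int_{0}^{T}\int_{\widetilde{\omega}}e^{2s\alpha}(s\phi)^{2(k+1)}|\nabla g|^{2}$; the parenthesis is $\leq\norme{w}_{L^{2}(0,T;H_{0}^{1}(\Omega))}^{2}$, so inserting the master estimate and relabelling $\delta$ gives \eqref{estimationpsidelta}.

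For \eqref{estimationpsidt} I would integrate by parts in time instead:
\[
\int_{0}^{T}\int_{\widetilde{\omega}}\chi e^{s\alpha}\psi\,\partial_{t}g = -\int_{0}^{T}\int_{\widetilde{\omega}}(\partial_{t}\mu)\,w\,g-\int_{0}^{T}\big\langle\partial_{t}w,\mu g\big\rangle_{H^{-1}(\Omega),H_{0}^{1}(\Omega)}\,dt,
\]
the boundary contributions at $t=0$ and $t=T$ vanishing because $\psi(T,\cdot)=0$ while $e^{s\alpha(t,\cdot)}\to 0$ uniformly in $x$ as $t\to 0^{+}$ and $\psi(t,\cdot),g(t,\cdot)$ stay bounded in $L^{2}(\Omega)$. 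The first term is handled exactly as before, using $|\partial_{t}\mu|\leq C\widetilde{\chi}e^{s\alpha}(s\phi)^{k+2}$, and gives $\delta\norme{w}_{L^{2}(0,T;H_{0}^{1}(\Omega))}^{2}+C_{\delta}\int_{0}^{T}\int_{\widetilde{\omega}}e^{2s\alpha}(s\phi)^{2(k+2)}|g|^{2}$. For the second term the key observation — and the main technical point — is that, although $g$ only satisfies a Neumann condition, the product $\mu g$ belongs to $L^{2}(0,T;H_{0}^{1}(\Omega))$ precisely because $\mu$ is compactly supported in $\widetilde{\omega}\subset\subset\Omega$; hence the pairing is legitimate and $\int_{0}^{T}\langle\partial_{t}w,\mu g\rangle\leq\norme{\partial_{t}w}_{L^{2}(0,T;H^{-1}(\Omega))}\norme{\mu g}_{L^{2}(0,T;H_{0}^{1}(\Omega))}$, where expanding $\nabla(\mu g)$ and using the bounds on $\mu,\nabla\mu$ yields $\norme{\mu g}_{L^{2}(0,T;H_{0}^{1}(\Omega))}^{2}\leq C\int_{0}^{T}\int_{\widetilde{\omega}}e^{2s\alpha}\big((s\phi)^{2(k+2)}|g|^{2}+(s\phi)^{2k}|\nabla g|^{2}\big)$. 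A final Young's inequality, with the master estimate controlling $\norme{\partial_{t}w}_{L^{2}(0,T;H^{-1}(\Omega))}^{2}$ (and \eqref{estimationeneta} turning $\norme{\eta}_{L^{2}(Q)}^{2}$ into the $|f|^{2}$-term), gives \eqref{estimationpsidt}. I expect the only delicate points to be the vanishing of the time-boundary terms and the $H_{0}^{1}$-membership of $\mu g$; everything else is weight bookkeeping from \eqref{defpoidsphialpha}--\eqref{defpoidsmin} together with Young's inequality.
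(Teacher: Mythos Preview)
Your proposal is correct and follows essentially the same approach as the paper: you set $\mu=\chi e^{s\alpha}(s\phi)^{k}$ and $w=\psi/(s\phi)^{k}$, integrate the derivative off $g$ (in space for \eqref{estimationpsidelta}, in time for \eqref{estimationpsidt}), apply Young's inequality, and close with the combined bounds \eqref{estimationeneta}--\eqref{estimationenpsi}. The paper's proof is identical in structure; the only difference is that the paper writes out the two integration-by-parts identities and then directly invokes \eqref{estimationenpsi} followed by \eqref{estimationeneta}, whereas you package those two into a single ``master estimate'' up front and are more explicit about why the time-boundary terms vanish and why $\mu g\in L^{2}(0,T;H_{0}^{1}(\Omega))$.
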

\begin{proof}
We integrate by parts with respect to the spatial variable and we use \eqref{estimationenpsi}, \eqref{estimationeneta}, 
\begin{align*}
&\int_0^T\int_{\widetilde{\omega}} \chi e^{s \alpha}\psi \Delta g\notag\\
& = - \int_0^T\int_{\widetilde{\omega}} \frac{\psi}{(s\phi)^k} \nabla(\chi e^{s \alpha}(s\phi)^k). \nabla g - \int_0^T\int_{\widetilde{\omega}} \chi e^{s \alpha}(s\phi)^k \nabla \left(\frac{\psi}{(s\phi)^k}\right). \nabla g\notag\\
& \leq \delta \norme{\frac{\psi}{(s\phi)^{k}}}_{L^2(0,T;H_0^1(\Omega))}^2 + C_{\delta} \int_0^T\int_{\widetilde{\omega}}  e^{2 s \alpha} (s \phi)^{2(k+1)} |\nabla g|^2\notag\\
& \leq \delta \left(\norme{\eta}_{L^2(Q)}^2 + \int_{0}^{T} \int_{\Omega} e^{2s \alpha} (s \phi)^{2(r+2-k)} |\Phi|^2\right) + C_{\delta} \int_0^T\int_{\widetilde{\omega}}  e^{2 s \alpha} (s \phi)^{2(k+1)} |\nabla g|^2\notag\\
& \leq \delta \left(\int_0^T \int_{\widetilde{\omega}} \widetilde{\chi}^2 e^{2s \alpha} (s \phi)^{2(r+2)}|f|^2 +\int_{0}^{T} \int_{\Omega} e^{2s \alpha} (s \phi)^{2(r+2-k)} |\Phi|^2 \right)\notag\\
&\quad + C_{\delta} \int_0^T\int_{\widetilde{\omega}}  e^{2 s \alpha} (s \phi)^{2(k+1)} |\nabla g|^2.
\end{align*}

We integrate by parts with respect to the time variable and we use \eqref{estimationenpsi}, \eqref{estimationeneta}, 
\begin{align*}
&\int_{0}^{T}\int_{\widetilde{\omega}} \chi e^{s \alpha} \psi \partial_t g \notag\\
& = - \left\langle\left(\frac{\psi}{(s\phi)^{k}}\right)_t,\chi e^{s \alpha} (s \phi)^{k} g\right\rangle_{L^2(0,T;H^{-1}(\Omega)),L^2(0,T;H_0^1(\Omega))} \notag\\
&\ \ \ \ - \int_{0}^{T}\int_{\widetilde{\omega}} \frac{\psi}{(s\phi)^{k}} \chi  \partial_t(e^{s \alpha} (s \phi)^{k})  g\notag\\
& \leq \delta \norme{\left(\frac{\psi}{(s\phi)^{k}}\right)_t}_{L^2(0,T;H^{-1}(\Omega))}^2 + C_{\delta} \norme{\chi e^{s \alpha} (s \phi)^{k} g}_{L^2(0,T;H_0^1(\Omega))}^2\notag\\
&\ + \delta \norme{\left(\frac{\psi}{(s\phi)^{k}}\right)}_{L^2(0,T;L^2(\Omega))}^2 + C_{\delta} \int_{0}^{T}\int_{\widetilde{\omega}} |\partial_t(e^{s \alpha} (s \phi)^{k})|^2 |g|^2\\
& \leq \delta \left(\norme{\eta}_{L^2(Q)}^2 + \int_{0}^{T} \int_{\Omega} e^{2s \alpha} (s \phi)^{2(r+2-k)} |\Phi|^2\right)\\
&\ + C_{\delta} \left(\int_0^T\int_{\widetilde{\omega}}  e^{2 s \alpha} (s \phi)^{2(k+2)} |g|^2 + \int_0^T\int_{\widetilde{\omega}}  e^{2 s \alpha} (s \phi)^{2k} |\nabla g|^2 \right)\\
& \leq \delta \left(\int_0^T \int_{\widetilde{\omega}} \widetilde{\chi}^2 e^{2s \alpha} (s \phi)^{2(r+2)} |f|^2+\int_{0}^{T} \int_{\Omega} e^{2s \alpha} (s \phi)^{2(r+2-k)} |\Phi|^2 \right)\\
&\quad + C_{\delta} \left(\int_0^T\int_{\widetilde{\omega}}  e^{2 s \alpha} (s \phi)^{2(k+2)} |g|^2 + \int_0^T\int_{\widetilde{\omega}}  e^{2 s \alpha} (s \phi)^{2k} |\nabla g|^2 \right).
\end{align*}
\end{proof}
\subsubsection{Proof of technical lemmas: \Cref{lemtechnique1} and \Cref{lemtechnique2}}
Let $\varepsilon \in (0,1)$. We introduce
\begin{equation}
\theta = \chi_3 e^{s \alpha} (s \phi)^3.
\label{theta}
\end{equation}
The function $\theta \Delta \Delta \varphi_4$ satisfies the following parabolic system (see \eqref{deltadeltaadj}),
\begin{equation}
\left\{
\begin{array}{l l}
- \partial_t (\theta \Delta \Delta  \varphi_4) - d_4 \Delta(\theta \Delta \Delta \varphi_4)\\
\quad = \theta \Delta \Delta (m_3(\varphi_1-\varphi_2))
- \partial_t \theta \Delta \Delta \varphi_4 - 2 d_4 \nabla \theta . \nabla (\Delta \Delta \varphi_4) - d_4 \Delta \theta \Delta \Delta \varphi_4 &\mathrm{in}\ (0,T)\times\Omega,\\
\theta \Delta \Delta \varphi_4 = 0 &\mathrm{on}\ (0,T)\times\partial\Omega,\\
\theta \Delta \Delta \varphi_4(T,.)=0 &\mathrm{in}\  \Omega.
\end{array}
\right.
\label{deltadeltaadjtheta}
\end{equation}
We decompose
\begin{equation}
\theta \Delta \Delta \varphi_4 = \eta + \psi,
\label{lienetapsi}
\end{equation}
 where $\eta$ and $\psi$ solve, respectively,
\begin{equation}
\left\{
\begin{array}{l l}
- \partial_t \eta - d_4 \Delta\eta= \theta \Delta \Delta (m_3(\varphi_1-\varphi_2))&\mathrm{in}\ (0,T)\times\Omega,\\
\eta = 0 &\mathrm{on}\ (0,T)\times\partial\Omega,\\
\eta(T,.)=0 &\mathrm{in}\  \Omega,
\end{array}
\right.
\label{deltadeltaadjeta}
\end{equation}
\begin{equation}
\left\{
\begin{array}{l l}
- \partial_t \psi - d_4 \Delta\psi= - \partial_t \theta \Delta \Delta \varphi_4 - 2 d_4\nabla \theta . \nabla (\Delta \Delta \varphi_4) - d_4 \Delta \theta \Delta \Delta \varphi_4 &\mathrm{in}\ (0,T)\times\Omega,\\
\psi = 0 &\mathrm{on}\ (0,T)\times\partial\Omega,\\
\psi(T,.)=0 &\mathrm{in}\  \Omega.
\end{array}
\right.
\label{deltadeltaadjpsi}
\end{equation}
\paragraph{Proof of \Cref{lemtechnique1}}
We have
\begin{equation}
\int_{0}^{T}\int_{\omega_2} (\chi_3(x))^2 e^{2s\alpha} (s\phi)^{3} (\Delta \Delta  \varphi_4) (\Delta \Delta  \varphi_3) dxdt = \int_{0}^{T}\int_{\omega_2} \chi_3(x) e^{s \alpha} ( \eta + \psi) (\Delta \Delta  \varphi_3) dxdt.
\label{1cin8}
\end{equation}
The first term in the right-hand side of \eqref{1cin8} can be estimated as follows,
\begin{align}
\int_{0}^{T}\int_{\omega_2} \chi_3(x) e^{s \alpha}  \eta (\Delta \Delta  \varphi_3) dxdt &\leq \varepsilon \int_{0}^{T}\int_{\Omega} e^{2s \alpha} (s\phi)|\Delta \Delta  \varphi_3|^2  + C_{\varepsilon} \int_{0}^{T}\int_{\omega_2}(\chi_3(x))^2(s \phi)^{-1} \eta^2  \notag\\
&\leq\varepsilon \int_{0}^{T}\int_{\Omega} e^{2s \alpha} (s\phi)|\Delta \Delta  \varphi_3|^2  + C_{\varepsilon} \int_{0}^{T}\int_{\Omega} \eta^2 .
\label{1cin9}
\end{align}
\begin{lem}\label{lemeta}
For every $\delta >0$,
\begin{align}
\int_{0}^{T}\int_{\Omega} |\eta|^2 dxdt &\leq \delta \int_{0}^{T}\int_{\Omega} e^{2s \alpha} (s \phi)^{4} (|\Delta \varphi_1|^2+|\Delta \varphi_2|^2)\notag\\
&\quad + C_{\delta} \int_{0}^{T}\int_{\omega_2} e^{2s \alpha} \Big\{(s \phi)^{24} (|\varphi_1|^2 + |\varphi_2|^2 + |\Delta \varphi_3|^2)+(s \phi)^{22} (|\nabla \varphi_1|^2 +  |\nabla\varphi_2|^2) \Big\}.
\label{esteta}
\end{align}
\end{lem}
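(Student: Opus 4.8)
The plan is to combine the decomposition Lemma \ref{lemmededecomposition} with the equations satisfied by $\varphi_1$ and $\varphi_2$ in \eqref{heatnondiagsystem2}. First I would rewrite the source of $\eta$ in \eqref{deltadeltaadjeta}: since $m_3$ is constant, $\theta\,\Delta\Delta(m_3(\varphi_1-\varphi_2)) = \theta\,\Delta g$ with $g := \Delta(m_3(\varphi_1-\varphi_2)) = m_3(\Delta\varphi_1-\Delta\varphi_2)$, and $g\in Y_2$ by \eqref{regvarphi} together with Lemma \ref{lemDelta} applied to the first two equations of \eqref{heatnondiagsystem2}. Moreover $\theta = \chi_3 e^{s\alpha}(s\phi)^3$ is of the form $\chi e^{s\alpha}(s\phi)^r$ with $r=3$, and the function $\Phi$ of Lemma \ref{lemmededecomposition} (the solution of $-\partial_t\Phi - d_4\Delta\Phi = \Delta g$) is exactly $\Delta\Delta\varphi_4$ by \eqref{deltadeltaadj} and Lemma \ref{lemDelta}, so that the decomposition \eqref{lienetapsi} is the one produced by the lemma. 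Estimate \eqref{estimationeneta} then gives directly, for a suitable cut-off $\widetilde{\chi}$ supported in $\omega_2$,
\[ \norme{\eta}_{L^2(Q)}^2 \leq C \int_0^T\int_{\omega_2} \widetilde{\chi}^2 e^{2s\alpha}(s\phi)^{10}\left(|\Delta\varphi_1|^2 + |\Delta\varphi_2|^2\right). \]

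It then remains to absorb this local term into the right-hand side of \eqref{esteta}. Here I would insert the pointwise identities $d_1\Delta\varphi_1 = -\partial_t\varphi_1 - a_{11}\varphi_1 - a_{21}\varphi_2$ and $d_2\Delta\varphi_2 = -\partial_t\varphi_2 - a_{12}\varphi_1 - a_{22}\varphi_2 - (d_2-d_3)\Delta\varphi_3$ coming from \eqref{heatnondiagsystem2}. The contributions of $|\varphi_1|^2$, $|\varphi_2|^2$ and $|\Delta\varphi_3|^2$ carry a weight $(s\phi)^{10}$, hence are dominated by the $(s\phi)^{24}$-terms of \eqref{esteta}, and only the terms $\int_0^T\int_{\omega_2}e^{2s\alpha}(s\phi)^{10}|\partial_t\varphi_i|^2$ need work. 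For these I would again use the equation for $\partial_t\varphi_i$ to replace one factor $\partial_t\varphi_i$: the cross term $\int_{\omega_2}(\cdots)\partial_t\varphi_i\,\Delta\varphi_i$ is split by Young's inequality keeping precisely the global piece $\delta\int_\Omega e^{2s\alpha}(s\phi)^4|\Delta\varphi_i|^2$ (the only admissible global contribution in \eqref{esteta}, absorbed later by $I(5,\lambda,s,\varphi_i)$ in \eqref{1cin18}); the cross term $\int_{\omega_2}(\cdots)\partial_t\varphi_2\,\Delta\varphi_3$ is controlled, after a small Young split absorbing the local $\partial_t\varphi_2$ term, by a local $|\Delta\varphi_3|^2$ term with an admissible power of $(s\phi)$; and the cross terms with $\varphi_1,\varphi_2$ are immediate. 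The remaining local $|\partial_t\varphi_i|^2$ term is then reduced by an integration by parts in time (as in the proof of \eqref{lemteccarl2}), the resulting $\nabla\partial_t\varphi_i$ being eliminated by a further integration by parts in space that turns $\int\rho\,\nabla\partial_t\varphi_i\cdot\nabla\varphi_i$ into $-\tfrac12\int\rho_t\,|\nabla\varphi_i|^2$; tracking the weights produces exactly the local terms $(s\phi)^{22}|\nabla\varphi_i|^2$ and $(s\phi)^{24}(|\varphi_i|^2+|\Delta\varphi_3|^2)$ of \eqref{esteta}.

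The main obstacle is the bookkeeping of weights in this last step: the local quantity $\int_{\omega_2}e^{2s\alpha}(s\phi)^{l}|\partial_t\varphi_i|^2$ reappears on the right-hand side after each integration by parts and must be recovered by a smallness (absorption) argument, while one has to check that the only global term ever created is $\delta\int_\Omega e^{2s\alpha}(s\phi)^4|\Delta\varphi_i|^2$ and that no "bad" global quantity (such as $\int_\Omega e^{2s\alpha}(s\phi)^6|\Delta\Delta\varphi_4|^2$, which the left-hand side of \eqref{1cin5} cannot absorb) is produced. As in \cite[Theorem 2.2]{CSG}, the whole argument is therefore purely local apart from this single global term, and $\Delta\Delta\varphi_4$ never enters the estimates again after the use of \eqref{estimationeneta}.
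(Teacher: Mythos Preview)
Your first step — applying \eqref{estimationeneta} from Lemma~\ref{lemmededecomposition} to obtain the local bound
\[
\norme{\eta}_{L^2(Q)}^2 \leq C\int_0^T\!\!\int_{\omega_2}\widetilde{\chi}^2 e^{2s\alpha}(s\phi)^{10}\big(|\Delta\varphi_1|^2+|\Delta\varphi_2|^2\big)
\]
— is exactly what the paper does (this is \eqref{estetaaux}).

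For the second step, however, the paper takes a different and more systematic route than yours. Instead of substituting the equation for $\Delta\varphi_i$ pointwise, the paper \emph{applies Lemma~\ref{lemmededecomposition} a second time}: setting $\widetilde{\theta}=\widetilde{\chi}_3 e^{s\alpha}(s\phi)^{10}$, it decomposes $\widetilde{\theta}\,\Delta\varphi_i=\widetilde{\eta}_i+\widetilde{\psi}_i$ (now with $\Phi=\Delta\varphi_i$, $f=a_{1i}\varphi_1+a_{2i}\varphi_2+\delta_{i2}(d_2-d_3)\Delta\varphi_3$, $r=10$), and then reads off \eqref{1cin12tildeaprouver} directly from \eqref{estimationeneta} and \eqref{estimationpsidelta}. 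This produces the global $\delta\!\int_\Omega e^{2s\alpha}(s\phi)^4|\Delta\varphi_i|^2$ term and the local $(s\phi)^{24},(s\phi)^{22}$ terms mechanically, with no bookkeeping of iterated $|\partial_t\varphi_i|^2$ contributions.

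Your alternative (equation substitution plus integration by parts) can be made to work and is in fact more elementary, but the execution you sketch has a gap. Once you apply Young's inequality to $\int_{\omega_2}\rho\,\partial_t\varphi_i\,\Delta\varphi_i$ with the global piece $\delta\!\int_\Omega e^{2s\alpha}(s\phi)^4|\Delta\varphi_i|^2$, the local remainder is $C_\delta\!\int_{\omega_2}e^{2s\alpha}(s\phi)^{16}|\partial_t\varphi_i|^2$, and there is no ``integration by parts in time (as in \eqref{lemteccarl2})'' that acts on $\int\rho'|\partial_t\varphi_i|^2$ directly — the term $\nabla\partial_t\varphi_i$ you invoke simply does not appear from that operation. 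The clean fix is to \emph{skip the Young step on this cross term}: integrate by parts in space first,
\[
-d_i\!\int_{\omega_2}\rho\,\partial_t\varphi_i\,\Delta\varphi_i
= d_i\!\int_{\omega_2}\rho\,\nabla\partial_t\varphi_i\!\cdot\!\nabla\varphi_i
+ d_i\!\int_{\omega_2}\partial_t\varphi_i\,\nabla\rho\!\cdot\!\nabla\varphi_i,
\]
then use $\rho\,\nabla\partial_t\varphi_i\!\cdot\!\nabla\varphi_i=\tfrac12\rho\,\partial_t|\nabla\varphi_i|^2$ and integrate in time. This yields only local $(s\phi)^{12}|\nabla\varphi_i|^2$ and an $\epsilon$-absorbable $\widetilde{\chi}^2e^{2s\alpha}(s\phi)^{10}|\partial_t\varphi_i|^2$ (the squared cut-off is what makes $|\nabla\rho|\leq C\widetilde{\chi}\,e^{2s\alpha}(s\phi)^{11}$ still carry one factor of $\widetilde{\chi}$). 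No global term is needed at all, and the resulting weights are strictly below the $(s\phi)^{22},(s\phi)^{24}$ in \eqref{esteta}. So your approach gives a valid — even slightly sharper — estimate once corrected, while the paper's route trades this sharpness for the convenience of reusing the decomposition machinery twice.
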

\begin{proof}
The idea of the proof is to apply two times \Cref{lemmededecomposition} because the source term of \eqref{deltadeltaadjeta} is $\theta \Delta \Delta (\dots)$.\\

\textbf{Step 1}: We apply \Cref{lemmededecomposition}: \eqref{estimationeneta} with $d =d_4$, $f =m_3\Delta(\varphi_1 - \varphi_2)$, $\Phi_T =\Delta\Delta\varphi_{4,T}$, $\widetilde{\omega}=\omega_2$,  $\chi =\chi_3$, $r=3$, $\Theta = \theta$, $\Phi = \Delta\Delta \varphi_4$ and the decomposition \eqref{lienetapsi}. Then, there exists $\widetilde{\chi_3} \in C^{\infty}(\overline{\Omega};[0,+\infty[)$ such that $supp(\widetilde{\chi_3})\subset\subset \omega_2$, $\widetilde{\chi_3} = 1$ on $supp(\chi_3)$ and
\begin{equation}
\norme{\eta}_{L^2(Q)}^2 \leq C \int_0^T \int_{\omega_2} (\widetilde{\chi_3})^2 e^{2s \alpha} (s \phi)^{10} (|\Delta \varphi_1|^2+|\Delta \varphi_2|^2)dxdt.
\label{estetaaux}
\end{equation}
\begin{rmk}
This estimate is not sufficient because we can not absorb the right hand side term of \eqref{estetaaux} by the left hand side term of \eqref{1cin5}.
\end{rmk}
\textbf{Step 2}: Now, our aim is to prove that for every $i \in \{1,2\}$, $\delta >0$, we have
\begin{align}
&\int_{0}^{T}\int_{\omega_2} (\widetilde{\chi_3})^2 e^{2s \alpha} (s \phi)^{10} |\Delta\varphi_i|^2 dxdt\notag\\
& \leq \delta \left(\int_{0}^{T}\int_{\Omega} e^{2s \alpha} (s\phi)^4|\Delta  \varphi_i|^2 dxdt\right)\notag\\
&\  + C_{\delta} \left(\int_{0}^{T}\int_{\omega_2} e^{2s \alpha} (s \phi)^{24} (|\varphi_1|^2 + |\varphi_2|^2+|\Delta \varphi_3|^2) dxdt + \int_{0}^{T}\int_{\omega_2}  e^{2 s \alpha} (s \phi)^{22} |\nabla  \varphi_i|^2 dxdt\right).
\label{1cin12tildeaprouver}
\end{align}
\begin{rmk}
This previous estimate is also useful for the proof of the observability inequality with one component (see \eqref{lemmebisbis}).
\end{rmk}
First, we remark that
\[\int_{0}^{T}\int_{\omega_2} (\widetilde{\chi_3})^2 e^{2s \alpha} (s \phi)^{10} |\Delta\varphi_i|^2  = \int_{0}^{T}\int_{\omega_2}  \widetilde{\chi_3} e^{s\alpha} \widetilde{\theta} \Delta \varphi_i\Delta \varphi_i,\]
with 
\begin{equation}
\widetilde{\theta} = \widetilde{\chi_3} e^{s \alpha}(s\phi)^{10}.
\label{defthetatilde}
\end{equation}
Moreover, $\widetilde{\theta} \Delta \varphi_i$ satisfies the following parabolic system (see \eqref{heatnondiagsystem2} and \Cref{lemDelta}),
\begin{equation}
\left\{
\begin{array}{l l}
- \partial_t (\widetilde{\theta}\Delta \varphi_i) - d_i \Delta(\widetilde{\theta} \Delta \varphi_i)\\
\quad = \widetilde{\theta} \Delta (a_{1i} \varphi_1 + a_{2i}\varphi_2 + \delta_{i2}(d_2-d_3) \Delta \varphi_3)\\
\qquad - \partial_t \widetilde{\theta} \Delta \varphi_i - 2 d_i \nabla \widetilde{\theta} . \nabla ( \Delta \varphi_i) - d_i \Delta \widetilde{\theta} \Delta \varphi_i &\mathrm{in}\ (0,T)\times\Omega,\\
\widetilde{\theta} \Delta \varphi_i = 0 &\mathrm{on}\ (0,T)\times\partial\Omega,\\
\widetilde{\theta} \Delta \varphi_i(T,.)=0 &\mathrm{in}\  \Omega.
\end{array}
\right.
\label{deltadeltaadjthetatilde}
\end{equation}
We decompose
\begin{equation}
\widetilde{\theta} \Delta \varphi_i = \widetilde{\eta_i} + \widetilde{\psi_i},
\label{lienetapsitilde}
\end{equation}
 where $\widetilde{\eta_i}$ and $\widetilde{\psi_i}$ solve, respectively,
\begin{equation}
\left\{
\begin{array}{l l}
- \partial_t \widetilde{\eta_i} - d_i \Delta\widetilde{\eta_i}= \widetilde{\theta} \Delta (a_{1i} \varphi_1 + a_{2i}\varphi_2 + \delta_{i2}(d_2-d_3) \Delta \varphi_3)&\mathrm{in}\ (0,T)\times\Omega,\\
\widetilde{\eta_i} = 0 &\mathrm{on}\ (0,T)\times\partial\Omega,\\
\widetilde{\eta_i}(T,.)=0 &\mathrm{in}\  \Omega,
\end{array}
\right.
\label{deltadeltaadjetatilde}
\end{equation}
\begin{equation}
\left\{
\begin{array}{l l}
- \partial_t \widetilde{\psi_i} - d_i \Delta\widetilde{\psi_i}= - \partial_t \widetilde{\theta} \Delta \varphi_i - 2 d_i \nabla \widetilde{\theta} . \nabla ( \Delta \varphi_i) - d_i \Delta \widetilde{\theta} \Delta \varphi_i&\mathrm{in}\ (0,T)\times\Omega,\\
\widetilde{\psi_i} = 0 &\mathrm{on}\ (0,T)\times\partial\Omega,\\
\widetilde{\psi_i}(T,.)=0 &\mathrm{in}\  \Omega.
\end{array}
\right.
\label{deltadeltaadjpsitilde}
\end{equation}
We have
\begin{equation}
\int_{0}^{T}\int_{\omega_2} (\widetilde{\chi_3})^2 e^{2s \alpha} (s \phi)^{10} |\Delta\varphi_i|^2 dxdt = \int_{0}^{T}\int_{\omega_2}\widetilde{\chi_3} e^{s \alpha} (\widetilde{\eta_i} + \widetilde{\psi_i}) (\Delta  \varphi_i) dxdt.
\label{1cin8tilde}
\end{equation}
The first term in the right-hand side of \eqref{1cin8tilde} can be estimated as follows,
\begin{equation}
\int_{0}^{T}\int_{\omega_2}\widetilde{\chi_3} e^{s \alpha}  \widetilde{\eta_i} (\Delta \varphi_i) dxdt \leq \delta \int_{0}^{T}\int_{\Omega} e^{2s \alpha} (s\phi)^4| \Delta  \varphi_i|^2 dxdt + C_{\delta} \int_{0}^{T}\int_{\Omega} \widetilde{\eta_i}^2 dxdt.
\label{1cin9tilde}
\end{equation}
Then, we apply \Cref{lemmededecomposition}: \eqref{estimationeneta} with $d =d_i$, $f =a_{1i} \varphi_1 + a_{2i}\varphi_2 + \delta_{i2}(d_2-d_3) \Delta \varphi_3 \in Y_2$ (because $A \in \mathcal{M}_4(C_0^{\infty}(Q))$), $\Phi_T =\Delta\varphi_{i,T}$, $\widetilde{\omega}=\omega_2$,  $\chi =\widetilde{\chi_3}$, $r=10$, $\Theta = \widetilde{\theta}$, $\Phi = \Delta \varphi_i$ and the decomposition \eqref{lienetapsitilde}. There exists ${\chi_3}^{\sharp} \in C^{\infty}(\overline{\Omega};[0,+\infty[)$ such that $supp({\chi_3}^{\sharp})\subset\subset \omega_2$ and $C$ which depends on $\norme{A}_{L^{\infty}(Q)}$ 
\begin{align}
\int_{0}^{T}\int_{\Omega}  |\widetilde{\eta_i}|^2 dxdt &\leq C \int_{0}^{T}\int_{\omega_2} ({\chi_3}^{\sharp})^2 e^{2s \alpha} (s \phi)^{24} (|\varphi_1|^2 + |\varphi_2|^2+|\Delta \varphi_3|^2) dxdt.
\label{estetatilde}
\end{align}
Then, \eqref{1cin9tilde} and \eqref{estetatilde} give
\begin{align}
&\int_{0}^{T}\int_{\omega_2} \widetilde{\chi_3} e^{s \alpha}  \widetilde{\eta_i} (\Delta \varphi_i) dxdt\notag\\
& \leq \delta \int_{0}^{T}\int_{\Omega} e^{2s \alpha} (s\phi)^4|\Delta \varphi_i|^2 dxdt + C_{\delta} \int_{0}^{T}\int_{\omega_2} e^{2s \alpha} (s \phi)^{24} (|\varphi_1|^2 + |\varphi_2|^2 + |\Delta \varphi_3|^2) dxdt.
\label{1cin10tilde}
\end{align}
For the second term in the right-hand side of \eqref{1cin8tilde}, we use \Cref{corestimationspsi}: \eqref{estimationpsidelta} with $d =d_i$, $f =a_{1i} \varphi_1 + a_{2i}\varphi_2 + \delta_{i2}(d_2-d_3) \Delta \varphi_3 \in Y_2$, $\Phi_T =\Delta\varphi_{i,T}$, $\widetilde{\omega}=\omega_2$,  $\chi =\widetilde{\chi_3}$, $(r,k)=(10,10)$, $\Theta = \widetilde{\theta}$, $\Phi = \Delta \varphi_i$ and the decomposition \eqref{lienetapsitilde}). Then, we have 
\begin{align}
&\int_0^T\int_{\omega_2} \widetilde{\chi_3} e^{s \alpha} \widetilde{\psi_i} \Delta \varphi_i \notag\\
&\leq \delta \left(\int_0^T \int_{\omega_2} ({\chi_3}^{\sharp})^2 e^{2s \alpha} (s \phi)^{24} (|\varphi_1|^2 + |\varphi_2|^2 + |\Delta \varphi_3|^2) +\int_{0}^{T} \int_{\Omega} e^{2s \alpha} (s \phi)^{4} |\Delta\varphi_i|^2 \right)\notag\\
&\quad + C_{\delta} \int_0^T\int_{\omega_2}  e^{2 s \alpha} (s \phi)^{22} |\nabla \varphi_i|^2.
\label{1cin11tilde}
\end{align}
Gathering \eqref{1cin8tilde}, \eqref{1cin10tilde} and \eqref{1cin11tilde}, we have \eqref{1cin12tildeaprouver}.\\

The estimates \eqref{estetaaux} and \eqref{1cin12tildeaprouver} give \eqref{esteta}.
\end{proof}
\textbf{End of the proof of \Cref{lemtechnique1}:} Applying \Cref{lemeta} with $\delta = \varepsilon/C_{\varepsilon}$, we find 
\begin{align}
&\int_{0}^{T}\int_{\Omega} |\eta|^2 dxdt\notag\\
&\leq \frac{\varepsilon}{C_{\varepsilon}} \int_{0}^{T}\int_{\Omega} e^{2s \alpha} (s \phi)^{4} (|\Delta \varphi_1|^2+|\Delta \varphi_2|^2 ) dxdt\notag\\
&\quad + C_{\varepsilon}' \int_{0}^{T}\int_{\omega_2} e^{2s \alpha} \Big\{(s \phi)^{24} (|\varphi_1|^2 + |\varphi_2|^2 + |\Delta \varphi_3|^2)+(s \phi)^{22} (|\nabla \varphi_1|^2 +  |\nabla\varphi_2|^2) \Big\}dxdt.
\label{estetaapp}
\end{align}
Then, we put \eqref{estetaapp} in \eqref{1cin9} to get
\begin{align}
&\int_{0}^{T}\int_{\omega_2} \chi_3(x) e^{s \alpha}  \eta (\Delta \Delta  \varphi_3) \notag\\
&\leq \varepsilon \left( \int_{0}^{T}\int_{\Omega} e^{2s \alpha} (s\phi)|\Delta \Delta  \varphi_3|^2  +\int_{0}^{T}\int_{\Omega} e^{2s \alpha} (s \phi)^{4} (|\Delta \varphi_1|^2+|\Delta \varphi_2|^2 ) \right)\notag\\  
&+ C_{\varepsilon}\left( \int_{0}^{T}\int_{\omega_2} e^{2s \alpha} (s \phi)^{24} (|\varphi_1|^2 + |\varphi_2|^2 + |\Delta \varphi_3|^2)+
\int_{0}^{T}\int_{\omega_2} e^{2s \alpha} (s \phi)^{22} (|\nabla \varphi_1|^2 +  |\nabla\varphi_2|^2) \right).
\label{1cin10}
\end{align}

\begin{lem}\label{lempsi}
For every $\delta >0$,
\begin{align}
&\int_{0}^{T}\int_{\omega_2} \chi_3 e^{s \alpha}\psi(\Delta \Delta  \varphi_3) dxdt\notag\\
& \leq \delta \left(\int_0^T \int_{\omega_2}  (\widetilde{\chi_3})^2 e^{2s \alpha} (s \phi)^{10}(|\Delta \varphi_1|^2 +|\Delta \varphi_2|^2)  +\int_{0}^{T} \int_{\Omega} e^{2s \alpha} (s \phi)^{3} |\Delta\Delta \varphi_4|^2 \right)\notag\\
&\quad + C_{\delta} \int_0^T\int_{\omega_2}  e^{2 s \alpha} (s \phi)^{9} |\nabla \Delta \varphi_3|^2.
\label{estpsideltap}
\end{align}
\end{lem}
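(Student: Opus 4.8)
The plan is to read \Cref{lempsi} as a direct instance of \Cref{corestimationspsi}, once the data and the Carleman exponents are chosen correctly, so that all the analytic work is absorbed into the decomposition machinery of \Cref{lemmededecomposition}. First I would recall the splitting $\theta\,\Delta\Delta\varphi_4 = \eta+\psi$ from \eqref{lienetapsi}, where $\theta = \chi_3 e^{s\alpha}(s\phi)^3$ and $\eta$, $\psi$ solve \eqref{deltadeltaadjeta}, \eqref{deltadeltaadjpsi}. I set $\Phi := \Delta\Delta\varphi_4$ and $f := \Delta\bigl(m_3(\varphi_1-\varphi_2)\bigr)$; applying \Cref{lemDelta} twice to the last equation of \eqref{heatnondiagsystem2} (licit thanks to the regularity \eqref{regvarphi}) shows that $\Phi$ solves $-\partial_t\Phi - d_4\Delta\Phi = \Delta f$, i.e. $\Phi$ is exactly the function attached to these data in \Cref{lemmededecomposition}. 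Moreover \eqref{regvarphi}, together with the smoothness $A\in\mathcal{M}_4(C_0^\infty(Q))$, gives $f\in Y_2$ and $\Delta\varphi_3\in Y_2$ (writing $\partial_t\Delta\varphi_i$ from the $\Delta$-differentiated equations and using $\Delta\Delta\varphi_i\in L^2(Q)$), so all hypotheses below are satisfied.

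Next I would write $\psi\,\Delta\Delta\varphi_3 = \psi\,\Delta(\Delta\varphi_3)$ and invoke \Cref{corestimationspsi}, inequality \eqref{estimationpsidelta}, with $d = d_4$, $f = \Delta(m_3(\varphi_1-\varphi_2))$, $\Phi_T = \Delta\Delta\varphi_{4,T}$, $\widetilde{\omega} = \omega_2$, $\chi = \chi_3$, $\Theta = \theta$, $\Phi = \Delta\Delta\varphi_4$, $g = \Delta\varphi_3$, and parameters $(r,k) = (3,7/2)$, for an arbitrary $\delta>0$. A one-line computation of the weights gives $2(r+2) = 10$, $2(r+2-k) = 3$ and $2(k+1) = 9$; combined with the trivial pointwise bound $|f|^2 \le C\bigl(|\Delta\varphi_1|^2+|\Delta\varphi_2|^2\bigr)$ (since $m_3\in\R$) and with the cutoff $\widetilde{\chi_3}$ furnished by \Cref{lemmededecomposition}, this is precisely the asserted inequality \eqref{estpsideltap}.

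The only genuine subtlety — hence the step to be most careful about — is the bookkeeping of these exponents: one must check that $k = 7/2$ is the unique value making $(s\phi)^{2(r+2-k)}$ coincide with the weight $(s\phi)^{3}$ that the statement attaches to the global term $\int_0^T\!\int_\Omega e^{2s\alpha}(s\phi)^3|\Delta\Delta\varphi_4|^2$, and then that the induced exponent $2(k+1) = 9$ on the local gradient term $|\nabla\Delta\varphi_3|^2$ is exactly the one appearing in \eqref{estpsideltap}; one should also double-check that $(r,k)=(3,7/2)$ is an admissible (non-integer but real) pair for \Cref{lemmededecomposition} and \Cref{corestimationspsi}. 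Beyond this purely arithmetic verification there is no further difficulty, the heavy estimates having already been established in \Cref{lemmededecomposition} and \Cref{corestimationspsi}.
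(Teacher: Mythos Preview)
Your proposal is correct and matches the paper's proof essentially word for word: the paper likewise applies \Cref{corestimationspsi}, estimate \eqref{estimationpsidelta}, with $d=d_4$, $f=m_3\Delta(\varphi_1-\varphi_2)$, $\Phi_T=\Delta\Delta\varphi_{4,T}$, $\widetilde\omega=\omega_2$, $\chi=\chi_3$, $(r,k)=(3,7/2)$, $\Theta=\theta$, $\Phi=\Delta\Delta\varphi_4$, the decomposition \eqref{lienetapsi}, and $g=\Delta\varphi_3$. Your exponent bookkeeping and the admissibility check $k=7/2\in[1,+\infty)$ are exactly right.
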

\begin{proof}
We apply \Cref{corestimationspsi}: \eqref{estimationpsidelta} with $d =d_4$, $f =m_3\Delta(\varphi_1 - \varphi_2)$, $\Phi_T =\Delta\Delta\varphi_{4,T}$, $\widetilde{\omega}=\omega_2$,  $\chi =\chi_3$, $(r,k)=(3,7/2)$, $\Theta = \theta$, $\Phi = \Delta\Delta \varphi_4$, the decomposition \eqref{lienetapsi} and $g=\Delta \varphi_3$.
\end{proof}
Applying \Cref{lempsi} with $\delta = \varepsilon$, we find
\begin{align}
&\int_{0}^{T}\int_{\omega_2} \chi_3 e^{s \alpha}\psi(\Delta \Delta  \varphi_3) dxdt\notag\\
& \leq \varepsilon \left(\int_0^T \int_{\omega_2}  (\widetilde{\chi_3})^2 e^{2s \alpha} (s \phi)^{10}(|\Delta \varphi_1|^2 +|\Delta \varphi_2|^2)  +\int_{0}^{T} \int_{\Omega} e^{2s \alpha} (s \phi)^{3} |\Delta\Delta \varphi_4|^2 \right)\notag\\
&\quad + C_{\varepsilon} \int_0^T\int_{\omega_2}  e^{2 s \alpha} (s \phi)^{9} |\nabla \Delta \varphi_3|^2.
\label{estpsideltapeps}
\end{align}
Then, we put \eqref{1cin12tildeaprouver} with $\delta = \varepsilon$ in \eqref{estpsideltapeps} to get
\begin{align}
&\int_{0}^{T}\int_{\omega_2} \chi_3 e^{s \alpha}\psi(\Delta \Delta  \varphi_3) \notag\\
& \leq \varepsilon \left(\int_0^T \int_{\Omega}  e^{2s \alpha} \Big\{(s \phi)^{4}(|\Delta \varphi_1|^2 +|\Delta \varphi_2|^2)  + (s \phi)^{3} |\Delta\Delta \varphi_4|^2\Big\} \right)\notag\\
&\ + C_{\varepsilon} \left( \int_{0}^{T}\int_{\omega_2} e^{2s \alpha} \Big\{(s \phi)^{24} (|\varphi_1|^2 + |\varphi_2|^2+|\Delta \varphi_3|^2)+(s \phi)^{22} (|\nabla \varphi_1|^2 +  |\nabla\varphi_2|^2 + |\nabla \Delta \varphi_3|^2)\Big\}\right).
\label{estpsideltapepsbis}
\end{align}
\indent Therefore, recalling \eqref{1cin8}, \eqref{1cin10}, \eqref{estpsideltapepsbis}, we get \eqref{1cin12lem} and consequently \Cref{lemtechnique1}.\\
\paragraph{Proof of \Cref{lemtechnique2}}
We have by \eqref{theta} and \eqref{lienetapsi}
\begin{equation}
\int_{0}^{T}\int_{\omega_2} (\chi_3(x))^2 e^{2s\alpha} (s\phi)^{3} (\Delta \Delta  \varphi_4) (\partial_t \Delta \varphi_3) dxdt = \int_{0}^{T}\int_{\omega_2} \chi_3(x) e^{s \alpha} (\eta + \psi)\partial_t(\Delta \varphi_3) dxdt.
\label{1cin13}
\end{equation}
We easily have by Young's inequality
\begin{equation}
\int_{0}^{T}\int_{\omega_2} \chi_3(x) e^{s \alpha} \eta\partial_t(\Delta \varphi_3) dxdt \leq \varepsilon \int_{0}^{T}\int_{\omega_2} e^{2s \alpha}(s \phi) |\partial_t(\Delta \varphi_3)|^2 dxdt + C_{\varepsilon} \int_{0}^{T}\int_{\Omega} |\eta|^2dxdt.
\label{1cin14}
\end{equation}
By using \Cref{lemeta} with $\delta = \varepsilon/C_{\varepsilon}$, we can deduce from \eqref{1cin14} that
\begin{align}
&\int_{0}^{T}\int_{\omega_2} \chi_3(x) e^{s \alpha}  \eta (\partial_t \Delta  \varphi_3) dxdt \notag\\
&\leq \varepsilon \left( \int_{0}^{T}\int_{\Omega} e^{2s \alpha} (s\phi)|\partial_t \Delta  \varphi_3|^2  +\int_{0}^{T}\int_{\Omega} e^{2s \alpha} (s \phi)^{4} (|\Delta \varphi_1|^2+|\Delta \varphi_2|^2 ) \right)\notag\\  
&+ C_{\varepsilon}\left( \int_{0}^{T}\int_{\omega_2} e^{2s \alpha} (s \phi)^{24} (|\varphi_1|^2 + |\varphi_2|^2 + |\Delta \varphi_3|^2)+
\int_{0}^{T}\int_{\omega_2} e^{2s \alpha} (s \phi)^{22} (|\nabla \varphi_1|^2 +  |\nabla\varphi_2|^2) \right).
\label{1cin14bis}
\end{align}
Then, we estimate the other term in the right hand side of \eqref{1cin13}. 
\begin{lem}
For every $\delta >0$,
\begin{align}
&\int_0^T\int_{\omega_2} \chi_3 e^{s \alpha} \psi \partial_t \Delta\varphi_3 \notag\\
& \leq \delta \left(\int_0^T \int_{\omega_2}  e^{2s \alpha} (s \phi)^{10} (|\Delta\varphi_1|^2+|\Delta\varphi_2|^2)+\int_{0}^{T} \int_{\Omega} e^{2s \alpha} (s \phi)^{3} |\Delta\Delta\varphi_4|^2 \right)\notag\\
&\quad + C_{\delta} \left(\int_0^T\int_{\omega_2}  e^{2 s \alpha} (s \phi)^{11} |\Delta \varphi_3|^2 + \int_0^T\int_{\omega_2}  e^{2 s \alpha} (s \phi)^{7} |\nabla \Delta\varphi_3|^2 \right).
\label{estpsidtapp}
\end{align}
\end{lem}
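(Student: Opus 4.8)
The plan is to read off \eqref{estpsidtapp} as a direct application of \Cref{corestimationspsi}, applied to the very decomposition $\theta\,\Delta\Delta\varphi_4=\eta+\psi$ set up in \eqref{theta}--\eqref{deltadeltaadjpsi}. This is the exact analogue, for the time-derivative factor $\partial_t\Delta\varphi_3$, of the estimate proved in \Cref{lempsi} for the factor $\Delta\Delta\varphi_3$: there one invoked estimate \eqref{estimationpsidelta} of \Cref{corestimationspsi}, and here I would invoke its companion \eqref{estimationpsidt}. So the structure is: recall that $\Phi:=\Delta\Delta\varphi_4$ solves the parabolic equation \eqref{deltadeltaadj} with source $\Delta f$ where $f:=m_3\Delta(\varphi_1-\varphi_2)$; recall the decomposition \eqref{lienetapsi} with $\Theta=\theta$ as in \eqref{theta}; and then plug everything into \eqref{estimationpsidt}.

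Concretely I would apply \Cref{corestimationspsi}, inequality \eqref{estimationpsidt}, with the same choices already used in \Cref{lempsi}, namely $d=d_4$, $f=m_3\Delta(\varphi_1-\varphi_2)$, $\Phi_T=\Delta\Delta\varphi_{4,T}$, $\widetilde\omega=\omega_2$, $\chi=\chi_3$, $(r,k)=(3,7/2)$, $\Theta=\theta$, $\Phi=\Delta\Delta\varphi_4$, the decomposition \eqref{lienetapsi}, and $g=\Delta\varphi_3$. The weight arithmetic then matches the target term by term: $2(r+2)=10$ produces $\delta\int_0^T\int_{\omega_2}e^{2s\alpha}(s\phi)^{10}|f|^2$, which after $|\Delta(\varphi_1-\varphi_2)|^2\le 2(|\Delta\varphi_1|^2+|\Delta\varphi_2|^2)$ and the boundedness of the cutoff $\widetilde\chi$ becomes the first $\delta$-term; $2(r+2-k)=3$ gives $\delta\int_0^T\int_\Omega e^{2s\alpha}(s\phi)^3|\Delta\Delta\varphi_4|^2$; $2(k+2)=11$ gives $C_\delta\int_0^T\int_{\omega_2}e^{2s\alpha}(s\phi)^{11}|\Delta\varphi_3|^2$; and $2k=7$ gives $C_\delta\int_0^T\int_{\omega_2}e^{2s\alpha}(s\phi)^7|\nabla\Delta\varphi_3|^2$. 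Absorbing the constant cutoff factors $\chi_3^2,\widetilde\chi^2\le C$ yields exactly \eqref{estpsidtapp}, with no further computation.

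The only point that needs genuine care — and the reason the argument is not completely formal — is the verification that the hypotheses of \Cref{lemmededecomposition}/\Cref{corestimationspsi} are legitimately in force here. I would first record, via \eqref{regvarphi} (obtained by consecutive applications of \Cref{lemDelta} to the equations of \eqref{heatnondiagsystem2}), that $\Delta\varphi_i\in Y_2$ for $i\in\{1,2,3\}$, so that both $f=m_3\Delta(\varphi_1-\varphi_2)\in Y_2$ and $g=\Delta\varphi_3\in Y_2$ as required; this is where the regularization $A\in\mathcal{M}_4(C_0^\infty(Q))$ of \Cref{density1} is used. I would also note that $\Phi=\Delta\Delta\varphi_4$ enters \Cref{lemmededecomposition} only through the equation $-\partial_t\Phi-d_4\Delta\Phi=\Delta f$ — the homogeneous Dirichlet boundary condition appearing there is automatically satisfied by $\theta\Phi$ and by $\eta,\psi$ because $\theta=\chi_3 e^{s\alpha}(s\phi)^3$ is compactly supported in $\omega_2\subset\subset\Omega$. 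Once these two checks are in place there is no obstacle left; the estimate is immediate from \Cref{corestimationspsi}.
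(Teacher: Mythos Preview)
Your proof is correct and is essentially identical to the paper's own proof, which also applies \Cref{corestimationspsi}: \eqref{estimationpsidt} with $d=d_4$, $f=\Delta(\varphi_1-\varphi_2)$ (up to the harmless constant $m_3$), $\Phi_T=\Delta\Delta\varphi_{4,T}$, $\widetilde\omega=\omega_2$, $\chi=\chi_3$, $(r,k)=(3,7/2)$, $\Theta=\theta$, $\Phi=\Delta\Delta\varphi_4$, the decomposition \eqref{lienetapsi}, and $g=\Delta\varphi_3$. Your additional explicit verification of the regularity hypotheses and the weight arithmetic is welcome but goes slightly beyond what the paper records.
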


\begin{proof}
We apply \Cref{corestimationspsi}: \eqref{estimationpsidt} with $d =d_4$, $f =\Delta(\varphi_1 - \varphi_2)$, $\Phi_T =\Delta\Delta\varphi_{4,T}$, $\widetilde{\omega}=\omega_2$,  $\chi =\chi_3$, $(r,k)=(3,7/2)$, $\Theta = \theta$, $\Phi = \Delta\Delta \varphi_4$, the decomposition \eqref{lienetapsi} and $g=\Delta\varphi_3$.
\end{proof}

Then, we put \eqref{1cin12tildeaprouver} with $\delta = \varepsilon$ in \eqref{estpsidtapp} to get
\begin{align}
&\int_{0}^{T}\int_{\omega_2} \chi_3 e^{s \alpha}\psi(\partial_t \Delta  \varphi_3)\notag\\
& \leq \varepsilon \left(\int_0^T \int_{\Omega}  e^{2s \alpha} \Big\{(s \phi)^{4}(|\Delta \varphi_1|^2 +|\Delta \varphi_2|^2)  + (s \phi)^{3} |\Delta\Delta \varphi_4|^2\Big\} \right)\notag\\
&\ + C_{\varepsilon} \left( \int_{0}^{T}\int_{\omega_2} e^{2s \alpha} \Big\{(s \phi)^{24} (|\varphi_1|^2 + |\varphi_2|^2+|\Delta \varphi_3|^2)+(s \phi)^{22} (|\nabla \varphi_1|^2 +  |\nabla\varphi_2|^2 + |\nabla \Delta \varphi_3|^2)\Big\} \right).
\label{estpsidtepsbis}
\end{align}
Recalling \eqref{1cin13}, \eqref{1cin14bis}, \eqref{estpsidtepsbis}, we get \eqref{1cin16lem} and consequently \Cref{lemtechnique2}.\\

\textbf{Acknowledgments.}
I would like to very much thank Karine Beauchard and Michel Pierre (Ecole Normale Supérieure de Rennes) for many fruitful, stimulating discussions, helpful advices. I would also like to thank Sergio Guerrero (Laboratoire Jacques-Louis Lions) for his welcome in Paris, his answers to my questions, his reading of a preliminary draft of this version. Finally, I would like to thank the referee for interesting remarks. \\

\bibliographystyle{plain}
\small{\bibliography{bibliordnonlin}}

\end{document}